\def\makebb#1{\expandafter\def\csname bb#1\endcsname{{\mathbb{#1}}}\ignorespaces}
\def\makerm#1{\expandafter\def\csname rm#1\endcsname{{\rm #1}}\ignorespaces}
\def\makebf#1{\expandafter\def\csname bf#1\endcsname{{\bf #1}}\ignorespaces}
\def\makegr#1{\expandafter\def\csname gr#1\endcsname{{\mathfrak{#1}}}\ignorespaces}
\def\makescr#1{\expandafter\def\csname scr#1\endcsname{{\mathscr{#1}}}\ignorespaces}
\def\makecal#1{\expandafter\def\csname cal#1\endcsname{{\cal #1}}\ignorespaces}
\def\makeudl#1{\expandafter\def\csname udl#1\endcsname{{\underline{#1}}}\ignorespaces}
\def\doLetters#1{%
  #1A #1B #1C #1D #1E #1F #1G #1H #1I #1J #1K #1L #1M
  #1N #1O #1P #1Q #1R #1S #1T #1U #1V #1W #1X #1Y #1Z}
\def\doletters#1{%
  #1a #1b #1c #1d #1e #1f #1g #1h #1i #1j #1k #1l #1m
  #1n #1o #1p #1q #1r #1s #1t #1u #1v #1w #1x #1y #1z}
\newcommand{\qbinom}[2]{\genfrac{[}{]}{0pt}{}{#1}{#2}}
\newcounter{InSection}
\numberwithin{InSection}{subsection}
\newtheorem{thm}[InSection]{Theorem}
\newtheorem{lemma}[InSection]{Lemma}
\newtheorem{cor}[InSection]{Corollary}
\newtheorem{prop}[InSection]{Proposition}
\theoremstyle{definition}
\theoremstyle{definition}
\newtheorem{defn}[InSection]{Definition}
\theoremstyle{definition}
\newtheorem{rem}[InSection]{Remark}
\def\angles#1{\langle #1\rangle}
\def\makeop#1{
  \expandafter\def
  \csname#1\endcsname{
    \mathop{\rm #1}
    \nolimits
  }
  \ignorespaces
}
\def\makeoplist#1 {
  \def\@@tmpa{#1}
  \def\@@tmpb{***}
  \ifx\@@tmpa\@@tmpb
  \else
    \makeop{#1}
    \expandafter\makeoplist
  \fi
}
\DeclareMathOperator{\mdim}{mdim}
\begin{document}
\title{
Zeta Functions for Spherical Tits Buildings of Finite General Linear Groups
}
\author{SHEN, Jianhao}
\maketitle

\begin{abstract}
In this paper, we define edge zeta functions for spherical buildings associated with finite general linear groups. We derive elegant formulas for these zeta functions and reveal patterns of eigenvalues of these buildings, by introducing and applying insightful tools including digraphs $X_0$ and $X_2$, cyclic $n$-partite graphs, partite-transitive group actions, and Springer's theorem on Hecke algebras.
\end{abstract}
\tableofcontents

\section*{Introduction}
\addcontentsline{toc}{section}{Introduction}

Buildings are a natural generalization of graphs, playing a significant role in various areas of study including geometry, combinatorics, and information theory. In the realm of algebra, two types of buildings naturally emerge: The Bruhat-Tits buildings, which are affine and associated with linear groups over local fields, and the Tits buildings, which are spherical and correspond to linear groups over finite fields.

Over the past two decades, the zeta function of finite quotients of affine buildings have been studied extensively, revealing crucial information about the affine building and finding the relation with $L$ function of such groups \cite{kang2010} \cite{li2011}. However, the zeta function of spherical buildings in the finite field case remains largely unexplored except for the rank 2 case.

For finite linear groups with rank 2, the spherical buildings degenerate to a special type of graphs known as generalized polygons. Generalized polygons were thoroughly studied by Feit and Higman \cite{feit_higman1964}. The spectral data of the generalized polygons were further analyzed in Tanner's work \cite{tanner1984} and Lubotzky's book \cite{lubotzky1994}. Because the spherical buildings reduce to graphs, the Ihara zeta function can be associated with such buildings, and can be computed easily with the spectral data presented in their work.

Beyond the rank 2 case, the zeta functions for spherical buildings in the finite field case have received little study. This presents a significant gap in our understanding of these structures. This paper seeks to bridge this gap by defining the edge zeta function of spherical buildings of finite linear groups, following a similar approach to that of Kang and Li \cite{kangli2014}\cite{li2019} for affine buildings, and agreeing with the Ihara zeta function in the rank 2 case. We then compute the edge zeta function for buildings associated with general linear groups $\GL_n(\mathbb{F}_q)$ and products of general linear groups $\prod_{i=1}^r \GL_{n_i}(\mathbb{F}_q)$.

Another relation among the various cases is that the building $\mathcal{B}(\GL_n(\mathbb{F}_p))$ can be viewed as a link of a vertex in the quotient of an affine building $\Gamma \backslash \mathcal{B}(\PGL_n(\mathbb{Q}_p))$, where $\Gamma$ is a regular discrete cocompact torsion-free subgroup of $\PGL_n(\mathbb{Q}_p)$, as studied by Kang and Li \cite{kangli2014}. Furthermore, by taking links of simplices in $\mathcal{B}(\GL_n(\mathbb{F}_q))$, we obtain buildings associated with products of general linear groups $\prod_{i=1}^r \GL_{n_i}(\mathbb{F}_q)$, where $\sum_{i=1}^r n_i = n$.

\paragraph{Structure of paper}
This paper is structured as follows: 

\begin{itemize}
\item Section 1 introduces the edge zeta function $Z(X,u)$ of a Tits building $X$ and specializes this definition to the buildings of $\GL(V)$ and $\GL(V_1) \times \GL(V_2)$. We then define two graphs $X_0$ and $X_2$ that capture the connectivity and the eigenvalues of these buildings. Our main theorem on the pattern of the eigenvalues of the buildings is then stated in terms of the eigenvalues of $X_0$, $X_2$.

\item In Sections 2 and 3, we develop graph-theoretical tools like cyclic $n$-partite
graphs and relative destination elements to analyze the connectivity and eigenvalues of the component subgraphs of $X_0$ and $X_2$. Section 4 provides explicit computations of the
relative destination elements that encode cycle counts in $X_0$ and $X_2$.

\item Sections 5 and 6 give an overview of unipotent representations and Springer's theorem on the action of a certain element central in the relevant Hecke algebra. This allows us to relate eigenvalues to character values.

\item Sections 7 and 8 use the above tools to derive formulas for the zeta functions of components of $X_0$ and $X_2$, and prove the main theorem. Section 9 concludes by summarizing the results and suggesting directions for further research.
\end{itemize}

The technical heart of the paper lies in Sections 2-6 where we develop the graph-theoretic and representation-theoretic tools to analyze the zeta functions of the graphs $X_0$ and $X_2$ and the associated buildings.

\paragraph{Acknowledgements}
I would like to thank my thesis advisor, Professor Jiu-Kang Yu, for his invaluable and constant guidance, support, and encouragement throughout this research. I also thank Professor Wen-Ching Winnie Li, Professor Caihua Luo, and Professor Scott Andrews for much helpful discussion.

\section{Geodesics Cycles in Buildings of $\GL(V)$ and $\prod\GL(V_i)$}
We begin by considering $V$ as an $n$-dimensional vector space over a field, denoted by $k$. For our discussions, we limit ourselves to cases where $n\geq 2$, as the building is empty by definition for $n=1$. In most of our discussion, $k$ is a finite field $\mathbb{F}_q$.

\subsection{Zeta Function of a Finite Spherical Building}

\paragraph{Defining the Building of $\GL(V)$}
Following Serre's work \cite{serre2005}, we define the building of $\GL(V)$ as follows:

\begin{defn}
The building of $\GL(V)$, or the building of $V$, is a simplicial complex $\mathcal{B}=\mathcal{B}(V)$ of dimension $n-2$. It is constructed as follows:
\begin{itemize}
    \item The vertices of $\mathcal{B}$ represent the proper nontrivial subspaces of $V$. We denote a vertex corresponding to a subspace $W$ by $x_W$.
    \item A collection $s=\{x_{W_1},...,x_{W_r}\}$ of vertices forms a simplex if and only if the corresponding subspaces $\{W_1,...,W_r\}$ constitute a flag. Equivalently, for any $i,j$, either $W_i\subseteq W_j$ or $W_j\subseteq W_i$.
\end{itemize}
\end{defn}

\paragraph{Opposite Vertices, Link of a Simplex, Geodesic Paths and Zeta Functions}
The spherical nature of the building $\mathcal{B}(V)$ introduces several key concepts, including `opposite vertices', the `link of a simplex'. We refer the reader to Abramenko and Brown's \textit{Buildings} \cite{abramenko2008buildings} for these concepts. In the case of $\mathcal{B}(V)$, the opposite relation is characterized by \cite{serre2005} as:

\begin{defn}
    Two vertices $x_{W}$ and $x_{W'}$ in $\mathcal{B}(V)$ are \textbf{opposite} if $V=W\oplus W'$.
\end{defn}

Now, we revisit the definition of the link of a simplex in a general simplicial complex:
\begin{defn}
For a simplex $A$ in a simplicial complex $X$, the \textbf{link} of $A$, denoted by $\lk A$ or $\lk_\mathcal{B}(A)$, is the subcomplex of $X$ containing simplicies 
$$\{B\in X: A\cap B=\varnothing, A\cup B \text{ is a simplex in } X\}.$$
\end{defn}

Notably, the link of any simplex in a spherical building $\mathcal{B}$ forms another spherical building and retains the concept of opposite vertices. This property is essential in defining geodesic paths and zeta functions in spherical buildings. 

\begin{defn}\label{def:1.1.4 geodesic and cycles}
    In a spherical building $\mathcal{B}$, a sequence of vertices $(x_0,x_1,...,x_l)$ constitutes a \textbf{path} of length $l$ if for each $0\leq i\leq l-1$, $x_i\neq x_{i+1}$ and $\{x_i,x_{i+1}\}$ forms a simplex in $\mathcal{B}$. If for each $1\leq i\leq l-1$, $x_{i-1}$ and $x_{i+1}$ are opposite in $\lk x_i$, the path is called a \textbf{geodesic path}. 
    
    A \textbf{geodesic cycle} is a special geodesic path that is closed and remains a geodesic cycle when extended indefinitely. Formally, a sequence of vertices $(x_0,x_1,...,x_l)$ forms a geodesic cycle if $x_l=x_0$ and $(x_0,x_1,...,x_l,x_1)$ is a geodesic path.
\end{defn}

For finite structures like $\mathcal{B}(V)$, where $V$ is a finite-dimensional vector space over a finite field $\mathbb{F}_q$, the number of geodesic paths for each length is finite, facilitating the definition of the edge zeta function. 

\begin{defn}
    In a finite spherical building $\mathcal{B}$, let $N(l)$, or $N(\mathcal{B},l)$ denote the number of geodesic cycles of length $l$. The edge zeta function $Z(\mathcal{B},u)$ is then the formal power series defined as
 $$Z(\mathcal{B},u)=\exp(\sum_{l=1}^{\infty}\frac{N(l)}{l} u^l).$$
\end{defn}

\subsection{Exploring Geodesics in $\mathcal{B}(V)$ and $\mathcal{B}(V_1,...,V_r)$}
In this section, we focus on the computation of the link of a single vertex in the context of $\mathcal{B}(V)$. Let $W\subseteq V$ be an $m$-dimensional subspace of $V$, with $0<m<n$. The link $\lk x_W= \lk\{x_W\}$ of the vertex $x_W$ in $\mathcal{B}(V)$ has vertex set $\mathcal{V}_1\cup \mathcal{V}_2$, where $\mathcal{V}_1=\{x_{W'}: 0\subsetneq W'\subsetneq W\}$ and $\mathcal{V}_2=\{x_{W'}: W\subsetneq W'\subsetneq V\}$. The simplices in $\lk(x_W)$ are the flags in $\mathcal{B}(V)$ consisting of elements in $\mathcal{V}_1\cup \mathcal{V}_2$. This simplicial complex $\lk x_W$ is isomorphic to the join of $\mathcal{B}(W)$ and $\mathcal{B}(V/W)$, which is the building of $\GL(W)\times \GL(V/W)$. We introduce these terms now.

\paragraph{Join of Simplicial Complexes}

\begin{defn}
The \textbf{join} of a finite collection $\{X_i\}_{i=1}^r$ of simplicial complexes, each with distinct vertex sets $\mathcal{V}_i$, is the complex $X_1*...*X_r$. This complex has a vertex set $\dot\bigcup_{i=1}^r \mathcal{V}_i$ and a simplex set $\{\dot\bigcup_{i=1}^r S_i :S_i \text{ is a simplex in } X_i\}$.
\end{defn}

Returning to the discussion of $\lk_{\mathcal{B}(V)} x_W$, $\mathcal{V}_2$ can be identified with the vertices in $\mathcal{B}(V/W)$ through $W'\mapsto W'/W$. Since flags with vertices in $\mathcal{V}_1\cup \mathcal{V}_2$ are exactly the union of a flag in $\mathcal{V}_1$ and a flag in $\mathcal{V}_2$, one can easily verify that $\lk x_W\simeq \mathcal{B}(W)*\mathcal{B}(V/W)$ via $x_{W'}\mapsto \begin{cases}
x_{W'}, \quad \text{if } W'\subseteq W \\
x_{W'/W}, \quad \text{if } W'\supseteq W.
\end{cases}$

We now define the buildings associated with vector spaces $(V_1, V_2, ..., V_r)$:

\begin{defn}
Let $(V_1,V_2, ..., V_r)$ be a finite collection of finite-dimensional vector spaces over $k$, with dimensions $n_1,n_2, ..., n_r$ respectively. The building $\mathcal{B}(V_1,V_2, ..., V_r)$ of $\prod_{i=1}^r \GL(V_i)$ is the join of the buildings of each $\GL(V_i)$. We also call $\mathcal{B}(V_1,V_2, ..., V_r)$ the building of $(V_1,V_2, ..., V_r)$.
\end{defn}

Next, we discuss opposing points in a joined spherical building structure. The result is summarized in the following proposition, which also serves here as a definition. See paragraph 2.39 in Tits' book \cite{tits1974buildings} for a reference.

\begin{prop}
In the building $\mathcal{B}=\mathcal{B}_1*...*\mathcal{B}_r$, where each $\mathcal{B}_i$ is a spherical building with a defined opposition relation, two vertices $x_{W},x_{W'}$ are opposite in $\mathcal{B}$ if and only if they are opposite vertices in $\mathcal{B}_i$ for some $i \in {1,...,r}$.
\end{prop}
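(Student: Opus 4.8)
The plan is to reduce the statement to the defining combinatorial property of the opposition relation in a spherical building, namely that two vertices are opposite precisely when they lie in a common apartment and are antipodal in it, and then to use the fact that the apartments of a join are exactly the joins of apartments of the factors. First I would recall that for a spherical building $\mathcal{B}_i$ of type $M_i$, its apartments are Coxeter complexes of type $M_i$ (spheres), and two chambers are opposite when they are antipodal in a common apartment; the opposition relation on vertices is induced by the longest element $w_0^{(i)}$ of the Weyl group. The join $\mathcal{B} = \mathcal{B}_1 * \cdots * \mathcal{B}_r$ is again a spherical building, whose Coxeter diagram is the disjoint union of the diagrams of the $M_i$, so its Weyl group is the direct product $W = W_1 \times \cdots \times W_r$ and its longest element is $w_0 = (w_0^{(1)}, \ldots, w_0^{(r)})$. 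An apartment $\Sigma$ of $\mathcal{B}$ is a join $\Sigma_1 * \cdots * \Sigma_r$ of apartments $\Sigma_i \subseteq \mathcal{B}_i$.

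The key step is then a direct computation of the antipodal map on vertices of $\Sigma = \Sigma_1 * \cdots * \Sigma_r$. A vertex of $\Sigma$ of cotype $J \subsetneq S = S_1 \sqcup \cdots \sqcup S_r$ must, by the product structure, have $S_j \subseteq J$ for all but exactly one index $j$; that is, every vertex of the join lies in a unique factor $\Sigma_j$, consistent with the vertex set of the join being the disjoint union $\dot\bigcup_i \mathcal{V}_i$. The opposite of such a vertex $v \in \Sigma_j$ is obtained by applying $w_0$, and since $w_0$ acts as $w_0^{(j)}$ on the $S_j$-coordinates and fixes everything else, the opposite of $v$ in $\Sigma$ is simply the opposite of $v$ in $\Sigma_j$ — in particular it again lies in the factor $\Sigma_j$, hence in $\mathcal{B}_j$. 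Chasing through: $x_W$ and $x_{W'}$ are opposite in $\mathcal{B}$ iff they lie in a common apartment $\Sigma = \Sigma_1 * \cdots * \Sigma_r$ and are antipodal there, iff they lie in the same factor, say the $j$-th, with $x_W, x_{W'} \in \Sigma_j$ antipodal in $\Sigma_j$, iff they are opposite vertices of $\mathcal{B}_j$.

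One direction is essentially immediate from this: if $x_W, x_{W'}$ are opposite in some $\mathcal{B}_j$, pick an apartment $\Sigma_j$ of $\mathcal{B}_j$ in which they are antipodal and arbitrary apartments $\Sigma_i$ of the other factors; then $\Sigma_1 * \cdots * \Sigma_r$ is an apartment of $\mathcal{B}$ witnessing that $x_W$ and $x_{W'}$ are opposite. For the converse, if $x_W$ and $x_{W'}$ are opposite in $\mathcal{B}$, then they lie in a common apartment $\Sigma$, which splits as a join, and the computation of the previous paragraph forces both vertices into a single factor $\Sigma_j$ where they are antipodal, hence opposite in $\mathcal{B}_j$. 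I expect the main obstacle to be purely bookkeeping: carefully justifying that apartments of a join are joins of apartments (this is standard — the Coxeter complex of a reducible Coxeter system is the join of the irreducible pieces — but should be cited, e.g.\ from Abramenko--Brown \cite{abramenko2008buildings} or Tits \cite{tits1974buildings}) and tracking cotypes through the join so that ``a vertex of the join lies in exactly one factor'' is stated cleanly. Since the excerpt explicitly points to paragraph 2.39 of Tits \cite{tits1974buildings}, the cleanest write-up is to invoke that reference for the apartment decomposition and then give the two-line antipodal computation above; no genuinely hard step remains.
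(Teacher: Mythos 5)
The paper does not actually prove this proposition. Immediately before the statement, the text says that the proposition ``also serves here as a definition'' and refers the reader to paragraph 2.39 of Tits' lecture notes; no argument is given. So there is no ``paper's proof'' to compare against.

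Your sketch, on the other hand, is a correct version of the standard argument, and it is the one you would find if you unwound the Tits/Abramenko--Brown references. The two ingredients you isolate are exactly the right ones: (i) apartments of $\mathcal{B}_1 * \cdots * \mathcal{B}_r$ are joins $\Sigma_1 * \cdots * \Sigma_r$ of apartments of the factors, because the Coxeter complex of a reducible Coxeter system is the join of the Coxeter complexes of the irreducible pieces; and (ii) the opposition involution of such an apartment is induced by $w_0 = (w_0^{(1)}, \ldots, w_0^{(r)})$, which preserves each block $S_j$ of the simple system, so it sends a vertex of type $s \in S_j$ to a vertex of type $w_0^{(j)} s w_0^{(j)} \in S_j$, i.e.\ stays within the $j$-th factor and there coincides with the opposition involution of $\Sigma_j$. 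The equivalence then falls out exactly as you describe. One small wording point: rather than phrasing the ``a vertex lives in a unique factor'' observation in terms of cotypes, it is cleaner to say directly that a vertex has type a singleton $\{s\}$ with $s$ in exactly one $S_j$ --- this is the same fact, and it makes the preservation under $w_0$-conjugation immediate. Since the paper only cites Tits here, supplying this two-step argument (apartment decomposition plus $w_0$-action on types) is a genuine completion rather than a variant of what the paper does.
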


\paragraph{Geodesic Paths and Cycles in $\mathcal{B}(V)$}

Let's turn our attention to the geodesic paths and cycles in $\mathcal{B}(V)$:

\begin{prop}\label{prop:1.2.5 on geodesic in X(V)}
    Let $W_0, W_1,..., W_l$ be proper nontrivial subspaces of a vector space $V$. The sequence $(x_{W_0}, x_{W_1},...,x_{W_l})$ is a geodesic path in $\mathcal{B}(V)$ if and only if for any $1\leq i\leq l-1$, either $W_{i-1}\oplus W_{i+1}=W_i$ or ($W_{i-1}\cap W_{i+1}=W_i$ and $W_{i-1}+W_{i+1}=V$).

    The sequence $(x_{W_0}, x_{W_1},...,x_{W_l})$ is a geodesic cycle in $\mathcal{B}(V)$ if and only if this relation among $(W_{i-1}, W_i, W_{i+1})$ holds for all $1\leq i\leq l$, with $W_l=W_0$, and $W_{l+1}$ is set to be $W_1$.
\end{prop}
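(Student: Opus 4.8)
The plan is to unwind the definition of a geodesic path (Definition~\ref{def:1.1.4 geodesic and cycles}) one vertex at a time, reducing everything to the opposition relation in the link of a single vertex $x_{W_i}$. The key identification is the isomorphism $\lk x_{W_i} \simeq \mathcal{B}(W_i) * \mathcal{B}(V/W_i)$ established just above, under which $x_{W_{i-1}} \mapsto x_{W_{i-1}}$ if $W_{i-1} \subseteq W_i$ and $x_{W_{i-1}} \mapsto x_{W_{i-1}/W_i}$ if $W_{i-1} \supseteq W_i$ (and similarly for $x_{W_{i+1}}$). Note first that the path condition $\{x_{W_{i-1}}, x_{W_i}\}$ and $\{x_{W_i}, x_{W_{i+1}}\}$ being simplices forces $W_{i-1}, W_{i+1}$ each to be comparable to $W_i$, so each of the four sign-combinations ($W_{i-1} \subsetneq W_i$ or $W_{i-1} \supsetneq W_i$, likewise for $W_{i+1}$) is a priori possible, and I will check the opposition condition in each.

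Next I would invoke the join-opposition Proposition: two vertices of $\mathcal{B}(W_i) * \mathcal{B}(V/W_i)$ are opposite iff they lie in the same factor and are opposite there. Since a vertex coming from $W_{i-1} \subsetneq W_i$ lies in the $\mathcal{B}(W_i)$ factor while one coming from $W_{i-1} \supsetneq W_i$ lies in the $\mathcal{B}(V/W_i)$ factor, the "mixed" cases ($W_{i-1} \subsetneq W_i \subsetneq W_{i+1}$ or $W_{i+1} \subsetneq W_i \subsetneq W_{i-1}$) can never give an opposition in $\lk x_{W_i}$ — so those are automatically excluded, and correctly so, since neither $W_{i-1} \oplus W_{i+1} = W_i$ nor $W_{i-1} \cap W_{i+1} = W_i$ can hold when $W_{i-1} \subsetneq W_i \subsetneq W_{i+1}$. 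This leaves two genuine cases. If both $W_{i-1}, W_{i+1} \subsetneq W_i$, the images lie in $\mathcal{B}(W_i)$ and are opposite there iff $W_i = W_{i-1} \oplus W_{i+1}$, by the opposition Definition for $\mathcal{B}(W_i)$. If both $W_{i-1}, W_{i+1} \supsetneq W_i$, the images $W_{i-1}/W_i$ and $W_{i+1}/W_i$ lie in $\mathcal{B}(V/W_i)$ and are opposite there iff $V/W_i = (W_{i-1}/W_i) \oplus (W_{i+1}/W_i)$; by the correspondence theorem for subspaces this is equivalent to $W_{i-1} + W_{i+1} = V$ together with $W_{i-1} \cap W_{i+1} = W_i$. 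Combining the two surviving cases gives exactly the stated disjunction, proving the path statement.

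For the geodesic cycle statement, I would simply apply Definition~\ref{def:1.1.4 geodesic and cycles} directly: $(x_{W_0}, \dots, x_{W_l})$ with $W_l = W_0$ is a geodesic cycle iff $(x_{W_0}, \dots, x_{W_l}, x_{W_1})$ is a geodesic path, and by the path characterization just proved this means the local relation holds at the interior indices $1, \dots, l-1$ of the original sequence and also at the index $l$ (using the triple $(W_{l-1}, W_l, W_{l+1}) = (W_{l-1}, W_0, W_1)$), which with the convention $W_{l+1} := W_1$ is precisely the claim. I expect the only real subtlety to be the translation of the direct-sum condition in $\mathcal{B}(V/W_i)$ back to conditions on $W_{i-1}, W_{i+1}$ themselves — i.e., verifying that $(W_{i-1}/W_i) \cap (W_{i+1}/W_i) = 0$ and $(W_{i-1}/W_i) + (W_{i+1}/W_i) = V/W_i$ together are equivalent to $W_{i-1} \cap W_{i+1} = W_i$ and $W_{i-1} + W_{i+1} = V$ — which is a routine application of the lattice isomorphism between subspaces of $V/W_i$ and subspaces of $V$ containing $W_i$, but should be spelled out to keep the argument self-contained.
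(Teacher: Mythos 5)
Your proposal is correct and follows essentially the same route as the paper's proof: unwind the geodesic-path condition to opposition in $\lk x_{W_i}$, use the identification $\lk x_{W_i} \simeq \mathcal{B}(W_i) * \mathcal{B}(V/W_i)$ together with the join-opposition proposition, and translate the direct-sum condition in $\mathcal{B}(V/W_i)$ via the lattice correspondence. Your write-up is a bit more explicit than the paper's (e.g.\ enumerating and dismissing the mixed-containment cases), but the underlying argument is the same.
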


\begin{proof}
    For $(x_{W_0}, x_{W_1},...,x_{W_l})$ to be a geodesic path in $\mathcal{B}(V)$, the condition is that for each $1\leq i\leq l-1$, $x_{W_{i-1}}$ and $x_{W_{i+1}}$ are opposite vertices in $\lk_V(x_{W_i})\simeq \mathcal{B}(W_{i})*\mathcal{B}(V/W_i)$. Then either $x_{W_{i-1}}$ and $x_{W_{i+1}}$ are opposite in $\mathcal{B}(W_{i})$, or $x_{W_{i-1}/W_i}$ and $x_{W_{i+1}/W_i}$ are opposite in $\mathcal{B}(V/W_i)$. In the former case, $W_{i-1}\oplus W_{i+1}=W_i$, and in the latter case, $W_{i-1}\supsetneq W_i$ and $W_{i+1}\supsetneq W_i$, with $W_{i-1}/W_i\oplus W_{i+1}/W_i= V/W_i$. Equivalently, $W_{i-1}\cap W_{i+1}=W_i$ and $W_{i-1}+W_{i+1}=V$.
    
    The condition for geodesic cycles then follows from Definition \ref{def:1.1.4 geodesic and cycles}.
\end{proof}
\paragraph{Geodesic Paths  in $\mathcal{B}(V_1,\ldots,V_r)$}
We now consider geodesic paths in the building $\mathcal{B}(V_1,\ldots,V_r)=\mathcal{B}(V_1)*\ldots*\mathcal{B}(V_r)$, where $V_1,\ldots,V_r$ are disjoint vector spaces over a common field $k$. For a subspace $W\in V_i$, the link of the vertex $x_W$ is $\lk_{\mathcal{B}(V_1,\ldots,V_r)}(x_W)=\mathcal{B}(V_1)*\ldots*\mathcal{B}(V_{i-1})*\lk_{\mathcal{B}(V_i)}(x_W)*\mathcal{B}(V_{i+1})*\ldots*\mathcal{B}(V_r)$, and is isomorphic to the join $\Asterisk_{j\neq i}\mathcal{B}(V_j) *\mathcal{B}(W)*\mathcal{B}(V_i/W)$.

\begin{prop}\label{prop:1.2.6 on geodesic paths}
Let $V_i$ be finite-dimensional vector spaces over a field $k$. Also, consider a sequence of vertices $(x_{W_0}, x_{W_1},...,x_{W_l})$ in $\mathcal{B}(V_1,...,V_r)$, where each $W_s$ is a subset of a certain $V_i$. The sequence $(x_{W_0}, x_{W_1},...,x_{W_l})$ is a geodesic path in the building $\mathcal{B}(V_1,\ldots,V_r)$ if and only if one of the following holds:

(1) All $W_s$ are contained in a single $V_i$, and the sequence $(x_{W_0}, x_{W_1},...,x_{W_l})$ forms a geodesic path in $\mathcal{B}(V_i)$.

(2) There exists some $i,j$ such that the $W_s$ are alternately contained in $V_i$ and $V_j$, and for any $0\leq s \leq l-2$, if $W_s\subseteq V_i$, then $W_s\oplus W_{s+2}=V_i$;  if $W_s\subseteq V_j$, then $W_s\oplus W_{s+2}=V_j$.
\end{prop}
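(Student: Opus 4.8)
The plan is to localize the geodesic condition at each interior vertex $x_{W_m}$ ($1\le m\le l-1$) and then propagate the resulting constraint along the whole sequence. Write $W_m\subseteq V_{a_m}$. By definition the sequence is a geodesic path exactly when, for every interior $m$, the vertices $x_{W_{m-1}}$ and $x_{W_{m+1}}$ are opposite in $\lk(x_{W_m})\simeq \Asterisk_{j\ne a_m}\mathcal{B}(V_j)*\mathcal{B}(W_m)*\mathcal{B}(V_{a_m}/W_m)$; since the sequence is in particular a path, both of these vertices lie in this link. By the proposition on opposition in joins, such a pair is opposite in the link if and only if both vertices lie in a single one of the join factors and are opposite inside that factor. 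Spelling out the three possibilities for that factor gives a trichotomy at each interior index $m$: \textbf{(A)} $W_{m-1},W_{m+1}\subseteq V_t$ for some $t\ne a_m$ with $W_{m-1}\oplus W_{m+1}=V_t$; \textbf{(B)} $W_{m-1},W_{m+1}\subseteq W_m$ with $W_{m-1}\oplus W_{m+1}=W_m$; or \textbf{(C)} $W_{m-1},W_{m+1}\supseteq W_m$ with $W_{m-1}\cap W_{m+1}=W_m$ and $W_{m-1}+W_{m+1}=V_{a_m}$. Cases (B) and (C) force $a_{m-1}=a_m=a_{m+1}$, case (A) forces $a_{m-1}=a_{m+1}\ne a_m$, and, crucially, the disjunction of (B) and (C) is precisely the condition of Proposition \ref{prop:1.2.5 on geodesic in X(V)} at index $m$ for the building $\mathcal{B}(V_{a_m})$.

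Next I would carry out the propagation. Assume first $l\ge 2$, so that interior indices exist. If some interior index is of type (B) or (C), then $a_{m-1}=a_m=a_{m+1}$; a neighboring interior index $m\pm1$ cannot then be of type (A) (which would require $a_{m}\ne a_{m\pm1}$), so it too is of type (B) or (C). Iterating in both directions, every interior index is of type (B) or (C) and all the $a_s$ coincide, say $a_s=i$ for all $s$; by Proposition \ref{prop:1.2.5 on geodesic in X(V)} this says exactly that $(x_{W_0},\dots,x_{W_l})$ is a geodesic path in $\mathcal{B}(V_i)$, which is alternative (1). Otherwise every interior index is of type (A); then $a_{m-1}=a_{m+1}\ne a_m$ for all interior $m$, which forces the $a_s$ to alternate between two fixed values $i\ne j$, and type (A) at index $m$ reads $W_{m-1}\oplus W_{m+1}=V_{a_{m-1}}$. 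Setting $m=s+1$ for $0\le s\le l-2$, this is exactly the requirement of alternative (2). The degenerate lengths $l\le 1$ are immediate: there are no interior vertices, so every such sequence is a geodesic path, and it satisfies (1) when all $W_s$ lie in one $V_i$ and (2) otherwise (both conditions being vacuous). This establishes the ``only if'' direction.

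For the converse I would verify that each alternative makes the interior opposition condition hold at every $m$. If (1) holds, Proposition \ref{prop:1.2.5 on geodesic in X(V)} applied in $\mathcal{B}(V_i)$ gives, at each interior $m$, either $W_{m-1}\oplus W_{m+1}=W_m$ or ($W_{m-1}\cap W_{m+1}=W_m$ and $W_{m-1}+W_{m+1}=V_i$); in the two cases $x_{W_{m-1}}$ and $x_{W_{m+1}}$ are opposite in the factor $\mathcal{B}(W_m)$, resp.\ $\mathcal{B}(V_i/W_m)$, of $\lk(x_{W_m})$, hence opposite in the link by the opposition-in-joins proposition. If (2) holds, then at each interior $m$ the subspaces $W_{m-1}$ and $W_{m+1}$ lie in the common space $V_{a_{m-1}}$, which is different from $V_{a_m}$, and $W_{m-1}\oplus W_{m+1}=V_{a_{m-1}}$; thus $x_{W_{m-1}}$ and $x_{W_{m+1}}$ are opposite in the factor $\mathcal{B}(V_{a_{m-1}})$ of $\lk(x_{W_m})$ and hence opposite in the link. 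In either case the cheap ``path'' requirements hold as well: under (1) they are inherited from the sequence being a path in $\mathcal{B}(V_i)$, and under (2) consecutive vertices lie in different join factors, so they are distinct and span a simplex.

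I expect the propagation step to be the only genuine subtlety: one must rule out the type of an interior index switching between ``all three subspaces in one $V_i$'' (types (B)/(C)) and ``the two outer subspaces in a different $V_j$'' (type (A)), so that the pattern is globally uniform. Everything else is bookkeeping with the join decomposition of the link, the opposition-in-joins proposition, and Proposition \ref{prop:1.2.5 on geodesic in X(V)}; some minor care is also needed for the incidence conditions that certify $x_{W_{m\pm1}}$ as vertices of $\lk(x_{W_m})$ and for the lengths $l=0,1$.
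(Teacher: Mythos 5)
Your proof is correct and uses the same core idea as the paper's: apply the opposition-in-joins proposition at each interior vertex to see that $x_{W_{m-1}}$ and $x_{W_{m+1}}$ must lie in a common join factor of $\lk(x_{W_m})$, extract the resulting local constraint, and propagate it to conclude that a single pattern (all $W_s$ in one $V_i$, or alternation between two) prevails along the whole path. Your organization via the explicit trichotomy (A)/(B)/(C) is a cleaner packaging of the same induction, and you also spell out the converse direction and the degenerate lengths $l\le 1$, which the paper leaves implicit.
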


\begin{proof}
For the sequence $(x_{W_0}, x_{W_1},...,x_{W_l})$ to be a geodesic path in $\mathcal{B}(V_1,\ldots,V_r)$, it is necessary that for each $0\leq s \leq l-2$, the vertices $x_{W_s}$ and $x_{W_{s+2}}$ are opposite in the link of $x_{W_{s+1}}$.

First, consider the case where $W_0$ and $W_1$ are contained in the same ambient vector space, say $V_i$. Then $W_0$ and $W_2$ are opposite in $\lk(W_1)\simeq \Asterisk_{j\neq i}\mathcal{B}(V_j) *\mathcal{B}(W_1)*\mathcal{B}(V_i/W_1)$. Then $x_{W_0}$ falls in $\mathcal{B}(W_1)\ast \mathcal{B}(V_i/W_1)$ and so does $x_{W_2}$. This implies that $W_2\subseteq V_i$ and $(x_{W_0}, x_{W_1}, x_{W_2})$ is a geodesic path in $\mathcal{B}(V_i)$. By induction, each $W_{s}\subseteq V_i$, and $(x_{W_0}, x_{W_1},...,x_{W_l})$ is a geodesic path in $\mathcal{B}(V_i)$. This accounts for situation (1).

Now, consider the case where $W_s$ are contained in different ambient spaces. Then there exists $i\neq j$ such that $W_0 \subseteq V_i$ and $W_1 \subseteq V_j$. In this scenario, the link $\lk(x_{W_1})$ is isomorphic to the join $\Asterisk_{k\neq j}\mathcal{B}(V_k) *\mathcal{B}(W_1)*\mathcal{B}(V_j/W_1)$, and we require $x_{W_0}$ and $x_{W_2}$ are opposite in this link. Since the vertex $x_{W_0}$ resides in $\mathcal{B}(V_i)$, the vertex $x_{W_2}$ must also lie in $\mathcal{B}(V_i)$. The condition now means that $W_0$ and $W_2$ are both subsets of $V_i$ with $W_0\oplus W_2=V_i$.

Now, let's consider $W_3$. Given that $W_1\subseteq V_j$ and $W_2\subseteq V_i$, we reason similarly to deduce that $W_3$ is a subset of $V_j$. Moreover, the direct sum of $W_1$ and $W_3$ equals $V_j$: $W_1\oplus W_3=V_j$.

Continuing in this manner for the entire sequence, we establish that the subspaces $W_s$ are alternately contained in $V_i$ and $V_j$. Furthermore, for any $0\leq s \leq l-2$, if $W_s\subseteq V_i$, then $W_s\oplus W_{s+2}=V_i$; if $W_s\subseteq V_j$, then $W_s\oplus W_{s+2}=V_j$. 
\end{proof}

It's noteworthy that the zeta functions of finite buildings, as defined above, are inverses of an integer polynomial with constant coefficient 1. This property is a variation of Hashimoto’s theorem\cite{hashimoto1989}. Therefore, we can write $1/Z(\mathcal{B},u)=(1-\lambda_1 u)...(1-\lambda_ku)$, where $\lambda_i$ are the roots of the polynomial. Unlike with graphs, we do not consider 0 as an eigenvalue of buildings, since we are primarily interested in properties of the zeta function rather than the full spectrum. We define these $\lambda_i$ as the \textbf{eigenvalues} of the building. 
The main result of this paper pertains to these eigenvalues.

\begin{thm}[Main theorem on eigenvalues of buildings]\label{thm:main thm on buildings}
For $\mathcal{B}=\mathcal{B}(\mathbb{F}_q^n)$, or $\mathcal{B}(\mathbb{F}_q^{n_1},..., \mathbb{F}_q^{n_r})$, all eigenvalues are a root of unity times a fractional power of $q$.
\end{thm}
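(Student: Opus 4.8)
The plan is to reduce the computation of the zeta function $Z(\mathcal{B},u)$ to a product of "local" zeta functions attached to the graphs $X_0$ and $X_2$ introduced in Section 1, and then to compute the eigenvalues of those graphs via representation theory. The first reduction is combinatorial: Proposition \ref{prop:1.2.5 on geodesic in X(V)} shows that a geodesic cycle in $\mathcal{B}(V)$ is governed entirely by the local relation among consecutive triples $(W_{i-1},W_i,W_{i+1})$, either $W_{i-1}\oplus W_{i+1}=W_i$ (a "contracting" step) or $W_{i-1}\cap W_{i+1}=W_i$, $W_{i-1}+W_{i+1}=V$ (an "expanding" step). Tracking the dimension $d_i=\dim W_i$, a geodesic cycle is a closed walk in a digraph on the possible dimensions, and one checks that the relevant cycles decompose according to how dimensions oscillate; the graphs $X_0$ (cycles returning through a fixed "pivot" configuration) and $X_2$ capture precisely the two essential shapes. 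So $1/Z(\mathcal{B},u)$ factors, up to reindexing $u\mapsto u^k$, as a product of reciprocals of $\det(I-uA)$ for adjacency-type operators $A$ on $X_0$ and $X_2$, and it suffices to prove the eigenvalue claim for each such $A$.

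**Key steps.** First I would make precise the statement that $X_0$ and $X_2$ are disjoint unions of "cyclic $n$-partite graphs" on which $\GL(V)$ (or $\prod \GL(V_i)$) acts partite-transitively, so that the adjacency operator is a $\GL(V)$-equivariant endomorphism of a permutation module $\mathrm{Ind}_{P}^{\GL(V)}\mathbf{1}$ for a suitable parabolic $P$ (a stabilizer of a flag of the relevant dimensions). Second, by the theory of Hecke algebras $\mathcal{H}(\GL(V),P)=\mathrm{End}_{\GL(V)}(\mathrm{Ind}_P^{\GL(V)}\mathbf{1})$, this operator decomposes through the irreducible constituents of the permutation module, all of which are \emph{unipotent} representations of $\GL(V)$; on each isotypic block the operator acts as a scalar, and these scalars are the eigenvalues $\lambda_i$. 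Third — and this is where Springer's theorem enters — the specific operator arising from the geodesic relation is (a power or product of) the distinguished central element of the Hecke algebra that Springer analyzed, whose eigenvalue on the unipotent representation labelled by a partition $\lambda$ is, up to a root of unity, an explicit monomial $q^{\,a(\lambda)}$ or $q^{\,a(\lambda)/k}$ in a fractional power of $q$. Combining, every $\lambda_i$ is (root of unity)$\times q^{\text{(rational)}}$, which is exactly the assertion. The multiplicative case $\mathcal{B}(\mathbb{F}_q^{n_1},\ldots,\mathbb{F}_q^{n_r})$ follows because, by Proposition \ref{prop:1.2.6 on geodesic paths}, geodesic cycles there either live inside a single $\mathcal{B}(V_i)$ or oscillate between exactly two factors $V_i,V_j$ with the alternate-direct-sum condition, so its zeta function is built from the $\GL(V_i)$ case and a two-factor analogue handled by the same Hecke-algebra machinery over $\GL(V_i)\times\GL(V_j)$.

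**Main obstacle.** The technical heart, and the step I expect to be hardest, is identifying the geodesic-cycle operator on $X_0$ (and $X_2$) with Springer's central Hecke-algebra element and pinning down its eigenvalue on each unipotent constituent. This requires (i) choosing the parabolic and the double-coset correctly so that "opposite in the link" translates into the right element of $\mathcal{H}(\GL(V),P)$, (ii) invoking the classification of the irreducible constituents of $\mathrm{Ind}_P^{\GL(V)}\mathbf{1}$ together with the Hoefsmit/Springer formula for the eigenvalue of the relevant central element, and (iii) bookkeeping the substitution $u\mapsto u^k$ coming from the period of the dimension-oscillation, which is what produces the \emph{fractional} power of $q$ rather than an integral one. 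Checking positivity/connectivity of the component cyclic $n$-partite graphs (so that no spurious zero eigenvalues are introduced and the factorization is clean) is routine by comparison, handled by the graph-theoretic tools of Sections 2--3. Once the eigenvalue of the central element is in hand, the conclusion is immediate.
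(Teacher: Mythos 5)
Your proposal follows essentially the same route as the paper: reduce $Z(\mathcal{B},u)$ to the closed-walk zeta functions of the digraphs $X_0$ and $X_2$ (via the bijection between geodesic cycles and closed walks and the two-case dichotomy for $\mathcal{B}(V_1,\dots,V_r)$), recognize these digraphs as cyclic $n$-partite with a partite-transitive $\GL$-action so that the relevant operator lives in a Hecke algebra $e_P\bbC G e_P$, and then evaluate its eigenvalues on unipotent constituents via Springer's centrality theorem and the Beynon--Lusztig formula. The one small divergence is in the multiplicative case: rather than invoking a Hecke algebra over $\GL(V_i)\times\GL(V_j)$, the paper observes that the contribution of alternating cycles is $Z_c(X_0(V_i)\times X_0(V_j),u^2)$ for the \emph{tensor product} of graphs, whose eigenvalues are just products of the $X_0(V_i)$- and $X_0(V_j)$-eigenvalues — this sidesteps any new Hecke-algebra computation for the product group and is why the conclusion is immediate once the single-factor case is known.
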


\subsection{The Digraphs $X_0(V)$ and $X_2(V)$}

To capture the connectivity of our structures, we introduce two graphs, $X_0(V)$ and $X_2(V)$. We will define the closed walk zeta functions $Z_c(X,u)$ for a finite graph $X$, and describe how the closed walk zeta functions of the graphs $X_0(V)$ and $X_2(V)$ determines the edge zeta function of buildings $\mathcal{B}(V)$ and $\mathcal{B}(V_1,...,V_r)$.

\subsubsection{Basic Notions of Graphs}
Before diving into the specific graphs $X_0(V)$ and $X_2(V)$, let's lay out some basic definitions and concepts about graphs:

A \textbf{simple directed graph}, or a \textbf{simple digraph} $X = (\mathcal{V}, \mathcal{E})$, is a mathematical structure defined by two sets: a set of \textbf{vertices} $\mathcal{V}$, and a set of \textbf{edges} $\mathcal{E}$. The edge set $\mathcal{E}$ a subset of the Cartesian product $\mathcal{V} \times \mathcal{V}$ such that $(v,v) \not\in \mathcal{E}$ for any $v \in \mathcal{V}$. Each edge $e\in\mathcal{E}$ can be written as $e=(u,v)$ for some $u,v\in \mathcal{V}$, where $u$ is called the origin of $e$, and $v$ is called the terminus of $e$. We call $e$ an edge from $u$ to $v$.

An \textbf{undirected graph} is a special case of a simple directed graph where for every edge $(u, v)$, the reverse $(v, u)$ is also an edge. In other words, if there is an edge from vertex $u$ to vertex $v$, then there is also an edge from $v$ to $u$.

Throughout this paper, we will only consider simple graphs, so we will omit the adjective `simple' in our discussions. Furthermore, in this paper, the term `graph'  means `undirected graph', and `digraph' refers to both `directed' and `undirected' graphs, as all undirected graphs will be considered as digraphs in our context.

A \textbf{finite digraph} is a simple digraph in which both the set of vertices $\mathcal{V}$ and the set of edges $\mathcal{E}$ are finite. 

A \textbf{walk} of length $l$ in a directed graph is a sequence of vertices $(v_0, v_1, ..., v_l)$ such that for each $i = 0, 1, ..., l - 1$, the pair $(v_i, v_{i+1})$ is an edge in the graph. If the sequence ends where it started, i.e., $v_l=v_0$, it is known as a \textbf{closed walk} of length $l$.  Note that in some sources,  what we have defined here as a `walk' is referred to as a `path'. 

\paragraph{Closed Walk Zeta Function of a Digraph}

Next, we define the closed walk zeta function $Z_c(X,u)$ for a finite digraph $X$. This function captures the total number of closed walks of each length in the digraph $X$. For a finite digraph $X$, let $N_c(l)$ or $N_c(X,l)$ denote the number of closed walks of length $l$. The closed walk zeta function $Z_c(X,u)$ is then the formal power series defined as $$Z_c(X,u)=\exp(\sum_{l=1}^{\infty}\frac{N_c(l)}{l} u^l).$$

\paragraph{Eigenvalues of a Digraph}
Similar to the finite building case, the closed path zeta function $Z_c(X,u)$ is also the reciprocal of an integer polynomial with constant coefficient 1 (see Theorem \ref{thm:2.1.1 Hashi}). Therefore, we can write $1/Z_c(X,u)=(1-\lambda_1 u)...(1-\lambda_ku)$, where $\lambda_i$ are the roots of the polynomial. We define these $\lambda_i$ as the \textbf{eigenvalues} of a digraph.
In addition, the digraph $X$ has an eigenvalue 0 with multiplicity given by $|\mathcal{V}(X)| - \deg(1/Z_c(X,u))$, where $\mathcal{V}(X)$ is the vertex set of $X$. This definition agrees with the definition of eigenvalues as eigenvalues of the incidence matrix, as described in section \ref{sec:2}.

\subsubsection{Definitions of $X_0(V)$ and $X_2(V)$}

\begin{defn}
The graph $X_0(V)$ is a simple, undirected graph. Its vertex set is the same as the vertex set of $\mathcal{B}(V)$, and corresponds to the set of nontrivial proper subspaces of $V$. Again, for a nontrivial proper subspace $W$ of $V$, we denote by $x_W$ the vertex corresponding to $W$.

An undirected edge exists between two vertices $x_{W_1}$ and $x_{W_2}$ if and only if their direct sum of the corresponding subspaces equals $V$, i.e., $W_1\oplus W_2=V$. 
\end{defn}

\paragraph{Directed Flags and Definition of the Digraph $X_2(V)$}

\begin{defn}
A \textbf{directed flag} $F$ in a vector space $V$ is an ordered pair $(W_1,W_2)$ of proper nontrivial subspaces of $V$, such that $W_1\subsetneq W_2$ or $W_2\subsetneq W_1$.
\end{defn}

\begin{defn}
Given a directed flag $F = (W_1, W_2)$ in vector space $V$, the \textbf{multi-dimension} of $F$, denoted by $\mdim(F)$, is defined as the ordered pair $(\dim(W_1), \dim(W_2))$.

\end{defn}
Note that in a directed flag $F = (W_1, W_2)$, the vector spaces $W_1$ and $W_2$ have different dimensions. Therefore, the possible multi-dimensions of $F$ within an $n$-dimensional vector space form the set $\{(a,b):1\leq a,b\leq n-1, a\neq b\}$.

\begin{defn}
The digraph $X_2(V)$ is defined as a directed graph whose vertices are directed flags in $V$. In other words, each directed flag $F = (W_1, W_2)$ in $V$ represents a vertex in $X_2(V)$, denoted by $x_F$.
Let $F_1=(W_1,W_2)$ and $F_2=(W_3,W_4)$ be two directed flags in $V$, there is a directed edge from $x_{F_1}$ to $x_{F_2}$ in $X_2(V)$ if and only if $W_2=W_3$ and $(W_1, W_2, W_4)$ forms a geodesic path in $\mathcal{B}(V)$. 
\end{defn}

\begin{rem}
The digraph $X_2(V)$ can be seen as the "edge graph" of $\mathcal{B}(V)$. Each vertex in $X_2(V)$ corresponds to a directed edge in $\mathcal{B}(V)$, and each directed edge in $X_2(V)$ corresponds to a geodesic path of length 2 in $\mathcal{B}(V)$.
\end{rem}

\begin{rem}\label{rem:1.3.6 on redefining edge}
Based on Proposition \ref{prop:1.2.5 on geodesic in X(V)}, the condition that $(W_1, W_2, W_4)$ forms a geodesic path in $\mathcal{B}(V)$ is equivalent to $W_{1}\oplus W_{4}=W_2$ or ($W_{1}\cap W_{4}=W_2$ and $W_{1}+W_{4}=V$). Therefore, if there is a directed edge from $x_{F_1}$ to $x_{F_2}$, then the multi-dimension $\mdim(F_2)$ of $F_2$ is determined by $\mdim(F_1)$. We will formalize and extend this observation in Proposition \ref{prop:2.2.1 on multidim trend}.
\end{rem}

\subsubsection{Relating Geodesic Cycles in Buildings and Digraphs}
\paragraph{Reformulating Geodesics in $\mathcal{B}(V)$ and Their Zeta Function}
With the digraph $X_2(V)$, we can reformulate the problem of counting geodesic cycles in $\mathcal{B}(V)$ as counting closed walks in $X_2(V)$. The relationship between geodesic cycles in $\mathcal{B}(V)$ and closed walks in $X_2(V)$ is summarized in the following proposition:

\begin{prop}
There is a one-to-one correspondence between geodesic cycles in the building $\mathcal{B}(V)$ and closed walks in the digraph $X_2(V)$. In particular, each geodesic cycle $(x_{W_0}, x_{W_1}, ..., x_{W_l})$ in $\mathcal{B}(V)$ corresponds bijectively to a closed walk $(x_{F_0}, x_{F_1}, ..., x_{F_{l-1}},x_{F_0})$ in $X_2(V)$.
\end{prop}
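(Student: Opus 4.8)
The plan is to establish the bijection by explicitly describing the map in both directions and checking that it is well-defined, then verifying the two compositions are identities. Given a geodesic cycle $(x_{W_0}, x_{W_1}, \ldots, x_{W_l})$ with $W_l = W_0$ in $\mathcal{B}(V)$, I would define $F_i = (W_i, W_{i+1})$ for $0 \leq i \leq l-1$; since consecutive vertices in a geodesic cycle span a simplex, each $W_i \subsetneq W_{i+1}$ or $W_{i+1} \subsetneq W_i$, so each $F_i$ is a genuine directed flag and $x_{F_i}$ is a vertex of $X_2(V)$. The associated sequence is $(x_{F_0}, x_{F_1}, \ldots, x_{F_{l-1}}, x_{F_0})$; here I would use the convention $W_{l+1} = W_1$ (as in Proposition \ref{prop:1.2.5 on geodesic in X(V)}) so that $F_{l-1} = (W_{l-1}, W_l)$ and the walk closes up consistently with $F_0 = (W_0, W_1) = (W_l, W_{l+1})$.

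Next I would check that this sequence is a closed walk in $X_2(V)$. For each $i$, the pair $(x_{F_i}, x_{F_{i+1}})$ with $F_i = (W_i, W_{i+1})$ and $F_{i+1} = (W_{i+1}, W_{i+2})$ satisfies the first edge condition automatically (the terminus space of $F_i$ equals the origin space of $F_{i+1}$, both being $W_{i+1}$), and the second condition is precisely that $(W_i, W_{i+1}, W_{i+2})$ is a geodesic path in $\mathcal{B}(V)$, which holds for all $1 \leq i \leq l$ by the geodesic cycle condition in Proposition \ref{prop:1.2.5 on geodesic in X(V)}. Conversely, given a closed walk $(x_{F_0}, x_{F_1}, \ldots, x_{F_{l-1}}, x_{F_0})$ in $X_2(V)$, the edge conditions force each $F_i$ to share its terminus space with the origin space of $F_{i+1}$; writing $F_0 = (W_0, W_1)$, the chain of equalities propagates to give $F_i = (W_i, W_{i+1})$ for a well-defined sequence $W_0, W_1, \ldots, W_l$ with $W_l = W_0$ (the closing edge from $x_{F_{l-1}}$ to $x_{F_0}$ forces $W_l = W_0$). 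The edge conditions then say exactly that each consecutive triple is a geodesic path, so $(x_{W_0}, \ldots, x_{W_l})$ is a geodesic cycle by Proposition \ref{prop:1.2.5 on geodesic in X(V)}.

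Finally, I would observe that the two constructions are mutually inverse: starting from a geodesic cycle, extracting the flags, and then reassembling the subspace sequence returns the original cycle since $F_i = (W_i, W_{i+1})$ determines $W_i$ and $W_{i+1}$; the reverse composition is checked the same way. I should also note that the indexing lengths match — a geodesic cycle of length $l$ (meaning $l$ edges $x_0 \to x_1 \to \cdots \to x_l = x_0$) yields a closed walk of length $l$ in $X_2(V)$ — so the correspondence is length-preserving, which is what is ultimately needed to relate $N(\mathcal{B}(V), l)$ to $N_c(X_2(V), l)$.

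The main subtlety, rather than a genuine obstacle, is bookkeeping with the cyclic indexing: one must be careful that the geodesic condition at the "wrap-around" positions $i = l-1$ and $i = l$ (equivalently $i = 0$) is correctly encoded by the closing edge $x_{F_{l-1}} \to x_{F_0}$ of the walk, and that degenerate edge cases (e.g. whether $x_i \neq x_{i+1}$ is automatic) are consistent with the definition of $X_2(V)$. This is handled cleanly by adopting the convention $W_{l+1} = W_1$ throughout and invoking Definition \ref{def:1.1.4 geodesic and cycles} together with Proposition \ref{prop:1.2.5 on geodesic in X(V)}; no hard estimates or structural results beyond these are required.
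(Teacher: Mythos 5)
Your proposal is correct and takes essentially the same approach as the paper: both define $F_i = (W_i, W_{i+1})$ and verify the two constructions are mutually inverse, with the geodesic condition at each vertex translating directly into the edge condition in $X_2(V)$. You spell out the wrap-around indexing $W_{l+1} = W_1$ a bit more carefully, but there is no substantive difference.
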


\begin{proof}
Given a geodesic cycle $(x_{W_0}, x_{W_1}, ..., x_{W_l})$ in $\mathcal{B}(V)$, we can associate it with a closed walk $(x_{F_0}, x_{F_1}, ..., x_{F_{l-1}},x_{F_0})$ in $\mathcal{B}(V)$, where each ${F_i}$ is the directed flag $({W_i}, {W_{i+1}})$. This map is well-defined: the definition of a geodesic cycle ensures that $x_{W_l}=x_{W_0}$ and $(x_{W_0}, x_{W_1}, ..., x_{W_l},x_{W_1})$ is a geodesic path in $\mathcal{B}(V)$. Therefore, $(x_{F_0}, x_{F_1}, ..., x_{F_{l-1}},x_{F_0})$ forms a walk in $X_2(V)$.

Conversely, given a closed walk $(x_{F_0}, x_{F_1}, ..., x_{F_{l-1}},x_{F_0})$ in $X_2(V)$, there exists a unique sequence of spaces $x_{W_0},...,x_{W_l}$ such that for each $i$, ${F_i}=({W_i},{W_{i+1}})$. Then the condition that the walk $(x_{F_0}, x_{F_1}, ..., x_{F_{l-1}},x_{F_0})$ in $X_2(V)$ is closed implies that $x_{W_l}=x_{W_0}$, and that $(x_{W_0},x_{W_1},...,x_{W_l},x_{W_1})$ is a geodesic path. Then $(x_{W_0},x_{W_1},...,x_{W_l})$ forms a geodesic cycle in $\mathcal{B}(V)$, which we associate to the closed walk.

By construction, these two correspondences are inverse to each other. Therefore, there is a bijection between geodesic cycles in $\mathcal{B}(V)$ and closed walks in $X_2(V)$. 
\end{proof}

\begin{cor}\label{cor:1.3.8 on zeta X2 and X}
For $V$ a finite-dimensional vector space over a finite field, the closed walk zeta function of the digraph $X_2(V)$ is equal to the edge zeta function of the building $\mathcal{B}(V)$. Formally, we have $Z_c(X_2(V),u) = Z(\mathcal{B}(V),u)$.
\end{cor}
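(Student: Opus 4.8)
The plan is to read the identity off directly from the bijection between geodesic cycles of $\mathcal{B}(V)$ and closed walks of $X_2(V)$ established in the preceding proposition, once one checks that this bijection preserves length. Both $Z(\mathcal{B}(V),u)$ and $Z_c(X_2(V),u)$ are defined as $\exp$ of a power series whose $u^l$-coefficient is $\tfrac{1}{l}$ times a cycle (resp.\ walk) count, so it suffices to prove $N(\mathcal{B}(V),l) = N_c(X_2(V),l)$ for every $l\ge 1$.

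First I would do the length bookkeeping. A geodesic cycle $(x_{W_0},x_{W_1},\dots,x_{W_l})$ of length $l$ (so $W_l=W_0$) is sent, under the map of the preceding proposition, to the walk $(x_{F_0},x_{F_1},\dots,x_{F_{l-1}},x_{F_0})$ with $F_i=(W_i,W_{i+1})$; this walk traverses exactly $l$ directed edges $x_{F_0}\to x_{F_1}\to\cdots\to x_{F_{l-1}}\to x_{F_0}$, hence has length $l$. Conversely, a closed walk of length $l$ in $X_2(V)$ determines $l$ directed flags $F_0,\dots,F_{l-1}$, from which the preceding proposition reconstructs spaces $W_0,\dots,W_l$ with $W_l=W_0$, i.e.\ a geodesic cycle of length $l$. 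Since these two assignments are mutually inverse, the bijection restricts, for each fixed $l$, to a bijection between the length-$l$ geodesic cycles in $\mathcal{B}(V)$ and the length-$l$ closed walks in $X_2(V)$; in particular $N(\mathcal{B}(V),l)=N_c(X_2(V),l)$.

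It then follows that the formal power series $\sum_{l\ge 1}\frac{N(\mathcal{B}(V),l)}{l}u^l$ and $\sum_{l\ge 1}\frac{N_c(X_2(V),l)}{l}u^l$ coincide, and applying $\exp$ yields $Z_c(X_2(V),u)=Z(\mathcal{B}(V),u)$. There is essentially no obstacle here: the substance is entirely contained in the preceding proposition, and the only point that genuinely requires care is confirming that the correspondence neither gains nor loses length (e.g.\ that a length-$l$ cycle is not matched with a length-$(l-1)$ walk), which the explicit index count above settles; finiteness of each $N(\mathcal{B}(V),l)$, needed for the power series to make sense, was already noted before the definition of the edge zeta function.
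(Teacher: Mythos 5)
Your proof is correct and follows essentially the same route as the paper: read the equality of zeta functions off from the bijection in the preceding proposition between geodesic cycles of $\mathcal{B}(V)$ and closed walks of $X_2(V)$. The only difference is that you spell out explicitly that the bijection preserves length (a geodesic cycle of length $l$ maps to a closed walk of length $l$), a point the paper's terser proof leaves implicit.
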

\begin{proof}
By the previous proposition, there is a bijection between geodesic cycles in $\mathcal{B}(V)$ and closed walks in $X_2(V)$. Therefore, the number of closed walks $N_c(X_2(V),l)$ equals the number of geodesic paths $N(\mathcal{B}(V),l)$ in $\mathcal{B}(V)$. The result then follows from the definitions of geodesics.
\end{proof}

\paragraph{Reformulating Geodesics in $\mathcal{B}(V_1,...,V_r)$ and Their Zeta Function}

Next, consider the building $\mathcal{B}(V_1,...,V_r)$, where $V_1,...,V_r$ are vector spaces over a common field $k$. The structure of geodesic paths in $\mathcal{B}(V_1,...,V)r)$ is more complex, and can be characterized by the following proposition:

\begin{prop}
The sequence of vertices  $(x_{W_0}, x_{W_1},...,x_{W_l})$ in $\mathcal{B}(V_1,...,V_r)$ is a geodesic cycle in the building $\mathcal{B}(V_1,...,V_r)$ if and only if one of the following holds:

(1) All $W_s$ are contained in a single $V_i$, and the sequence $(x_{W_0}, x_{W_1},...,x_{W_l})$ forms a geodesic cycle in $\mathcal{B}(V_i)$.

(2) There exists some $i,j$ such that the $W_s$ are alternately contained in $V_i$ and $V_j$, and $l$ is even. Moreover, the sequence $(x_{W_0}, x_{W_2},...,x_{W_l})$ is a closed walk in $X_0(V_i)$, and $(x_{W_1}, x_{W_3},...,x_{W_{l-1}},x_{W_1})$ is a closed walk in $X_0(V_{3-i})$, where $i=1$ or 2.
\end{prop}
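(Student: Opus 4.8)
The plan is to reduce the statement to Proposition~\ref{prop:1.2.6 on geodesic paths} combined with the definition of a geodesic cycle (Definition~\ref{def:1.1.4 geodesic and cycles}), and then to translate the resulting ``direct sum'' conditions into adjacency in the graphs $X_0(V_i)$.

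First, by Definition~\ref{def:1.1.4 geodesic and cycles}, the closed sequence $(x_{W_0},\dots,x_{W_l})$ is a geodesic cycle precisely when $x_{W_l}=x_{W_0}$ and the extended sequence $(x_{W_0},x_{W_1},\dots,x_{W_l},x_{W_1})$ --- which has length $l+1$, with the convention $W_{l+1}:=W_1$ --- is a geodesic path. I would therefore apply Proposition~\ref{prop:1.2.6 on geodesic paths} to this extended path. In its case~(1), all of $W_0,\dots,W_l,W_{l+1}=W_1$ lie in a single $V_i$ and the extended sequence is a geodesic path in $\mathcal{B}(V_i)$; unwinding Definition~\ref{def:1.1.4 geodesic and cycles} once more, now inside $\mathcal{B}(V_i)$, this says exactly that $(x_{W_0},\dots,x_{W_l})$ is a geodesic cycle in $\mathcal{B}(V_i)$, which is alternative~(1) of the proposition.

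In case~(2) of Proposition~\ref{prop:1.2.6 on geodesic paths}, the subspaces are alternately contained in two factors $V_i$ and $V_j$ with $i\neq j$; since $x_{W_s}$ lies in $\mathcal{B}(V_i)$ for even $s$ and in $\mathcal{B}(V_j)$ for odd $s$, the identity $x_{W_l}=x_{W_0}$ forces $l$ to be even, and after relabelling we may assume $\{i,j\}=\{1,2\}$. The direct-sum conditions furnished by Proposition~\ref{prop:1.2.6 on geodesic paths}(2), which range over $0\le s\le l-1$ for the length-$(l+1)$ path, specialize at the even indices $s=0,2,\dots,l-2$ to $W_s\oplus W_{s+2}=V_i$; together with $W_l=W_0$ and the definition of $X_0(V_i)$, this is exactly the assertion that $(x_{W_0},x_{W_2},\dots,x_{W_l})$ is a closed walk in $X_0(V_i)$. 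At the odd indices $s=1,3,\dots,l-1$ --- the last of which reads $W_{l-1}\oplus W_{l+1}=W_{l-1}\oplus W_1=V_j$ --- the same conditions say that $(x_{W_1},x_{W_3},\dots,x_{W_{l-1}},x_{W_1})$ is a closed walk in $X_0(V_j)$. This is alternative~(2). The converse direction runs the same chain of equivalences backwards: from the two walks one recovers all of the direct-sum identities, reassembles the extended sequence, checks it is a genuine path (consecutive vertices are distinct and lie in different factors of the join, hence span a simplex by definition of the join), invokes Proposition~\ref{prop:1.2.6 on geodesic paths} to conclude it is a geodesic path in $\mathcal{B}(V_1,\dots,V_r)$, and then reads off from Definition~\ref{def:1.1.4 geodesic and cycles} that the original closed sequence is a geodesic cycle.

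The only point requiring care --- rather than a genuine obstacle --- is the index bookkeeping when the extended path wraps around: one must verify that the conditions of Proposition~\ref{prop:1.2.6 on geodesic paths}(2), taken over all $0\le s\le l-1$, match precisely the edges of the two $X_0$-walks, including the closing edges $W_{l-2}\oplus W_l=V_i$ (via $W_l=W_0$) and $W_{l-1}\oplus W_1=V_j$ (via $W_{l+1}=W_1$), and that the alternation of join factors really does rule out odd $l$. Everything else is a routine transcription between direct-sum identities and adjacency in $X_0(V_1)$ and $X_0(V_2)$.
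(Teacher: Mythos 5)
Your proof is correct and follows essentially the same route as the paper's: reduce via Definition~\ref{def:1.1.4 geodesic and cycles} to the extended path $(x_{W_0},\dots,x_{W_l},x_{W_1})$, apply Proposition~\ref{prop:1.2.6 on geodesic paths}, and translate the direct-sum conditions into closed walks in $X_0(V_i)$ and $X_0(V_j)$. You are somewhat more explicit than the paper about the converse direction and the wrap-around index bookkeeping (the paper dismisses the converse with ``also holds by definition''), but the substance is the same.
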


\begin{proof}
By definition of geodesic cycles, the sequence $(x_{W_0}, x_{W_1},...,x_{W_l},x_{W_1})$ must be a geodesic path and $x_{W_0}=x_{W_l}$. According to Proposition \ref{prop:1.2.6 on geodesic paths}, we have two cases:

If all  $W_s$ are contained in a single $V_i$, then $(x_{W_0}, x_{W_1},...,x_{W_l},x_{W_1})$ forms a geodesic path in $\mathcal{B}(V_i)$. Since $x_{W_0}=x_{W_l}$, $(x_{W_0}, x_{W_1},...,x_{W_l})$ is a geodesic cycle in $\mathcal{B}(V_i)$.

If the $W_s$ are alternately contained in $V_i$ and $V_j$ with $W_0\subseteq V_i$, then $l$ is even since $W_l=W_0$ are contained in the same ambient space, and for each $0\leq r\leq l-1$, $W_r\oplus W_{r+2}$ equals either $V_i$ or $V_j$, where $W_{l+1}$ is defined as $W_1$. Then the sequence $(x_{W_0}, x_{W_2},...,x_{W_l}=x_0)$ is a closed walk in $X_0(V_i)$, and $(x_{W_1}, x_{W_3},...,x_{W_{l-1}},x_{W_1})$ is a closed walk in $X_0(V_j)$. The converse also holds by definition.
\end{proof}

\paragraph{Relating $Z_c(\mathcal{B}(V),u)$ and $Z(\mathcal{B}(V_1,...,V_r),u)$}

The relation between the closed walk zeta function of the graph $X_0(V)$ and the edge zeta function of the building $\mathcal{B}(V_1,...,V_r)$ can now be made precise. The formula involves tensor product of graphs, whose properties will be studied in the next section.
\begin{cor}\label{cor:1.3.10 on zeta X0 and XV1V2}
The edge zeta function of the building $\mathcal{B}(V_1,...,V_r)$ satisfies:
$$Z(\mathcal{B}(V_1,...,V_r),u) = \prod_{i=1}^r Z(\mathcal{B}(V_i),u)\times \prod_{1\leq i<j\leq r} Z_c(X_0(V_i)\times X_0(V_j),u^2),$$
where $X_0(V_i)\times X_0(V_j)$ is the tensor product of the graphs $X_0(V_i)$ and $X_0(V_j)$.
\end{cor}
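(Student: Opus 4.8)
The plan is to express the number $N(\mathcal{B}(V_1,\ldots,V_r),l)$ of geodesic cycles of length $l$ as a sum of contributions sorted by which ambient spaces the vertices live in, then translate each contribution into closed-walk counts and repackage the exponential generating functions as a product. Concretely, I would first invoke the preceding proposition: a geodesic cycle in $\mathcal{B}(V_1,\ldots,V_r)$ either lies entirely inside a single $\mathcal{B}(V_i)$ (type (1)), or its vertices alternate between exactly two ambient spaces $V_i$, $V_j$ with $l$ even (type (2)). These two types are disjoint and exhaustive, so $N(l) = \sum_{i=1}^r N(\mathcal{B}(V_i),l) + \sum_{1\le i<j\le r} M_{ij}(l)$, where $M_{ij}(l)$ counts geodesic cycles of length $l$ that genuinely alternate between $V_i$ and $V_j$ (i.e.\ use vertices from both).

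The next step is to compute $M_{ij}(l)$. By the proposition, such a cycle exists only when $l = 2m$ is even, and it is determined by the pair consisting of a closed walk $(x_{W_0}, x_{W_2}, \ldots, x_{W_{2m}}=x_{W_0})$ of length $m$ in $X_0(V_i)$ and a closed walk $(x_{W_1}, x_{W_3}, \ldots, x_{W_{2m-1}}, x_{W_1})$ of length $m$ in $X_0(V_j)$ — together with a choice of which of $V_i, V_j$ contains $W_0$, contributing a factor of $2$ when $i<j$ and the roles are not symmetric; but actually since we sum over ordered starting data the bijection with the proposition already accounts for the base point, and the factor comes precisely from the two ways to interleave the two chosen closed walks. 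I would argue that the interleaving data is exactly a pair of synchronized closed walks, and that the set of such pairs of length-$m$ closed walks in $X_0(V_i)$ and $X_0(V_j)$ is in bijection with the set of length-$m$ closed walks in the tensor product $X_0(V_i)\times X_0(V_j)$: a closed walk in the tensor product is by definition a sequence $((a_0,b_0),(a_1,b_1),\ldots,(a_m,b_m))$ with $(a_t,a_{t+1})$ an edge of $X_0(V_i)$ and $(b_t,b_{t+1})$ an edge of $X_0(V_j)$ and $(a_m,b_m)=(a_0,b_0)$, i.e.\ precisely a pair of length-$m$ closed walks. Hence $M_{ij}(2m) = N_c(X_0(V_i)\times X_0(V_j),\,m)$ and $M_{ij}(2m+1)=0$.

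Finally I would assemble the zeta functions. Taking logarithms, $\log Z(\mathcal{B}(V_1,\ldots,V_r),u) = \sum_{l\ge 1}\frac{N(l)}{l}u^l$ splits, by the additive decomposition of $N(l)$, into $\sum_{i}\sum_{l}\frac{N(\mathcal{B}(V_i),l)}{l}u^l + \sum_{i<j}\sum_{m}\frac{N_c(X_0(V_i)\times X_0(V_j),m)}{2m}u^{2m}$. The first double sum is $\sum_i \log Z(\mathcal{B}(V_i),u)$ by definition of the edge zeta function, and the second is $\sum_{i<j}\frac12 \sum_m \frac{N_c(X_0(V_i)\times X_0(V_j),m)}{m}(u^2)^m = \sum_{i<j}\frac12\log Z_c(X_0(V_i)\times X_0(V_j),u^2)$; wait — I must double-check the factor here, and I expect the correct bookkeeping (a length-$2m$ cycle corresponds to exactly one pair of walks once a base point in the cycle is fixed, and the $\frac1l = \frac1{2m}$ weight in the building zeta versus the $\frac1m$ weight in the graph zeta) to make the factors match without a spurious $\frac12$, so that the second sum is exactly $\sum_{i<j}\log Z_c(X_0(V_i)\times X_0(V_j),u^2)$. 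Exponentiating gives the claimed product formula. The main obstacle — and the step deserving the most care — is precisely this combinatorial bookkeeping in type (2): getting the length correspondence ($l\leftrightarrow l/2$), the base-point/cyclic-shift accounting, and the interleaving-vs-tensor-product bijection all consistent so that the exponents of $u$ and the $1/l$ normalizations line up to yield exactly $Z_c(\cdots,u^2)$ with no extra factor. Everything else (disjointness of the two types, the tensor-product closed-walk description) is routine once the definitions are unwound.
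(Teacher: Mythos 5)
Your overall architecture matches the paper's proof: split the geodesic cycles by case, identify the alternating cycles with pairs of closed walks in $X_0(V_i)$ and $X_0(V_j)$, translate the pair count into the tensor product count $N_c(X_0(V_i)\times X_0(V_j),m)=N_c(X_0(V_i),m)N_c(X_0(V_j),m)$, and assemble the exponential generating functions. However, there is a genuine unresolved error in the key count. Early on you correctly flag that a length-$2m$ alternating cycle is determined by a pair of closed walks \emph{together with a choice of which of $V_i,V_j$ contains $W_0$}, giving a factor of $2$. But you then write $M_{ij}(2m)=N_c(X_0(V_i)\times X_0(V_j),m)$, dropping that factor, and carry this incorrect count into the final computation. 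This is not a mere bookkeeping nuisance: a geodesic cycle is a based object (a sequence starting at a definite $W_0$), and for a fixed pair of unordered closed walks you really do get two distinct cycles, one with $W_0\subseteq V_i$ and one with $W_0\subseteq V_j$ (a cyclic shift by one). So the correct count is $M_{ij}(2m)=2\,N_c(X_0(V_i)\times X_0(V_j),m)$. With your stated formula you end up with $\sum_{i<j}\tfrac12\log Z_c(X_0(V_i)\times X_0(V_j),u^2)$ and the ``spurious $\tfrac12$'' you yourself notice. You say you ``expect the correct bookkeeping to make the factors match,'' but you never actually produce the missing $2$; as written, the proof yields $\prod_{i<j} Z_c(X_0(V_i)\times X_0(V_j),u^2)^{1/2}$, which is wrong.

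The fix is exactly the ingredient you noticed and then talked yourself out of: the factor $2$ from the choice of parity/starting space cancels the $\tfrac{1}{2m}$ weight in the building zeta down to $\tfrac{1}{m}$, producing $\exp\bigl(\sum_m \tfrac{2\,N_c(\cdot,m)}{2m}(u^2)^m\bigr)=Z_c(X_0(V_i)\times X_0(V_j),u^2)$. This is precisely how the paper's proof handles it. The ``two ways to interleave'' language you use is on the right track, but you must commit to it in the count of $M_{ij}(l)$ rather than absorb it into a hoped-for cancellation you never verify.
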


\begin{proof}
From the above proposition, the geodesic cycles $(x_{W_0},...,x_{W_l})$ of length $l$ in $\mathcal{B}(V_1,...,V_r)$ can be divided into two cases: all $W_s$ are in $V_i$ for some $i$, or $W_s$ are alternately in $V_i$ and $V_j$ with $W_0\subseteq V_i$ for some $i\neq j$. 

The first cases contribute $\sum_i N(\mathcal{B}(V_i),l)$  to $N(\mathcal{B}(V_1,...,V_r),l)$. These contributions correspond to the factors $\prod_i Z(\mathcal{B}(V_i),u)$ in the edge zeta function $Z(\mathcal{B}(V_1,...,V_r),u)$.

In the second scenario, we firstly fix $i,j$ with $i< j$. The two alternating cases involving $V_i$ and $V_j$ with $W_0\subseteq V_i$ or $W_0\subseteq V_j$ totally contribute $2N_c(X_0(V_i),l/2)N_c(X_0(V_j),l/2)$ to $N(\mathcal{B}(V_1,...,V_r),l)$ when $l$ is even. This contribution corresponds to the factor $$\exp(\sum_{l=1}^{\infty}\frac{2N_c(X_0(V_i),l)N_c(X_0(V_j),l)}{2l} u^{2l})= Z_c(X_0(V_i)\times X_0(V_j),u^2)$$ in the edge zeta function $Z(\mathcal{B}(V_1,...,V_r),u)$, because $$N_c(X_0(V_i)\times X_0(V_j),l)=N_c(X_0(V_i),l)N_c(X_0(V_j),l)$$ for the tensor product of graphs $X_0(V_i)\times X_0(V_j)$ (See Section \ref{sec:2}). Letting $i<j$ run over the range $1\leq i<j\leq n$ gives the desired equality.
\end{proof}

This corollary reflects the intricate interplay of the structures of $\mathcal{B}(V_i)$, and $X_0(V_i)\times X_0(V_j)$ in the building $\mathcal{B}(V_1,...,V_r)$.
We now formulate the main theorem on graphs.
\begin{thm}[Main theorem on eigenvalues of graphs]\label{thm:main thm on graphs 1.3.11}
The nonzero eigenvalues of the digraphs $X_0(\mathbb{F}_q^n)$ and $X_2(\mathbb{F}_q^n)$ are a root of unity times a fractional power of $q$.
\end{thm}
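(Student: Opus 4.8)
The plan is to analyze the two digraphs separately, using the combinatorial structure of subspaces of $\mathbb{F}_q^n$ together with a group-action argument that reduces the eigenvalue computation to the representation theory of $\GL_n(\mathbb{F}_q)$. First I would stratify both digraphs by multi-dimension: by Remark~\ref{rem:1.3.6 on redefining edge} (and its promised extension, Proposition~\ref{prop:2.2.1 on multidim trend}), a directed edge in $X_2(V)$ sends a flag of multi-dimension $(a,b)$ to one whose multi-dimension is forced, so the $\mdim$-data partitions the vertex set into pieces that are permuted among themselves in a controlled, cyclic way; the same sort of dimension bookkeeping organizes the closed walks in $X_0(V)$, where an edge $W_1 \oplus W_2 = V$ forces $\dim W_2 = n - \dim W_1$. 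Closed walks live entirely inside a single ``cyclic component'' determined by this dimension pattern, so it suffices to compute the nonzero eigenvalues of each such component subgraph. This is where the cyclic $n$-partite graph machinery of Sections~2--3 enters: each component is (the kind of graph obtained from) a cyclic multipartite graph carrying a transitive, partite-preserving action of $\GL_n(\mathbb{F}_q)$, and the closed-walk zeta function of such a graph is governed by the spectrum of a product of incidence operators around the cycle.

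Next I would exploit the $\GL_n(\mathbb{F}_q)$-action. Since $\GL_n(\mathbb{F}_q)$ acts transitively on flags of a fixed type, each partite set of a component subgraph is a coset space $G/P$ for a parabolic $P$, and the relevant ``go around the cycle once'' operator is a $G$-equivariant endomorphism of the permutation module $\bbC[G/P]$ — equivalently, an element of the Hecke algebra $\End_G(\bbC[G/P])$, or of a product of bimodule Hom-spaces threaded around the cycle. Decomposing the permutation modules into irreducibles (Section~5 on unipotent representations), the operator acts as a scalar on each isotypic piece, and that scalar is the eigenvalue we want, weighted by the multiplicity of the constituent. Springer's theorem (Section~6) identifies the relevant central Hecke-algebra element's eigenvalue on each unipotent constituent in terms of $q$-analogues of hook-length / degree data — concretely, a monomial in $q$ times a sign or root of unity coming from the combinatorics of the associated partition and the cyclic arrangement of the parabolics. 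Assembling these across all unipotent constituents and all cyclic components yields, for each digraph, a factorization $1/Z_c(X,u) = \prod_i (1 - \lambda_i u)$ in which every $\lambda_i$ is visibly a root of unity times a rational power of $q$.

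A few structural points need care along the way. One must check that the cyclic components really are isolated for the purpose of counting closed walks — i.e., that no closed walk can ``drift'' between multi-dimension strata — which follows from the fact that the $\mdim$-transition in $X_2$ is a bijective map on the finite set of allowed multi-dimensions (so its orbits are genuine cycles), and from the analogous rigidity $\dim W \mapsto n - \dim W$ in $X_0$. One must also verify that the component subgraphs are connected (or account for disconnectedness by a further orbit decomposition under the partite-transitive action), since the cyclic $n$-partite analysis of Sections~2--3 is what converts ``product of incidence matrices has eigenvalues of the required shape'' into a statement about $Z_c$. And one must confirm that the operator threaded around a cycle is conjugate (or similar enough, up to the zero eigenspace we are allowed to discard) to an element to which Springer's theorem directly applies; if the cycle has length $> 2$ one works with a product $T_1 T_2 \cdots T_m$ of intertwiners between successive parabolics, and the point is that such a product still lands in a commutative algebra acting by Springer-type scalars on each unipotent piece.

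The main obstacle I anticipate is precisely this last point: controlling the eigenvalues of the ``around-the-cycle'' operator for cycles of length greater than two, where one does not have a single Hecke algebra $\End_G(\bbC[G/P])$ but a composite of bimodule maps, and where the naive guess (that the product acts by the product of Springer scalars on matching constituents) has to be justified — one needs that the intermediate permutation modules share the relevant unipotent constituents with the right multiplicities and that the composite map is, on each such constituent, literally scalar multiplication. Handling the non-nilpotent/zero part (vertices whose multi-dimension lies in a cyclic orbit through which no closed walk of the relevant length passes, contributing only to the zero eigenvalue) is a bookkeeping nuisance but not a real difficulty, since Theorem~\ref{thm:main thm on graphs 1.3.11} only concerns the \emph{nonzero} eigenvalues. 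Finally, Theorem~\ref{thm:main thm on buildings} will then follow from Theorem~\ref{thm:main thm on graphs 1.3.11} via Corollaries~\ref{cor:1.3.8 on zeta X2 and X} and~\ref{cor:1.3.10 on zeta X0 and XV1V2}, since tensor products of graphs multiply eigenvalues and the eigenvalues of $X_0(V_i) \times X_0(V_j)$ are therefore products of two quantities each of the allowed shape, hence again of the allowed shape (and passing from $u$ to $u^2$ only halves the relevant fractional exponents).
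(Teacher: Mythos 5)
Your outline reconstructs the paper's own strategy quite faithfully — cyclic multipartite decomposition, partite-transitive $\GL_n(\bbF_q)$-action, reduction to Hecke-algebra eigenvalues on unipotent constituents, Springer's theorem — but it leaves open exactly the two points where the real work is, and your diagnosis of the first one points in the wrong direction. You flag ``the around-the-cycle operator for cycles of length $>2$'' as the main obstacle and suggest resolving it by showing the composite $T_1T_2\cdots T_m$ ``lands in a commutative algebra.'' That would not work as stated: the relevant algebra $e_{P_0}\bbC G e_{P_0}$ is \emph{not} commutative in general, since $\bbC G e_{P_0}$ is not multiplicity-free (its multiplicities are the Kostka numbers $K_{\lambda,\mu}$). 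What the paper actually does is first telescope the six-step walk operator to $D(F_0,F_3,3)D(F_3,F_0,3)=a_{P_0P_3}a_{P_3P_0}$, where $P_3$ is the \emph{opposite} parabolic to $P_0$ sharing the same Levi $L$, and then compute explicitly (Sections~4.1.2, 4.2.1) that this product equals a power of $q$ times $e_L\,(e_Bw_0e_B)^2$. The factor $(e_Bw_0e_B)^2 = q^{-n(n-1)}a_{w_0}^2$ lies in the \emph{center} of the Borel Hecke algebra $H=e_B\bbC Ge_B$ by Springer's theorem, hence acts as a scalar on each simple $H$-module $e_BM_\lambda$ and therefore on the subspace $e_{P_0}M_\lambda$; the factor $e_L$ acts trivially on $\bbC Ge_{P_0}$. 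So the scalar action is secured by centrality in a larger algebra, together with the opposite-parabolic identity, not by commutativity of $\End_G(\bbC[G/P_0])$. This reduction is the conceptual heart of Sections~4 and 6 and is genuinely missing from your sketch.

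The second gap is the ``special'' components: $X_0^{[n/2]}$ when $n$ is even (a $1$-partite component, so the relevant operator is $D(W_0,W_0,1)$, a square root of the Springer element), and $X_2^{[(n/3,2n/3)]}$ when $3\mid n$ (a bipartite component, operator $D(F_0,F_0,2)$, a cube root). You wave at ``a sign or root of unity coming from the combinatorics,'' but determining which root of unity appears requires the whole of Section~8: one shows via Tits' deformation principle (Proposition~\ref{prop:8.1.1}, Corollary~\ref{cor:8.1.2}) that the trace $\Psi_\lambda(D)$ on each unipotent constituent is a polynomial in $q$, evaluates it at $q=1$ to get the $S_n$-character value $\psi_\lambda(e_{S_\mu}w\,e_{S_\mu})$, computes that value by a Jacobi--Trudi/Kostka analysis of double cosets (Section~8.3), and only then reads off the multiplicities of the $2$nd or $3$rd roots of unity. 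Calling this a ``bookkeeping nuisance'' understates it substantially; without it you have only that the eigenvalues are $q^{s/2}$ or $q^{s/6}$ up to an \emph{undetermined} root of unity, which is not yet a proof that the zeta function has integer coefficients, nor does it produce the explicit factorizations in Theorems~\ref{thm:8.4.3} and~\ref{thm:8.5.3} that the paper's proof of Theorem~\ref{thm:main thm on graphs 1.3.11} actually cites.
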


\begin{proof}[Proof of Theorem \ref{thm:main thm on graphs 1.3.11} implies Theorem \ref{thm:main thm on buildings}]
Let $V$, $V_1,...,V_r$ be finite-dimensional vector spaces over $\mathbb{F}_q$.

For the building $\mathcal{B}(V)$, Corollary \ref{cor:1.3.8 on zeta X2 and X} states that the edge zeta function $Z(\mathcal{B}(V),u)$ equals the closed walk zeta function $Z_c(X_2(V),u)$. Therefore, the nonzero eigenvalues of $\mathcal{B}(V)$ are the same as the eigenvalues of $X_2(V)$, and are of the desired form by Theorem \ref{thm:main thm on graphs 1.3.11}

For the building $\mathcal{B}(V_1,...,V_r)$, by the expression in Corollary \ref{cor:1.3.10 on zeta X0 and XV1V2},
The eigenvalues of $\mathcal{B}(V_1,...,V_r)$ are: the nonzero eigenvalues of $\mathcal{B}(V_i)$ for some $i$, and the square roots of the nonzero eigenvalues of $X_0(V_i)\times X_0(V_j)$ for some $i<j$.

By Theorem \ref{thm:main thm on graphs 1.3.11}, the nonzero eigenvalues of $X_0(V_i)$ and $X_0(V_j)$ have the form of roots of unity times fractional powers of $q$. Their products, which, by Proposition \ref{prop:2.1.6 Eigenvalue of products}, are the nonzero eigenvalues of the tensor product graph $X_0(V_i)\times X_0(V_j)$, also have this form. Taking square roots preserves the structure. Therefore, the eigenvalues of $\mathcal{B}(V_1,...,V_r)$ must be roots of unity times fractional powers of $q$, as claimed.
\end{proof}

In the next sections, we will examine the eigenvalues of $X_0(V)$ and $X_2(V)$ by studying their graph structures. We will abstract their common properties by considering cyclic multipartite digraphs and group actions. We will develop general theories and then apply them to our specific cases.

\section{Properties of the Digraphs $X_0(V)$ and $X_2(V)$}\label{sec:2}
\subsection{Generalities on Zeta Function and Eigenvalues of a Digraph}

In this section, we delve into some general properties of the closed walk zeta function and the eigenvalues of a digraph. These properties lay the groundwork for our later discussions on the eigenvalues of the digraphs $X_0(V)$ and $X_2(V)$.

\subsubsection{Polynomial Nature of the Inverse Zeta Function}
A key result regarding the zeta function of a digraph is that the inverse of the Ihara zeta function of a digraph is a polynomial. We present the proof here.

Let $X$ be a finite  digraph, with vertex set $\calV=\{v_1,...,v_{|\calV|}\}$. Suppose $A$ is the adjacency matrix of the digraph $X$, where each entry $A_{ij}$ equals the number of edges from vertex $x_i$ to vertex $x_j$ in $X$. Therefore, the entry $(A^l)_{ij}$ equals the number of walks of length $l$ from vertex $i$ to vertex $j$ in $X$. In particular, $(A^l)_{ii}$ equals the number of closed walks of length $l$ starting and ending at vertex $i$ in $X$. 
The trace of the matrix $A^l$, denoted by $\Tr(A^l)$, thus equals the total number of closed walks of length $l$ in $X$. Therefore, we have $\Tr(A^l) = N_c(l)$.

\begin{thm}\label{thm:2.1.1 Hashi}
    Let $X$ be a finite simple directed graph. Then the inverse of the zeta function counting closed walks is a polynomial with integer coefficients.
\end{thm}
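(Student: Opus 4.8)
The plan is to identify $Z_c(X,u)$ with the reciprocal of $\det(I-uA)$, where $A$ is the adjacency matrix of $X$. Since we have already observed that $N_c(l)=\Tr(A^l)$, the defining formula becomes
$$Z_c(X,u)=\exp\Big(\sum_{l=1}^{\infty}\frac{\Tr(A^l)}{l}u^l\Big),$$
so the whole task reduces to showing that the exponent equals $-\log\det(I-uA)$ as formal power series in $u$.

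To prove that identity I would pass to the eigenvalues of $A$. Viewing $A$ as a complex matrix and writing $\lambda_1,\dots,\lambda_m$ for its eigenvalues with multiplicity, where $m=|\calV(X)|$, the Jordan (or upper-triangular) form gives $\Tr(A^l)=\sum_{i=1}^m\lambda_i^l$ for all $l\geq 1$, and $\det(I-uA)=\prod_{i=1}^m(1-\lambda_i u)$. Hence, as formal power series (equivalently, for $|u|$ smaller than $1/\max_i|\lambda_i|$),
$$\sum_{l=1}^{\infty}\frac{\Tr(A^l)}{l}u^l=\sum_{i=1}^m\sum_{l=1}^{\infty}\frac{(\lambda_i u)^l}{l}=-\sum_{i=1}^m\log(1-\lambda_i u)=-\log\prod_{i=1}^m(1-\lambda_i u)=-\log\det(I-uA).$$
(Alternatively, one can avoid eigenvalues entirely and compare logarithmic derivatives: by Jacobi's formula $\frac{d}{du}\log\det(I-uA)=-\Tr\big((I-uA)^{-1}A\big)=-\sum_{l\geq 1}\Tr(A^l)u^{l-1}$, which matches the derivative of $-\sum_{l\geq 1}\frac{\Tr(A^l)}{l}u^l$, and both series vanish at $u=0$.) Exponentiating yields $Z_c(X,u)=1/\det(I-uA)$, so $1/Z_c(X,u)=\det(I-uA)$.

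It remains to read off the asserted properties of the polynomial $1/Z_c(X,u)=\det(I-uA)$: since the entries of $A$ are nonnegative integers, $\det(I-uA)$ is a polynomial in $u$ with integer coefficients, of degree at most $|\calV(X)|$, and its constant term is $\det(I)=1$. This also reconciles the two notions of eigenvalue mentioned in the excerpt, the roots of $1/Z_c(X,u)$ being the reciprocals of the nonzero $\lambda_i$, with the zero eigenvalue accounting for the drop in degree. I do not expect a genuine obstacle here; the only point requiring care is the bookkeeping behind the logarithm identity, namely choosing to work in $\mathbb{Z}[[u]]$ versus analytically near $u=0$ and handling eigenvalue multiplicities cleanly via the triangular form, all of which is routine linear algebra.
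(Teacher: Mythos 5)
Your proposal is correct and follows essentially the same route as the paper: both pass to the eigenvalues of the adjacency matrix $A$ to establish $Z_c(X,u)=1/\det(I-uA)$ (the paper isolates this manipulation in its Lemma~\ref{lemma:2.3.3}, which is exactly your inline computation with $t=1$), and both then conclude integrality from the integrality of $A$'s entries. The alternative via Jacobi's formula and logarithmic derivatives is a nice supplementary observation but not a departure from the paper's argument.
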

\begin{proof}
    Note that $$Z_c(X,u)=\exp(\sum_{m=1}^\infty \frac{N_c(m)}{m} u^m)=\exp(\sum_{m=1}^\infty \frac{\Tr(A^m)}{m} u^m).$$  Let $\lambda_1,...,\lambda_r$ be all the eigenvalues of $A$, counting algebraic multiplicities. Then by the following computational Lemma \ref{lemma:2.3.3}, $$Z_c(X,u)=\prod_{i=1}^r \frac{1}{(1-\lambda_i u)}=\frac{1}{\det(1-Au)}.$$  Each entry of the connectivity matrix $A$ equals 0 or 1. Hence, $\det(1-Au)\in \bbZ[u]$, and our result follows.
\end{proof}

This theorem confirms that the zeta functions of our digraphs $X_0(V)$ and $X_2(V)$ are inverses of polynomials with integer coefficients. Moreover, it shows that the eigenvalues of a finite digraph can be identified with the eigenvalues of its adjacency matrix.

We now present a useful computational lemma.
\begin{lemma}\label{lemma:2.3.3}
    Let $N$ be a $M$-dimensional  vector space over $\bbC$, and let $f:N\to N$ be a $\bbC$-linear map.  Let $\lambda_1,...,\lambda_M$ be the eigenvalues of $f$. For a positive integer $t$, we have the following identity:
    $$\exp\left(\sum_{m=1}^{\infty} \frac{\Tr(f^m)}{tm}u^{tm}\right) = \prod_{i=1}^M\frac{1}{(1-\lambda_i u^t)^{\frac{1}{t}}}.$$ Here, $(1-\lambda_i u^t)^{\frac{1}{t}}$ denotes the $t$-th root of $(1-\lambda_i u^t)$ chosen such that the constant term is 1.
\end{lemma}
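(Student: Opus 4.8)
The plan is to establish the identity first for the rank-one case $M=1$ and then extend multiplicatively, reducing everything to a formal power series manipulation with the logarithm. First I would record the basic formal identity $-\log(1-z) = \sum_{m=1}^\infty z^m/m$ valid in $\mathbb{C}[[z]]$. Applying it with $z = \lambda u^t$ gives
\begin{equation*}
\sum_{m=1}^\infty \frac{\lambda^m u^{tm}}{m} = -\log(1-\lambda u^t),
\end{equation*}
and dividing by $t$ and exponentiating yields
\begin{equation*}
\exp\left(\sum_{m=1}^\infty \frac{\lambda^m}{tm} u^{tm}\right) = \exp\left(-\tfrac{1}{t}\log(1-\lambda u^t)\right) = (1-\lambda u^t)^{-1/t},
\end{equation*}
where the right-hand side is by definition the exponential of $-\tfrac1t$ times the principal branch of $\log(1-\lambda u^t)$ (a well-defined element of $\mathbb{C}[[u]]$ with constant term $1$). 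This settles the single-eigenvalue case.

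Next I would pass to the general linear map $f$ on the $M$-dimensional space $N$. The key algebraic input is that $\Tr(f^m) = \sum_{i=1}^M \lambda_i^m$ for every $m\geq 1$, which follows by putting $f$ in upper-triangular (Jordan) form over $\mathbb{C}$: the diagonal entries are the $\lambda_i$ (with algebraic multiplicity), and the $m$-th power has diagonal entries $\lambda_i^m$, so the trace is as claimed. Substituting this into the left-hand side and using that $\exp$ turns sums into products,
\begin{equation*}
\exp\left(\sum_{m=1}^\infty \frac{\Tr(f^m)}{tm} u^{tm}\right) = \exp\left(\sum_{i=1}^M \sum_{m=1}^\infty \frac{\lambda_i^m}{tm} u^{tm}\right) = \prod_{i=1}^M \exp\left(\sum_{m=1}^\infty \frac{\lambda_i^m}{tm} u^{tm}\right) = \prod_{i=1}^M (1-\lambda_i u^t)^{-1/t},
\end{equation*}
where the last equality is the rank-one case applied to each factor.

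The only genuine subtlety — and the step I expect to need the most care — is justifying the interchange of sums and the manipulation of $\exp$ and $\log$ as operations on formal power series in $u$ rather than as analytic functions. I would be careful to note that each inner series $\sum_m \lambda_i^m u^{tm}/(tm)$ has zero constant term, so $\exp$ of it is a well-defined formal power series with constant term $1$, and that the double sum over $(i,m)$ is a legitimate rearrangement because for each fixed power of $u$ only finitely many terms contribute (the coefficient of $u^{N}$ involves only $m \leq N/t$ and the finitely many $i$). The definition of $(1-\lambda_i u^t)^{1/t}$ as "the $t$-th root with constant term $1$" is exactly $\exp\big(\tfrac1t \log(1-\lambda_i u^t)\big)$ in $\mathbb{C}[[u]]$, so it matches the factor produced above; one should also remark that $t$-th roots of a power series with constant term $1$ are unique, so there is no ambiguity in the statement. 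With these formal-power-series conventions pinned down, the computation above is complete.
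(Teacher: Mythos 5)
Your proof is correct and follows essentially the same route as the paper's: expand $\Tr(f^m)=\sum_i\lambda_i^m$, use $\exp$ to convert the sum over $i$ into a product, and recognize each factor as $(1-\lambda_i u^t)^{-1/t}$ via the logarithm series. You spell out the rank-one reduction, the Jordan-form justification, and the formal-power-series bookkeeping more explicitly than the paper does, but the underlying computation is identical.
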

\begin{proof}
   The trace of $f^m$ can be expanded as 
   \begin{align*}
\exp\left({\sum_{m=1}^{\infty} \frac{\Tr(f^m)}{tm}u^{tm}}\right) &= \prod_{i=1}^M \exp\left(\sum_{m=1}^{\infty}\frac{\lambda_i^m}{tm}u^{tm}\right) = \prod_{i=1}^M \exp\left(\frac{1}{t}\sum_{m=1}^{\infty}\frac{(\lambda_iu^t)^m}{m}\right) \\
&= \prod_{i=1}^M \frac{1}{(1-\lambda_iu^t)^{\frac{1}{t}}}.
\end{align*}
   The choice of the $1/t$ root in $(1-\lambda_iu^t)^{\frac{1}{t}}$ is justified because the constant term in the exponential expression is 1.
   
\end{proof}

\subsubsection{Operations on Digraphs and Their Zeta Functions}
We now study two operations, the disjoint union and the tensor product, that allow us to construct complex graphs from simpler ones and yield important correlations between the zeta functions and eigenvalues of the resulting and original graphs.

In graph theory, the disjoint union combines two digraphs into a larger digraph containing all vertices and edges of the originals, with no additional edges between them. From a categorical viewpoint, the disjoint union of digraphs is their coproduct. Similarly, the tensor product of digraphs can be viewed as their product in the category of digraphs.

\begin{defn}
The \textbf{disjoint union} of two digraphs $X_1 = (\calV_1, \calE_1)$ and $X_2 = (\calV_2, \calE_2)$, denoted by $X_1\ \dot\cup\ X_2$, is the digraph with vertex set $\calV_1\ \dot\cup\ \calV_2$ and edge set $\calE_1\ \dot\cup\  \calE_2$. There are no additional edges in this digraph between vertices of $X_1$ and vertices of $X_2$.
\end{defn}

The zeta function of the disjoint union is simply the product of the zeta functions of the individual digraphs:

\begin{prop}
For any two finite digraphs $X_1$ and $X_2$, we have
$$Z_c(X_1 \dot\cup X_2,u) = Z_c(X_1,u)Z_c(X_2,u).$$
\end{prop}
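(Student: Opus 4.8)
The statement to prove is that the closed walk zeta function of a disjoint union of digraphs factors as the product of the individual zeta functions. The plan is to reduce everything to the level of the defining exponential generating functions, using the additivity of the closed‑walk counts under disjoint union.

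First I would observe that a closed walk in $X_1\ \dot\cup\ X_2$ lives entirely in one of the two components, since there are no edges between $\calV_1$ and $\calV_2$; hence for every length $l\geq 1$ we have the identity
\begin{equation*}
N_c(X_1\ \dot\cup\ X_2, l) = N_c(X_1, l) + N_c(X_2, l).
\end{equation*}
Equivalently, at the level of adjacency matrices, the adjacency matrix of $X_1\ \dot\cup\ X_2$ is the block-diagonal matrix $A_1 \oplus A_2$, so $\Tr((A_1\oplus A_2)^l) = \Tr(A_1^l) + \Tr(A_2^l)$, which is the same identity since $\Tr(A^l) = N_c(X,l)$ as noted before Theorem \ref{thm:2.1.1 Hashi}.

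Next I would substitute this additivity relation into the definition of $Z_c$:
\begin{align*}
Z_c(X_1\ \dot\cup\ X_2, u) &= \exp\!\left(\sum_{l=1}^{\infty}\frac{N_c(X_1\ \dot\cup\ X_2, l)}{l}u^l\right) \\
&= \exp\!\left(\sum_{l=1}^{\infty}\frac{N_c(X_1,l)}{l}u^l + \sum_{l=1}^{\infty}\frac{N_c(X_2,l)}{l}u^l\right) \\
&= \exp\!\left(\sum_{l=1}^{\infty}\frac{N_c(X_1,l)}{l}u^l\right)\exp\!\left(\sum_{l=1}^{\infty}\frac{N_c(X_2,l)}{l}u^l\right) = Z_c(X_1,u)Z_c(X_2,u),
\end{align*}
where the splitting of the exponential is the usual identity $\exp(a+b)=\exp(a)\exp(b)$ for formal power series, valid because each series has zero constant term. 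This completes the argument.

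There is essentially no obstacle here: the only points requiring (minimal) care are the observation that disjoint-union graphs share no connecting edges, so walks cannot cross between components, and the bookkeeping that $\exp$ of a sum of two power series with vanishing constant term is the product of the exponentials. Both are routine, so this proposition is a short warm-up; the substantive analogue is the multiplicativity of $N_c$ under \emph{tensor} product, which is flagged for later and is where the real content lies.
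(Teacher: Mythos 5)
Your argument is correct and is essentially the paper's own proof: both rest on the additivity $N_c(X_1\,\dot\cup\,X_2,l)=N_c(X_1,l)+N_c(X_2,l)$ (walks cannot cross between components) and then factor the exponential. The extra remark about the block-diagonal adjacency matrix is a harmless alternative phrasing of the same observation.
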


\begin{proof}
The number of closed walks of length $l$ in the disjoint union $X_1 \dot\cup X_2$ is the sum of the number of closed walks of length $l$ in $X_1$ and in $X_2$. Hence, $N_c(X_1 \dot\cup X_2, l) = N_c(X_1, l) + N_c(X_2, l)$.

Thus, the zeta function of the disjoint union is 
\[
Z_c(X_1 \dot\cup X_2,u) = \exp\left(\sum_{l=1}^{\infty}\frac{N_c(X_1, l)}{l}u^l + \sum_{l=1}^{\infty}\frac{N_c(X_2, l)}{l}u^l\right)
= Z_c(X_1,u)Z_c(X_2,u).
\]
\end{proof}

The tensor product combines two digraphs into a new digraph whose vertices are ordered pairs of vertices from the original digraphs.

\begin{defn}
The \textbf{tensor product} of two digraphs $X_1$ and $X_2$, denoted by $X_1 \times X_2$, is the digraph whose vertex set is the Cartesian product of the vertex sets of $X_1$ and $X_2$. There is an edge from  the vertex $(x_1, x_2)$ to the vertex $(y_1, y_2)$ in $X_1 \times X_2$ if and only if there is an edge from $x_1$ to $y_1$ in $X_1$ and there is an edge from $x_2$ to $y_2$ in $X_2$.
\end{defn}

The eigenvalues of the tensor product digraph are products of the original eigenvalues:

\begin{prop}\label{prop:2.1.6 Eigenvalue of products}
For any two finite digraphs $X_1$ and $X_2$ with adjacency matrices $A_1$ and $A_2$, the eigenvalues of the tensor product $X_1 \times X_2$ are the products $\lambda_1 \lambda_2$ for all pairs of eigenvalues $\lambda_1$ of $A_1$ and $\lambda_2$ of $A_2$.
\end{prop}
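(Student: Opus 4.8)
The statement to prove is Proposition~\ref{prop:2.1.6 Eigenvalue of products}: for finite digraphs $X_1, X_2$ with adjacency matrices $A_1, A_2$, the eigenvalues of $X_1 \times X_2$ are exactly the products $\lambda_1\lambda_2$ over pairs of eigenvalues. The plan is to identify the adjacency matrix of the tensor product with the Kronecker product $A_1 \otimes A_2$ and then invoke the standard fact that the spectrum of a Kronecker product is the set of pairwise products of spectra.

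First I would check, directly from the definition of the tensor product digraph, that if we index the vertex set $\mathcal{V}(X_1) \times \mathcal{V}(X_2)$ in lexicographic order, then the adjacency matrix of $X_1 \times X_2$ is precisely $A_1 \otimes A_2$: the entry at position $((x_1,x_2),(y_1,y_2))$ is $1$ iff there is an edge $x_1 \to y_1$ in $X_1$ and an edge $x_2 \to y_2$ in $X_2$, i.e.\ iff $(A_1)_{x_1 y_1}(A_2)_{x_2 y_2} = 1$, which is exactly the $((x_1,x_2),(y_1,y_2))$ entry of $A_1 \otimes A_2$. Since all entries are $0$ or $1$ this is a legitimate adjacency matrix of a simple digraph (note $(x,x)$ has no loop because at least one of $x_1\to x_1$, $x_2 \to x_2$ is absent).

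Next I would recall the eigenvalue computation for Kronecker products. Over $\mathbb{C}$, put $A_1$ and $A_2$ into upper-triangular form via Schur decompositions $A_i = U_i T_i U_i^{-1}$ with the eigenvalues of $A_i$ on the diagonal of $T_i$. Then $A_1 \otimes A_2 = (U_1 \otimes U_2)(T_1 \otimes T_2)(U_1 \otimes U_2)^{-1}$, using the mixed-product identity $(P\otimes Q)(R\otimes S) = (PR)\otimes(QS)$ and $(P\otimes Q)^{-1} = P^{-1}\otimes Q^{-1}$. Since $T_1 \otimes T_2$ is upper triangular (a Kronecker product of upper-triangular matrices is upper triangular in lexicographic ordering) with diagonal entries $(\lambda_1)_i(\lambda_2)_j$, these products are exactly the eigenvalues of $A_1 \otimes A_2$, counted with algebraic multiplicity. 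Combined with the previous step and Theorem~\ref{thm:2.1.1 Hashi} (which identifies the eigenvalues of a digraph with those of its adjacency matrix), this gives the claim.

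There is no serious obstacle here; the only point requiring mild care is bookkeeping the vertex ordering so that "adjacency matrix of the tensor product" and "$A_1 \otimes A_2$" literally coincide rather than merely being permutation-conjugate — and since eigenvalues are invariant under conjugation, even that is inessential. The multiplicity statement (products taken over all pairs, with multiplicity) falls out of the triangularization argument for free, so the proposition follows immediately.
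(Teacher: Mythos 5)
Your proof is correct and follows essentially the same route as the paper: identify the adjacency matrix of $X_1 \times X_2$ with the Kronecker product $A_1 \otimes A_2$ and then use the standard spectrum-of-a-Kronecker-product fact. The only difference is that the paper simply cites this fact as ``well known,'' whereas you supply a self-contained proof via Schur triangularization and the mixed-product identity.
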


\begin{proof}
By construction, the adjacency matrix of the tensor product $X_1 \times X_2$ is the Kronecker product of the adjacency matrices of $X_1$ and $X_2$. 

It is well known that the eigenvalues of the Kronecker product of two matrices are the products of the eigenvalues of the two matrices. Therefore, the eigenvalues of $X_1 \times X_2$ are the products of the eigenvalues of $X_1$ and $X_2$.
\end{proof}

\subsubsection{Cyclic $n$-Partite Digraphs and Their Properties}

In our study of digraphs $X_0(V)$ and $X_2(V)$, we encounter structures that can be understood as bipartite or cyclic $n$-partite digraphs. These digraphs are characterized by a partition of their vertex set into two or more disjoint subsets, with specific adjacency constraints.

\begin{defn}
A \textbf{bipartite digraph} is a digraph whose vertex set can be partitioned into two disjoint sets, denoted as $V_1$ and $V_2$, such that every edge connects a vertex in $V_1$ with a vertex in $V_2$.
\end{defn}

This definition can be generalized to cyclic $n$-partite digraphs.

\begin{defn}
A \textbf{cyclic $n$-partite digraph} is a digraph whose vertex set can be partitioned into $n$ disjoint sets, denoted as $V_1, ..., V_n$, such that every edge is from a vertex in $V_i$ to a vertex in $V_{i+1}$ for some $1 \leq i \leq n$, where $V_{n+1}$ is understood as $V_1$. Vertices in $V_i$ are said to be of type $i$.
\end{defn}

\begin{rem}
    Every digraph can be considered as a cyclic $1$-partite digraph, and a bipartite digraph is a cyclic $2$-partite digraph. Moreover, in the category of digraphs, cyclic $n$-partite digraphs can be viewed as objects with a map to a directed cycle of length $n$.
\end{rem}

We now analyze the closed walks and zeta function of cyclic $n$-partite digraphs. 

Firstly, the adjacency matrix $A$ of a cyclic $n$-partite digraph has the property that its $t$-th power $A^t$ has zero trace, unless $n$ divides $t$. This is due to the fact that in a cyclic $n$-partite digraph, a walk of length $t$ that starts and ends at the same vertex would mean that the vertex would have to change partition sets $t$ times, which is only possible if $n$ divides $t$.

Next, we consider closed walks starting from different types
For any cyclic $n$-partite digraph $X$, we denote the set of closed walks of length $m$ starting and ending at vertices of type $i$ as $CP(X,m,i)$, and its cardinality as $N^{(i)}_c(X,m)$. 

Remarkably, all the closed walks numbers $N^{(i)}_c(X,m)$ of a cyclic $n$-partite digraph are equal. This can be shown using shifting maps:

\begin{prop}\label{prop:2.1.13}
For a cyclic $n$-partite digraph $X$, we have $N^{(i)}_c(X,m) = N^{(j)}_c(X,m)$ and $N_c(X,m)=nN^{(i)}_c(X,m)$ for all $i,j \in \{1,\ldots,n\}$ and all $m \geq 1$. 
\end{prop}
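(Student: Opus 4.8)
The plan is to establish the two claimed identities by exhibiting an explicit bijection between closed walks of the same length that start at vertices of different types. First I would observe that the second identity, $N_c(X,m) = n N^{(i)}_c(X,m)$, follows immediately from the first together with the preliminary remark that $A^m$ has zero trace unless $n \mid m$: when $n \nmid m$ there are no closed walks of length $m$ at all, so both sides vanish; when $n \mid m$, every closed walk of length $m$ starts at a vertex of some type $i \in \{1,\dots,n\}$, and these sets $CP(X,m,i)$ partition the set of all closed walks of length $m$, giving $N_c(X,m) = \sum_{i=1}^n N^{(i)}_c(X,m) = n N^{(1)}_c(X,m)$ once the first identity is known. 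So the crux is to prove $N^{(i)}_c(X,m) = N^{(j)}_c(X,m)$ for all $i,j$.

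To prove the first identity, it suffices by transitivity to treat the case $j = i+1$ (indices mod $n$). I would define a shifting map $\sigma$ on closed walks as follows: given a closed walk $w = (v_0, v_1, \dots, v_m = v_0)$ with $v_0 \in V_i$, note that since edges in a cyclic $n$-partite digraph only go from $V_k$ to $V_{k+1}$, each step advances the type by one, so $v_1 \in V_{i+1}$ and, because the walk is closed of length $m$ with $n \mid m$, we have $v_m = v_0 \in V_i$. Set $\sigma(w) = (v_1, v_2, \dots, v_m, v_1)$, the walk obtained by dropping the initial vertex, appending $v_1$ at the end, and thereby "rotating" the base point forward by one edge. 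Since $v_{m} = v_0$ and $(v_{m-1}, v_m) = (v_{m-1}, v_0)$ is an edge and $(v_0, v_1)$ is an edge, the sequence $(v_1, \dots, v_m, v_1)$ is indeed a closed walk of length $m$, and its starting vertex $v_1$ lies in $V_{i+1}$. Thus $\sigma$ maps $CP(X,m,i)$ into $CP(X,m,i+1)$.

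The final step is to check that $\sigma$ is a bijection. I would write down the inverse explicitly: given a closed walk $w' = (u_0, u_1, \dots, u_m = u_0)$ starting in $V_{i+1}$, since the walk traverses all $n$ types cyclically and returns, the vertex $u_{m-1}$ lies in $V_i$ and $(u_{m-1}, u_0)$ is an edge, so $\tau(w') = (u_{m-1}, u_0, u_1, \dots, u_{m-1})$ is a closed walk of length $m$ starting in $V_i$; one then verifies directly that $\sigma \circ \tau$ and $\tau \circ \sigma$ are the respective identity maps, which is immediate from the construction. Hence $N^{(i)}_c(X,m) = N^{(i+1)}_c(X,m)$, and iterating gives equality for all pairs of types. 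I do not anticipate a serious obstacle here; the only point requiring slight care is the bookkeeping that a closed walk of length $m$ in a cyclic $n$-partite digraph forces $n \mid m$ and forces the types of the vertices to cycle through $i, i+1, \dots$ in order, which is exactly the preliminary observation about $\Tr(A^t)$ restated at the level of individual walks, and this guarantees that the shifted sequence genuinely begins in the next partite set.
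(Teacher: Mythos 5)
Your proposal is correct and uses essentially the same idea as the paper: a shifting map that rotates the base point of a closed walk. The paper defines a single shift $sh_k$ by an arbitrary offset $k$, whereas you shift by one step and iterate, and you spell out the inverse map and the trivial case $n \nmid m$ explicitly — but these are only cosmetic differences in presentation.
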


\begin{proof}
The equality $N^{(i)}_c(X,m) = N^{(j)}_c(X,m)$ can be proven with shifting maps. For any integer $k$, define a shifting map $sh_k : CP(X,m,i) \rightarrow CP(X,m,(i+k) \mod n)$ that takes a closed walk $(v_0, v_1, \ldots, v_m = v_0)$ of type $i$ to the closed walk $(v_k, v_{k+1}, \ldots, v_{k+m} = v_k)$ of type $(i+k) \mod n$. 

The map $sh_k$ is a bijection between $CP(X,m,i)$ and $CP(X,m,(i+k) \mod n)$. Thus, $N^{(i)}_c(X,m) = N^{(i+k) \mod n}_c(X,m)$. Hence, $N^{(i)}_c(X,m) = N^{(j)}_c(X,m)$ by taking $k = j-i$. Then $N_c(X,m)=\sum_{k=1}^n N^{(k)}_c(X,m)=N^{(i)}_c(X,m)$.
\end{proof}

We now prove a useful tool for computing zeta functions, to be used in Section 3. 
\begin{prop}\label{prop:2.1.14 Main property}
Let $X$ be a cyclically $n$-partite digraph with adjacency matrix $A$. Let $B$ be the submatrix of $A^n$ formed by taking the rows and columns corresponding to vertices in $\calV_1$, and $\mu_1,...,\mu_M$ be the eigenvalues of $B$. Then $Z_c(X,u)=\prod_{i=1}^M \frac{1}{1-\mu_i u^n}$.
\end{prop}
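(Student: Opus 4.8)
The strategy is to reduce the zeta function of the cyclic $n$-partite digraph $X$ to that of a related structure supported on $\calV_1$. The key observation is that $Z_c(X,u)$ is built from the numbers $N_c(X,m)$, and by Proposition~\ref{prop:2.1.13} together with the remark that $\Tr(A^t)=0$ unless $n\mid t$, only the values $N_c(X,nm)$ matter, and these equal $n\,N^{(1)}_c(X,nm)$. So the first step is to write
\[
Z_c(X,u)=\exp\Bigl(\sum_{m=1}^{\infty}\frac{N_c(X,nm)}{nm}u^{nm}\Bigr)
=\exp\Bigl(\sum_{m=1}^{\infty}\frac{N^{(1)}_c(X,nm)}{m}u^{nm}\Bigr),
\]
using that all other coefficients $N_c(X,t)$ with $n\nmid t$ vanish.

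The second step is to identify $N^{(1)}_c(X,nm)$ with a trace. A closed walk of length $nm$ starting and ending in $\calV_1$ visits, at times that are multiples of $n$, a sequence of vertices in $\calV_1$; grouping the steps into blocks of length $n$ shows that the number of such walks whose length-$n$ subwalks go from a given vertex $v\in\calV_1$ to a given vertex $w\in\calV_1$ is exactly $(A^n)_{vw}$ restricted to $\calV_1$, i.e.\ $B_{vw}$. Hence $N^{(1)}_c(X,nm)=\Tr(B^m)$. (One should note that $A^n$ is block-diagonal with respect to the partition $\calV=\calV_1\dot\cup\cdots\dot\cup\calV_n$, since a walk of length $n$ returns to its original type; so $B$ is literally the $\calV_1$-block of $A^n$ and $\Tr(A^n)=\sum_i \Tr(B_i)$ with $\Tr(B_i)$ all equal by the shifting argument — but we only need the $\calV_1$-block here.)

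The third step is a direct application of Lemma~\ref{lemma:2.3.3}, or rather its specialization: substituting $\Tr(B^m)$ and replacing $u$ by a formal variable, we get
\[
Z_c(X,u)=\exp\Bigl(\sum_{m=1}^{\infty}\frac{\Tr(B^m)}{m}(u^n)^m\Bigr)=\prod_{i=1}^{M}\frac{1}{1-\mu_i u^n},
\]
where $\mu_1,\dots,\mu_M$ are the eigenvalues of $B$; this is exactly Lemma~\ref{lemma:2.3.3} with $t=1$ and $u$ replaced by $u^n$, or equivalently the identity $\exp(\sum_m \Tr(f^m)v^m/m)=1/\det(1-fv)$ already used in the proof of Theorem~\ref{thm:2.1.1 Hashi}.

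\textbf{Main obstacle.} None of the three steps is deep; the only point requiring care is the combinatorial bijection in the second step — checking that concatenating length-$n$ blocks sets up an exact correspondence between closed walks of length $nm$ based at $\calV_1$ and $m$-tuples of length-$n$ walks $\calV_1\to\calV_1$ composable head-to-tail, so that the count is genuinely $\Tr(B^m)$ and not something that overcounts or undercounts walks passing through other parts. This is the step where one must use that every closed walk of length $nm$ necessarily returns to $\calV_1$ at each multiple of $n$ (a consequence of the cyclic $n$-partite structure), which is what makes the block decomposition canonical.
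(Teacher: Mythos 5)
Your proof is correct and follows essentially the same route as the paper: restrict to walk lengths divisible by $n$, invoke Proposition~\ref{prop:2.1.13} to reduce to closed walks based in $\calV_1$, identify $N^{(1)}_c(X,nm)$ with $\Tr(B^m)$ via the block-diagonal structure of $A^n$, and finish with Lemma~\ref{lemma:2.3.3}. The only cosmetic difference is that you phrase the trace identity as a combinatorial count rather than extracting the block of $A^{nm}$ directly, but the substance is identical.
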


\begin{proof}
First, we order the vertices $\mathcal{V}$ as $\mathcal{V}_1 \cup \dots \cup \mathcal{V}_n$, with all vertices in $\mathcal{V}_1$ appearing first, vertices in $\mathcal{V}_2$ appearing next, and so on. Then the matrix of the adjacency matrix $A$ is a block matrix. Then, the matrix $A^n$ is block diagonal with $B$ as the first block. This follows from the cyclically $n$-partite nature of $X$. 

We only need to consider closed walk numbers $N_c(nl)$ for $l\geq 1$ because any closed walk has length divisible by $n$. Now, applying Proposition \ref{prop:2.1.13} to our cyclic $n$-partite digraph $X$, we have $N_c(nl)= nN_c^{(1)}(nl)$ for each $l \geq 1$. This implies that $\Tr(A^{nl}) = n\Tr(B^l)$, because $B^l$ is the first block of $A^{nl}$ with trace exactly $N_c^{(1)}(nl)$.

The zeta function of $X$ is then given by 
\[
Z(X,u) = \exp\left(\sum_{l=1}^{\infty} \frac{\Tr(A^{nl})}{nl}u^{nl}\right)= \exp\left(\sum_{l=1}^{\infty} \frac{n\Tr(B^l)}{nl}u^{nl}\right) = \exp\left(\sum_{l=1}^{\infty} \frac{\Tr(B^l)}{l}u^{nl}\right).
\]
The formula
\[
Z_c(X,u) = \prod_{i=1}^M \frac{1}{1-\mu_i u^n}.
\]
then follows from Lemma \ref{lemma:2.3.3}.
\end{proof}

There is an interesting relation between the eigenvalues $\mu_i$ of $B$ and the eigenvalues $\lambda_i$ of $A$. 

\begin{prop}\label{prop:2.1.11}
For each nonzero eigenvalue $\mu_i$ of $B$, there correspond $n$ eigenvalues $\lambda_{i,1}, \ldots, \lambda_{i,n}$ of $A$, which are all the $n$-th roots of $\mu_i$. These account for all the nonzero eigenvalues of $A$.
\end{prop}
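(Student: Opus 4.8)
The plan is to analyze the block structure of the adjacency matrix $A$ of the cyclic $n$-partite digraph $X$ directly. Order the vertices by type, so that $A$ has the block form in which the only nonzero blocks are $A_{i,i+1}$ mapping type $i$ to type $i+1$ (indices mod $n$), i.e. $A$ is a ``block companion'' / block-circulant-with-shift matrix. Then $A^n$ is block diagonal, and its first diagonal block is exactly $B = A_{1,2}A_{2,3}\cdots A_{n,1}$, while the $i$-th diagonal block is the analogous cyclic product $B_i = A_{i,i+1}\cdots A_{i-1,i}$ starting at type $i$. A standard fact (conjugation by a cyclic permutation of the products, or just the identity $\Tr(PQ)=\Tr(QP)$ applied repeatedly) shows all the $B_i$ have the same nonzero spectrum as $B$, with the same multiplicities.

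Next I would set up the eigenvector correspondence explicitly. Given a nonzero eigenvalue $\mu$ of $B$ with eigenvector $w\in\mathbb{C}^{\mathcal{V}_1}$ (so $A_{1,2}\cdots A_{n,1}w=\mu w$), and given any $n$-th root $\zeta$ of $\mu$ (fix one $n$-th root $\zeta_0$ and let $\zeta$ range over $\zeta_0$ times the $n$-th roots of unity), I would build a vector $v = (v_1,\dots,v_n)$ on all of $\mathcal{V}$ by setting $v_1 = w$ and $v_{k+1} = \zeta^{-k} A_{k,k+1}A_{k-1,k}\cdots A_{1,2}\, w$ for $k=1,\dots,n-1$; one then checks $Av = \zeta v$, the only nontrivial component of the check being the ``wrap-around'' block, which reads $A_{n,1}v_n = \zeta^{-(n-1)}B w = \zeta^{-(n-1)}\mu w = \zeta v_1$ since $\mu = \zeta^n$. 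Because $w\neq 0$ and hence $v_1\neq 0$, each such $v$ is nonzero, and for distinct $n$-th roots $\zeta$ the vectors are linearly independent (they lie in a common subspace on which $A^n$ acts as $\mu$, and $A$ acts on that subspace with the distinct eigenvalues $\zeta$). This produces, for each nonzero $\mu_i$ of $B$, exactly $n$ eigenvalues of $A$ forming the full set of $n$-th roots of $\mu_i$, and a dimension count via Proposition \ref{prop:2.1.14 Main property} (or directly: $\sum_i$ (number of nonzero $\mu_i$) $= \deg(1/Z_c(X,u))/n \cdot n$) shows these exhaust all nonzero eigenvalues of $A$, matching multiplicities.

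To handle multiplicities cleanly rather than just eigenvalues, I would phrase the argument in terms of generalized eigenspaces: the restriction of $A$ to the generalized $0$-eigenspace aside, the map $w\mapsto v$ above, bundled over all $n$-th roots $\zeta$, gives an explicit isomorphism between (a copy of) the generalized $\mu$-eigenspace of $B$ acting appropriately and the sum of the generalized $\zeta$-eigenspaces of $A$ over $\zeta^n=\mu$; equivalently, one shows the characteristic polynomial satisfies $\det(tI - A) = t^{N} \det(t^n I - B)$ where $N = |\mathcal{V}| - n\cdot(\text{size of }B)$, which immediately yields the claim including multiplicities. This polynomial identity can itself be proved from the block structure: expanding $\det(tI-A)$ by the cyclic block structure (e.g. by the Schur complement formula applied successively to eliminate the blocks $v_2,\dots,v_n$, legitimate since we are working over the field $\mathbb{C}(t)$ and $t$ is invertible there) collapses the $n$ blocks down to $\det(t^n I - B)$ times the appropriate power of $t$.

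The main obstacle is the bookkeeping in the multiplicity statement: showing not merely that every $n$-th root of every nonzero $\mu_i$ is an eigenvalue of $A$, but that the algebraic multiplicities match up so that ``these account for all the nonzero eigenvalues of $A$'' is literally correct. The cleanest route around it is the characteristic-polynomial identity $\det(tI-A)=t^{N}\det(t^nI-B)$; everything else (the root-of-unity orbit structure, the count $n$) then falls out formally, and consistency with Proposition \ref{prop:2.1.14 Main property} is automatic since that proposition already computes $Z_c(X,u) = \prod_i (1-\mu_i u^n)^{-1}$, whose reciprocal has degree $n$ times the number of nonzero $\mu_i$, exactly the number of nonzero eigenvalues of $A$ predicted here.
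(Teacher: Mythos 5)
Your proposal is correct, and it is a genuinely different (more explicit, more self-contained) route than the paper's. The paper's proof is essentially one line: Theorem \ref{thm:2.1.1 Hashi} gives $1/Z_c(X,u)=\prod_k(1-\lambda_k u)$ and Proposition \ref{prop:2.1.14 Main property} gives $1/Z_c(X,u)=\prod_i(1-\mu_i u^n)$; equating these and factoring $1-\mu_i u^n=\prod_{j=1}^n(1-\zeta_n^j\mu_i^{1/n}u)$ yields the claim, multiplicities included. Your characteristic-polynomial identity $\det(tI-A)=t^N\det(t^nI-B)$, after substituting $t=1/u$ and clearing powers, is precisely $\det(I-Au)=\det(I-Bu^n)$, i.e.\ it re-derives the content of Proposition \ref{prop:2.1.14 Main property} from the block structure via Schur complements rather than citing it — correct, but longer, since that proposition is already proved in the paper. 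What your approach buys that the paper's does not is the explicit eigenvector construction $v_{k+1}=\zeta^{-k}A_{k,k+1}\cdots A_{1,2}w$ producing an eigenvector of $A$ for each $n$-th root $\zeta$ of $\mu_i$; this is strictly more information than a comparison of characteristic or zeta polynomials and is a nice concrete picture of where the root-of-unity orbit of eigenvalues comes from. Your worry about algebraic versus geometric multiplicities is exactly the right thing to notice, and your resolution (fall back on the determinant identity) is sound; just be aware that with the paper's machinery in hand, the two-product comparison is the fastest path to the same conclusion.
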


\begin{proof}
Comparing Proposition \ref{prop:2.1.14 Main property} with Theorem \ref{thm:2.1.1 Hashi}, we have 
$$Z(X,u) = \prod_{i=1}^M\frac{1}{(1-\mu_i u^n)}= \prod_{k=1}^{M'}\frac{1}{(1-\lambda_k u)},$$
where $\mu_i$ are the eigenvalues of $B$, and $\lambda_k$ are the eigenvalues of $A$.

For the product, we may only consider $\lambda_k$ and $\mu_i$ that is nonzero. The result then follows from taking reciprocals of the above identity, and a factorization of $(1-\mu_i u^n)$:
\[
 \prod_{k=1}^{M'} (1-\lambda_k u)= \prod_{i=1}^M{(1-\mu_i u^n)}=\prod_{i=1}^M \prod_{j=1}^n (1-\zeta_n^j (\mu_i)^{1/n} u). 
\]
\end{proof}

This property relates the nonzero eigenvalues of $A$ with those of $B$.

\subsection{Components of the Digraphs $X_0(V)$ and $X_2(V)$}

The digraphs $X_0(V)$ and $X_2(V)$, representing the collections of subspaces and directed flags in an $n$-dimensional vector space $V$, exhibit intriguing structures through their specific subgraphs. These subgraphs, which emerge by focusing on certain subsets of vertices, can provide valuable insights into the overall structure and connectivity of these digraphs. The standing assumption in this subsection is that $V=\bbF_q^n$ with $n\geq 2$.

\subsubsection{Dimension Patterns Along Walks}

In the digraphs $X_0(V)$ and $X_2(V)$, which represent the collections of subspaces and directed flags in an $n$-dimensional vector space $V$, we can observe specific patterns in the dimensions of these subspaces or the multi-dimensions of directed flags along their walks. 

\paragraph{Dimension Patterns in $X_0(V)$}
In $X_0(V)$, each walk alternates between two distinct dimensions of subspaces, $i$ and $n-i$. This pattern results from the adjacency condition in $X_0(V)$, which requires that the direct sum of the subspaces associated with two adjacent vertices equals $V$. For instance, let's consider a walk $(x_{W_0}, x_{W_1}, ..., x_{W_l})$ in $X_0(V)$. If $\dim(W_0) = i$, then $\dim(W_1) = n-i$, $\dim(W_2) = i$, and so on.

\paragraph{Multi-dimension Patterns in $X_2(V)$}
For $X_2(V)$, the pattern in the multi-dimensions of directed flags along a walk is described in the following proposition, as anticipated in Remark \ref{rem:1.3.6 on redefining edge}.

\begin{prop}\label{prop:2.2.1 on multidim trend}  
Let $V$ be an $n$-dimensional vector space, and let $F_0,F_1,...,F_l$ be a sequence of directed flags in $V$. Suppose the multi-dimension of $F_0$ is $(a, b)$.

    \begin{enumerate}
        \item  If there exists an edge from $x_{F_0}$ to $x_{F_1}$, then the multi-dimension of  $F_1$ is $f(a,b)$ where $f$ is defined on $\{(a,b):1\leq a,b\leq n-1, a\neq b\}$ as follows:
        \[
        f(a, b) = 
        \begin{cases} 
        (b, b-a) & \text{if } b > a \\
        (b, n+b-a) & \text{if } b < a
        \end{cases}
        \]
        \item Suppose $a < b$. Let $i = a$, $j = b - a$, $k = n - b$ such that $\mdim(F_0) = (i, i + j)$, then the multi-dimension of $F_i$ is determined by $i \pmod{6}$. The cyclic pattern of multi-dimensions is summarized in the following table:
        \begin{table}[h]
            \centering
            \begin{tabular}{|c|c|c|c|c|c|c|}
            \hline
            & $F_0$ & $F_1$ & $F_2$ & $F_3$ & $F_4$ & $F_5$ \\ 
            \hline
            Multi-dimensions & $(i,i+j)$ & $(i+j,j)$ & $(j,j+k)$ & $(j+k,k)$ & $(k,k+i)$ & $(k+i,i)$ \\ 
            \hline
            \end{tabular}
            \caption{Multi-dimensions of directed flags in a walk in $X_2(V)$}
            \label{table:1}
            \end{table}
    \end{enumerate}
\end{prop}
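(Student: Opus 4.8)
The plan is to prove part (1) directly from the adjacency condition for $X_2(V)$ as reformulated in Remark~\ref{rem:1.3.6 on redefining edge}, and then obtain part (2) by iterating $f$. Recall that an edge from $x_{F_0}$ to $x_{F_1}$ with $F_0 = (W_1, W_2)$ and $F_1 = (W_3, W_4)$ requires $W_2 = W_3$ and that $(W_1, W_2, W_4)$ be a geodesic path in $\mathcal{B}(V)$, i.e. either $W_1 \oplus W_4 = W_2$, or $W_1 \cap W_4 = W_2$ together with $W_1 + W_4 = V$. I would split into the two cases $b > a$ and $b < a$ according to whether $\mdim(F_0) = (a,b)$ has $\dim W_1 < \dim W_2$ or $\dim W_1 > \dim W_2$, and in each case read off which of the two geodesic alternatives can occur by a dimension count.

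If $b > a$, then $W_1 \subsetneq W_2$, so $W_1 \oplus W_4 = W_2$ is impossible (it would force $W_4 \subseteq W_2$ and $W_4 \cap W_1 = 0$ with $\dim W_4 = b - a > 0$, but also $W_1 \subsetneq W_2$ means $W_1$ is already a proper subspace — actually this case \emph{is} possible; let me restate). More carefully: when $W_1 \subsetneq W_2$ the alternative $W_1 \cap W_4 = W_2$ forces $W_2 \subseteq W_4$, and combined with $W_1 \subsetneq W_2 \subseteq W_4$ this is consistent, giving $\dim W_4 = \dim(W_1 + W_4) \geq \dim W_1 + \dim W_4 - \dim(W_1\cap W_4)$... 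The right approach is simply: the two geodesic alternatives are mutually exclusive given the containment direction of $F_0$, and a direct dimension count in each alternative yields $\dim W_4$. Since $F_1 = (W_3, W_4) = (W_2, W_4)$, we get $\mdim(F_1) = (b, \dim W_4)$, and the dimension count gives $\dim W_4 = b - a$ in the case $b > a$ (the alternative $W_1 \oplus W_4 = W_2$) and $\dim W_4 = n + b - a$ in the case $b < a$ (the alternative $W_1 \cap W_4 = W_2$, $W_1 + W_4 = V$). This establishes the formula for $f$; one should check that the output always lies in the admissible range $\{(a',b') : 1 \le a', b' \le n-1,\ a' \ne b'\}$, which is immediate since $b - a$ and $n + b - a$ are strictly between $0$ and $n$ and differ from $b$.

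For part (2), set $i = a$, $j = b - a$, $k = n - b$, so $i + j + k = n$ and $\mdim(F_0) = (i, i+j)$. I would simply apply $f$ repeatedly: since $i + j > i$, $f(i, i+j) = (i+j, j)$; since $j < i + j$, $f(i+j, j) = (j, n + j - (i+j)) = (j, n - i) = (j, j + k)$; and so on, cycling through the six values in Table~\ref{table:1}. After six steps one returns to $(i, i+j)$, so the pattern is periodic with period (dividing) $6$, and $\mdim(F_m)$ depends only on $m \bmod 6$. This is a routine finite verification — six applications of a two-line piecewise formula — so I would present it as a short computation, perhaps noting the symmetry $(i,j,k) \mapsto (j,k,i)$ that is visible in the table and cuts the work to two genuinely distinct steps.

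The only mild obstacle is bookkeeping: making sure the case split $b > a$ versus $b < a$ is correctly matched to which containment $W_1 \subsetneq W_2$ or $W_2 \subsetneq W_1$ holds, and hence to which of the two geodesic-path alternatives in Proposition~\ref{prop:1.2.5 on geodesic in X(V)} is feasible. Once that dictionary is fixed, both parts are dimension counts and direct substitution; there is no real difficulty, and no deep input beyond Proposition~\ref{prop:1.2.5 on geodesic in X(V)} is needed.
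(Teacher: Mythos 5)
Your approach matches the paper's: use the reformulated adjacency condition of Remark~\ref{rem:1.3.6 on redefining edge} together with the two geodesic alternatives of Proposition~\ref{prop:1.2.5 on geodesic in X(V)}, determine from the containment direction of $F_0$ which alternative can occur, do a dimension count, and iterate $f$ six times for part~(2). That strategy is sound, and the conclusion you land on is correct.

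However, the middle of your part~(1) argument contains an error you should repair rather than abandon. When $b > a$ (so $W_1 \subsetneq W_2$), you claim the alternative $W_1 \cap W_4 = W_2$ is ``consistent'' with $W_1 \subsetneq W_2 \subseteq W_4$. It is not: $W_1 \cap W_4 = W_2$ forces $W_2 \subseteq W_1$, which together with $W_1 \subsetneq W_2$ gives $W_1 = W_2$, a contradiction. This observation---that $W_1 \cap W_4 = W_2$ implies $W_2 \subseteq W_1$, not merely $W_2 \subseteq W_4$---is exactly the clean reason the two alternatives are mutually exclusive given the containment direction of $F_0$; it is what your ``the right approach is simply\ldots'' sentence asserts but never establishes. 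Symmetrically, when $b < a$ (so $W_2 \subsetneq W_1$), the alternative $W_1 \oplus W_4 = W_2$ forces $W_1 \subseteq W_2$, which is impossible. Once these two one-line contradictions are inserted, the dimension counts $\dim W_4 = b - a$ and $\dim W_4 = n + b - a$ are forced, and part~(2) is the routine six-step iteration you describe. The paper's proof is essentially this, with the case matching pinned down cleanly rather than stated as an assertion.
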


\begin{proof}
    \begin{enumerate}
    \item If an edge exists from $x_{F_0}$ to $x_{F_1}$, the conditions from Remark \ref{rem:1.3.6 on redefining edge} imply the existence of vector subspaces $W_0,W_1,W_2$ such that $F_0=(W_0,W_1)$ and $F_1=(W_1,W_2)$, with either $W_0\oplus W_2=W_1$ or ($W_0\cap W_2=W_1$ and $W_0+W_2=V$). Given $\dim(W_0)=a$ and $\dim(W_1)=b$, in the first case where $a<b$, $\dim(W_2)=b-a$; in the latter case where $a>b$, $\dim(W_2)=n-b-a$. Hence, the multi-dimension of $F_1$ aligns with $f(a,b)$ in both cases.

    \item By the first part, $\mdim(F_{r+1})=f(\mdim(F_r))$ for each $r$. The multi-dimensions of $F_0$ through to $F_5$ then follow the pattern outlined in the table. Applying $f$ to $\mdim(F_5) = (k+i, i)$ gives $(i,n+i-(i+k))=(i,i+j)$, which equals $\mdim(F_0)$. This confirms the cyclic pattern of the multi-dimensions.
    \end{enumerate}
\end{proof}

\begin{cor}\label{cor:2.2.2 to prop2.2.1}
   The function $f$ defined in the above proposition holds the property $f^6 = \id$.
\end{cor}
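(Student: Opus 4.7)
The plan is to deduce the corollary directly from part (2) of Proposition 2.2.1, splitting on whether the first coordinate of $(a,b)$ is smaller than the second. Since the domain of $f$ requires $a \neq b$, these two cases are exhaustive.

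For $a < b$, the result is immediate from the cyclic table. Setting $i = a$, $j = b-a$, $k = n-b$, all three are positive integers summing to $n$, and the table presents the iterates $\mdim(F_0), \mdim(F_1), \ldots, \mdim(F_5)$ starting from $(i, i+j) = (a, b)$. Proposition 2.2.1(2) asserts the pattern is cyclic of period six, which says exactly that $f^6(a,b) = (a,b)$.

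For $a > b$, I would first apply $f$ once to reduce to the previous case. By definition, $f(a,b) = (b, n+b-a)$, and since $a \leq n-1$, the first coordinate $b$ is strictly less than the second coordinate $n+b-a$. Now applying the table to $f(a,b)$ with fresh parameters $i' = b$, $j' = n-a$, $k' = a-b$ (all positive since $b < a < n$), one traces
\[
F_0 = (b, n+b-a), \ F_1 = (n+b-a, n-a), \ F_2 = (n-a, n-b), \ F_3 = (n-b, a-b), \ F_4 = (a-b, a), \ F_5 = (a, b).
\]
Hence $f^5(f(a,b)) = (a,b)$, which is $f^6(a,b) = (a,b)$, as desired.

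There is no real obstacle here; the corollary is essentially a restatement of part (2) of the preceding proposition, with the minor bookkeeping of handling the $a > b$ branch by using one step of $f$ to land inside the configuration the table already covers. If a more unified presentation is preferred, one can instead verify by a two-line calculation that $f$ is a bijection on its finite domain (for instance, by checking injectivity directly from the piecewise formula), and then note that case one already proves $f^6 = \id$ on the subset $\{(a,b) : a < b\}$, which equals the image of $f$ restricted to $\{(a,b) : a > b\}$; since $f$ is a bijection, $f^6 = \id$ globally.
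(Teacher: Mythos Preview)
Your proof is correct and takes essentially the same approach as the paper: both cases rely on the cyclic table of Proposition~2.2.1(2). The paper's handling of the $a>b$ case is slightly more direct---it writes $(a,b)=(i+j,j)$ with $i=a-b$, $j=b$, recognizing this as the entry $F_1$ already in the table, so cyclicity gives $f^6(F_1)=F_1$ without recomputing iterates; your reduction via one application of $f$ traces the same orbit with cyclically permuted parameters $(i',j',k')=(j,k,i)$.
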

\begin{proof}
Let $(a,b)$ satisfy $1\leq a,b\leq n-1, a\neq b$. We can express $(a,b)$ as either $(i,i+j)$ or $(i+j,j)$ for some $i,j$. The previous proposition shows that $f^6(i,i+j)=(i,i+j)$, and $f^6(i+j,j)=(i+j,j)$. Therefore, $f^6=\id$.  
\end{proof}

\subsubsection{Subgraphs in $X_0(V)$ and $X_2(V)$}\label{sec:2.2.2}

Certain subgraphs within the digraphs $X_0(V)$ and $X_2(V)$ can be identified by focusing on specific dimensions of the subspaces or multi-dimensions of the directed flags. It turns out that they are connected components of these digraphs.

\paragraph{Subgraphs of $X_0(V)$}

For $X_0(V)$, we can define an equivalence class $[k] = \{k, n-k\}$ for each $k$ where $1 \leq k < n$, or $[k] = \{k\}$ if $k = \frac{n}{2}$. These equivalence classes categorize the set of possible dimensions $\{1,\ldots,n-1\}$.

Using these equivalence classes, 
we define subgraphs $X_0^{[k]}(V)$ which form the connected components of $X_0(V)$, as will be shown in Section \ref{sec:4.1.4}.
\begin{defn}
The subgraph $X_0^{[k]}(V)$ of $X_0(V)$ consists of vertices representing subspaces of $V$ of dimensions in the class $[k]$. The edges of $X_0^{[k]}(V)$ are the edges in $X_0(V)$ that connect these vertices.
\end{defn}

These subgraphs $X_0^{[k]}(V)$ are disjoint, as the alternating pattern of dimensions along walks in $X_0(V)$ ensures that there are no edges between vertices of dimensions not in the same class $[k]$. The union of these subgraphs forms the entire graph $X_0(V)$. Thus, the graph $X_0(V)$ can be expressed as a disjoint union of its subgraphs $X_0^{[k]}(V)$:

\[X_0(V) = \dot\bigcup_{[k]} X_0^{[k]}(V).\]

By the properties of the zeta function for disjoint union of digraphs, the zeta function of $X_0(V)$ is then the product of the zeta functions of its subgraphs $X_0^{[k]}(V)$:

\[Z_c(X_0(V),u) = \prod_{[k]} Z_c(X_0^{[k]}(V),u).\]

The subgraph $X_0^{[k]}(V)$, for $k \neq n/2$, is bipartite, split into two types of vertices: those representing subspaces of dimension $k$ and those of dimension $n-k$. Edges only exist between these differing types, illustrating the bipartite nature.

\paragraph{Subgraphs of $X_2(V)$}
In $X_2(V)$, we define an equivalence relation on the set of possible multi-dimensions $\{(a,b) : 1 \leq a,b \leq n-1, a \neq b\}$ using the function $f$ defined in Proposition \ref{prop:2.2.1 on multidim trend}.

Two multi-dimensions $(a,b)$ and $(a',b')$ are equivalent, denoted $(a,b) \sim (a',b')$, if and only if there exists an integer $k \geq 0$ such that $f^k(a,b) = (a',b')$. For each such $(a,b)$, we define the equivalence class $[(a,b)] := \{(a',b'): (a',b') \sim (a,b)\}$. 
We then define a subgraph $X_2^{[(a,b)]}(V)$ as:

\begin{defn}
The subgraph $X_2^{[(a,b)]}(V)$ of $X_2(V)$ includes vertices that correspond to directed flags with multi-dimensions in the class $[(a,b)]$. The edges of $X_2^{[(a,b)]}(V)$ are the edges in $X_2(V)$ that connect these vertices.
\end{defn}

Since the cyclic pattern of multi-dimensions along walks in $X_2(V)$ ensures that there are no edges between vertices of multi-dimensions not in the same class $[(a,b)]$, these subgraphs $X_2^{[(a,b)]}(V)$ are disjoint. Moreover, each $X_2^{[(a,b)]}$ is connected, as will be shown in Section \ref{sec:4.1.4}. This yields the following connected components decomposition of the digraph $X_2(V)$:

\[X_2(V) = \dot\bigcup_{[(a,b)]} X_2^{[(a,b)]}(V)\]

The zeta function of $X_2(V)$ is then the product of the zeta functions of its components:
\[Z_c(X_2(V),u) = \prod_{[(a,b)]} Z_c(X_2^{[(a,b)]}(V),u)\]

Now let's discuss the size of the equivalence class $[(a,b)]$. As demonstrated in the proof of Corollary \ref{cor:2.2.2 to prop2.2.1}, we can express $(a,b)$ as $(i,i+j)$ or $(i+j,j)$. By the computation in Proposition \ref{prop:2.2.1 on multidim trend}, where $k=n-i-j$, the class $[(a,b)]$ equals to the set $\{(i,i+j), (i+j,j), (j,j+k), (j+k,k), (k,k+i), (k+i,i)\}$. The cardinality of the equivalence class $[(a,b)]$ is 2 if $i=j=k$, and 6 otherwise.

The structure of the subgraph $X_2^{[(a,b)]}(V)$ depends on these cardinalities. When $i$, $j$, and $k$ are distinct, $X_2^{[(a,b)]}(V)$ is a cyclic 6-partite digraph, the vertices of which correspond to the multi-dimensions in $\{(i,i+j), (i+j,j), (j,j+k), (j+k,k), (k,k+i), (k+i,i)\}$, with edges connecting vertices in cyclic order.

However, when $i=j=k$, $X_2^{[(a,b)]}(V)$ becomes a bipartite digraph. Its vertices correspond to flags with multi-dimensions $(i,2i)$ and $(2i,i)$, with no edges connecting flags of the same type.

\section{Group Action on a Cyclic $n$-Partite Digraph}
We now turn our attention to the study of zeta functions of component subgraphs of the digraphs $X_0(V)$ and $X_2(V)$. It is important to note that the group $\GL(V)$ acts on these subgraphs and acts transitively on each type of vertices. In this section, we will explore the implications of this group action on a cyclic $n$-partite digraph.
\subsection{Group Actions, Averaging, and Module Structures}
In this subsection, we recall the $\bbC G$-module structure induced by a finite group $G$ acting on a set $X$. We further investigate the effects of averaging subsets of $G$ and how this relates to the module structure. This allows for an inclusion map from $\bbC [G.x]$ to $\bbC G$ and paves the way for the relative destination elements in the next subsection.

\begin{defn}
Consider a finite group $G$ acting on a set $X$. The action induces a structure of a $\bbC G$-module on the vector space $\bbC [X]$ over $\bbC$ with basis $X$ where 
\[
\bbC [X]= \left\{\sum_{x\in X} c_x x: c_x\in \bbC, \text{ only finitely many $c_x$ are nonzero} \right\},
\]
and $\bbC G$ acts on $\bbC [X]$ by linearly extending the $G$-action on $X$.
\end{defn}

For each $x \in X$, the stabilizer in $G$ is denoted by $G_x = \{g \in G : g.x = x\}$. By the theory of group action, $G.x\simeq G/G_x$ as $G$-sets, with $G$ acting on $G/G_x$ by left multiplication. This leads to the module isomorphism $\bbC[G.x]\simeq \bbC[G/G_x]$. To identify the module $\bbC[G.x]$ as a subset of the module $\bbC[G]$, we need to define the average of certain subsets of $G$.

\begin{defn}
Given a nonempty subset $S$ of $G$, the average of $S$, denoted by $e_S$, is defined as the element of $\bbC G$ given by: 
\[
e_{S} = \frac{1}{|S|}\sum_{s \in S} s.
\]
\end{defn}

This leads to a basic fact about averaging elements.

\begin{lemma}\label{lemma:mul_idem}
If $H$ and $K$ are subgroups of $G$, and $g\in G$, then $e_{HgK}=e_H\cdot g\cdot e_K$. In particular, $e_{HK}=e_H e_K$ and $e_H=e_H^2$.
\end{lemma}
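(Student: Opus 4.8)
The plan is to compute $e_H \cdot g \cdot e_K$ directly from the definition of the averaging element and identify the result as a scalar multiple of $\sum_{x \in HgK} x$. First I would expand
\[
e_H \cdot g \cdot e_K = \frac{1}{|H|}\frac{1}{|K|}\sum_{h \in H}\sum_{k \in K} hgk,
\]
so that the coefficient of a group element $x \in G$ in this sum is $\frac{1}{|H||K|}$ times the number of pairs $(h,k) \in H \times K$ with $hgk = x$. This coefficient is zero unless $x \in HgK$, which already shows the support of $e_H \cdot g \cdot e_K$ is contained in the double coset $HgK$.

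The key step is to show that the number of pairs $(h,k)$ with $hgk = x$ is the same for every $x \in HgK$; call this common count $c$. For this I would fix a reference element, say $x_0 = g \in HgK$, and note that the set of pairs $(h,k)$ representing a given $x \in HgK$ is a torsor under the subgroup $\{(h,k) \in H \times K : hgk = g\}$ of $H \times K$ — indeed, if $hgk = x = h'gk'$ then $(h^{-1}h')g(k'k^{-1}) = g$, and conversely multiplying a valid representation by an element of this stabilizing subgroup yields another valid representation. Hence $c$ is independent of $x$ and equals $|\{(h,k) : hgk = g\}|$. It then follows that
\[
e_H \cdot g \cdot e_K = \frac{c}{|H||K|}\sum_{x \in HgK} x.
\]
To pin down the scalar, I would count the pairs $(h,k) \in H \times K$ in two ways: each of the $|H||K|$ pairs lands in exactly one double-coset element, so $|H||K| = c \cdot |HgK|$, giving $\frac{c}{|H||K|} = \frac{1}{|HgK|}$. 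Therefore $e_H \cdot g \cdot e_K = \frac{1}{|HgK|}\sum_{x \in HgK} x = e_{HgK}$, as desired.

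The special cases are then immediate: taking $g = 1$ (the identity of $G$) gives $e_{HK} = e_H \cdot 1 \cdot e_K = e_H e_K$, and further taking $K = H$ gives $e_{HH} = e_H^2$, while $HH = H$ since $H$ is a subgroup, so $e_H = e_H^2$. I do not expect any serious obstacle here; the only point requiring care is the torsor/counting argument showing the representation count $c$ is constant across the double coset, which is the routine heart of the "double cosets and Hecke-type averaging" bookkeeping.
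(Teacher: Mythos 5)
Your proposal is correct and follows essentially the same route as the paper: expand $e_H\,g\,e_K$, observe that the support lies in $HgK$, show the coefficient is constant across the double coset, and then determine the scalar (you count $|H||K| = c\,|HgK|$ directly, while the paper combines constancy with the observation that the coefficients sum to $1$; the paper gets constancy from the identity $h\,e_H\,g\,e_K\,k = e_H\,g\,e_K$, while you use an orbit/torsor count — these are interchangeable). One small imprecision worth flagging: the set $S=\{(a,b)\in H\times K : agb = g\}$ is \emph{not} a subgroup of $H\times K$ under the coordinatewise product — the closure check gives $(a_1a_2)g(b_2b_1)=g$, so $S$ is really a subgroup of $H\times K^{\mathrm{op}}$, and the action on representatives is $(h,k)\mapsto(ha,bk)$ — but this does not affect the free-transitive/bijection counting you actually use, so the conclusion $c = |S| = |H||K|/|HgK|$ stands.
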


\begin{proof}
Consider $e_H  g  e_K = \frac{1}{|H|}\sum_{h \in H}h \cdot g \cdot \frac{1}{|K|}\sum_{k \in K}k$. This can be rewritten as $\sum_{x\in G}c_x x$, where $c_x$ is the coefficient of $x$. Note that $\sum_{x\in G}c_x = 1$ and $c_x \neq 0$ only when $x \in HgK$. 

Now consider any $x,y \in HgK$. If $y=hxk$ for some $h \in H$ and $k \in K$, then $he_H  g  e_K  k = e_H  g  e_K$, implying $c_x=c_y$. Therefore, $c_x = \frac{1}{|HgK|}$ for all $x \in HgK$, and we have $e_H  g  e_K = \frac{1}{|HgK|}\sum_{x\in HgK}x = e_{HgK}$.
\end{proof}
\begin{thm}\label{thm:isomorphism_modules}
Consider a finite group $G$ acting on a set $X$. Let $H$ be a subgroup of $G$, and $x\in X$. Then the following hold:
\begin{enumerate}
    \item $\bbC G e_H$ is isomorphic to $\bbC[G/H]$ as $\bbC G$-modules via the correspondence $e_H\leftrightarrow H$.
    \item There is a $\bbC G$-module isomorphism $\alpha_x:\bbC[G.x]\to \bbC G e_{G_x}$ with $\alpha_x(x)= e_{G_x}$.
\end{enumerate}
\end{thm}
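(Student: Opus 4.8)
The plan is to prove the two parts in order, using Lemma~\ref{lemma:mul_idem} as the main engine. For part (1), I would define a $\bbC G$-linear map $\phi:\bbC[G/H]\to\bbC Ge_H$ by sending the coset $gH$ to $ge_H$. First I need to check this is well-defined: if $gH=g'H$ then $g'=gh$ for some $h\in H$, and $g'e_H=ghe_H=ge_H$ since $he_H=e_H$ (a special case of Lemma~\ref{lemma:mul_idem}, as $h\in H$ means $e_{\{h\}H}=e_{\{h\}}e_H$, or more simply left-multiplication by $h$ permutes $H$). Then $\phi$ is surjective by construction, and $\bbC G$-linearity is immediate from the definition on the basis. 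Injectivity follows because the elements $\{ge_H : gH\in G/H\}$ are linearly independent: $ge_H=\frac{1}{|H|}\sum_{h\in H}gh$ is supported exactly on the coset $gH$, so distinct cosets give vectors with disjoint supports in the basis $G$ of $\bbC G$. Hence $\phi$ is the desired isomorphism with $e_H=\phi(H)\leftrightarrow H$.

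For part (2), I would build $\alpha_x$ as the composite of the standard $G$-set isomorphism $G.x\xrightarrow{\sim}G/G_x$ (sending $g.x\mapsto gG_x$, well-defined and bijective by orbit-stabilizer) with the isomorphism $\phi$ from part (1) applied with $H=G_x$. Linearly extending $G.x\xrightarrow{\sim}G/G_x$ gives a $\bbC G$-module isomorphism $\bbC[G.x]\xrightarrow{\sim}\bbC[G/G_x]$, and composing with $\phi:\bbC[G/G_x]\xrightarrow{\sim}\bbC Ge_{G_x}$ yields $\alpha_x:\bbC[G.x]\to\bbC Ge_{G_x}$. Tracking the basis element $x$: it maps to the coset $G_x$, which $\phi$ sends to $e_{G_x}$, so $\alpha_x(x)=e_{G_x}$ as claimed.

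I do not expect a serious obstacle here; the only point requiring a little care is the well-definedness of $\phi$ and the linear-independence argument for injectivity, both of which rest on the observation that $ge_H$ has support precisely the coset $gH$. If one prefers, injectivity can instead be obtained by exhibiting a one-sided inverse: the map $\bbC Ge_H\to\bbC[G/H]$ induced by the projection $G\to G/H$, $g\mapsto gH$, which kills nothing on $\bbC Ge_H$ since $e_H$ itself maps to $H$. I would phrase the writeup to emphasize the support/disjointness argument as it is the most transparent. One should also note that $\bbC Ge_H$ is indeed a $\bbC G$-submodule of $\bbC G$ (it is a left ideal), so the statement "isomorphic as $\bbC G$-modules" is meaningful; this is automatic and needs no separate argument.
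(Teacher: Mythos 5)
Your proof is correct and follows essentially the same approach as the paper: both hinge on the two observations that the stabilizer of $e_H$ under left multiplication is $H$ (giving well-definedness of the map $gH\mapsto ge_H$) and that the elements $\{ge_H:gH\in G/H\}$ are $\bbC$-linearly independent (you argue via disjoint supports, the paper simply asserts the orbit $G.e_H$ is independent), after which part (2) is the same composition with the orbit--stabilizer bijection $G.x\simeq G/G_x$. The paper phrases part (1) as identifying the $G$-set $G.e_H\subseteq\bbC G$ with $G/H$ rather than constructing the map $\phi$ outright, but this is a cosmetic difference; your explicit support argument for injectivity is a clean way to make precise what the paper leaves implicit.
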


\begin{proof}
\begin{enumerate}
    \item The group $G$ acts on $\bbC G$ by left multiplication and the stabilizer of $e_H$ is $H$. Hence, we can identify the orbit $G.e_H$ with the set of left cosets $G/H$. This implies that $\bbC [G.e_H]\simeq \bbC[G/H]$ as $\bbC G$-modules via $e_H\leftrightarrow H$. 
    
    The set $G.e_H$ is $\bbC$-independent, and hence a $\bbC$-basis for the submodule $\bbC G e_H$ of $\bbC G$. Therefore, the $\bbC G$-module $\bbC[G.e_H]$ is naturally identified with $\bbC G e_H$.
    
    \item Similarly, for any $x\in X$, the stabilizer of $x$ is $G_x$, and we can identify the orbit $G.x$ with the set of left cosets $G/G_x$. Then $\bbC[G.x]$ is isomorphic to $\bbC[G/H]$ and $\bbC Ge_H$ by part 1. The $\bbC G$-module isomorphism from $\bbC[G.x]$ to $\bbC G e_{G_x}$ sends $x$ to the average $e_{G_x}$ of $G_x$.
\end{enumerate}
\end{proof}

\begin{prop}\label{prop:3.1.5 two alpha}
Assume the notations in the previous theorem, and let $y\in X$. Suppose there exists $g\in G$ satisfies $g.x=y$. 
Then $G.x=G.y$, and the isomorphism $\alpha_{y}\circ \alpha_x^{-1}:\bbC G e_{G_x}\to \bbC G e_{G_{y}}$ is equivalent to $r_{g^{-1}}$, the right multiplication by $g^{-1}$.
\end{prop}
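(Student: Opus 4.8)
The plan is to use that $\bbC G e_{G_x}$ is a \emph{cyclic} $\bbC G$-module, generated by $e_{G_x}$ (indeed $\bbC G e_{G_x} = \bbC G\cdot e_{G_x}$ by definition). Consequently any $\bbC G$-module homomorphism out of $\bbC G e_{G_x}$ is completely determined by the image of the single element $e_{G_x}$, so it will suffice to produce $r_{g^{-1}}$ as a $\bbC G$-module homomorphism $\bbC G e_{G_x}\to \bbC G e_{G_y}$ and then check that $\alpha_y\circ\alpha_x^{-1}$ and $r_{g^{-1}}$ agree on $e_{G_x}$.

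First I would record the elementary facts. Since $y = g.x$, the orbits coincide, $G.x = G.y$, so $\alpha_y$ is defined on $\bbC[G.x] = \bbC[G.y]$ and the composite $\alpha_y\circ\alpha_x^{-1}\colon \bbC G e_{G_x}\to \bbC G e_{G_y}$ makes sense. Conjugation of stabilizers gives $G_y = gG_xg^{-1}$, whence, averaging directly (or via Lemma \ref{lemma:mul_idem}),
\[
e_{G_y} = \frac{1}{|G_x|}\sum_{h\in G_x} ghg^{-1} = g\, e_{G_x}\, g^{-1}.
\]
From this identity, for any $a\in \bbC G$ we get $a e_{G_x} g^{-1} = a g^{-1}(g e_{G_x} g^{-1}) = a g^{-1} e_{G_y} \in \bbC G e_{G_y}$, so right multiplication by $g^{-1}$ genuinely carries $\bbC G e_{G_x}$ into $\bbC G e_{G_y}$; and since left and right multiplication on $\bbC G$ commute, $r_{g^{-1}}$ is a homomorphism of left $\bbC G$-modules between these submodules.

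Then I would evaluate both maps on the generator. By Theorem \ref{thm:isomorphism_modules}, $\alpha_x^{-1}(e_{G_x}) = x$ and $\alpha_y$ is a $\bbC G$-module map with $\alpha_y(y) = e_{G_y}$. Writing $x = g^{-1}.y$ and using $\bbC G$-linearity of $\alpha_y$,
\[
(\alpha_y\circ\alpha_x^{-1})(e_{G_x}) = \alpha_y(g^{-1}.y) = g^{-1}\alpha_y(y) = g^{-1} e_{G_y} = g^{-1}(g e_{G_x} g^{-1}) = e_{G_x} g^{-1} = r_{g^{-1}}(e_{G_x}).
\]
Both $\alpha_y\circ\alpha_x^{-1}$ and $r_{g^{-1}}$ are $\bbC G$-module homomorphisms $\bbC G e_{G_x}\to \bbC G e_{G_y}$ agreeing on the generator $e_{G_x}$, hence they are equal. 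The only genuinely delicate point is the bookkeeping for $r_{g^{-1}}$ — that it maps into the correct codomain $\bbC G e_{G_y}$ and is a left-module map — which rests entirely on the conjugation identity $e_{G_y} = g\, e_{G_x}\, g^{-1}$; the remaining verification is a one-line computation on a single generator.
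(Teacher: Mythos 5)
Your proof is correct and follows essentially the same route as the paper: establish $e_{G_y} = g\,e_{G_x}\,g^{-1}$ via Lemma \ref{lemma:mul_idem} to see $r_{g^{-1}}$ is a well-defined left $\bbC G$-module map into $\bbC G e_{G_y}$, then check agreement with $\alpha_y\circ\alpha_x^{-1}$ on generators. The only stylistic difference is that you reduce to the single generator $e_{G_x}$ by invoking cyclicity, whereas the paper computes on all elements $h e_{G_x}$ of $G.e_{G_x}$ and extends by linearity — the same verification, packaged slightly differently.
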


\begin{proof}
Since $y=g.x$, we have $G_{y}=\Stab(y)=g G_x g^{-1}$. Then by Lemma \ref{lemma:mul_idem}, we obtain $e_{G_{y}}=e_{g G_x g^{-1}}=g e_{G_x} g^{-1}$. Hence, $\bbC G e_{G_{y}}=\bbC G g e_{G_x} g^{-1}=\bbC G e_{G_x} g^{-1}$. Therefore, the map $r_{g^{-1}}:\bbC G e_{G_x}\to \bbC G e_{G_{y}}$ is well-defined.

For any $h\in  G$, we have $\alpha_x^{-1}(h e_{G_x})=h.x=h g^{-1}.y$. Therefore $\alpha_{y}\alpha_x^{-1}(h e_{G_x})=h g^{-1} e_{G_{y}}$ $=h e_{G_x} g^{-1}=r_{g^{-1}}(h e_{G_x})$. Then $\alpha_{y}\alpha_x^{-1}$ and $r_{g^{-1}}$ agree on $G.e_{G_x}$. They also agree on $\bbC G e_{G_x}$ by extending by linearity. 
\end{proof}

\subsection{Group Action on Digraphs and Destination Elements}
We begin by defining the walk numbers and destination sums, which are concepts that allow us to count the number of walks of a given length between vertices and express the distribution of such walks.

\begin{defn}[Walk Number and Destination Sum]
Let $u, v$ be vertices in $X=(\mathcal{V},\mathcal{E})$, and let $l$ be a positive integer. The \textbf{walk number} (or path number) $P(u, v, l)$ is defined as the number of walks from $u$ to $v$ of length $l$ in $X$.

Let $u$ be a vertex in $X$, and let $l$ be a positive integer. The \textbf{destination sum} $D(u, l)$ is defined as the following formal sum in $\bbC \mathcal{V}$:
\[D(u, l) = \sum_{v \in \mathcal{V}} P(u, v, l). v.\]
\end{defn}

With this, we can define and study the adjacency operator.

\begin{defn}
Let $X$ be a digraph with adjacency matrix $A$ and vertex set $\mathcal{V}$. We define the adjacency operator of $X$ as a linear map $T:\mathbb{C}\mathcal{V} \rightarrow \mathbb{C}\mathcal{V}$ such that for each $u\in\mathcal{V}$, $T(u)=D(u,1)$, which is the sum of vertices $v$ directly reachable from $u$.
\end{defn}

Note that the coefficient of $v$ in $T(u)$ is then $P(u,v,1)$, which is equal to $A_{u,v}$ by definition of the adjacency matrix. With respect to the basis $\mathcal{V}$ of $\mathbb{C}\mathcal{V}$, the matrix of $T$ is then $A^T$, the transpose of $A$.

Then for a positive integer $m$, the matrix of $T$ with respect to $\mathcal{V}$ is then $(A^m)^T$. For any $u,v\in \mathcal{V}$, the coefficient of $v$ in $T^m(u)$ is then $(A^m)_{u,v}=P(u,v,m)$. Therefore, $T^m$ is the linear map such that $T^m(u)=\sum_{v\in \mathcal{V}} P(u,v,l)=D(u,l)$.

\paragraph{Partite Transitive Group Action}
Recall the definition of group action on digraphs.
\begin{defn}
A \textbf{group action} of a group $G$ on a simple digraph $X=(\mathcal{V},\mathcal{E})$ is a group action of $G$ on the vertex set $\mathcal{V}$, preserving the edge relations. Specifically, if $(u,v)\in \mathcal{E}$, then $(gu,gv)\in \mathcal{E}$.
\end{defn}

We will consider a special type of group action on cyclic $n$-partite digraph, called partite transitive action.
\begin{defn}
Let $X=(\mathcal{V},\mathcal{E})$ be a cyclic $n$-partite digraph with vertex parts $\mathcal{V}_1, ..., \mathcal{V}_n$. A \textbf{partite transitive action} of a group $G$ on $X$ is a group action of $G$ on the vertex set $\mathcal{V}$ such that for each $i$, $G.\mathcal{V}_i = \mathcal{V}_i$ and $G$ acts transitively on $\mathcal{V}_i$. In other words, the group action on vertices preserves type, and acts transitively on each type of vertices.
\end{defn}

For the remainder of this section, we make the standing assumption that $G$ is a finite group acting partite transitively on a cyclic $n$-partite digraph $X=(\mathcal{V},\mathcal{E})$. Note that in this case, $T^m$ maps each $\bbC\mathcal{V}_i$ to $\bbC \mathcal{V}_{i+m\pmod n}$.

The following proposition reveals the relationship between the group action and the walk characterizing concepts in our cyclic $n$-partite digraph:
\begin{prop}\label{prop:3.2.5 G-Properties}
Under our standing assumption, let $l$ be a positive integer. Suppose $u, v$ are vertices in $X$ with $\text{type}(v) = \text{type}(u) + l \pmod{n}$. The following properties hold:

1. $P(gu,gv,l)=P(u,v,l)$.

2. $D(gu,l)=gD(u,l)$.

3. The map $T^l$ is $G$-linear.
\end{prop}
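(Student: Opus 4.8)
The plan is to prove the three assertions in the order stated, since each builds on the previous one. The key inputs are the definition of a group action on a digraph (which preserves the edge relation) and the partite-transitivity standing assumption (which guarantees the type-shifting behavior of $T^l$ so that the statement ``$\text{type}(v) = \text{type}(u) + l \pmod n$'' is exactly the condition under which $P(u,v,l)$ can be nonzero).

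First I would prove (1). Fix $g \in G$. A walk of length $l$ from $u$ to $v$ is a sequence $(u = w_0, w_1, \ldots, w_l = v)$ with each $(w_i, w_{i+1}) \in \mathcal{E}$. Since $G$ acts on $X$ by digraph automorphisms, $(gw_i, gw_{i+1}) \in \mathcal{E}$ for all $i$, so $(gu = gw_0, gw_1, \ldots, gw_l = gv)$ is a walk of length $l$ from $gu$ to $gv$; this gives a map from walks $u \to v$ to walks $gu \to gv$. Applying the same construction with $g^{-1}$ gives an inverse map, so the map is a bijection and $P(gu, gv, l) = P(u, v, l)$.

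Next, (2) follows by a direct computation in $\bbC \mathcal{V}$:
\[
gD(u,l) = g\sum_{v \in \mathcal{V}} P(u,v,l)\, v = \sum_{v \in \mathcal{V}} P(u,v,l)\, (gv) = \sum_{v' \in \mathcal{V}} P(u, g^{-1}v', l)\, v' = \sum_{v' \in \mathcal{V}} P(gu, v', l)\, v' = D(gu, l),
\]
where the third equality is the substitution $v' = gv$ (a bijection of $\mathcal{V}$) and the fourth uses part (1) in the form $P(gu, v', l) = P(u, g^{-1}v', l)$. Finally, for (3), recall from the discussion preceding the proposition that $T^l(u) = D(u,l)$ for every vertex $u$. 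Hence for each $u \in \mathcal{V}$ we have $T^l(gu) = D(gu, l) = gD(u,l) = gT^l(u)$ by part (2), so $T^l$ commutes with the $G$-action on the basis $\mathcal{V}$; extending by linearity, $T^l$ is $G$-linear on all of $\bbC\mathcal{V}$.

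I do not anticipate a serious obstacle here — the statement is essentially bookkeeping. The only point requiring a little care is making sure the index substitution in (2) is a genuine reindexing of the (finite-support) sum and that the hypothesis on types is used correctly: it is not needed for the identities themselves but rather guarantees that both sides land in the expected homogeneous piece $\bbC\mathcal{V}_{\text{type}(u)+l}$, which is what makes the statement meaningful in the cyclic $n$-partite setting. If one wanted to streamline, (1) could alternatively be phrased via the observation that the matrix of $T$ transforms as $g$ permutes basis vectors, but the explicit bijection-of-walks argument is cleanest and self-contained.
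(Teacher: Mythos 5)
Your proof is correct and takes essentially the same route as the paper: act on walks to get (1), reindex the sum to get (2), and evaluate $T^l$ on basis vectors to get (3). The only cosmetic difference is that you reindex $v' = gv$ starting from $gD(u,l)$, while the paper starts from $D(gu,l)$ and substitutes $v \mapsto gv$; your explicit note on why the map of walks is a bijection and your remark on the (in)essential role of the type hypothesis are correct and slightly more careful than the paper's one-line treatment.
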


\begin{proof}
1. If there exists a walk of length $l$ from $u$ to $v$, acting with $g \in G$ on this walk gives a walk of the same length from $gu$ to $gv$. Therefore, the walk number $P(u,v,l)$ is invariant under the action of the group $G$.

2. By definition and the first property, $$D(gu,l)=\sum_{v\in \mathcal{V}} P(gu,v,l).v=\sum_{v\in \mathcal{V}} P(gu,gv,l).gv=\sum_{v\in \mathcal{V}} P(u,v,l).gv=g.D(u,l).$$

3. For any $u\in \mathcal{V}$, $T^l(gu)=D(gu,l)=gD(u,l)=gT^l(u)$.
\end{proof}

\begin{prop}\label{prop:3.3.2} Suppose $l$ is a positive integers, and $i,j$ are types with $i+l=j\pmod n$. Suppose $u\in \mathcal{V}_i$ and $v\in \mathcal{V}_j$. Then there exists some unique $D(u,v,l)\in e_{G_u}\bbC Ge_{G_v}$ such that the following diagram commutes. Here, $D(u,v,l)_r$ means right multiplication by $D(u,v,l)$.
\begin{center}
\begin{tikzcd}[column sep=large]
\mathbb{C}\mathcal{V}_i \arrow[r, "\alpha_u"] \arrow[d, "T^l"'] & \mathbb{C}Ge_{G_u} \arrow[d, "{D(u,v,l)}_r"] \\
\mathbb{C}\mathcal{V}_j \arrow[r, "\alpha_v"'] & \mathbb{C}Ge_{G_v}
\end{tikzcd}
\end{center}
\end{prop}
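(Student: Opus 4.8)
The plan is to transport the map $T^l \colon \mathbb{C}\mathcal{V}_i \to \mathbb{C}\mathcal{V}_j$ across the vertical isomorphisms $\alpha_u$ and $\alpha_v$ of Theorem \ref{thm:isomorphism_modules} and to check that the resulting map $\mathbb{C}Ge_{G_u} \to \mathbb{C}Ge_{G_v}$ is right multiplication by a suitable element. Concretely, set $\Phi = \alpha_v \circ T^l \circ \alpha_u^{-1}$; this is a $\mathbb{C}G$-linear map between the two submodules of $\mathbb{C}G$, using that $T^l$ is $G$-linear (Proposition \ref{prop:3.2.5 G-Properties}(3)) and that $\alpha_u,\alpha_v$ are $\mathbb{C}G$-module isomorphisms. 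Since $\mathbb{C}\mathcal{V}_i = \mathbb{C}[G.u]$ (partite transitivity gives $\mathcal{V}_i = G.u$) and likewise $\mathbb{C}\mathcal{V}_j = \mathbb{C}[G.v]$, both sides are genuine cyclic $\mathbb{C}G$-modules generated by $e_{G_u}$ and $e_{G_v}$ respectively.

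First I would define $D(u,v,l) := \Phi(e_{G_u}) \in \mathbb{C}Ge_{G_v} \subseteq \mathbb{C}G$ and show it lies in $e_{G_u}\mathbb{C}Ge_{G_v}$. For the right-hand factor: $\Phi(e_{G_u}) \in \mathbb{C}Ge_{G_v}$ and $e_{G_v} = e_{G_v}^2$ by Lemma \ref{lemma:mul_idem}, so $\Phi(e_{G_u})e_{G_v} = \Phi(e_{G_u})$. For the left-hand factor: $e_{G_u}$ is fixed by left multiplication by every $h \in G_u$ (again Lemma \ref{lemma:mul_idem}, since $he_{G_u} = e_{hG_u} = e_{G_u}$), so by $\mathbb{C}G$-linearity $h\cdot\Phi(e_{G_u}) = \Phi(e_{G_u})$ for all $h \in G_u$; averaging over $G_u$ gives $e_{G_u}\Phi(e_{G_u}) = \Phi(e_{G_u})$. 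Hence $D(u,v,l) \in e_{G_u}\mathbb{C}Ge_{G_v}$.

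Next I would verify that $\Phi$ equals right multiplication by $D(u,v,l)$. Because $\Phi$ is $\mathbb{C}G$-linear and $\mathbb{C}Ge_{G_u}$ is generated as a left $\mathbb{C}G$-module by $e_{G_u}$, an arbitrary element of $\mathbb{C}Ge_{G_u}$ has the form $a\,e_{G_u}$ with $a \in \mathbb{C}G$; then $\Phi(a\,e_{G_u}) = a\,\Phi(e_{G_u}) = a\,e_{G_u}\,D(u,v,l)$, where the last equality uses $e_{G_u}D(u,v,l) = D(u,v,l)$ established above. Thus $\Phi(x) = x\cdot D(u,v,l)$ for all $x \in \mathbb{C}Ge_{G_u}$, which is exactly commutativity of the square with the right map being $D(u,v,l)_r$. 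Uniqueness is immediate: if $D'$ also makes the square commute, then $e_{G_u}D' = \alpha_v(T^l(\alpha_u^{-1}(e_{G_u}))) = e_{G_u}D(u,v,l)$, and since $D', D(u,v,l) \in e_{G_u}\mathbb{C}Ge_{G_v}$ we have $D' = e_{G_u}D' = e_{G_u}D(u,v,l) = D(u,v,l)$.

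I do not expect a serious obstacle here; the only point requiring a little care is the bookkeeping that identifies $\mathbb{C}\mathcal{V}_i$ with $\mathbb{C}[G.u]$ so that $\alpha_u$ from Theorem \ref{thm:isomorphism_modules}(2) applies — this is where partite transitivity is used — together with the observation that $T^l$ restricted to $\mathbb{C}\mathcal{V}_i$ lands in $\mathbb{C}\mathcal{V}_j$ precisely because $\mathrm{type}(v) = \mathrm{type}(u) + l \pmod n$, as noted in the standing assumption preceding Proposition \ref{prop:3.2.5 G-Properties}. Everything else is a formal consequence of the idempotent identities in Lemma \ref{lemma:mul_idem} and the $\mathbb{C}G$-linearity supplied by Proposition \ref{prop:3.2.5 G-Properties}(3).
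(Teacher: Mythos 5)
Your proof is correct and takes essentially the same approach as the paper's: the paper invokes the standard module-theoretic fact that for an idempotent $e$ in a ring $R$ and an $R$-module $M$, every $R$-linear map $Re\to M$ is right multiplication by a unique element of $eM$, whereas you unwind the proof of that fact directly via the idempotent identities. Both proofs hinge on the same observations — that $\alpha_v\circ T^l\circ \alpha_u^{-1}$ is $\mathbb{C}G$-linear and that $\mathbb{C}Ge_{G_u}$ is cyclic on $e_{G_u}$ — so the arguments are identical in substance.
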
 

\begin{proof}
    Note that $T^l$ is a $\bbC G$-homomorphism, and $\alpha_u$ and $\alpha_v$ are $\bbC G$-isomorphisms. Therefore, the map $\alpha_v\circ T^l\circ \alpha_u^{-1}:\bbC Ge_{G_u}\to \bbC Ge_{G_v}$ is a $\bbC G$-homomorphism. 
    
    Recall that if $e$ is an idempotent in a ring $R$, and $M$ is an $R$-module, then each $R$-linear map $\phi:Re\to M$ is afforded by $m_r:a\mapsto am$ for some unique $m\in eM$, which equals $\phi(e)$. Apply this module theory proposition to the map $\alpha_v\circ T^l\circ \alpha_u^{-1}:\bbC Ge_{G_u}\to \bbC Ge_{G_v}$ yields the result. Moreover, $D(u,v,l)=\alpha_v\circ T^l\circ \alpha_u^{-1}(e_u)=\alpha_v\circ T^l(u)=\alpha_v(D(u,l))$.
\end{proof}

We take the expression $\alpha_v(D(u,l))$ as the definition of $D(u,v,l)$, and call it a relative destination element because it reflects the destinations of walks from $u$, relative to the vertex $v$. It is defined to make the diagram above commute.

\begin{defn}[Relative Destination Element]
Under our standing assumption, let $u, v$ be vertices in $X$, and let $l$ be a positive integer. If $\text{type}(v) = \text{type}(u) + l \pmod{n}$, then the \textbf{relative destination element} $D(u, v, l)$ is defined as $\alpha_v(D(u, l))$.
\end{defn}

We now prove a key property of relative destination elements that facilitates computation: they are multiplicative under the concatenation of walks.

\begin{prop}[Multiplicativity of Relative Destination Elements]
Let $u, v, w$ be vertices in $X$, and let $l_1, l_2$ be positive integers such that the types of the vertices are suitable for walks of lengths $l_1, l_2, l_1+l_2$ in $X$, i.e., $\text{type}(v) = \text{type}(u) + l_1 \pmod{n}$ and $\text{type}(w) = \text{type}(v) + l_2 \pmod{n}$. Then
\[D(u, v, l_1)D(v, w, l_2) = D(u, w, l_1 + l_2).\]
\end{prop}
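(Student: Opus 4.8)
The plan is to reduce the claimed identity to the commutativity of the outer square obtained by stacking the two commuting squares from Proposition~\ref{prop:3.3.2}. First I would set $i=\text{type}(u)$, $j=\text{type}(v)$, $k=\text{type}(w)$, so that $j=i+l_1$, $k=j+l_2$, and hence $k=i+(l_1+l_2)\pmod n$; this guarantees that all three relative destination elements $D(u,v,l_1)$, $D(v,w,l_2)$, and $D(u,w,l_1+l_2)$ are defined. Then I would invoke Proposition~\ref{prop:3.3.2} three times to obtain commuting squares relating $T^{l_1}$ with ${D(u,v,l_1)}_r$ (via $\alpha_u,\alpha_v$), $T^{l_2}$ with ${D(v,w,l_2)}_r$ (via $\alpha_v,\alpha_w$), and $T^{l_1+l_2}$ with ${D(u,w,l_1+l_2)}_r$ (via $\alpha_u,\alpha_w$).

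Next I would stack the first two squares vertically. Since $T^{l_1+l_2}=T^{l_2}\circ T^{l_1}$ as maps $\mathbb{C}\mathcal{V}_i\to\mathbb{C}\mathcal{V}_k$, the composite of the two stacked squares is a commuting square with left edge $T^{l_1+l_2}$, right edge ${D(v,w,l_2)}_r\circ {D(u,v,l_1)}_r$, and horizontal edges $\alpha_u$ (top) and $\alpha_w$ (bottom). Now observe that ${D(v,w,l_2)}_r\circ {D(u,v,l_1)}_r$, as a composition of right multiplications, equals right multiplication by the product $D(u,v,l_1)D(v,w,l_2)$ — here one must be careful about the order, since $\bigl(x\cdot a\bigr)\cdot b = x\cdot(ab)$, so applying $D(u,v,l_1)_r$ first and then $D(v,w,l_2)_r$ indeed yields $x\mapsto x\,D(u,v,l_1)\,D(v,w,l_2)$. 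Thus $T^{l_1+l_2}$ fits into a commuting square with $\alpha_u,\alpha_w$ and right edge $\bigl(D(u,v,l_1)D(v,w,l_2)\bigr)_r$.

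Finally I would compare this with the third square from Proposition~\ref{prop:3.3.2}, whose right edge is $\bigl(D(u,w,l_1+l_2)\bigr)_r$. Both squares have the same three other edges $T^{l_1+l_2}$, $\alpha_u$, $\alpha_w$, so $\alpha_w\circ T^{l_1+l_2}\circ\alpha_u^{-1}$ equals both $\bigl(D(u,v,l_1)D(v,w,l_2)\bigr)_r$ and $\bigl(D(u,w,l_1+l_2)\bigr)_r$ as maps $\mathbb{C}Ge_{G_u}\to\mathbb{C}Ge_{G_w}$. By the uniqueness clause in Proposition~\ref{prop:3.3.2} (the module-theoretic fact that an $R$-linear map out of $Re$ is afforded by a unique element of $eM$), the two elements $D(u,v,l_1)D(v,w,l_2)$ and $D(u,w,l_1+l_2)$ must coincide. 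Alternatively, and perhaps cleanly, one can just evaluate both right-multiplication maps at $e_{G_u}$: the uniquely determined element affording an $R$-linear map $\phi\colon Re_{G_u}\to M$ is $\phi(e_{G_u})$, so $D(u,v,l_1)D(v,w,l_2) = e_{G_u}\cdot D(u,v,l_1)D(v,w,l_2) = \bigl(D(u,v,l_1)_r\circ \text{(nothing)}\bigr)$, more precisely it equals the image of $e_{G_u}$ under the composite, which also computes $D(u,w,l_1+l_2)$.

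The only real subtlety — and the step I would be most careful about — is bookkeeping the order of multiplication when composing two right-multiplication operators, since it is easy to get $D(v,w,l_2)D(u,v,l_1)$ instead of $D(u,v,l_1)D(v,w,l_2)$; the convention $(x a)_r$-then-$(\cdot b)_r$ giving $x\mapsto xab$ is what makes the statement come out in the asserted order. Everything else is formal diagram-chasing plus the uniqueness part of Proposition~\ref{prop:3.3.2}, so I do not anticipate a genuine obstacle beyond this.
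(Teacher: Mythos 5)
Your proposal is correct and takes essentially the same approach as the paper: the paper's proof is the one-line remark that the identity ``translates to $T^{l_1}\circ T^{l_2}=T^{l_1+l_2}$ via the commutative diagram,'' and you have simply unpacked this into the explicit stacking of the two commuting squares from Proposition~\ref{prop:3.3.2}, the bookkeeping that composing right-multiplications $a_r$ then $b_r$ gives $(ab)_r$, and the uniqueness clause of that proposition (equivalently, evaluation at $e_{G_u}$).
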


\begin{proof}
This translates to $T^{l_1} \circ T^{l_2}=T^{l_1+l_2}$ via the commutative diagram.
\end{proof}

The main result of this section is the following proposition that relates the zeta function of graphs to the eigenvalues of certain action on modules, allowing us to apply tools in representation theory to the graph theory question of computing zeta functions. 
\begin{prop}\label{prop:3.3.4 Main prop on group zeta}
    Let $X$ be a finite cyclic $n$-partite digraph acted on partite-transitively by a finite group $G$. Let $u_1\in \mathcal{V}_1$ be a vertice of the first type, and $G_{u_1}$ be its stabilizer in $G$. The zeta function of $X$ equals \[Z_c(X,u) = \prod_{i=1}^m \frac{1}{1-\lambda_i u^n},\]
    where $\lambda_i$ are the eigenvalues of the right action of $D(u_1,u_1,n)$ on $\bbC G e_{G_{u_1}}$.
\end{prop}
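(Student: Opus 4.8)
The plan is to combine Proposition \ref{prop:2.1.14 Main property} with the module-theoretic dictionary established in Propositions \ref{prop:3.3.2} and the multiplicativity of relative destination elements. By Proposition \ref{prop:2.1.14 Main property}, the zeta function $Z_c(X,u)$ equals $\prod_j \frac{1}{1-\mu_j u^n}$, where the $\mu_j$ are the eigenvalues of $B$, the submatrix of $A^n$ obtained by restricting to the rows and columns indexed by $\mathcal{V}_1$. So it suffices to show that $B$, viewed as a linear operator, is conjugate (hence isospectral) to the right action of $D(u_1,u_1,n)$ on $\bbC G e_{G_{u_1}}$.

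First I would observe that $T^n$ preserves $\bbC\mathcal{V}_1$, since $T^n$ maps $\bbC\mathcal{V}_i$ to $\bbC\mathcal{V}_{i+n \bmod n} = \bbC\mathcal{V}_i$; moreover the matrix of $T^n$ restricted to $\bbC\mathcal{V}_1$ with respect to the basis $\mathcal{V}_1$ is exactly $(A^n|_{\mathcal{V}_1})^T = B^T$, by the computation relating $T^m$ to $(A^m)^T$ given before Proposition \ref{prop:3.2.5 G-Properties}. Since $B$ and $B^T$ have the same eigenvalues, it is enough to identify the eigenvalues of $T^n|_{\bbC\mathcal{V}_1}$ with those of right multiplication by $D(u_1,u_1,n)$ on $\bbC G e_{G_{u_1}}$. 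Then I would apply Proposition \ref{prop:3.3.2} with $i = j = 1$, $u = v = u_1$, and $l = n$: this gives a $\bbC G$-isomorphism $\alpha_{u_1}:\bbC\mathcal{V}_1 \to \bbC G e_{G_{u_1}}$ making the square with vertical maps $T^n$ and $D(u_1,u_1,n)_r$ commute. Consequently $T^n|_{\bbC\mathcal{V}_1}$ and right multiplication by $D(u_1,u_1,n)$ are conjugate linear operators, so they have the same spectrum, and the eigenvalue lists $\{\mu_j\}$ and $\{\lambda_i\}$ coincide (with multiplicities). Substituting into the formula from Proposition \ref{prop:2.1.14 Main property} yields $Z_c(X,u) = \prod_i \frac{1}{1-\lambda_i u^n}$, as claimed.

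The steps are thus: (1) invoke Proposition \ref{prop:2.1.14 Main property} to reduce to the spectrum of $B$; (2) identify $B^T$ with the matrix of $T^n|_{\bbC\mathcal{V}_1}$, noting $B$ and $B^T$ are isospectral; (3) use the commutative diagram of Proposition \ref{prop:3.3.2} (specialized to $u=v=u_1$, $l=n$) to conjugate $T^n|_{\bbC\mathcal{V}_1}$ to $D(u_1,u_1,n)_r$ on $\bbC G e_{G_{u_1}}$; (4) conclude equality of eigenvalue multisets and substitute.

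I do not anticipate a serious obstacle here — the proposition is essentially a repackaging of earlier results. The one point requiring a little care is the transpose: Proposition \ref{prop:2.1.14 Main property} is phrased in terms of the adjacency matrix $B$ acting on column vectors, whereas the adjacency operator $T$ of Section 3.2 has matrix $A^T$, so the operator we land on via $\alpha_{u_1}$ is really the transpose-side object; but since eigenvalues are transpose-invariant this is harmless. Alternatively, one could bypass the transpose entirely by re-deriving the trace identity $\Tr(A^{nl}) = n\,\Tr\big((T^n|_{\bbC\mathcal{V}_1})^l\big) = n\,\Tr(D(u_1,u_1,n)_r^{\,l})$ directly — using Proposition \ref{prop:2.1.13} and the multiplicativity $D(u_1,u_1,n)^l = D(u_1,u_1,nl)$ — and then applying Lemma \ref{lemma:2.3.3}; I would mention this as the cleaner route if the transpose bookkeeping becomes distracting.
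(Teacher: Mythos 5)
Your proposal is correct and takes essentially the same route as the paper: the paper's proof is a one-liner citing Proposition \ref{prop:2.1.14 Main property} together with the trace equality $\Tr(D(u_1,u_1,n)_r)=\Tr(T^n|_{\bbC \mathcal{V}_1})=\Tr(A^n|_{\bbC \mathcal{V}_1})$, and you have simply spelled out the detail it leaves implicit, namely that the commutative square of Proposition \ref{prop:3.3.2} with $u=v=u_1$, $l=n$ conjugates $T^n|_{\bbC\mathcal{V}_1}$ to $D(u_1,u_1,n)_r$ (and that the transpose appearing when passing from $B$ to the operator $T^n$ is harmless for eigenvalues). Your closing remark about deriving $\Tr(A^{nl})=n\Tr(D(u_1,u_1,n)_r^{\,l})$ and then invoking Lemma \ref{lemma:2.3.3} is in fact the closest match to the paper's stated justification.
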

\begin{proof}
This follows from Proposition \ref{prop:2.1.14 Main property} and the observation that $\Tr(D(u_1,u_1,n)_r)=\Tr(T^n|_{\bbC \mathcal{V}_1})=\Tr(A^n|_{\bbC \mathcal{V}_1})$.
\end{proof}

\subsection{Application to the subgraphs $X_0^{[k]}(V)$ and $X_2^{[(a,b)]}(V)$}\label{sec:3.4}

This section applies prior discussions to the subgraphs $X_0^{[k]}(V)$ and $X_2^{[(a,b)]}(V)$ for $V=\bbF_q^n$. In both cases, the general linear group $G=\GL(V)$ acts on $X_0$ and $X_2$ by $g.x_W=x_{g.W}$ and $g.x_{F}=x_{g.F}$, respectively. Here, if $F=(W_1,W_2)$, then $gF=(gW_1,gW_2)$. The action of $G$ respects the dimensions or multi-dimensions and operates transitively on vertices of similar type, as characterized by aforementioned dimensions or multi-dimensions. Therefore, Proposition \ref{prop:3.3.4 Main prop on group zeta} is applicable and can be used to determine the zeta functions $Z(X_0^{[k]}(V),u)$ and $Z(X_2^{[(a,b)]}(V),u)$:

\begin{itemize}
    \item For $X_0^{[k]}(V)$ with $k\neq n/2$, the graph is bipartite. The vertices in the first part are $x_W$ with $W$ being $k$-dimensional, and the vertices in the second part are $x_W$ with $W$ being $(n-k)$-dimensional. The group $G$ acts on both partite sets transitively. Let $W_1$ be a $k$-dimensional vector subspace, and let $P_1=G_{x_{W_1}}=\Stab(W_1)$ be its stabilizer.  By proposition \ref{prop:3.3.4 Main prop on group zeta}, to find the zeta function of $Z_c(X_0^{[k]}(V),u)$, it suffices to find the eigenvalues of $D(x_{W_1},x_{W_1},2)_r$ on $CGe_{P_1}$. We will show in Section \ref{sec:7} that all these eigenvalues are a non-negative integer power of $q$.
    
    \item For $X_0^{[k]}(V)$ with $k=n/2$, the graph is 1-partite. The group $G$ acts transitively on all vertices. Let $W_1$ and $P_1$ be defined as in the previous case. 
    Again, to find the zeta function of $Z_c(X_0^{[k]}(V),u)$, it suffices to find the eigenvalues of $D(x_{W_1},x_{W_1},1)_r$ on $CGe_{P_1}$. We will show in Section \ref{sec:8} that all these eigenvalues are of the form $\pm q^m$ for some $m\in \bbZ_{\geq 0}$. 
    \item For $X_2^{[(a,b)]}(V)$ with $[(a,b)]$ consisting of 6 elements, the digraph is cyclically 6-partite. The group $G$ acts transitively on each part. Choose $F_1$ with multi-dimension $(i,i+j)$ and let $P_1$ be its stabilizer. We now need to find the eigenvalues of $D(x_{F_1},x_{F_1},6)_r$ on $CGe_{P_1}$. We will show in Section \ref{sec:7} that all these eigenvalues are a non-negative integer power of $q$.
    \item For $X_2^{[(a,b)]}(V)$ with $[(a,b)]=\{(n/3,2n/3), (2n/3,n/3)\}$, the digraph is bipartite according to multi-dimensions. The group $G$ acts transitively on vertices of each type. Choose a flag $F_1$ of multi-dimension $(n/3, 2n/3)$, and let $P_1$ be its stabilizer. It now remains to find the eigenvalues of $D(x_{F_1},x_{F_1},2)_r$ on $CGe_{P_1}$. We will show in Section \ref{sec:8} that all these eigenvalues are of the form $\zeta q^m$ for some $m\in \bbZ_{\geq 0}$, where $\zeta^3=1$. 
\end{itemize}

To ease notations, we write $W$ for $x_W$ to represent a vertex in $X_0(V)$, and $F$ for the vertex $x_F$ in $X_1(V)$. In the following section, we will compute the elements $D(W_1,W_1,2)$, $D(W_1,W_1,1)$, $D(F_1,F_1,6)$, and $D(F_1,F_1,2)$. The remainder of the paper will be devoted to clarifying their action on the corresponding module $CGe_{P_1}$.

\section{Computing Relative Destination Elements} \label{sec:4}
This section aims to explicitly compute the elements $D(W,W;m)$ and $D(F,F;m)$ that capture the counts of cycles in the digraphs $X_0(V)$ and $X_2(V)$, respectively.

We first introduce some preliminary results on Bruhat decomposition that will be useful. Let $G=\GL_n(\mathbb{F}_q)$ and let $B=\UT_n(\mathbb{F}_q)$ be the subgroup of upper triangular matrices. Let $\overline{B}$ denote the subgroup of lower triangular matrices, which is a conjugate of $B$ via the longest element $w_0\in S_n$ with $w_0(i)=n+1-i$ for each $i$.

Here, we identify $S_n$ as a subgroup of permutation matrices in $G$ in the following way: each $w\in S_n$ is identified with a matrix which permutes the basis elements $e_1,...,e_n$ by letting $w$ act on the indices (i.e. $w e_i=e_{w(i)}$). Then the longest element $w_0\in S_n$ is identified with the anti-diagonal matrix $(\delta_{i+j,n+1})$.

\begin{thm}[Bruhat Decomposition]\label{theorem:4.0.1, BruhatD}
With the above notations, the following decompositions hold:
\begin{enumerate}
\item $G=BS_nB$
\item $G=B'S_nB''$ for any subgroups $B',B''$ that are conjugates of $B$ via elements of $S_n$.
\item $G=B\overline{B}B$.
\end{enumerate}
\end{thm}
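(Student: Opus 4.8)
The plan is to prove (1) by Gaussian elimination using only row/column operations that are realized by multiplication by $B$, to deduce (2) from (1) by conjugating with permutation matrices, and to prove (3) directly by induction on $n$ (independently of (1)) by ``peeling off the first row''.

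\textbf{Part (1).} Left multiplication by $B$ realizes exactly the operations ``add a scalar multiple of row $j$ to row $i$ whenever $i<j$'' and ``rescale a row''; dually, right multiplication by $B$ realizes ``add a scalar multiple of column $i$ to column $j$ whenever $i<j$'' and ``rescale a column''. Given $g\in G$, I would process the columns $j=1,\dots,n$ in order, maintaining the invariant: after step $j$, columns $1,\dots,j$ are distinct standard basis vectors $e_{r_1},\dots,e_{r_j}$ and rows $r_1,\dots,r_j$ equal $e_1^{T},\dots,e_j^{T}$. At step $j$: the current matrix is invertible, so column $j$ is nonzero, and by the invariant it vanishes in rows $r_1,\dots,r_{j-1}$; let $r_j$ be the largest row index carrying a nonzero entry of column $j$; rescale row $r_j$ to turn that entry into $1$ (this row is already zero in columns $1,\dots,j-1$, so nothing earlier is disturbed); use row $r_j$ to clear the other entries of column $j$, which all lie in rows $<r_j$, hence a left-$B$ operation; use column $j$ to clear the entries of row $r_j$ in columns $>j$, a right-$B$ operation. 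After step $n$ the matrix is a permutation matrix $\sigma$, so $b_1gb_2=\sigma$ with $b_1,b_2\in B$, i.e.\ $g\in BS_nB$.

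\textbf{Part (2).} Writing $B'=w_1Bw_1^{-1}$ and $B''=w_2Bw_2^{-1}$ with $w_1,w_2\in S_n$, one gets $B'S_nB''=w_1B(w_1^{-1}S_nw_2)Bw_2^{-1}=w_1(BS_nB)w_2^{-1}=w_1Gw_2^{-1}=G$, using $w_1^{-1}S_nw_2=S_n$ (since $S_n$ is a subgroup), part (1), and $w_1,w_2\in G$.

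\textbf{Part (3).} Since $\overline B=w_0Bw_0^{-1}$, the claim is that every $g\in GL_n(\mathbb F_q)$ is a product (upper triangular)(lower triangular)(upper triangular); I would prove this by induction on $n$, the case $n=1$ being trivial. For $n\ge 2$ and $g\in GL_n$: the last $n-1$ rows of $g$ span a hyperplane $H$, and $\{v:v_1=0\}$ is a hyperplane; as $\mathbb F_q^n$ is not the union of two proper subspaces, choose $v_0\notin H$ with first coordinate nonzero. Because $v_0\notin H$, in the unique expansion of $v_0$ in the basis of rows of $g$ the coefficient of the first row is nonzero, so there is an upper triangular invertible $b_1$ (first row the vector of those coefficients, remaining rows $e_2^{T},\dots,e_n^{T}$) with first row of $b_1g$ equal to $v_0$; since $(v_0)_1\neq 0$ there is an upper triangular invertible $b_2$ (rescale the first coordinate, then clear to the right) with first row of $M:=b_1gb_2$ equal to $e_1^{T}$. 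Then $M=\begin{pmatrix}1&0\\ c&M'\end{pmatrix}$ with $M'\in GL_{n-1}$; by induction $M'=\beta_1\lambda\beta_2$ with $\beta_1,\beta_2$ upper triangular and $\lambda$ lower triangular, and then
\[
M=\begin{pmatrix}1&0\\ 0&\beta_1\end{pmatrix}\begin{pmatrix}1&0\\ \beta_1^{-1}c&\lambda\end{pmatrix}\begin{pmatrix}1&0\\ 0&\beta_2\end{pmatrix},
\]
whose three factors are, respectively, upper triangular, lower triangular, and upper triangular in $GL_n$; hence $M\in B\overline BB$ and $g=b_1^{-1}Mb_2^{-1}\in B\overline BB$.

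\textbf{Main difficulty.} None of the three parts is deep; the work is entirely bookkeeping. In (1) the delicate point is checking that each elementary step is genuinely realized by multiplication by $B$ and leaves the already-normalized columns untouched; in (3) one must check that the constructed $b_1,b_2$ really lie in $B$ and that $M$ acquires the displayed block shape. I expect the invariant maintenance in (1) to be the fiddliest part of the argument.
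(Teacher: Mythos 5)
Your argument is correct; parts (1) and (3) diverge from the paper's treatment. The paper proves (1) only by citing the standard result in Curtis--Reiner, whereas you give a direct Gaussian-elimination proof; the crucial device is choosing $r_j$ to be the \emph{largest} row index with a surviving nonzero entry in column $j$, so that clearing the rest of column $j$ amounts to adding multiples of row $r_j$ to rows of smaller index, i.e.\ a left-$B$ operation, and indeed that bookkeeping checks out. Part (2) coincides with the paper's proof. For (3) the paper's proof is deferred to Corollary~\ref{cor:5.3.4} and is conceptually quite different: it works inside the Hecke algebra $H=e_B\mathbb{C}Ge_B$, uses the relations $a_s a_w=a_{sw}$ (if $l(sw)>l(w)$) and $a_s a_w=q_s a_{sw}+(q_s-1)a_w$ (otherwise) to show that for every $w\in W$ the coefficient of $a_w$ in $a_{w_0}^2$ is strictly positive (this is where $q_s\geq 2$ enters), and then combines that positivity with $G=\bigsqcup_{w\in W} BwB$ and $\overline{B}=w_0Bw_0$ to conclude $G=B\overline{B}B$. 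Your block-inductive factorization --- normalizing the first row to $e_1^{T}$ by $b_1,b_2\in B$, passing to the invertible minor $M'$, and reassembling --- is more elementary and self-contained: it uses no Hecke-algebra machinery and is logically independent of part (1). The trade-off is that the paper's positivity argument carries over verbatim to any finite group with a $BN$-pair having all $q_s\geq 2$, whereas your argument is special to $\GL_n$, where row/column reduction is available.
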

\begin{proof}
The first statement is a standard result in linear algebraic groups \cite[Theorem~65.4]{curtis_reiner1986}. For the second statement, suppose $B'=w_1Bw_1^{-1}$ and $B''=w_2Bw_2^{-1}$ for some $w_1,w_2\in S_n$. Then $B'S_nB''=w_1Bw_1^{-1}S_nw_2Bw_2^{-1}=w_1BS_nBw_2^{-1}=w_1Gw_2^{-1}=G$. The third statement will be proved in Corollary \ref{cor:5.3.4}.
\end{proof}
In particular, $G=BS_n \overline{B}$.

Here is a lemma that shows how walk numbers depend only on certain cosets in a digraph under group action, which will be useful in our  computation.
\begin{lemma}\label{lemma:4.0.2 PathNumbers}
Let $G$ act on a digraph $X$, and let $u,v$ be vertices in $X$. Let $l$ be a positive number. Then, for $g \in G$, the walk number $P(u,g.v,l)$ only depends on the coset $G_u g G_v$. That is, if $h=agb$ for some $a \in G_u$ and $b \in G_v$, then $P(u,g.v,l)=P(u,h.v,l)$.
\end{lemma}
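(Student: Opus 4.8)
The plan is to reduce the statement to the elementary observation that a group acting on a digraph permutes walks of any fixed length, and therefore preserves walk numbers between corresponding endpoints.

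First I would record this observation cleanly. If $G$ acts on $X=(\mathcal{V},\mathcal{E})$ and $g'\in G$, then for any vertices $x,y$ and any $l\geq 1$ the map $(v_0,v_1,\dots,v_l)\mapsto(g'.v_0,g'.v_1,\dots,g'.v_l)$ carries walks of length $l$ from $x$ to $y$ bijectively onto walks of length $l$ from $g'.x$ to $g'.y$; indeed $g'$ sends edges to edges, and $(g')^{-1}$ furnishes the inverse map. Hence $P(g'.x,g'.y,l)=P(x,y,l)$ for all $g'\in G$. (This is exactly the argument underlying Proposition~\ref{prop:3.2.5 G-Properties}.1, now invoked without any partite-transitivity hypothesis.)

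Next, write $h=agb$ with $a\in G_u$ and $b\in G_v$. Since $b$ fixes $v$, associativity of the action gives
$$h.v=(agb).v=(ag).(b.v)=(ag).v=a.(g.v),$$
so $P(u,h.v,l)=P\bigl(u,\,a.(g.v),\,l\bigr)$. Now apply the recorded identity with $g'=a^{-1}$, $x=u$, and $y=a.(g.v)$; because $a\in G_u$ we have $a^{-1}.u=u$, whence
$$P\bigl(u,\,a.(g.v),\,l\bigr)=P\bigl(a^{-1}.u,\,a^{-1}.(a.(g.v)),\,l\bigr)=P(u,g.v,l).$$
Chaining the two displays yields $P(u,h.v,l)=P(u,g.v,l)$, i.e. the walk number depends only on the double coset $G_u g G_v$, as claimed.

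I do not expect a genuine obstacle here: the proof uses only the edge-preservation hypothesis (iterated $l$ times) and the defining axioms of a group action, and needs no finiteness assumption on $X$ nor any structure (such as being cyclic $n$-partite) beyond a plain $G$-action. The one point deserving a little care is the left-action bookkeeping — making sure that $b$ is absorbed by acting on $v$ while $a$ is removed by acting on $u$, rather than the reverse.
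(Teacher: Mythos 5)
Your proof is correct and follows essentially the same route as the paper's: both reduce the claim to the invariance $P(g'.x,g'.y,l)=P(x,y,l)$, then absorb $b$ by $b.v=v$ and remove $a$ by acting with $a^{-1}$ on $u$. The one small improvement in your write-up is that you re-derive this invariance from scratch under the bare hypothesis that $G$ acts on a digraph, whereas the paper cites Proposition~\ref{prop:3.2.5 G-Properties}, which is formally stated only under the Section~3 standing assumption (partite-transitive action on a cyclic $n$-partite digraph) — a stronger hypothesis than Lemma~\ref{lemma:4.0.2 PathNumbers} actually requires.
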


\begin{proof}
By Proposition \ref{prop:3.2.5 G-Properties}, $P(u,h.v,l)=P(u,agb.v,l)=P(a^{-1}.u, gb.v,l)$. Since $a\in G_u$ and $b \in G_v$, $a^{-1}.u=u$ and $b.v=v$. Then $P(u,h.v,l)=P(u,g.v,l)$.
\end{proof}

\subsection{Relative Destination Element \texorpdfstring{$D(W,W;m)$}{D(W,W;m)} for \texorpdfstring{$X_0(V)$}{X0(V)}}\label{sec:4.1}
This section aims to explicitly compute the elements $D(W,W;m)$ in $\bbC G$.

\subsubsection{Calculation of \texorpdfstring{$D(W_0,1)$}{D(W0,1)} and \texorpdfstring{$D(W_0,W_1,1)$}{D(W0,W1,1)}}\label{sec:calculationDW}
To perform the calculation, we first recall that $V=\bbF_q^n$ with a basis $e_1, \ldots, e_n$. Consider the nonzero proper subspaces $W_0=\angles{e_1,...,e_i}$ and $W_1=\angles{e_{i+1},...,e_n}$ of $V$. We denote $P_0=\Stab(W_0)$ and $P_1=\Stab(W_1)$. Note that $P_0\supseteq B$ and $P_1\supseteq \overline{B}$. Therefore, by Theorem \ref{theorem:4.0.1, BruhatD}, $G=BS_n\overline{B}=P_0 S_n P_1$.

\begin{prop}\label{prop:4.1.1}
With the notation as above, the element $D(W_0,1)$ can be expressed as the following formal sum over all left cosets $gP_1$ in $P_0P_1$, $$D(W_0,1)=\sum_{gP_1\in P_0P_1}gW_1.$$
\end{prop}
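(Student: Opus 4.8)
The plan is to unwind the definition of the relative destination element $D(W_0,1)=\alpha_{W_1}(D(W_0,1))$. Here $D(W_0,1)\in\bbC\mathcal{V}_{n-i}$ is by definition $\sum_{v}P(W_0,v,1).v$, i.e. the sum of all vertices adjacent to $W_0$ in $X_0(V)$, namely all $x_{W'}$ with $W'\oplus W_0=V$, each with coefficient $1$ since $X_0(V)$ is simple. All such $W'$ have dimension $n-i$, so they lie in the single orbit $G.W_1$, and $\alpha_{W_1}$ carries this orbit sum isomorphically into $\bbC G e_{P_1}$ via $h.x_{W_1}\mapsto h e_{P_1}$. Thus $D(W_0,1)=\sum_{W':W'\oplus W_0=V}\alpha_{W_1}(x_{W'})$, and the task is to re-index this sum by left cosets $gP_1$.

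First I would identify the set of complements: a subspace $W'$ with $W'\oplus W_0=V$ is exactly $g.W_1$ for some $g\in G$ (since $G$ acts transitively on $(n-i)$-dimensional subspaces and $W_1$ is one complement of $W_0$), and $g.W_1=h.W_1$ iff $gP_1=hP_1$. So complements of $W_0$ correspond bijectively to those cosets $gP_1$ for which $g.W_1\oplus W_0=V$. Second, I would characterize which cosets these are: $g.W_1$ is a complement of $W_0$ iff $g.W_1\cap W_0=0$ iff (by a dimension count) $W_0+g.W_1=V$. The key observation is that, using the Bruhat decomposition $G=P_0 S_n P_1$ established just before the proposition (from $P_0\supseteq B$, $P_1\supseteq\overline B$, and $G=BS_n\overline B$), the condition $W_0+g.W_1=V$ should pin $g$ down to the cell $P_0\cdot 1\cdot P_1=P_0P_1$; that is, $g.W_1$ is a complement of $W_0$ iff $gP_1\subseteq P_0P_1$, iff the coset $gP_1$ lies in $P_0P_1$. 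Granting this, $\{W':W'\oplus W_0=V\}$ is in bijection with $\{gP_1:gP_1\in P_0P_1\}$ via $gP_1\mapsto g.W_1$, and applying $\alpha_{W_1}$ turns the destination sum into $\sum_{gP_1\in P_0P_1}g.W_1$ (in the notational convention of the paper where $g.W_1$ denotes the image $ge_{P_1}$), which is the claimed formula.

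The main obstacle is the middle step: proving that $W_0+g.W_1=V$ holds precisely when $gP_1\in P_0P_1$. One direction is easy — if $g=p_0p_1$ with $p_0\in P_0,p_1\in P_1$, then $g.W_1=p_0p_1.W_1=p_0.W_1$, and since $W_0\oplus W_1=V$ we get $W_0\oplus p_0.W_1=p_0.(W_0\oplus W_1)=V$ because $p_0$ fixes $W_0$. For the converse I would argue via Bruhat: write $g=p_0 w p_1$ with $w\in S_n$ (using $G=P_0S_nP_1$), so $g.W_1=p_0w.W_1$; then $W_0+g.W_1=V$ iff $W_0+w.W_1=V$ (apply $p_0^{-1}$, which fixes $W_0$), and I would check that among permutation representatives this forces $w$ to lie in the parabolic double coset giving the open cell, i.e. $P_0wP_1=P_0P_1$, equivalently $w.W_1$ is a complement of $W_0$ only for $w$ in $W_{P_0}W_{P_1}$ — concretely $w.W_1=\langle e_{w(i+1)},\dots,e_{w(n)}\rangle$ is a complement of $W_0=\langle e_1,\dots,e_i\rangle$ iff $\{w(i+1),\dots,w(n)\}=\{i+1,\dots,n\}$, which is exactly the condition that $w$ stabilizes the partition, hence $w\in P_0\cap S_n$ times $P_1\cap S_n$ and $P_0wP_1=P_0P_1$. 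This reduces everything to the combinatorics of coordinate subspaces and the standard fact that Bruhat double cosets for parabolics are indexed by double cosets of parabolic subgroups of $S_n$; I expect this to be the only place requiring genuine care, the rest being formal manipulation of the orbit–stabilizer identification and the definition of $\alpha_{W_1}$.
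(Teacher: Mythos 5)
Your argument is essentially the same as the paper's: reduce to computing which $gW_1$ are complements of $W_0$, use the Bruhat decomposition $G=P_0S_nP_1$ to pass to a permutation representative $w$, observe that $W_0\cap wW_1=0$ forces $w$ to preserve $\{1,\dots,i\}$ so that $w\in P_0$, and conclude that the complements are exactly $\{gW_1:gP_1\subseteq P_0P_1\}$, in bijection with left cosets $gP_1\subseteq P_0P_1$. (The paper routes the reduction to $w\in S_n$ through Lemma~\ref{lemma:4.0.2 PathNumbers}; you redo that step by hand, which is fine.)

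One thing to straighten out: Proposition~\ref{prop:4.1.1} is about the \emph{destination sum} $D(W_0,1)\in\bbC\mathcal V$, not the relative destination element. In the formula $\sum_{gP_1\in P_0P_1}gW_1$, the summand $gW_1$ denotes the vertex $x_{gW_1}$ (a subspace), not the element $ge_{P_1}$ of $\bbC Ge_{P_1}$. You open by declaring ``the relative destination element $D(W_0,1)=\alpha_{W_1}(D(W_0,1))$'' and at a couple of points insert $\alpha_{W_1}$ where it has no business being; $\alpha_{W_1}$ only enters in Corollary~\ref{cor:4.1.2}, which converts this proposition into the statement $D(W_0,W_1;1)=a_{P_0P_1}$. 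Strip out the spurious $\alpha_{W_1}$'s and the resulting argument is exactly the paper's proof.
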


\begin{proof}
By definition, $D(W_0,1)=\sum_{W'} P(W_0,W',1)W'$. For dimension reasons, all the $W'$ with nonzero coefficient lies in $G.W_1$. It then suffices to consider the terms $P(W_0,gW_1,1)$ for $g\in G$. By definition of walk in $X_0$, the walk number $P(W_0,gW_1,1)$ equals 1 when $W_0\oplus gW_1=V$, and 0 otherwise. 

By Lemma \ref{lemma:4.0.2 PathNumbers}, the number $P(W_0,gW_1,1)$ only depends on the double coset $P_0gP_1$. Since $G=P_0 S_n P_1$, we may assume $g\in P_0 wP_1$ for some $w\in S_n$. Then $P(W_0,gW_1,1)=P(W_0,wW_1,1)$. 

 Note that $W_0\oplus 1.W_1=V$. This implies $P(W_0,gW_1,1)=P(W_0,1.W_1,1)=1$ for $g\in P_0P_1$.
Conversely, suppose $P(W_0,gW_1,1)=1$. Then $P(W_0,wW_1,1)=1$ and so $W_0\oplus wW_1=V$.  In particular, $W_0\cap wW_1=0$, which implies that $\{1,...,i\}\cap w\{i+1,...,n\}=\varnothing$. Then $w \in S_{\{1,...,i\}}\times S_{\{i+1,...,n\}}\subseteq P_0$. Thus, $g\in P_0w P_1=P_0P_1$. It follows that $P(W_0,gW_1,1)=1$ if and only if $g\in P_0 P_1$. Therefore, $D(W_0,1)=\sum_{gP_1\in P_0P_1}gW_1$.
\end{proof}

\begin{cor}\label{cor:4.1.2}
The relative destination element $D(W_0, W_1; 1)$ is given by 
\[D(W_0, W_1; 1) = \frac{|P_0 P_1|}{|P_1|} e_{P_0 P_1}=\frac{1}{|P_1|} \sum_{g\in P_0P_1}g.\]
\end{cor}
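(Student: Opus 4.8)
The plan is to derive Corollary \ref{cor:4.1.2} directly from Proposition \ref{prop:4.1.1} by applying the module isomorphism $\alpha_{W_1}$ of Theorem \ref{thm:isomorphism_modules} to the identity $D(W_0,1)=\sum_{gP_1\in P_0P_1}gW_1$. By definition, $D(W_0,W_1;1)=\alpha_{W_1}(D(W_0,1))$. Since $\alpha_{W_1}$ is a $\bbC G$-module isomorphism $\bbC[G.W_1]\to\bbC G e_{P_1}$ sending $W_1\mapsto e_{P_1}$ (here $P_1=G_{x_{W_1}}$), it is $G$-equivariant, so $\alpha_{W_1}(gW_1)=g\alpha_{W_1}(W_1)=g e_{P_1}$ for each $g\in G$. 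Hence $D(W_0,W_1;1)=\sum_{gP_1\in P_0P_1}g e_{P_1}$, where the sum runs over a set of representatives for the left cosets $gP_1$ contained in $P_0P_1$; this is well-defined precisely because $g e_{P_1}$ depends only on the coset $gP_1$ (as $e_{P_1}p=e_{P_1}$ for $p\in P_1$ by Lemma \ref{lemma:mul_idem}).

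The next step is to rewrite this coset sum as a scalar multiple of $e_{P_0P_1}$. Each left coset $gP_1$ inside $P_0P_1$ has $|P_1|$ elements, and these cosets partition $P_0P_1$, so there are exactly $|P_0P_1|/|P_1|$ of them. For a fixed choice of representatives $\{g_1,\dots,g_t\}$ with $t=|P_0P_1|/|P_1|$, we have $\sum_{k=1}^t g_k e_{P_1}=\frac{1}{|P_1|}\sum_{k=1}^t\sum_{p\in P_1}g_k p=\frac{1}{|P_1|}\sum_{x\in P_0P_1}x$, since $\{g_k p : 1\le k\le t,\ p\in P_1\}$ enumerates $P_0P_1$ without repetition. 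Therefore $D(W_0,W_1;1)=\frac{1}{|P_1|}\sum_{x\in P_0P_1}x$. Finally, recognizing $e_{P_0P_1}=\frac{1}{|P_0P_1|}\sum_{x\in P_0P_1}x$, we get $\frac{1}{|P_1|}\sum_{x\in P_0P_1}x=\frac{|P_0P_1|}{|P_1|}e_{P_0P_1}$, which is the claimed formula.

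There is essentially no hard step here; the statement is a bookkeeping consequence of Proposition \ref{prop:4.1.1} together with the definition of $\alpha_{W_1}$. The one point requiring a little care is the passage from a sum over left cosets of $P_1$ to a sum over elements of $P_0P_1$: one must check that $P_0P_1$ is genuinely a disjoint union of left $P_1$-cosets (immediate, since it is a union of such cosets and any two left cosets of $P_1$ are equal or disjoint) and that the representatives contribute each element of $P_0P_1$ exactly once with multiplicity $|P_1|$ in the expansion of $g_k e_{P_1}$. Once that counting is in hand, the two displayed equalities in the statement follow formally.
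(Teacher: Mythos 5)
Your proof is correct and follows essentially the same route as the paper's: both apply $\alpha_{W_1}$ to the coset-indexed sum from Proposition \ref{prop:4.1.1}, then convert that sum to a sum over elements of $P_0P_1$. The only cosmetic difference is that the paper first arrives at $\frac{|P_0P_1|}{|P_1|}e_{P_0P_1}e_{P_1}$ and then absorbs the trailing $e_{P_1}$ via $e_{P_0P_1}=e_{P_0}e_{P_1}$ (Lemma \ref{lemma:mul_idem}), whereas you expand $e_{P_1}$ explicitly and count the products $g_kp$, which lands you directly at $\frac{1}{|P_1|}\sum_{x\in P_0P_1}x$ without needing the absorption step.
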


\begin{proof}
By definition of the relative destination element, 
\[D(W_0,W_1,1) =\alpha_{W_1}(D(W_0,1))=\alpha_{W_1}(\sum_{gP_1\in P_0P_1}gW_1)= \frac{1}{|P_1|}\sum_{g \in P_0 P_1} g e_{P_1}=\frac{|P_0P_1|}{|P_1|}e_{P_0P_1}e_{P_1}.\]

Note that $e_{P_0P_1}=e_{P_0}e_{P_1}$, and is invariant under right multiplication by $e_{P_1}$. Then  $D(W_0,W_1,1) =\frac{|P_0P_1|}{|P_1|}e_{P_0P_1}e_{P_1}=\frac{|P_0P_1|}{|P_1|}e_{P_0P_1}$, which equals $\frac{1}{|P_1|} \sum_{g\in P_0P_1}g$.
\end{proof}

\begin{defn}\label{def:element_a}
Given two subgroups $H_1$ and $H_2$ of $G$, we define another average element $a_{H_1 H_2}$ of the double coset $H_1 H_2$ as:
\[a_{H_1 H_2} = \frac{1}{|H_2|} \sum_{x \in H_1 H_2} x.\] 
\end{defn}

Note that $a_{H_1 H_2}=\frac{|H_1 H_2|}{|H_2|}e_{H_1 H_2}$.

By the corollary, we can now express the relative destination element $D(W_0, W_1; 1)$ as $D(W_0, W_1; 1) = a_{P_0 P_1}$. We denote $a_{P_0 P_1}$ as $a_{0,1}$ for simplicity. A similar argument shows that $D(W_1,W_0;1)=\alpha_{W_0}(D(W_1,1))$ can be expressed as the average element $a_{P_1 P_0}$, which we denote as $a_{1,0}$. Then $$D(W_0,W_0,2)=D(W_0,W_1;1)D(W_1,W_0;1)=a_{0,1}a_{1,0}.$$

\subsubsection{Simplification of the expression \texorpdfstring{$a_{0,1}a_{1,0}$}{a0,1 a1,0}}\label{sec:4.1.2simplify}

We aim to find the eigenvalues associated with the action of $(a_{0,1}a_{1,0})_r$ on the $\mathbb{C}G e_{P_0}$ module. As a first step, we simplify the expression $a_{0,1}a_{1,0}$ using the longest element in $S_n$. Let $w_0$ denote the longest element in $S_n$. Then $\overline{B} = w_0 B w_0$.

The Levi subgroups of $P_0$ and $P_1$ are both $L = \GL(\langle e_1,...,e_i\rangle) \times \GL(\langle e_{i+1},...e_n\rangle)\subseteq G$. This results in $P_0 = L B = B L$ and $P_1 = \overline{B} L = L \overline{B}$. By Lemma \ref{lemma:mul_idem}, $e_{P_0}=e_Be_L=e_Le_B$ and 
$e_{P_1}=e_{\overline{B}} e_L=e_Le_{\overline{B}}$, 
and we can express $e_{P_0} e_{P_1} e_{P_0}$ as $e_L e_B e_{\overline{B}} e_B $, which further equals $ e_L e_B w_0 e_B w_0 e_B$.

Recall that the product $a_{0,1} a_{1,0}$ equals $\frac{|P_0 P_1|^2}{|P_0| |P_1|} e_{P_0} e_{P_1} e_{P_0}.$ Then $a_{0,1} a_{1,0}=\frac{|P_0 P_1|^2}{|P_0| |P_1|} e_L e_B e_{\overline{B}} e_B$. As multiplication by $e_L$ on the right on $\mathbb{C}G e_{P_0}$ is an identity operation, the action of $D(W_0, W_0, 2)_r$ on $\mathbb{C}G e_{P_0}$ is equivalent to the right multiplication by $\frac{|P_0 P_1|^2}{|P_0| |P_1|}e_B e_{\overline{B}} e_B$ on $\mathbb{C}G e_{P_0}$. 

Therefore, we are primarily interested in the right multiplication by $e_B e_{\overline{B}} e_B=(e_{B} w_0 e_{B})^2$ on $\mathbb{C}G e_{P_0}$. Developing tools for this question will be the focus of Sections 5-7.

\subsubsection{Special Case \texorpdfstring{$D(W,W;1)$}{D(W,W;1)}}
\label{sec:special_caseDW4.1.3}

In the special case where $i = \frac{n}{2}$, the expression $D(W_0, W_0; 1)$ is meaningful. In this case, $D(W_0, W_0; 1)=\alpha_{W_0}(D(W_0, 1))=\alpha_{W_0}\circ\alpha^{-1}_{W_1}(D(W_0,W_1, 1)).$ Because $W_0=w_0W_1$, by Proposition \ref{prop:3.1.5 two alpha}, we have$D(W_0, W_0; 1)=D(W_0,W_1, 1)w_0^{-1}$, which further simplifies as $D(W_0,W_1, 1)w_0^{-1}=a_{P_0 P_1} w_0 = \frac{|P_0 P_1|}{|P_1|} e_{P_0} e_{P_1} w_0 = \frac{|P_0 P_1|}{|P_1|} e_{P_0} w_0 e_{P_0}$.
We will study the right  multiplication of $e_{P_0} w_0 e_{P_0}$ on $\bbC Ge_{P_0}$ in Section 8.

\subsection{The digraph \texorpdfstring{$X_2(V)$}{X2(V)} and the elements \texorpdfstring{$D(F,F;m)$}{D(F,F;m)}}
\label{sec:4.2}

In this section, we will consider the digraph $X_2(V)$, and carry out explicit calculations of the elements $D(F,F;m)$ in $\bbC G$.

Consider a partition of $n$ into three positive integers $i$, $j$, and $k$, i.e., $n=i+j+k$. Let $I=\{1,...,i\}, J=\{i+1,...,i+j\}, K=\{i+j+1,...,i+j+k=n\}$. We denote by $V_I$ the vector space spanned by the first $i$ basis vectors, i.e., $V_I=\angles{e_1,...,e_i}$. Similarly, we define $V_J=\angles{e_{i+1},...,e_{i+j}}$, and $V_K=\angles{e_{i+j+1},...,e_n}$.

Let $F_0=(V_I,V_I+V_J)$ be a directed flag with multi-dimension $(i,i+j)$. We can also define a series of flags $F_1$ to $F_5$ with suitable multi-dimensions as follows:
\begin{align*}
F_0 &= (V_I,V_I+V_J),\  F_1 = (V_I+V_J,V_J),\  F_2  = (V_J,V_J+V_K), \\
F_3 & = (V_J+V_K,V_K), F_4  = (V_K,V_K+V_I), 
F_5  = (V_K+V_I,V_I).
\end{align*}

We also define $P_0$ to $P_5$ as the stabilizers of the flags $F_0$ to $F_5$, i.e., $P_r=\Stab(F_r)$ for $r=0,...,5$. Let $F_6=F_0$, and $P_6=P_0$. Again, each $P_i$ contains a conjugate of $B$, and $G=P_i S_n P_j$ for any $i,j$ by Bruhat Decomposition \ref{theorem:4.0.1, BruhatD}.

\subsubsection{Calculation of \texorpdfstring{$D(F_r,t)$}{D(Fr,t)} and \texorpdfstring{$D(F_r,F_s,t)$}{D(Fr,Fs,t)}}
\label{sec:4.2.1}

We now aim to calculate the destination sums $D(F_r,t)$ and $D(F_r,F_s;t)$. We have the following general proposition expressing expressing these destination elements. Recall the definition of $a_{H_1H_2}$ in Definition \ref{def:element_a}. 

\begin{prop}\label{prop:4.2.1 on D(F,F,t)}
Let $t\in \{1,2,3\}$ and $r,s\in \{0,1,...,5\}$ such that $r+t\equiv s \pmod 6$. Then the destination sum is given by $D(F_r,t)=\sum_{gP_s\in P_rP_s} gF_s$, and $D(F_r,F_s;t)=a_{P_rP_s}$. 
\end{prop}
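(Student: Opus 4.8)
The plan is to adapt the argument of Proposition \ref{prop:4.1.1} and Corollary \ref{cor:4.1.2} almost verbatim, replacing single subspaces by directed flags and the pair $(P_0,P_1)$ by the pair $(P_r,P_s)$. First I would fix $t\in\{1,2,3\}$, $r$, and $s=r+t\bmod 6$, and expand $D(F_r,t)=\sum_{F'}P(F_r,F',t).F'$ over all vertices $F'$ of $X_2(V)$. By Proposition \ref{prop:2.2.1 on multidim trend}, the only $F'$ with nonzero coefficient are those whose multi-dimension equals $f^t(\mdim F_r)=\mdim F_s$; since $\GL(V)$ acts transitively on directed flags of a given multi-dimension, all such $F'$ lie in the orbit $G.F_s$, so it suffices to understand $P(F_r,gF_s,t)$ for $g\in G$. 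By Lemma \ref{lemma:4.0.2 PathNumbers}, this number depends only on the double coset $P_r g P_s$, and by the Bruhat decomposition $G=P_r S_n P_s$ we may assume $g\in P_r w P_s$ for some $w\in S_n$.

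The substance of the argument is the claim that, for $w\in S_n$, one has $P(F_r,wF_s,t)=1$ if $w$ can be taken in $P_r$ (equivalently $g\in P_rP_s$) and $P(F_r,wF_s,t)=0$ otherwise. One direction is easy: $P(F_r,F_s,t)=1$ because $F_r,F_{r+1},\dots,F_s$ is, by construction, a walk of length $t$ in $X_2(V)$ (the consecutive standard flags $F_0,\dots,F_5$ were defined precisely so that each $(\cdot,\cdot,\cdot)$ triple is a geodesic path in $\mathcal{B}(V)$ by Proposition \ref{prop:1.2.5 on geodesic in X(V)}), and there is exactly one such walk since in $X_2(V)$ each vertex has a unique out-neighbour of each admissible multi-dimension once the intermediate subspace is fixed — more carefully, a walk $F_r\to\cdots\to F_s$ of length $t\le 3$ is determined by its sequence of subspaces $W_r,W_{r+1},\dots$, and the geodesic condition $W_{s-2}\oplus W_s=W_{s-1}$ or ($W_{s-2}\cap W_s=W_{s-1}$, $W_{s-2}+W_s=V$) pins down each successive subspace, so the walk from $F_r$ to a fixed target $F'$ is unique. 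For the converse I would argue, as in Proposition \ref{prop:4.1.1}, that $P(F_r,wF_s,t)=1$ forces a dimension/incidence relation between the subspaces of $F_r$ and those of $wF_s$ (the chain of direct-sum or intersection conditions running back from $wF_s$ to $F_r$), and that this relation, being the same one satisfied by $F_s$ itself, forces $w$ to lie in the subgroup of $S_n$ permuting the relevant index blocks, hence in $P_r$; then $g\in P_r w P_s=P_rP_s$. This yields $D(F_r,t)=\sum_{gP_s\in P_rP_s}gF_s$.

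Finally, applying $\alpha_{F_s}$ and using $\alpha_{F_s}(gF_s)=ge_{P_s}$ gives $D(F_r,F_s;t)=\alpha_{F_s}(D(F_r,t))=\frac{1}{|P_s|}\sum_{g\in P_rP_s}g=a_{P_rP_s}$, exactly as in Corollary \ref{cor:4.1.2} (using $e_{P_rP_s}=e_{P_r}e_{P_s}$ and invariance under right multiplication by $e_{P_s}$). The main obstacle is the converse direction of the walk-count claim for $t=2$ and $t=3$: unlike the $X_0$ case where a single direct-sum condition $W_0\oplus wW_1=V$ immediately constrained $w$, here one must unwind a length-$2$ or length-$3$ chain of alternating "$\oplus$" and "$\cap/+$" conditions and check that the resulting combinatorial constraint on $w\in S_n$ is exactly membership in the block-permutation subgroup of $P_r$; organizing this case analysis cleanly (perhaps by reducing modulo $6$ to the three essentially different values of $t$, and exploiting $f^6=\id$ from Corollary \ref{cor:2.2.2 to prop2.2.1} for symmetry) is where the real work lies, though it is conceptually routine given Proposition \ref{prop:1.2.5 on geodesic in X(V)}.
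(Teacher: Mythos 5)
Your overall strategy matches the paper's: reduce via the group action and Bruhat decomposition to computing $P(F_r,wF_s,t)$ for $w\in S_n$, show it is $0$ or $1$, show it equals $1$ exactly when $w\in P_r$, and then push through $\alpha_{F_s}$ as in Corollary~\ref{cor:4.1.2}. But the part you defer as ``conceptually routine given Proposition~\ref{prop:1.2.5 on geodesic in X(V)}'' is precisely where the proof lives, and you do not have the paper's organizing device for it. The paper isolates three conditions --- (a) $P(F_r,gF_s,t)\in\{0,1\}$, (b) $P(F_r,F_s,t)=1$, (c) $P(F_r,wF_s,t)=1\Rightarrow w\in P_r$ --- and then observes that it suffices to verify (a) and (c) for $t=3$ only: from the concatenation inequality $P(F_r,gF_{r+3},3)\geq P(F_r,gF_{r+t},t)\,P(gF_{r+t},gF_{r+3},3-t)$ together with $P(F_{r+t},F_{r+3},3-t)\geq 1$, the $t=1,2$ cases follow at once. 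Your suggestion of ``reducing modulo $6$ to the three essentially different values of $t$'' does not give this collapse; without it you would have to run the case analysis for all $(r,t)$.

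Even for $t=3$, the converse direction in your sketch is a hand-wave rather than an argument. Saying ``this relation, being the same one satisfied by $F_s$ itself, forces $w$ to lie in the block-permutation subgroup'' is exactly the claim to be proved, not a justification of it. What the paper actually does (for $r=0$) is extract from the chain of geodesic conditions the two intersection facts $W_0\cap W_3=0$ and $W_1\cap W_4=0$, then translate them, for $F_0=(V_I,V_I+V_J)$ and $wF_3=(w(V_J+V_K),wV_K)$, into $V_I\cap w(V_J+V_K)=0$ and $wV_K\cap(V_I+V_J)=0$; these force $w$ to stabilize each of $I$, $J$, $K$ and hence lie in $S_I\times S_J\times S_K\subseteq P_0$. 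One then repeats for $r=1$ (the $r=2,\dots,5$ cases follow by cyclically permuting $(i,j,k)$). Your proposal needs this explicit extraction of constraints on $w$, and it needs the $t=3$ reduction to keep the bookkeeping finite.
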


\begin{proof} The proof is divided into two parts. Firstly, we discuss some conditions for the statement on $D(F_r,t)$ and $D(F_r,F_s;t)$ to hold. Secondly, we verify that these conditions hold for all $r,s,t$ here.

\textbf{Conditions (a)-(c)}\quad
To compute $D(F_r,t)$, we only need to find $D(F_r,gF_s,t)$ for $g\in G=\GL_n(\mathbb{F}_q)$. This can be done by verifying the following conditions: 
\begin{enumerate}[label=(\alph*)]
    \item $P(F_r,gF_s,t)=0$ or 1 for any $g\in G$.
    \item $P(F_r,F_s,t)=1$
    \item If $w\in S_n$ satisfies $P(F_r,wF_s,t)=1$, then $w\in P_r$.
\end{enumerate}

If these conditions are met, then by condition (b), $P(F_r,gF_s,t)=P(F_r,F_s,t)=1$ for $g\in P_0P_1$ because $P(F_r,gF_s,t)$ depends only on the coset $P_0 gP_1$. 

Conversely, suppose $P(F_r,gF_s,t)=1$. Note that there must exist some $w\in S_n$ such that $g\in P_r wP_s$. Then, $P(F_r,wF_s,t)=1$, and by condition (c), $w\in P_r$, and so $g\in P_rP_s$. This implies that $P(F_r,F_s,t)=1$ if and only if $g\in P_rP_s$.

Hence, $D(F_r,t)=\sum_{gP_s\in P_rP_s} gF_s$. The statement $D(F_r,F_s;t)=a_{P_r,P_s}$ follows from  with an appropriate adaptation of the computation in Corollary \ref{cor:4.1.2}. 

\vspace{12pt}

\textbf{Verification of The Condition}\quad
To verify conditions (a), (b), and (c) for various $r,s,t$, we find that $P(F_r,F_s,t)\geq 1$ is always satisfied because $(F_0,F_1,...,F_5,F_0)$ is a cyclic walk in $X_2(V)$. Therefore, condition (a) implies condition (b). 

We now show that condition (a) and (c) are satisfied. All indices are taken in $\mathbb{Z}/6\mathbb{Z}$. It suffices to prove (a) and (c) for $t=3$: For $t=1$ or 2, $P(F_r,gF_{r+3},3)\geq P(F_r,gF_{r+t},t)P(gF_{r+t},gF_{r+3},3-t)$ by concatenation of walks. Since $P(gF_{r+t},gF_{r+3},3-t)=P(F_{r+t},F_{r+3},3-t)\geq 1$, $P(F_r,gF_{r+t},t)\leq P(F_r,gF_{r+3},3)$, which is less than or equal to 1 if (a) holds for $t=3$.
Similarly, if $P(F_r,wF_{r+t},t)=1$, then $P(F_r,wF_{r+3},3)\geq 1$. Then by (a), (c) for $t=3$, we have $w\in P_r$.

Now, we prove (a),(c) when $t=3$. If we rotate the indices $(i,j,k)$, we replace $r$ by $r+2$ or $r+4$. So it suffices to prove $r=0$ and $r=1$ cases. 

Consider $r=0$.
Consider two flags $F'_0=(W_0,W_1)$ and $F'_3=(W_3,W_4)$ with dimensions $\dim(W_0)=i$, $\dim(W_1)=i+j$, $\dim(W_3)=j+k$ and $\dim(W_4)=k$. Assume there is a walk of length 3 in the digraph $X_2$ from $F'_0$ to $F'_3$. Then, there must exist a subspace $W_2$ of dimension $j$ such that $(W_0,W_1,W_2,W_3,W_4)$ forms a geodesic path in the building $X(V)$. This implies that $W_0\oplus W_2=W_1$, $W_2\oplus W_4=W_3$, $W_1\cap W_3=W_2$, and $W_1+W_3=V$ by Proposition \ref{prop:1.2.5 on geodesic in X(V)}. Hence, such a subspace $W_2$ is unique and equals $W_1\cap W_3$. In particular, the walk in $X_2$ is unique. This proves $P(F_0,gF_3,3)\leq 1$ as required.

Moreover, we note that $W_0\cap W_3=W_0\cap W_1\cap W_3 =W_0\cap W_2=0$, and $W_1\cap W_4=W_1\cap W_3\cap W_4=W_2\cap W_4=0$.
Suppose now $P(F_0,wF_3;1)=1$. Note that $F_0=(V_I,V_I+V_J), wF_3=(w(V_J+V_K),wV_K)$. Therefore, $V_I\cap w(V_J+V_K)=0$, and $wV_K\cap (V_I+V_J)=0$. In particular, $w(J\cup K)\cap I=0$, and $w(K)\cap (I\cup J)=0$. Hence, $w$ preserves index sets $I,J,K$. Then $w\in S_I\times S_J\times S_K\subseteq P_0$. Hence (c) is proved.

Now we consider $r=1$. The situation is analogous. Suppose $F_1'=(W_1,W_2)$, and $F_4'=(W_4,W_5)$ with multi-dimensions matching with $F_1$ and $F_4$ respectively. Then a length 3 walk from $F_1'$ to $F_4'$ necessitates the existence of a $W_3$ with $(W_1,W_2,W_3,W_4,W_5)$ satisfying the conditions in Proposition \ref{prop:1.2.5 on geodesic in X(V)}. Then $W_3=W_2\oplus W_4$ is unique, which proves (a), the uniqueness of walks. Again, $W_1\cap W_4=0$ and $W_2\cap W_5=0$ by the same argument as before. Then if $P(F_1,wF_4,3)=1$, then $(V_I+V_J)\cap wV_K=0$ and $V_J\cap w(V_I+V_K)=0$. In particular, $w$ stabilizes $K$ and $J$, and thus $w\in S_I\times S_J\times S_K\subseteq P_0$, proving (c) and concluding our proof.
\end{proof}

\begin{rem}
Since $D(F_0,F_2;2)=D(F_0,F_1;1)D(F_1,F_2;1)$, we have $a_{P_0, P_2}=a_{P_0,P_1}a_{P_1P_2}$. Thus, we find that \[\frac{|P_0P_1|}{|P_1|}\frac{|P_1P_2|}{|P_2|}e_{P_0P_1}e_{P_1P_2}=\frac{|P_0P_2|}{|P_2|}e_{P_0P_2}.\] Taking the sum of coefficients gives $\frac{|P_0P_1||P_1P_2|}{|P_1|}=|P_0P_2|$. Moreover, we find that $e_{P_0P_2}=e_{P_0P_1}e_{P_1P_2}$. Thus, $P_0P_1P_2=P_1P_2$, and specifically $P_1\subseteq P_0 P_2$. Similarly, $a_{P_0 P_3}=a_{P_0P_2}a_{P_2P_3}$ implies  $P_2\subseteq P_0P_3$ and a similar result on coefficients.
\end{rem}

Now, we observe that $D(P_0,P_0;6)=D(P_0,P_3;3)D(P_3,P_0;3)=a_{P_0,P_3}a_{P_3P_0}$. Let's denote $L=\GL(V_I)\times\GL(V_J)\times\GL(V_K)$. Here, $L$ is the Levi subgroup of both $P_0$ and $P_3$, with $P_0=BL=LB$ and $P_3=\overline{B}L=L\overline{B}$. The product $a_{P_0,P_3}a_{P_3P_0}$ can be represented as $C e_L e_Be_{\overline{B} }e_B$, where $C=\frac{|P_0P_3|^2}{|P_0||P_3|}$.

For the right multiplication of $a_{P_0P_3}$ on $\bbC Ge_{P_0}$, $(e_L)_r$ is again the identity map. We are interested in the trace of $(e_Be_{\overline{B} }e_B)_r$ on $\bbC Ge_{P_0}$. This leads to a problem that is essentially the same as the one discussed at the end of Section \ref{sec:4.1.2simplify}, only with a different definition for the parabolic subgroup $P_0$. We will discuss this in the following sections.

\subsubsection{Special Case \texorpdfstring{$D(F_0,F_0;2)$}{D(F0,F0;2)}}
\label{sec:4.2.2}
Lastly, we consider the special case where $i=j=k=\frac{n}{3}$. Only in this case does $D(F_0,F_0;2)$ make sense in terms of multi-dimensions.

Consider the element $w_1=(1,i+1,2i+1)(2,i+2,2i+2)...(i,2i,3i)\in S_n$. In this case, $w_1F_0=F_2$ and $P_2=w_1P_0w_1^{-1}$. The element $D(F_0,F_0;2)$ equals $\alpha_{F_0}(D(F_0,2))$ which can be computed using Proposition \ref{prop:3.1.5 two alpha}: $\alpha_{F_0}(\alpha_{F_2}^{-1}(a_{P_0P_2}))=a_{P_0P_2}w_1$, because $F_0=w_1^{-1}F_2$.

On the other hand, $a_{P_0P_2}=|P_0P_2|/|P_2|e_{P_0}e_{P_2}=(|P_0P_2|/|P_2|)e_{P_0}w_1e_{P_0}w_1^{-1}$. Then $D(F_0,F_0;2)= (|P_0P_2|/|P_2|)e_{P_0}w_1e_{P_0}$.
Note that this element also equals $a_{P_0 w_1 P_0}=\frac{1}{|P_0|}\sum_{x\in P_0 w_1 P_0}x$, because the sum of coefficients in $a_{P_0 w_1 P_0}$ is $\frac{|P_0 w_1 P_0|}{|P_0|}=\frac{|P_0P_2|}{|P_0|}=\frac{|P_0P_2|}{|P_2|}$. 
Therefore, \[D(F_0,F_0;2)=a_{P_0 w_1 P_0}=\frac{|P_0P_2|}{|P_0|}e_{P_0}w_1e_{P_0}.\]
We are interested in the eigenvalues of $(e_{P_0}w_1e_{P_0})_r: \bbC Ge_{P_0}\to\bbC Ge_{P_0}$. This topic will be addressed in Section 8.

\subsubsection{Connectivity of the subgraphs}\label{sec:4.1.4}
Now we discuss the connectivity of the subgraphs $X_0^{[i]}(V)$ and $X_2^{[(a,b)]}(V)$, as promised in Section \ref{sec:2.2.2}
\begin{prop}
Definition as in Section \ref{sec:2.2.2}. The graphs $X_0^{[i]}(V)$ and  $X_2^{[(a,b)]}(V)$ are connected.
\end{prop}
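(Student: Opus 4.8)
### Proof Proposal

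The plan is to prove connectivity of each subgraph by exploiting the transitive action of $G = \GL(V)$ on each type of vertex, so that it suffices to connect \emph{one} vertex of each type to a single fixed reference vertex. Concretely, fix the standard basis $e_1,\ldots,e_n$ of $V$. For $X_0^{[i]}(V)$ with $i \neq n/2$, pick the reference pair $W_0 = \angles{e_1,\ldots,e_i}$ and $W_1 = \angles{e_{i+1},\ldots,e_n}$; these are adjacent since $W_0 \oplus W_1 = V$. Since $G$ acts transitively on $i$-dimensional subspaces and on $(n-i)$-dimensional subspaces, every vertex of $X_0^{[i]}(V)$ is $G$-equivalent to $W_0$ or to $W_1$. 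So it is enough to show: for every $g \in G$, the vertex $gW_0$ is connected by a walk to $W_0$ (the case $gW_1$ then follows by adjacency). For $X_0^{[i]}(V)$ with $i = n/2$ the graph is $1$-partite with $G$ transitive on all vertices, and the same reduction applies with reference vertex $W_0 = \angles{e_1,\ldots,e_i}$.

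The core lemma I would establish is that the stabilizer $P_0 = \Stab(W_0)$ together with a single well-chosen ``swap'' element generates $G$ in a way compatible with walks. Specifically: (i) if $g \in P_0$ then $gW_0 = W_0$, so those vertices need no work; (ii) I need to move between cosets. The key geometric fact is that for the longest Weyl element $w_0 \in S_n$ (or rather, for the relevant element realizing the opposition), $W_0$ is adjacent to $w_0 W_0$ or at least there is a short geodesic/walk between generators of $P_0$-cosets. More robustly, I would argue directly: given any subspace $W$ of dimension $i$, I want a walk in $X_0^{[i]}$ from $W_0$ to $W$. Choose any complement $U$ of $W$ with $\dim U = n-i$ (exists), so $W$ is adjacent to $x_U$; then choose a complement $W'$ of $U$ with $\dim W' = i$ and $W_0 \oplus U = V$ arranged — the point is that the ``complement graph'' on subspaces of fixed dimension pair $(i, n-i)$ is connected because the relation ``have a common complement'' generates everything. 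I would prove this last claim by a dimension-count / explicit-basis argument: any two $i$-dimensional subspaces $W, W''$ admit a common $(n-i)$-dimensional complement $U$ (this is a standard fact, provable by choosing a basis adapted to $W \cap W''$ and extending), hence $W - x_U - W''$ is a walk of length $2$; thus $X_0^{[i]}(V)$ is in fact of diameter $\le 2$ on each type.

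For $X_2^{[(a,b)]}(V)$ the argument is parallel but bookkeeping-heavier. The graph is cyclic $6$-partite (or bipartite when $i = j = k$), with $G$ transitive on each of the (up to) six types of directed flags, by the multidimension analysis of Proposition \ref{prop:2.2.1 on multidim trend}. Fix the reference flags $F_0, \ldots, F_5$ from Section \ref{sec:4.2}. By transitivity it suffices to connect an arbitrary vertex $g F_r$ back into the ``reference $6$-cycle'' $(F_0, F_1, \ldots, F_5, F_0)$. Using Proposition \ref{prop:4.2.1 on D(F,F,t)} — which shows $P(F_r, F_{r+1}, 1) = 1$ and, crucially, that $D(F_r, 1) = \sum_{gP_{r+1} \in P_r P_{r+1}} g F_{r+1}$ with the set of reachable flags governed by the double coset $P_r P_{r+1}$ — I would show that from $F_r$ one can reach \emph{every} flag of type $r+1$ whose corresponding coset lies in $P_r P_{r+1}$, and since $G = P_r S_n P_{r+1}$ (Bruhat, Theorem \ref{theorem:4.0.1, BruhatD}) one reduces to the Weyl-group level. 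Then an explicit chase analogous to the $X_0$ case — at the Weyl group level, $S_n$ is generated by the parabolic subgroups $S_I \times S_J \times S_K$ and adjacent transpositions, each realizable by a walk — shows every flag is reachable from $F_0$.

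The main obstacle I anticipate is the ``common complement'' step for $X_0$ and its flag-analogue for $X_2$: one must verify that the walk relation actually connects all $G$-translates, not merely that $G$ acts transitively on each type (transitivity alone does not give connectivity — a graph can have many isomorphic components). For $X_0$ this is the clean fact that any two subspaces of the same dimension share a common complement, giving diameter $2$; I would prove it by an adapted-basis argument. For $X_2^{[(a,b)]}$ the subtlety is that a walk of length $1$ only reaches flags in a specific double coset $P_r P_{r+1}$, so connectivity requires concatenating several steps around the $6$-cycle and tracking how the accessible double cosets compose — here I would lean on the remark following Proposition \ref{prop:4.2.1 on D(F,F,t)} (e.g. $P_1 \subseteq P_0 P_2$, $P_2 \subseteq P_0 P_3$) to show the composed relation eventually fills out all of $G$, together with the Bruhat reduction to handle the Weyl-level generators.
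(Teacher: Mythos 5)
Your argument splits into two cases of unequal quality, so let me address them separately.

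\paragraph{The $X_0^{[i]}(V)$ case.} Your proof is correct and takes a genuinely different, more elementary route than the paper's. The paper uses the Bruhat-type decomposition $G = B\overline{B}B = P_0 P_1 P_0$: write any $g\in G$ as $g = g_1 p_0$ with $g_1\in P_0P_1$, $p_0\in P_0$, invoke Proposition \ref{prop:4.1.1} to get the edge $W_0 - g_1 W_1$, and then note $g_1 W_1 - g_1 p_0 W_0 = gW_0$. You instead prove directly that any two $i$-dimensional subspaces $W, W'$ share a common $(n-i)$-dimensional complement $U$ (adapted-basis argument on $W\cap W'$), so $W - x_U - W'$ is a walk of length $2$, and every $(n-i)$-dimensional vertex is adjacent to an $i$-dimensional one. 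Your route avoids Bruhat theory entirely and gives a sharper diameter bound; the paper's route has the virtue that, being phrased via $D(W_0,1)$ and the decomposition $P_0P_1P_0$, it transfers verbatim to the $X_2$ case. You correctly anticipated the pitfall that transitivity alone does not imply connectivity — that is exactly the right concern.

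\paragraph{The $X_2^{[(a,b)]}(V)$ case.} Here your sketch does not close, and the decomposition you cite is the wrong one for the job. You appeal to $G = P_r S_n P_{r+1}$, but that is a double-coset parametrization; it does not tell you which flags of type $r+1$ are reachable from $F_r$ by walks, since a single edge only reaches $gF_{r+1}$ for $g$ in the \emph{single} double coset $P_rP_{r+1}$. What actually makes the $X_0$ argument port over is the decomposition $G = P_0 P_3 P_0$, which follows from $G = B\overline{B}B$ together with $B\subseteq P_0$ and $\overline{B}\subseteq P_3$. Given that, write $g = g_1 p_0$ with $g_1\in P_0P_3$, $p_0\in P_0$; Proposition \ref{prop:4.2.1 on D(F,F,t)} with $t=3$ shows $F_0$ is joined to $g_1F_3$ by a walk of length $3$, and $F_3$ is joined to $p_0F_0 = F_0$ by another, hence $g_1F_3$ to $g_1p_0F_0 = gF_0$. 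Your closing remark that ``$S_n$ is generated by $S_I\times S_J\times S_K$ and adjacent transpositions, each realizable by a walk'' is where the gap lies: it is not true that each Coxeter generator is realizable by a walk between reference flags, and the inclusions $P_1\subseteq P_0P_2$, $P_2\subseteq P_0P_3$ from the remark after Proposition \ref{prop:4.2.1 on D(F,F,t)} do not by themselves compose up to all of $G$. You need the $G = P_0P_3P_0$ step made explicit.

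\end{document}
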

\begin{proof}
For the $X_0^{[i]}(V)$ case, we retain the notation is Section \ref{sec:4.1}.
Note that $G=B\overline{B}B=P_0P_1P_0$ by Theorem \ref{theorem:4.0.1, BruhatD}. Then any $g\in G$ can be expressed as $g=g_1 p_0$ for some $g_1\in P_0P_1$, and $p_0\in P_0$. By Proposition \ref{prop:4.1.1}, $W_0$ is connected with $g_1W_1$. Note that $W_1$ is connected with $W_0=p_0W_0$. Therefore, $g_1W_1$ is connected to $g_1p_0W_0=gW_0$, and so $W_0$ and $gW_0$ are on the same connected components. This shows that dimension $i$ subspaces are in the same connected components. It remains to note that each dimension $n-i$ subspace is connected to a subspace of dimension $i$.

The case for $X_2^{[(a,b)]}(V)$ is analogous.
\end{proof}

\section{Unipotent Representations of \texorpdfstring{$\bbC G$}{CG}}
\label{sec:5}
In this section, we delve into the concept of unipotent representations of the group algebra $\bbC G$, where $G$ is the general linear group $\GL_n(\bbF_q)$. We only consider left modules unless otherwise stated. 

We begin by setting some notations. Let $B=\UT_n(\bbF_q)$ denote the group of upper triangular matrices in $G$ and $e_B$ the idempotent associated to $B$ in $\bbC G$, as defined in Section 2. We also denote by $H$ the Hecke algebra $e_B \bbC G e_B$. 

Following Andrews \cite{Andrews2018}, 
we define irreducible unipotent modules as
\begin{defn}
An irreducible module of $\bbC G$  is called unipotent if it is isomorphic to a submodule of the induced representation $\Ind_{B}^G (1_B)$ of the trivial representation $1_B$ on $B$.
\end{defn}

Note that the module $\Ind_{B}^G (1_B)=\bbC G\otimes_{\bbC B} \bbC$, where $B$ acts trivially on $\bbC$. It is isomorphic to $\bbC Ge_B$ as $\bbC G$-modules by $g\otimes 1\mapsto ge_B$. 

Our questions in Section \ref{sec:4}
concern the right multiplication of certain elements in $H$ on the $\bbC G$-module $\bbC Ge_P$. We will attack this question in Section \ref{sec:6} by flipping the side, and consider the left $H$-action on $e_P \bbC G$. But before that, we need to develop the unipotent representation theory of $G$ the representation theory of $H$.

There is a correspondence between the following three sets, and we will address their pairwise relations in the three subsections in this section:
\begin{enumerate}
    \item The equivalence classes of irreducible unipotent representations of $G$,
    \item The equivalence classes of irreducible representations of $H$,
    \item The equivalence classes of irreducible representations of the symmetric group $S_n$.
\end{enumerate}

If we define unipotent representations of $G$ as direct summands of $(\bbC Ge_B)^r, r>0$, then connections exist among  1-3 without the adjective "irreducible".

\subsection{Representations of Group Rings and Hecke Algebras}
The correspondence of sets 1 and 2 mentioned above can be stated more generally. Let $G$ be a finite group and $e$ an idempotent element of $G$ (i.e., $e^2=e$ and $e\neq 0$). In this subsection, we examine the relationship between the irreducible $\bbC G$ submodules of $\bbC Ge$ and the irreducible modules of the algebra $H=e\bbC Ge$.

The detailed results can be found in Chapter 11D of Curtis and Reiner's book \cite{curtis_reiner1981}, which is a comprehensive source on this topic.

\begin{prop}\label{prop:5.1.1}
Let $G$ be a finite group, and $e\in G$ an idempotent element. The associated Hecke algebra $H=e\bbC Ge$ is semisimple.
\end{prop}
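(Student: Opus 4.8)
The plan is to deduce semisimplicity of $H = e\mathbb{C}Ge$ directly from semisimplicity of the group algebra $\mathbb{C}G$, which holds by Maschke's theorem since we are in characteristic $0$. The key algebraic fact I would invoke is that if $R$ is a semisimple (Artinian) ring and $e \in R$ is a nonzero idempotent, then the corner ring $eRe$ is again semisimple. So the proposition reduces to this ring-theoretic statement applied to $R = \mathbb{C}G$.

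First I would recall that $\mathbb{C}G$ is semisimple by Maschke's theorem. Then, to show $eRe$ is semisimple, I would argue that its Jacobson radical vanishes: $J(eRe) = e\,J(R)\,e$ for any idempotent $e$ in a ring $R$ (a standard fact, e.g. in Curtis–Reiner), and since $J(\mathbb{C}G) = 0$ we get $J(H) = 0$; combined with the fact that $H$ is finite-dimensional over $\mathbb{C}$, hence Artinian, this forces $H$ to be semisimple. Alternatively — and perhaps cleaner to present — I would use the module-theoretic characterization: every left $\mathbb{C}G$-module is semisimple, the functor $M \mapsto eM$ from $\mathbb{C}G$-modules to $eRe$-modules is exact and sends $\mathbb{C}Ge$ to $eRe$, and every $eRe$-module arises as $eM$ for a suitable $\mathbb{C}G$-module; exactness of this functor together with semisimplicity of $\mathbb{C}G$-modules then transfers to show every $H$-module is semisimple. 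Either route works; I would cite Curtis–Reiner Chapter 11D, which the paper already references for exactly this circle of ideas.

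There is no real obstacle here — the statement is essentially a citation, and the only thing to be careful about is making precise the correspondence between $H$-modules and the "$e$-fixed parts" of $\mathbb{C}G$-modules (the Condition that $eRe$-modules are all of the form $eM$), which is standard. I expect the proof to be two or three lines in the final write-up: invoke Maschke, invoke the corner-ring fact, done.

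\begin{proof}
By Maschke's theorem, the group algebra $\mathbb{C}G$ is semisimple. It is a standard fact (see \cite[Chapter~11D]{curtis_reiner1981}) that if $R$ is a semisimple ring and $e\in R$ is a nonzero idempotent, then the corner ring $eRe$ is also semisimple: indeed $J(eRe)=eJ(R)e=0$ since $J(R)=0$, and $eRe$ is a finite-dimensional $\mathbb{C}$-algebra, hence Artinian, so it is semisimple. Applying this with $R=\mathbb{C}G$ shows that $H=e\mathbb{C}Ge$ is semisimple.
\end{proof}
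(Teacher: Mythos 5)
Your proof is correct and takes essentially the same approach as the paper, which likewise reduces the statement to the standard fact that a corner $eRe$ of a semisimple ring $R$ is semisimple and cites Curtis--Reiner for it. The only small difference is in the sketch of that corner fact: the paper suggests decomposing $\mathbb{C}G$ into matrix algebras and diagonalizing each component of $e$, while you use the Jacobson-radical identity $J(eRe)=eJ(R)e$ together with Artinianness; both justifications are routine and valid.
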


\begin{proof}
This result is a direct consequence of Theorem 5.13 and Theorem 5.18 in \cite{curtis_reiner1981}. Alternatively, one can prove this result by decomposing $\bbC G$ as a direct sum of matrix algebras and diagonalizing each component of $e$. 
\end{proof}

\begin{prop}\label{prop:5.1.2} Let $H=e\bbC G e$ with $G$ and $e$ as defined above. The following statements hold:
    \begin{enumerate}
        \item For a simple $\bbC G$-module $M$, the multiplicity of $M$ in a decomposition of $\bbC Ge$ into simple submodules equals $\dim(eM)$. Moreover, $\dim(eM)=\Tr(e,M)$.

        \item The map $M\mapsto eM$ is a bijection from the isomorphism classes of simple $\bbC G$-submodules of $\bbC Ge$ to the isomorphism classes of simple $H$-modules.
        
        The map $\zeta\mapsto \zeta|_H$ is a bijection from irreducible characters $\zeta$ of $G$ such that $\langle\zeta,\phi_{\bbC Ge}\rangle>0$ to the set of irreducible characters of $H$. Here, $\phi_{\bbC Ge}$ denotes the character of $G$ afforded by $\bbC Ge$.
    \end{enumerate}
\end{prop}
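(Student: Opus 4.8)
The plan is to establish the classical Hecke algebra correspondence in the specific form stated, working from the known structure theory of semisimple algebras and the already-established semisimplicity of $H = e\bbC G e$ (Proposition \ref{prop:5.1.1}). First I would recall the general module-theoretic fact: for an idempotent $e$ in a ring $R$, the functor $M \mapsto eM$ from left $R$-modules to left $eRe$-modules is exact, and when restricted to modules annihilated by no part of $eRe$ it behaves well. The key point I would isolate is that for a simple $\bbC G$-module $M$, the $eRe$-module $eM$ is either zero or simple; this is because if $N \subseteq eM$ is a nonzero $eRe$-submodule, then $\bbC G N$ is a nonzero $\bbC G$-submodule of $M$, hence equals $M$ by simplicity, and then $eM = e\bbC G N = (e\bbC G e)N = N$ since $eN = N$ (as $N \subseteq eM$ and $e$ acts as identity there). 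So $eM$ is simple whenever nonzero.

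Next I would address part 1, which part 2 partly relies on. Decompose $\bbC Ge = \bigoplus_i M_i^{\oplus m_i}$ into isotypic components over the simple $\bbC G$-modules $M_i$. Applying $\Hom_{\bbC G}(-, M)$ or using the standard identity $e\bbC G e \cong \End_{\bbC G}(\bbC Ge)^{\mathrm{op}}$ together with $\Hom_{\bbC G}(\bbC Ge, M) \cong eM$ (the isomorphism sending $\phi \mapsto \phi(e)$), one gets that the multiplicity $m_i$ of $M_i$ in $\bbC Ge$ equals $\dim_{\bbC} \Hom_{\bbC G}(\bbC Ge, M_i) = \dim_{\bbC} eM_i$. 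The identity $\dim(eM) = \Tr(e, M)$ is immediate since $e$ is an idempotent acting on the finite-dimensional space $M$, so its trace equals the dimension of its image $eM$.

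For part 2, the bijection $M \mapsto eM$: the previous paragraphs show this is well-defined (sends simple submodules of $\bbC Ge$ to simple $H$-modules, since $eM \neq 0$ exactly when $M$ appears in $\bbC Ge$, i.e. $m_i > 0$). For surjectivity, given a simple $H$-module $N$, realize it as a submodule (or quotient) of $H = e\bbC Ge$ viewed as a left $H$-module; then $\bbC Ge \otimes_H N$, or more concretely the $\bbC G$-module generated appropriately, produces a $\bbC G$-module $M$ with $eM \cong N$ — here I would invoke the standard equivalence of categories between the full subcategory of $\bbC G$-modules generated by $\bbC Ge$ and $H$-modules (Curtis–Reiner, Chapter 11D, as cited). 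Injectivity follows because $M$ is recovered from $eM$ as $\bbC G \cdot eM$ inside any module, or again by the categorical equivalence. Finally, the character reformulation is a translation: $\langle \zeta, \phi_{\bbC Ge}\rangle > 0$ says exactly that the simple module $M$ with character $\zeta$ appears in $\bbC Ge$, i.e. $eM \neq 0$; the character of the $H$-module $eM$ is the restriction $\zeta|_H$ because $H \subseteq \bbC G$ acts on $M$ and preserves $eM$, and the trace of $h \in H$ on $M$ equals its trace on $eM$ (since $h = ehe$ kills the complement of $eM$ in any $e$-stable decomposition — more precisely $h$ maps $M$ into $eM$ and acts as zero on a complement of $eM$, as $h = he$).

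I expect the main obstacle to be pinning down surjectivity and injectivity of $M \mapsto eM$ cleanly without redeveloping the full idempotent-functor formalism; the honest route is to cite Curtis–Reiner Chapter 11D for the category equivalence $\bbC Ge\,\text{-generated modules} \simeq H\text{-Mod}$ and then read off the statements about simple objects, rather than reproving it. The character-restriction compatibility is a minor but genuine check that requires observing $h M \subseteq eM$ for $h \in H$ and that $h$ annihilates an $e$-complement, which I would state explicitly.
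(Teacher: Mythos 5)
Your proof is correct and takes essentially the same route as the paper, which simply cites Curtis--Reiner Theorem 11.25 (and sketches the alternative via the matrix-algebra decomposition of $\bbC G$). You fill in several of the auxiliary steps that the paper leaves implicit --- that $eM$ is simple or zero, the identification $\Hom_{\bbC G}(\bbC Ge, M)\cong eM$ giving the multiplicity, $\dim(eM)=\Tr(e,M)$, and the trace-restriction compatibility --- while still deferring to Curtis--Reiner Chapter 11D for the bijectivity, so the underlying approach is the same.
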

\begin{proof}
These results are classical and can be found in various sources. For example, our theorem follows from Theorem 11.25 and its proof in \cite{curtis_reiner1981}. Alternatively, one could  prove this theorem by again interpreting $\bbC G$ as a direct sum of matrix algebras and components of e as diagonal matrices, transforming this theorem into a statement relevant to matrix algebras.
\end{proof}

Back to the situation at the beginning of this section, we can summarize the results as follows: 

\begin{cor}\label{cor:5.1.4} Let $G=\GL_n(\bbF_q)$ and $B$ is the set of upper triangular matrices in $G$. Let $H=e_B \bbC Ge_B$.
\begin{enumerate}
    \item The map $\zeta\mapsto \zeta|_H$ is a bijection from irreducible representations $\zeta$ of $G$ such that $\langle\zeta,\Ind_B^G (1_B)\rangle>0$ to the set of irreducible representations of $H$.
    \item Let $\zeta$ be an irreducible representation of $G$. Then $(\zeta,\Ind_B^G (1_B))=\zeta(e_B)$.
\end{enumerate}
\end{cor}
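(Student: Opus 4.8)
The plan is to obtain both parts as direct specializations of Propositions \ref{prop:5.1.1} and \ref{prop:5.1.2}, taking the idempotent $e$ to be $e_B$ and using the module identification $\Ind_B^G(1_B)\simeq \bbC G e_B$ noted above, under which $g\otimes 1\mapsto g e_B$ and hence the characters $\phi_{\bbC G e_B}$ and $\phi_{\Ind_B^G(1_B)}$ coincide. Semisimplicity of $H=e_B\bbC G e_B$, which is what makes ``irreducible representations of $H$'' meaningful, is exactly Proposition \ref{prop:5.1.1} applied to $e=e_B$.

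For part (1) I would simply invoke Proposition \ref{prop:5.1.2}(2) with $e=e_B$: it asserts that $\zeta\mapsto\zeta|_H$ is a bijection from those irreducible characters $\zeta$ of $G$ with $\langle\zeta,\phi_{\bbC G e_B}\rangle>0$ onto the irreducible characters of $H$. Since $\langle\zeta,\phi_{\bbC G e_B}\rangle=\langle\zeta,\Ind_B^G(1_B)\rangle$ by the identification above, this is precisely the claimed statement.

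For part (2) I would give the Frobenius-reciprocity argument as the main line: $\langle\zeta,\Ind_B^G(1_B)\rangle_G=\langle\zeta|_B,1_B\rangle_B=\frac{1}{|B|}\sum_{b\in B}\zeta(b)$, and since $e_B=\frac{1}{|B|}\sum_{b\in B}b$ and $\zeta$ extends linearly to $\bbC G$, the right-hand side is $\zeta(e_B)$. As a cross-check I would note the alternative derivation from Proposition \ref{prop:5.1.2}(1): the multiplicity of the simple module $M$ affording $\zeta$ in $\bbC G e_B$ is $\dim(e_B M)=\Tr(e_B,M)=\zeta(e_B)$, and that multiplicity equals $\langle\zeta,\Ind_B^G(1_B)\rangle$. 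I would also remark that this identity holds for \emph{every} irreducible $\zeta$, not only unipotent ones, since both sides vanish when $\zeta$ is not unipotent.

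I do not expect a genuine obstacle here: the only point requiring care is bookkeeping, namely checking that the isomorphism $\bbC G e_B\simeq\Ind_B^G(1_B)$ matches $e_B$ with (a scalar multiple of) $1\otimes 1$ so that characters and multiplicities line up, after which both statements are immediate consequences of the general propositions already proved. Accordingly I would keep the write-up to just a few lines.
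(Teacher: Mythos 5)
Your proposal is correct and follows the same route as the paper: both parts are read off from Proposition~\ref{prop:5.1.2} with $e=e_B$, after identifying $\Ind_B^G(1_B)$ with $\bbC G e_B$. The only cosmetic difference is that for part (2) you lead with the explicit Frobenius-reciprocity computation $\langle\zeta,\Ind_B^G(1_B)\rangle=\langle\zeta|_B,1_B\rangle=\zeta(e_B)$ and mention Proposition~\ref{prop:5.1.2}(1) as a cross-check, whereas the paper simply cites that proposition, so the content is the same.
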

\begin{proof}
Note that $\Ind_B^G (1_B)$ is the representation of $\bbC Ge_B$. The first statement then follows from part 2 of Proposition \ref{prop:5.1.2}, and the second statement follows from part 1 of that proposition.
\end{proof}

\subsection{Representations of \texorpdfstring{$S_n$}{Sn} and \texorpdfstring{$\GL_n(\bbF_q)$}{GLn(Fq)}}\label{sec:5.2}

In this subsection, we discuss the relationship between the unipotent representations of the general linear group $G = \GL_n(\mathbb{F}_q)$ and representations of the symmetric group $S_n$. 

Note that $S_n$ is the Weyl group of $G$, and the structure of $S_n$ gives the skeleton of structure of $G$. Roughly speaking, the group $S_n$ can be thought of as the general linear group over ``a field with one element". Therefore, the representations of $G = \GL_n(\mathbb{F}_q)$ are intimately related to the representations of $S_n$.

We will first briefly recall the representations of $S_n$, following \cite[Chapter~II]{sagan2001}, and then relate the representations of $S_n$ with those of $\GL_n(\bbF_q)$, as in \cite[Chapter~67B]{curtis_reiner1986}. 

\subsubsection{Representations of \texorpdfstring{$S_n$}{Sn}}\label{sec:5.2.1}
Irreducible representations of $S_n$ are parametrized by partitions of $n$. We first introduce the concepts of partitions and dominance among partitions.
\begin{defn}
    A partition $\lambda$ of a positive integer $n$ is a sequence of positive integers $(\lambda_1, \lambda_2, \ldots, \lambda_r)$ such that $\lambda_1 \geq \lambda_2 \geq \ldots \geq \lambda_r > 0$ and $\sum_{i=1}^r \lambda_i = n$. The integers $\lambda_i$ are called the parts of the partition. We use $\lambda\vdash n$ to denote that $\lambda$ is a partition of $n$.

    For two partitions $\lambda, \mu$ of $n$, we say that $\lambda$ dominates $\mu$, written as $\lambda \unrhd \mu$, if $\sum_{i=1}^k \lambda_i \geq \sum_{i=1}^k \mu_i$ for all $k\geq 1$.
\end{defn}

As a convention, appending finitely many zeros at the end of a partition does not change the partition. For instance, the partition (3,2,0,0) is the same as the partition (3,2).

Given a partition $\lambda$ of $n$, one can define a Specht module $S^\lambda$ over the group ring of the symmetric group $S_n$, as described in Section 2.3 of Sagan's book \cite{sagan2001}. According to Theorem 2.4.4 in \cite{sagan2001}, each Specht module is simple, and affords a corresponding irreducible character of $S_n$, denoted by $\psi_\lambda$. Moreover, as stated in Theorem 2.4.6 of \cite{sagan2001}, the set $\{\psi_\lambda:\lambda\vdash n \}$ is a complete list of irreducible characters of $S_n$ over $\bbC$.

Next, we consider representations induced from subgroups of $S_n$.

\begin{defn}
    Let $\mu=(\mu_1,\mu_2,...,\mu_r)$ be a partition of $n$. The Young subgroup of $S_n$ corresponding to $\mu$ is
    $S_\mu=S_{\{1,2,...,\mu_1\}}\times S_{\{\mu_1+1,\mu_1+2,...,\mu_1+\mu_2\}}\times...\times S_{\{n-\mu_r+1,n-\mu_r+2,...,n\}}$.
\end{defn}

\begin{defn}\label{def:5.2.3}
Let $\lambda,\mu$ be two partitions of $n$. 
Denote by $\phi_\mu$ the induced character $(1_{S_\mu})^{S_n}=\Ind^{S_n}_{S_\mu}(1_{S_\mu})$. The Kostka number $K_{\lambda,\mu}$ is defined as the multiplicity of $\psi_\lambda$ in $\phi_\mu$, i.e.,
$$\langle \psi_\lambda, \phi_\mu \rangle = K_{\lambda,\mu}.$$
\end{defn}

We record here two properties of the Kostka numbers $K_{\lambda,\mu}$, see Corollary 2.4.7 in \cite{sagan2001}:

\begin{prop}\label{prop:5.2.4} For any partitions $\lambda,\mu$ of $n$,
$K_{\lambda,\mu}\geq 1$ exactly when $\lambda\unrhd\mu$. Moreover, $K_{\mu,\mu}=1$.     
\end{prop}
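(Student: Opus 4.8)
The plan is to translate the character-theoretic definition of $K_{\lambda,\mu}$ into a count of tableaux and then read off both assertions combinatorially. By Frobenius reciprocity applied to $\phi_\mu=\Ind^{S_n}_{S_\mu}(1_{S_\mu})$, one has
\[
K_{\lambda,\mu}=\langle\psi_\lambda,\phi_\mu\rangle=\langle\psi_\lambda|_{S_\mu},1_{S_\mu}\rangle=\dim\big((S^\lambda)^{S_\mu}\big),
\]
and by Young's rule (see \cite{sagan2001}) this equals the number of semistandard Young tableaux of shape $\lambda$ and content $\mu$ (fillings of the Young diagram of $\lambda$ with rows weakly increasing, columns strictly increasing, and the entry $i$ used exactly $\mu_i$ times); write $\mathrm{SSYT}(\lambda,\mu)$ for this set. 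It then suffices to show (i) $\mathrm{SSYT}(\lambda,\mu)\neq\varnothing$ if and only if $\lambda\unrhd\mu$, and (ii) $|\mathrm{SSYT}(\mu,\mu)|=1$.

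For the forward half of (i), I would argue as follows: given $T\in\mathrm{SSYT}(\lambda,\mu)$ and an integer $k$, the cells of $T$ with entry $\leq k$ occupy a top segment of each column (columns increase strictly) and a left segment of each row (rows increase weakly), and no column contains more than $k$ of them; hence they form the Young diagram of a partition $\nu$ with $\nu\subseteq\lambda$ and with at most $k$ rows. Counting cells yields $\mu_1+\cdots+\mu_k=|\nu|=\nu_1+\cdots+\nu_k\leq\lambda_1+\cdots+\lambda_k$, and as $k$ is arbitrary this is precisely $\lambda\unrhd\mu$. For the converse I would construct a tableau directly: assuming $\lambda\unrhd\mu$, fill the cells of $\lambda$ in reading order (row by row, left to right) with $\mu_1$ ones, then $\mu_2$ twos, and so on; the dominance inequalities guarantee that at each step the next block of equal entries can be placed so as to keep rows weakly increasing and columns strictly increasing, producing a member of $\mathrm{SSYT}(\lambda,\mu)$.

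For (ii): if $T\in\mathrm{SSYT}(\mu,\mu)$, then every entry equal to $1$ must lie in the first row (strictness of columns), and the first row has exactly $\mu_1$ cells, which is the number of $1$'s, so the first row of $T$ is all $1$'s; deleting it and inducting shows that the $i$-th row of $T$ consists entirely of $i$'s. Hence $T$ is unique and $K_{\mu,\mu}=1$.

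The step that needs care is the constructive direction of (i): checking that the greedy filling is genuinely semistandard and not merely of the correct content. This can be verified by an induction on the number of distinct entries used, but a cleaner alternative is to bypass the combinatorics entirely via the Submodule Theorem for Specht modules (\cite[\S2.4]{sagan2001}): since $\bbC S_n$ is semisimple, $K_{\lambda,\mu}=\dim\Hom_{S_n}(S^\lambda,M^\mu)$ with $M^\mu$ affording $\phi_\mu$, and that theorem gives both $\Hom_{S_n}(S^\lambda,M^\mu)\neq 0\iff\lambda\unrhd\mu$ and $\dim\Hom_{S_n}(S^\mu,M^\mu)=1$. In any event both statements are recorded as \cite[Corollary~2.4.7]{sagan2001}.
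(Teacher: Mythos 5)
Your route — Frobenius reciprocity, Young's rule, then the combinatorics of semistandard tableaux — is the same one the paper takes: the paper cites \cite[Corollary~2.4.7]{sagan2001} at the statement and, in Section~7.3, gives the forward combinatorial argument (entries $\leq a$ sit in the first $a$ rows by column strictness, so $\mu_1+\cdots+\mu_a\leq\lambda_1+\cdots+\lambda_a$) while citing an external source for the converse. Your forward argument and your induction for $K_{\mu,\mu}=1$ are both correct.

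The one genuine gap is the greedy construction you offer for the converse. Filling the diagram of $\lambda$ in reading order with $\mu_1$ ones, $\mu_2$ twos, etc.\ does \emph{not} in general produce a semistandard tableau. Take $\lambda=(4,1,1)$ and $\mu=(2,2,2)$: dominance holds ($4\geq 2$, $5\geq 4$, $6\geq 6$), but the reading-order fill gives first row $1,1,2,2$, second row $3$, third row $3$, whose first column is $1,3,3$ — not strictly increasing. A semistandard tableau of this shape and content does exist (first row $1,1,2,3$, second row $2$, third row $3$), but the reading-order greedy rule misses it because it saturates row~1 before checking whether later rows can still be filled strictly below. The correct argument must either build from the largest entry down, or induct on $n$ by peeling off a suitable horizontal strip, or track the partition shape $\nu^{(k)}$ of cells with entry $\leq k$ row by row rather than cell by cell. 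You do hedge — noting that this step ``needs care'' — and fall back on the Submodule Theorem and the citation to Sagan, both of which are sound; so the proposition is ultimately proved, but the greedy filling as written would not withstand scrutiny and should be dropped or replaced.
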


Therefore, one can decompose the induced character $(1_{S_\mu})^{S_n}=\phi_\mu$ as  $$(1_{S_\mu})^{S_n}=\phi_\mu=\sum_{\lambda\unrhd \mu} K_{\lambda,\mu} \psi_\lambda.$$
We may assemble the Kostka numbers into a matrix $K=(K_{\lambda,\mu})$ indexed by partitions $\lambda$, and totally order all the partitions in a way such that $\lambda\geq \mu$ whenever $\lambda\unrhd \mu$ (say using the lexicographic order, see [Sa,2.2.5]). Then $K$ is a lower triangular matrix with each diagonal entry equal to 1. Then $K$ is an invertible matrix, and the inverse $K^{-1}$ is also lower triangular, and with entries 1 on the diagonal. 

We may arrange all partitions as $\lambda_1<\lambda_2<...<\lambda_{P(n)}$ according to the total order chosen as above, where $P(n)$ is the number of partitions of $n$. Then $$(\phi_{\lambda_1},\phi_{\lambda_2},...,\phi_{\lambda_{P(n)}})=(\psi_{\lambda_1},\psi_{\lambda_2},...,\psi_{\lambda_{P(n)}})K.$$
Therefore, $$(\psi_{\lambda_1},\psi_{\lambda_2},...,\psi_{\lambda_{P(n)}})=(\phi_{\lambda_1},\phi_{\lambda_2},...,\phi_{\lambda_{P(n)}})K^{-1},$$
and each $\psi_\mu=\sum_{\lambda\geq \mu} (K^{-1})_{\lambda,\mu} \cdot \phi_\lambda$. This expresses the irreducible representation as an integer combination of certain induced characters.

\begin{prop}\label{prop:5.2.5}
    Every irreducible character of $S_n$ is an integer combination of induced characters of the form $\phi_\lambda=(1_{S_\lambda})^{S_n}$. 
\end{prop}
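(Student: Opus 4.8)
The plan is to read off the statement from the unitriangularity of the Kostka matrix, which has essentially been set up in the discussion preceding the proposition. First I would fix a total order $\lambda_1 < \lambda_2 < \dots < \lambda_{P(n)}$ on the partitions of $n$ refining the dominance order, so that $\lambda_a \unrhd \lambda_b$ forces $a \geq b$; the lexicographic order works, and this is exactly the ordering already invoked. With respect to this order I would form the square matrix $K = (K_{\lambda_a,\lambda_b})_{a,b}$ of Kostka numbers. By Proposition \ref{prop:5.2.4}, $K_{\lambda_a,\lambda_b} \neq 0$ only when $\lambda_a \unrhd \lambda_b$, i.e.\ only when $a \geq b$, so $K$ is lower triangular; and $K_{\lambda_a,\lambda_a} = K_{\mu,\mu} = 1$, so its diagonal entries are all $1$. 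Hence $K$ is an integer unitriangular matrix.

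Next I would invoke the elementary fact that the inverse of an integer lower-triangular matrix with $1$'s on the diagonal is again an integer lower-triangular matrix with $1$'s on the diagonal (this follows by the usual recursive back-substitution for the entries of $K^{-1}$, each step of which involves only subtraction and multiplication of integers since the pivots are $1$). Therefore $K^{-1} = ((K^{-1})_{\lambda_a,\lambda_b})$ has integer entries.

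Finally I would combine this with the decomposition $\phi_\mu = \sum_{\lambda \unrhd \mu} K_{\lambda,\mu}\psi_\lambda$, recorded just before the statement, which in matrix form reads $(\phi_{\lambda_1},\dots,\phi_{\lambda_{P(n)}}) = (\psi_{\lambda_1},\dots,\psi_{\lambda_{P(n)}})K$. Multiplying on the right by $K^{-1}$ gives $(\psi_{\lambda_1},\dots,\psi_{\lambda_{P(n)}}) = (\phi_{\lambda_1},\dots,\phi_{\lambda_{P(n)}})K^{-1}$, that is, $\psi_\mu = \sum_{\lambda \geq \mu} (K^{-1})_{\lambda,\mu}\,\phi_\lambda$ with $(K^{-1})_{\lambda,\mu} \in \mathbb{Z}$. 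Since every irreducible character of $S_n$ is some $\psi_\mu$ by the classification via Specht modules, this proves the claim.

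There is no real obstacle here; the only point requiring a line of care is the integrality (and triangularity) of $K^{-1}$, which is where the normalization $K_{\mu,\mu} = 1$ is essential — without unit diagonal one would only get a rational inverse and hence only a rational combination of the $\phi_\lambda$. Everything else is bookkeeping with the ordering of partitions.
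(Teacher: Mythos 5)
Your proof is correct and follows the paper's approach exactly: order the partitions refining dominance so the Kostka matrix $K$ is lower unitriangular (Proposition \ref{prop:5.2.4}), invert to get $(\psi_{\lambda_1},\dots,\psi_{\lambda_{P(n)}}) = (\phi_{\lambda_1},\dots,\phi_{\lambda_{P(n)}})K^{-1}$ with $K^{-1}$ integer unitriangular, and conclude $\psi_\mu=\sum_{\lambda\geq\mu}(K^{-1})_{\lambda,\mu}\phi_\lambda$. You spell out the integrality of $K^{-1}$ a bit more carefully than the paper does, but it is the same argument.
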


\begin{proof}
    For $\mu\vdash n$, the character $\psi_\mu$ ranges over all irreducible characters of $S_n$. Then the statement follows from the equality $\psi_\mu=\sum_{\lambda\geq \mu} (K^{-1})_{\lambda,\mu} \cdot \phi_\lambda$.
\end{proof}

\subsubsection{Coxeter Systems and Weyl Groups}\label{sec:5.2.2}
Before we can establish the link between the representations of $S_n$ and $\GL_n(\bbF_q)$, we need to introduce the concept of a Coxeter system.

A Coxeter system is a pair $(W, S)$ where $W$ is a group generated by a set of reflections $S$, with the presentation $W = \langle S \mid (st)^{m_{st}} = 1 \rangle$ for $s, t \in S$, and $m_{st} \in \{2, 3, \ldots\} \cup \{\infty\}$. Here, $m_{ss}=1$ for all $s \in S$, $m_{st}=m_{ts}$ for all $s, t \in S$, and $m_{st} = \infty$ means that there are no relations between $s$ and $t$.

The Weyl group of a linear algebraic group is an example of a Coxeter group. Now we return to the case where $G = \GL_n(\bbF_q)$. Here the Weyl group is $W=S_n$, and the set of reflections $S$ is $\{s_1=(1,2),s_2=(2,3),...,s_{n-1}=(n-1,n)\}$.

For a subset $J$ of $S$, we denote by $W_J$ the subgroup of $W$ generated by $J$, and call $W_J$ a parabolic subgroup of $W$.  

\begin{prop}\label{prop:5.2.6}
Every Young subgroups of $S_n$ can be identified with some parabolic subgroup of $W=S_n$.

Conversely, every parabolic subgroup of $W=S_n$ is conjugate to some Young subgroup $S_\lambda$ of $S_n$.
\end{prop}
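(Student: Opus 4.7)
The plan is to unpack both ``Young subgroup'' and ``parabolic subgroup'' in terms of generating sets of transpositions, and show they match up to relabeling of blocks.

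For the forward direction, I would start with a partition $\mu = (\mu_1, \ldots, \mu_r)$ of $n$ and define the partial sums $\sigma_0 = 0$, $\sigma_k = \mu_1 + \cdots + \mu_k$. Set $J_\mu = S \setminus \{s_{\sigma_1}, s_{\sigma_2}, \ldots, s_{\sigma_{r-1}}\} \subseteq S$. The key observation is that the symmetric group $S_{\{\sigma_{k-1}+1, \ldots, \sigma_k\}}$ on a block of consecutive integers is generated by the simple transpositions $s_{\sigma_{k-1}+1}, \ldots, s_{\sigma_k - 1}$, since any symmetric group on $\{1,\ldots,m\}$ is generated by adjacent transpositions. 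Because the sets $\{s_{\sigma_{k-1}+1},\ldots,s_{\sigma_k-1}\}$ for distinct $k$ are pairwise disjoint and involve disjoint indices, the subgroups they generate commute and their product inside $W$ is a direct product. This shows $W_{J_\mu} = S_\mu$, identifying $S_\mu$ as the standard parabolic $W_{J_\mu}$.

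For the converse, every parabolic subgroup of $W = S_n$ is by definition conjugate to a standard parabolic $W_J$ for some $J \subseteq S$, so it suffices to show that each $W_J$ is conjugate to a Young subgroup. Given $J$, partition $\{1,\ldots,n\}$ into maximal runs of consecutive integers $\{a+1,a+2,\ldots,a+c\}$ such that $s_{a+1},\ldots,s_{a+c-1} \in J$ (with singleton runs if no such $s$ lies in $J$); let the run lengths, in their natural left-to-right order, be the composition $\alpha = (\alpha_1,\ldots,\alpha_r)$ of $n$. By the same generating-set argument as above, $W_J$ equals the direct product $S_{\alpha_1} \times \cdots \times S_{\alpha_r}$ acting on the successive blocks. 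If we now let $\lambda$ be the partition obtained by sorting $\alpha$ into weakly decreasing order, one can choose a permutation $w \in S_n$ that sends the blocks of $\lambda$ bijectively onto the blocks of $\alpha$ (in an order-preserving way within each block). Then $w S_\lambda w^{-1} = W_J$, exhibiting $W_J$ as conjugate to the Young subgroup $S_\lambda$.

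I expect no serious obstacle here; the whole proof is really just the observation that both notions are encoded by an unordered partition of $\{1,\ldots,n\}$ into blocks, with the only subtlety being that parabolic subgroups come with an ordered composition while Young subgroups are indexed by partitions, so a conjugation is needed to reorder the blocks. The most error-prone step will be writing down the conjugating permutation $w$ cleanly; I would present it simply as the unique permutation that maps the standard block decomposition for $\lambda$ to the block decomposition for $\alpha$ in an order-preserving fashion on each block, and verify $w S_\lambda w^{-1} = W_J$ by a direct computation on generators $w s_i w^{-1}$.
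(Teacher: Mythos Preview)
Your proposal is correct and follows essentially the same approach as the paper: both directions hinge on removing the simple transpositions at the partial-sum positions to pass between a partition and a subset $J\subseteq S$, and both handle the converse by writing $J$ as a composition and then sorting into a partition with a conjugating permutation. One small terminological mismatch: in this paper a \emph{parabolic subgroup} of $W$ means precisely a subgroup of the form $W_J$ (not a conjugate of one), so your reduction ``every parabolic is by definition conjugate to some $W_J$'' is unnecessary here---you can start directly with $W_J$.
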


\begin{proof}
Let $\lambda = (\lambda_1, \lambda_2, \ldots, \lambda_r)$ be a partition of $n$. Consider the subset $J(\lambda) = S-\{{s_{\lambda_1}, s_{\lambda_1 + \lambda_2}, \ldots, s_{\lambda_1 + \cdots + \lambda_{r-1}}}\}\subseteq S$. Then the parabolic subgroup $W_{J(\lambda)}$ generated by $J(\lambda)$ is exactly $S_\lambda=S_{\{1,2,...,\lambda_1\}}\times...\times S_{\{n-\lambda_r+1,n-\lambda_r+2,...,n\}}$.

Conversely, let $W_J$ be a parabolic subgroup of $W=S_n$, generated by a subset $J = S-\{s_{\lambda_1}, s_{\lambda_1 + \lambda_2}, \ldots, s_{\lambda_1 + \cdots + \lambda_{r-1}}\} \subseteq S$ for some positive integers $\lambda_1,...,\lambda_{r-1}$ such that $\lambda_1+...+\lambda_{r-1}<n$. We define $\lambda_r=n-(\lambda_1+...+\lambda_{r-1})$. 

Next, we rearrange $\lambda_1,...,\lambda_r$ in weakly descending order to form a partition $\mu=(\lambda_{\sigma(1)},...,\lambda_{\sigma(r)})$ of $n$. Hence, $W_J=S_{\{1,2,...,\lambda_1\}}\times...\times S_{\{n-\lambda_r+1,n-\lambda_r+2,...,n\}}$, and is conjugate to $S_\mu$.
\end{proof}

Recall the Bruhat decomposition $G=\bigsqcup_{w\in W} BwB$, where each $w\in W=S_n$ is identified with the matrix that permutes $\{e_1,...,e_n\}$ via $e_i\mapsto e_{w(i)}$, and $B=\UT_n(\bbF_q)$ is the set of upper triangular matrices.

For a subset $I$ of $S$, we set $P_I=BW_IB$. Then $P_I$ is a subgroup of $G$ (see [CR86, 65.13]), called a parabolic subgroup of $G$. We denote the induced character $(1_{W_I})^{S_n}$ of $S_n$ by $\phi_I$, and denote the character $(1_{P_I})^G$ of $G$ by $\Phi_I$. Then $\phi_\mu=\phi_{J(\mu)}$, with $J(\mu)$ defined as in the proof above. We therefore denote $\Phi_\mu=\Phi_{J(\mu)}$. On the other hand, if $W_I$ is conjugate to $S_\lambda$, the two $\bbC G$-modules $\bbC Ge_{W_I}$ and $\bbC Ge_{S_\lambda}$ are isomorphic as $\bbC G$-modules, and so $\phi_I=\phi_\mu$. The equation $\Phi_I=\Phi_\mu$ follows easily from the injectivity of hat operator in the next part.

\subsubsection{Linking Representations of \texorpdfstring{$S_n$}{Sn} and \texorpdfstring{$\GL_n(\bbF_q)$}{GLn(Fq)}}
We can now establish a correspondence between certain virtual characters of $S_n$ and $G=\GL_n(\bbF_q)$. Recall that a virtual character of a group is an integer combination of some irreducible characters of that group. The discussion of \ref{lemma:innerProdEqDC}-\ref{rem:5.2.10}  follows Chapter 67B in Curtis and Reiner's book \cite{curtis_reiner1986}.

\begin{lemma}\label{lemma:innerProdEqDC}
    Let $G$ be a finite group, and $H,K$ are subgroups of $G$. Then the inner product of the induced characters $\angles{(1_H)^G,(1_K)^G}=|H\backslash G/K|$. 
\end{lemma}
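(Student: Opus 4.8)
The plan is to use Frobenius reciprocity twice and then rewrite the resulting quantity in terms of double cosets. First I would apply Frobenius reciprocity to write $\angles{(1_H)^G,(1_K)^G}_G = \angles{1_H,\ ((1_K)^G)|_H}_H$, where the restriction is to the subgroup $H$. So the task reduces to decomposing the restriction of the induced character $(1_K)^G$ back down to $H$, and then extracting the multiplicity of the trivial character of $H$.

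The key step is the Mackey restriction formula: for the permutation character $(1_K)^G$, its restriction to $H$ decomposes according to the $H$--$K$ double cosets in $G$, namely $((1_K)^G)|_H = \sum_{HgK \in H\backslash G/K} (1_{H\cap gKg^{-1}})^H$, where the sum runs over a set of representatives $g$ of the double cosets. Taking the inner product with $1_H$ and applying Frobenius reciprocity once more gives $\angles{1_H,\ (1_{H\cap gKg^{-1}})^H}_H = \angles{1_{H}|_{H\cap gKg^{-1}},\ 1_{H\cap gKg^{-1}}}_{H\cap gKg^{-1}} = 1$ for each double coset. Summing the contribution $1$ over all double cosets yields exactly $|H\backslash G/K|$.

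Alternatively — and perhaps more transparently for this combinatorial statement — I would realize $(1_H)^G$ and $(1_K)^G$ as the permutation characters of $G$ acting on the coset spaces $G/H$ and $G/K$ respectively, so that $\angles{(1_H)^G,(1_K)^G}_G$ counts the number of $G$-orbits on $G/H \times G/K$ (Burnside-type inner product of permutation characters). The $G$-orbits on $G/H \times G/K$ are in natural bijection with the double cosets $H\backslash G/K$ via $(aH, bK) \mapsto Ha^{-1}bK$, which gives the claimed equality directly.

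There is no real obstacle here: both arguments are standard character theory, and the only point requiring a little care is bookkeeping the direction of the coset identifications (left versus right cosets) so that the bijection with $H\backslash G/K$ comes out correctly rather than with $K\backslash G/H$ — but these two double coset spaces have the same cardinality via $HgK \mapsto Kg^{-1}H$, so even a sign-of-convention slip is harmless. I would present the second (orbit-counting) argument as the main proof and perhaps remark that the Mackey formula gives an alternative.
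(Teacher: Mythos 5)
Your proposal is correct, and the orbit-counting argument you flag as your preferred main proof is exactly the one the paper gives: interpret $(1_H)^G$ and $(1_K)^G$ as permutation characters, use Burnside to count orbits on $G/H \times G/K$, and identify those orbits with $H\backslash G/K$ via $(xH, yK) \mapsto Hx^{-1}yK$. Your alternative route via Frobenius reciprocity and the Mackey restriction formula is also a standard and valid proof, just not the one the paper uses.
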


\begin{proof}
    The character $(1_H)^G$ is afforded by the module $\bbC Ge_H=\bbC[G/H]$. Hence, $(1_H)^G$ is the permutation character of $G$ acting on $G/H$ by left multiplication. Therefore, $\angles{(1_H)^G,(1_K)^G}=\frac{1}{|G|}\sum_{g\in G}|(G/H)^g||(G/K)^g|=\frac{1}{|G|}\sum_{g\in G}|(G/H\times G/K)^g|$, where $X^g=\{x\in X: gx=x\}$ for a $G$-set $X$. 
    
    By Burnside's counting theorem, $\frac{1}{|G|}\sum_{g\in G}|(G/H\times G/K)^g|=|(G/H\times G/K)/G|$, the number of orbits of the left $G$-action on $(G/H\times G/K)$. It remains to observe that there is a bijection $(G/H\times G/K)/G\to H\backslash G/K$ by $(xH,yK)\mapsto Hx^{-1}yK$.
\end{proof}

Return to the special setting where $G=\GL_n(\bbF_q)$ and $B=\UT_n(\bbF_q)$. 

We have the following equality:

\begin{prop}\label{prop:5.2.8} Let $(W,S)$ be the Coxeter system associated with $G$. 
Let $I, J$ be subsets of $S$, and let $P_I=BW_IB$ and $P_J=BW_JB$.  Then the inner product of the induced characters $\langle\Phi_I, \Phi_J\rangle = \langle\phi_I, \phi_J\rangle$.
\end{prop}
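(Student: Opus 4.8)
The plan is to compute both inner products by counting double cosets, using Lemma~\ref{lemma:innerProdEqDC} on each side. By that lemma applied to $G$ with the subgroups $P_I$ and $P_J$, we have $\langle \Phi_I,\Phi_J\rangle = |P_I\backslash G/P_J|$. Likewise, applied to the finite group $S_n$ with the parabolic subgroups $W_I$ and $W_J$, it gives $\langle \phi_I,\phi_J\rangle = |W_I\backslash S_n/W_J|$. So the statement reduces to the purely combinatorial identity $|P_I\backslash G/P_J| = |W_I\backslash W/W_J|$, where $W=S_n$.

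First I would recall the refinement of the Bruhat decomposition for parabolic subgroups: since $P_I=BW_IB$ and $P_J=BW_JB$, every $(P_I,P_J)$-double coset in $G$ has the form $P_I\,w\,P_J$ for some $w\in W$, and two elements $w,w'\in W$ give the same double coset $P_IwP_J = P_Iw'P_J$ if and only if $w$ and $w'$ lie in the same $(W_I,W_J)$-double coset of $W$. This is the standard fact that the map $W_I w W_J \mapsto P_I w P_J$ is a well-defined bijection $W_I\backslash W/W_J \;\xrightarrow{\ \sim\ }\; P_I\backslash G/P_J$; one direction (well-definedness and surjectivity) is immediate from $G=BWB$ and the absorption $BW_IB\cdot B = BW_IB$, while injectivity uses that for $s\in I$ one has $BsB\cdot BwB \subseteq BswB \cup BwB$, so that $P_I w P_J$ meets $BW$ only in $\bigcup_{u\in W_I,\,v\in W_J} Buw vB$, forcing $w'\in W_I w W_J$. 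I would cite \cite[65.13 and the surrounding material]{curtis_reiner1986} for these facts about $BN$-pairs rather than reprove them. Combining the bijection with the two applications of Lemma~\ref{lemma:innerProdEqDC} yields $\langle\Phi_I,\Phi_J\rangle = |P_I\backslash G/P_J| = |W_I\backslash W/W_J| = \langle\phi_I,\phi_J\rangle$, as desired.

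The only real obstacle is the injectivity half of the double-coset bijection, i.e.\ showing that distinct $(W_I,W_J)$-double cosets in $W$ stay distinct after passing to $(P_I,P_J)$-double cosets in $G$; this is exactly where the $BN$-pair axioms (specifically the exchange-type relation $BsB\cdot BwB\subseteq BwB\cup BswB$) are needed, and it is the one step that is not a formality. Everything else — the two invocations of Lemma~\ref{lemma:innerProdEqDC} and the bookkeeping with $BW_IB$ — is routine. If one prefers to avoid quoting the parabolic Bruhat decomposition as a black box, an alternative is to observe directly that $\Phi_I$ is the permutation character of $G$ on $G/P_I$, that $G/P_I \cong$ the set of flags of the corresponding type, and to exhibit the orbit count of $G$ on (type-$I$ flags)$\times$(type-$J$ flags) as matching the orbit count of $S_n$ on the analogous pair of "flags over $\mathbb{F}_1$"; but the double-coset argument above is cleaner and I would present that.
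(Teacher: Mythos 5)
Your proof is correct and follows essentially the same route as the paper: apply Lemma~\ref{lemma:innerProdEqDC} on both sides to reduce to the double-coset count $|P_I\backslash G/P_J| = |W_I\backslash W/W_J|$, then invoke the standard bijection $W_I w W_J \mapsto P_I w P_J$ from the theory of $BN$-pairs (the paper cites Theorem 65.21 of Curtis--Reiner rather than 65.13, but it is the same circle of facts). Your additional sketch of why the bijection is injective is fine, though the paper simply quotes the result.
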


\begin{proof}
Recall that $\Phi_I=(1_{P_I})^G$ and $\phi_I=(1_{W_I})^W$. By Lemma \ref{lemma:innerProdEqDC}, $\langle\Phi_I, \Phi_J\rangle=|P_I\backslash G/P_J|$, and $\langle\phi_I, \phi_J\rangle=|W_I \backslash W/W_J|$. By Theorem 65.21 in \cite{curtis_reiner1986}, there is a bijection of double cosets $W_I \backslash W/W_J \to P_I\backslash G/P_J$ by $W_IwW_J\mapsto BW_IwW_JB=P_I wP_J$. Therefore, $|P_I\backslash G/P_J|=|W_I \backslash W/W_J|$ and our result follows.
\end{proof}

Now we can establish the correspondence between certain virtual characters of $S_n$ and $G=\GL_n(\bbF_q)$. We define a map from the virtual characters of $S_n$ to the virtual characters of $G=\GL_n(\bbF_q)$ by sending $\phi_I$ to $\Phi_I$ and extending it linearly to virtual characters.

\begin{prop}\label{prop:5.2.9}
The map \[\xi=\sum_{J\subseteq S} n_J \phi_J \mapsto \widehat{\xi}=\sum_{J\subseteq S} n_J \Phi_J, \text{where each $n_J\in \bbZ$} \] defines an injective map from  virtual characters of $W=S_n$ that are integer combinations of $\{\phi_J:J\subseteq S\}$ to virtual characters of $G$. This map preserves inner products.
\end{prop}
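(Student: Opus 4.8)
The plan is to verify that the map $\xi \mapsto \widehat{\xi}$ is well-defined and injective, and that it preserves inner products, by reducing everything to the inner-product computation of Proposition \ref{prop:5.2.8}. First I would address well-definedness: a priori the expression $\sum_J n_J \phi_J$ does not determine the coefficients $n_J$ uniquely, since the characters $\{\phi_J : J \subseteq S\}$ are not linearly independent (different subsets $J$ can give conjugate parabolics $W_J$, hence equal $\phi_J$). However, as noted in the excerpt after Proposition \ref{prop:5.2.6}, whenever $W_I$ is conjugate to $W_J$ in $S_n$ we also have $\phi_I = \phi_J$ \emph{and} $\Phi_I = \Phi_J$ (the latter because $\bbC G e_{W_I} \simeq \bbC G e_{W_J}$ as $\bbC G$-modules). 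So the only relations among the $\phi_J$ come from conjugacy of parabolic subgroups, and these relations are respected on the $\Phi_J$ side; hence the map is well-defined on the $\bbZ$-span of the $\phi_J$.

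Next I would prove that the map preserves inner products. Take two virtual characters $\xi = \sum_I m_I \phi_I$ and $\eta = \sum_J n_J \phi_J$ in the span. By bilinearity,
\[
\langle \xi, \eta \rangle = \sum_{I,J} m_I n_J \langle \phi_I, \phi_J \rangle, \qquad
\langle \widehat{\xi}, \widehat{\eta} \rangle = \sum_{I,J} m_I n_J \langle \Phi_I, \Phi_J \rangle.
\]
By Proposition \ref{prop:5.2.8}, $\langle \Phi_I, \Phi_J \rangle = \langle \phi_I, \phi_J \rangle$ for every pair $I, J \subseteq S$, so the two double sums agree term by term, giving $\langle \widehat{\xi}, \widehat{\eta} \rangle = \langle \xi, \eta \rangle$. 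This is the technical heart, and it is essentially immediate once Proposition \ref{prop:5.2.8} is in hand.

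Finally, injectivity follows formally from inner-product preservation: if $\widehat{\xi} = 0$ then $\langle \xi, \xi \rangle = \langle \widehat{\xi}, \widehat{\xi} \rangle = 0$, and since $\langle\,\cdot\,,\,\cdot\,\rangle$ is a positive-definite Hermitian form on the space of (virtual) characters of $S_n$, we conclude $\xi = 0$. I expect the main obstacle — or rather the only point requiring genuine care — to be the well-definedness step: one must be sure that \emph{all} linear relations among the $\phi_J$ are accounted for by conjugacy of the associated parabolic subgroups, so that the assignment $\phi_J \mapsto \Phi_J$ does not depend on how a virtual character is written as an integer combination of the $\phi_J$'s. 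Once that is settled, inner-product preservation and injectivity are formal consequences of Proposition \ref{prop:5.2.8} and positive-definiteness of the character inner product.
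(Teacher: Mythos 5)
Your inner-product computation and your injectivity step are exactly the paper's: bilinearity reduces everything to Proposition \ref{prop:5.2.8} term by term, and $\widehat\xi=0 \Rightarrow \langle\xi,\xi\rangle=0 \Rightarrow \xi=0$ by positive-definiteness. Where you diverge is on well-definedness, which you correctly flag as the only delicate point. You try to settle it by asserting that every $\bbZ$-linear relation among the $\phi_J$ comes from conjugacy of the parabolics $W_J$, so that the assignment $\phi_J \mapsto \Phi_J$ descends. That claim is true --- after grouping by conjugacy class the distinct $\phi_J$ are the $\phi_\lambda$, and these are linearly independent because the Kostka matrix of Section \ref{sec:5.2.1} is unitriangular --- but you state it without proof, so there is a real gap at precisely the step you identified as needing care. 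The paper's proof sidesteps the whole question: it works at the level of tuples $N=\{n_J\}$, shows $\langle\xi_N,\xi_{N'}\rangle = \langle\eta_N,\eta_{N'}\rangle$ directly from Proposition \ref{prop:5.2.8}, and then observes
\[
\xi_N=\xi_{N'} \iff \langle\xi_{N-N'},\xi_{N-N'}\rangle=0 \iff \langle\eta_{N-N'},\eta_{N-N'}\rangle=0 \iff \eta_N=\eta_{N'}.
\]
The forward implication is well-definedness and the reverse is injectivity, both falling out of the same positive-definiteness argument with no need to classify the relations among the $\phi_J$. If you fill your gap by citing linear independence of the $\phi_\lambda$, your proof goes through; the paper's route is simply tighter because it never needs that structural input.
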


\begin{proof}
    Given indexed integers $N=\{n_J\}_{J\subseteq S}\in \bbZ^{\{J:J\subseteq S\}}$, let $\xi_N=\sum_{J\subseteq S} n_J \phi_J$, and $\eta_{N}=\sum_{J\subseteq S} n_J \Phi_J$. For any $N=\{n_J\}_{J\subseteq S},N'=\{n'_J\}_{J\subseteq S}$,
$$\angles{\xi_N,\xi_{N'}}=\sum_{I\subseteq S,J\subseteq S} n_I n'_J\angles{\phi_I,\phi_J}= \sum_{I\subseteq S,J\subseteq S} n_I n'_J\angles{\Phi_I,\Phi_J}=\angles{\eta_{N},\eta_{N'}}.$$ 

    Note that the two maps $N\mapsto \xi_N$ and $N\mapsto \eta_N$ are both linear. Then  $$\xi_{N}=\xi_{N'}\iff \angles{\xi_{N-N'},\xi_{N-N'}}=0\iff \angles{\eta_{N-N'},\eta_{N-N'}}=0\iff \eta_{N}=\eta_N'.$$ Therefore, the map $\xi_{N}\mapsto \eta_{N}$ does not depend on the $N$ chosen, and is injective. This gives the linear association $\xi\mapsto \hat{\xi}$, which preserves inner product.
\end{proof}

\begin{rem}\label{rem:5.2.10}
    If $\xi=\sum_{J\subseteq S} n_J \phi_J$ is an irreducible character of $W$, then $\angles{\xi,\xi}=1$, and $\angles{\hat{\xi},\hat{\xi}}=1$. Therefore, either $\hat{\xi}$ or $-\hat{\xi}$ is an irreducible character of $G$. We will show that the former is the case.
\end{rem}

\begin{prop}\label{prop:5.2.11}
    Suppose $\psi=\sum_{J\subseteq S} n_J \phi_J$ is an irreducible character of $W$. Then $\hat{\psi}$ is an irreducible character of $G$.
\end{prop}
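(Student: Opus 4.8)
The plan is to reduce the statement to a single sign computation. Since $\psi$ is an irreducible character of $W = S_n$, Remark~\ref{rem:5.2.10} already gives $\langle \widehat{\psi}, \widehat{\psi}\rangle = \langle \psi,\psi\rangle = 1$, so $\widehat{\psi} = \epsilon\,\chi$ for a unique irreducible character $\chi$ of $G$ and a sign $\epsilon \in \{+1,-1\}$; the whole proposition amounts to showing $\epsilon = +1$. Writing $\psi = \psi_\lambda$ for the partition $\lambda \vdash n$ that indexes it (the $\psi_\nu$, $\nu \vdash n$, being all the irreducible characters of $S_n$), I would set, for each $\nu \vdash n$, $\widehat{\psi_\nu} = \epsilon_\nu\,\chi_\nu$ with $\epsilon_\nu \in \{\pm 1\}$ and $\chi_\nu$ irreducible. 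Two easy preliminary facts: the $\chi_\nu$ are pairwise distinct, because $\langle \widehat{\psi_\nu}, \widehat{\psi_{\nu'}}\rangle = \langle \psi_\nu, \psi_{\nu'}\rangle = \delta_{\nu\nu'}$ by Proposition~\ref{prop:5.2.9} and equality of two of them would make this pairing $\pm 1$; and $\Phi_\lambda$ is an honest permutation character (it is induced from a parabolic subgroup), so $\langle \Phi_\lambda, \eta\rangle \geq 0$ for every irreducible character $\eta$ of $G$.

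Next I would feed the Kostka decomposition through the hat map. By Proposition~\ref{prop:5.2.4} and the discussion after it, $\phi_\lambda = \sum_{\nu \unrhd \lambda} K_{\nu,\lambda}\,\psi_\nu$ with $K_{\lambda,\lambda} = 1$, and by Proposition~\ref{prop:5.2.5} each $\psi_\nu$ lies in the $\bbZ$-span of $\{\phi_J : J \subseteq S\}$, so this is an identity between virtual characters to which the well-defined linear map of Proposition~\ref{prop:5.2.9} applies. Since that map sends $\phi_\lambda$ to $\Phi_\lambda$, I get
\[
\Phi_\lambda \;=\; \sum_{\nu \unrhd \lambda} K_{\nu,\lambda}\,\widehat{\psi_\nu}\;=\;\sum_{\nu \unrhd \lambda} K_{\nu,\lambda}\,\epsilon_\nu\,\chi_\nu,
\]
an expansion of $\Phi_\lambda$ in pairwise distinct irreducible characters of $G$.

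The decisive step — and the only point where something genuinely has to be argued rather than quoted — is reading off the multiplicity of $\chi_\lambda$: from the displayed formula it is $K_{\lambda,\lambda}\,\epsilon_\lambda = \epsilon_\lambda$, while from the permutation-character property it is $\langle \Phi_\lambda, \chi_\lambda\rangle \geq 0$; hence $\epsilon_\lambda = +1$ and $\widehat{\psi} = \widehat{\psi_\lambda} = \chi_\lambda$ is an irreducible character of $G$. I expect the main thing to be careful about is exactly this: the argument relies on $K_{\lambda,\lambda}$ being \emph{precisely} $1$ and on the $\chi_\nu$ being genuinely distinct, so that the coefficient of $\chi_\lambda$ in $\Phi_\lambda$ is unambiguously $\epsilon_\lambda$ with no cancellation among the other terms. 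I would also spell out the minor bookkeeping that $\phi_J$ (and hence $\Phi_J$) depends only on the multiset of block sizes of $J$, so that $\phi_\lambda$, $\Phi_\lambda$, and the map of Proposition~\ref{prop:5.2.9} are unambiguous on $\sum_{\nu} K_{\nu,\lambda}\psi_\nu$. Nothing here is computationally heavy; the content is the positivity of $\langle \Phi_\lambda, \chi_\lambda\rangle$ combined with the unitriangular shape of the Kostka matrix.
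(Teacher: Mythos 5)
Your proof is correct, and the overall strategy is the same as the paper's: reduce via Remark~\ref{rem:5.2.10} to the statement that exactly one of $\widehat{\psi}, -\widehat{\psi}$ is irreducible, then pair $\widehat{\psi}$ against a known \emph{genuine} permutation character $\Phi_J$ of $G$ and use non-negativity of that pairing to rule out the minus sign. The difference is which $\Phi_J$ you test against. The paper takes $J=\varnothing$, so $\Phi_\varnothing=(1_B)^G$, and uses the elementary fact that the regular character of $W=S_n$ pairs with every irreducible $\psi$ to give $\deg(\psi)>0$ --- a single uniform choice that works for all $\psi$ simultaneously and requires no Kostka-number input. You instead write $\psi=\psi_\lambda$, test against the $\lambda$-specific $\Phi_\lambda$, and read the sign $\epsilon_\lambda$ off the coefficient of $\chi_\lambda$ using the unitriangularity $K_{\lambda,\lambda}=1$ of the Kostka matrix. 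Both are correct; yours is slightly heavier in bookkeeping (and you rightly flag the two points needing care: distinctness of the $\chi_\nu$ and the exact value $K_{\lambda,\lambda}=1$), but it has the pedagogical advantage of making the triangular structure of the transition matrix visible, and it packages the observation that the $\chi_\nu$ are pairwise distinct, which the paper bypasses by using only a single inner product.
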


\begin{proof}
Take $I=\varnothing$, and let $\xi=\phi_I=(1_{\{1\}})^W$. Then $\xi$ is the regular character of $W$, and $\angles{\xi, \psi}= \deg(\psi)$ because $\psi$ is an irreducible character of $W$.

Then $\angles{\hat{\xi}, -\hat{\psi}}=-\deg(\psi)<0$. Note that $\hat{\xi}=\Phi_I=(1_B)^G$ is a character of $G$, and the inner product of two characters of $G$ is non-negative. Therefore, $-\hat{\psi}$ is not a character of $G$. By the previous remark, either $\hat{\psi}$ or $-\hat{\psi}$ is irreducible. Then $\hat{\psi}$ is irreducible character of $G$. 
\end{proof}

By Proposition \ref{prop:5.2.5} and \ref{prop:5.2.6}, each irreducible character of $S_n$ is an integer combination of certain characters of the form $\phi_J$. Then the domain of the hat map $\xi\mapsto \hat{\xi}$ is the set of all virtual characters of $W$. Moreover, $\{\hat{\xi}:\xi \in \Irr(S_n)\}$ is the set of all unipotent characters of $G$:
\begin{prop}\label{prop:5.2.12}
    Let $G=\GL_n(\bbF_q)$, and $W=S_n$ is the Weyl group of $G$. The set of all irreducible characters in $(1_B)^G$ is $\{{\Psi_\lambda}: \lambda\vdash n\}$, where $\Psi_\lambda$ is defined as $\widehat{\psi_\lambda}$. Moreover, $(1_B)^G=\sum_{\lambda \vdash n} \deg({\psi_\lambda}){\Psi_\lambda}$.
\end{prop}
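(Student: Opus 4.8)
The plan is to obtain the full decomposition of $(1_B)^G$ by transporting, via the hat map of Section \ref{sec:5.2}, the decomposition of the regular character of $W=S_n$ into irreducibles.

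First I would observe that $(1_B)^G = \Phi_\varnothing = \widehat{\phi_\varnothing}$. Indeed $P_\varnothing = BW_\varnothing B = B$, so $\Phi_\varnothing = (1_{P_\varnothing})^G = (1_B)^G$, and by construction the hat map sends $\phi_\varnothing \mapsto \Phi_\varnothing$. Next, $W_\varnothing = \{1\}$, so $\phi_\varnothing = (1_{\{1\}})^{S_n}$ is the regular character of $S_n$, which by the standard theory of the group algebra decomposes as $\phi_\varnothing = \sum_{\lambda\vdash n}\deg(\psi_\lambda)\,\psi_\lambda$, each irreducible occurring with multiplicity its degree.

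The key step is then to apply the hat map to this identity. By Proposition \ref{prop:5.2.5} together with Proposition \ref{prop:5.2.6}, every irreducible character $\psi_\lambda$ of $S_n$ is an integer combination of the characters $\phi_J$, $J\subseteq S$, hence lies in the domain on which the hat map is defined and, by Proposition \ref{prop:5.2.9}, is $\bbZ$-linear and inner-product preserving. Applying it and using $\widehat{\psi_\lambda}=\Psi_\lambda$ gives
\[
(1_B)^G = \widehat{\phi_\varnothing} = \sum_{\lambda\vdash n}\deg(\psi_\lambda)\,\Psi_\lambda .
\]
By Proposition \ref{prop:5.2.11} each $\Psi_\lambda$ is an irreducible character of $G$, and since the hat map is isometric, $\langle\Psi_\lambda,\Psi_\mu\rangle = \langle\psi_\lambda,\psi_\mu\rangle = \delta_{\lambda\mu}$, so the $\Psi_\lambda$ are pairwise distinct. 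Because $\deg(\psi_\lambda)\geq 1$ for every $\lambda$, the right-hand side is exactly the decomposition of $(1_B)^G$ into distinct irreducible constituents, each with positive multiplicity, so the set of irreducible characters occurring in $(1_B)^G$ is precisely $\{\Psi_\lambda:\lambda\vdash n\}$.

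Given the machinery already assembled in Section \ref{sec:5.2}, no step is genuinely deep; the only point needing care is the bookkeeping that makes the hat map applicable here — namely that its domain, a priori only the $\bbZ$-span of $\{\phi_J\}$, actually contains every virtual character of $S_n$, and that the map is linear and isometric on it — and this is exactly what Propositions \ref{prop:5.2.5}, \ref{prop:5.2.6}, and \ref{prop:5.2.9} supply.
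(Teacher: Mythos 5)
Your proof is correct and follows essentially the same route as the paper: identify $(1_B)^G$ with $\widehat{\phi_\varnothing}$, decompose the regular character $\phi_\varnothing$ of $S_n$ into irreducibles, and push the decomposition through the hat map using its linearity, isometry, and the fact (Proposition \ref{prop:5.2.11}) that it sends irreducibles to irreducibles. The paper invokes injectivity of the hat map where you invoke isometry to get pairwise distinctness of the $\Psi_\lambda$; these are interchangeable given the setup, so there is no substantive difference.
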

\begin{proof}
Again consider $I=\varnothing$ and $\xi=\phi_I$, the regular character of $W=S_n$.
Then $\xi= \sum_{\lambda \vdash n} \deg(\psi_\lambda)\psi_\lambda$ is the decomposition of $\xi$ into irreducible characters.

By the previous remark, one can apply the hat map to every $\psi_\lambda$. Hence, $(1_B)^G=\hat{\xi} =\sum_{\lambda \vdash n} \deg({\psi_\lambda})\widehat{\psi_\lambda}=\sum_{\lambda \vdash n} \deg(\psi_\lambda)\Psi_\lambda$. 
Because the hat map is injective, and maps irreducible characters to irreducible characters, the expression $(1_B)^G=\sum_{\lambda \vdash n} \deg({\psi_\lambda})\Psi_\lambda$ is the decomposition of $(1_B)^G$ to irreducible characters of $G$.
\end{proof}

\begin{rem}
    Proposition \ref{prop:5.2.8}-\ref{prop:5.2.11} applies to general finite group $G$ with $(B,N)$-pairs. To get a result as in Proposition \ref{prop:5.2.12}, one needs to express each character of $W$ as linear combination of characters of the form $\phi_J$. This is not always feasible for other types of $G$. 
\end{rem}

\subsection{Hecke Algebra and Tits' Deformation Theorem}\label{sec:5.3}

In Section \ref{sec:5.2}, we established that every unipotent irreducible representation of $G = \GL_n(\mathbb{F}_q)$ equals $\hat{\psi_{\lambda}}$ for some partition $\lambda$ of $n$. By Proposition \ref{prop:5.1.2}(3), every irreducible representation of $H = e_B \mathbb{C} G e_B$ can be written as $\hat{\psi_{\lambda}}|_H$. Therefore, the map $\psi \mapsto \hat{\psi}|_H$ is a bijection between irreducible representations of $S_n$ and $H$. Moreover, by Corollary \ref{cor:5.1.4}, $\deg(\hat{\psi}|_H)=\angles{\hat{\psi},(1_{B})^G}$, which equals $\deg(\psi)$ by Proposition \ref{prop:5.2.12}.
Since both $\mathbb{C} S_n$ and $H$ are semisimple, it follows that they have the same decomposition into direct sum of matrix algebras. That is, $\mathbb{C} S_n \simeq \bigoplus_{\lambda \vdash n} M_{\deg(\psi_{\lambda})}(\mathbb{C}) \simeq H$.

In this section, we discuss the Hecke algebra $H$ by analyzing the base elements, and state (without proof) the Tits' Deformation Theorem which relates $\mathbb{C} W$ and $\mathbb{C} G$ for a finite group $G$ with a $BN$-pair.

We first assume that $G$ is a finite group, with $B$ a subgroup of $G$. Then we focus on the case where $G$ is a finite group with a $BN$-pair. The reader can refer to \cite[\S VIII]{feit_higman1964} or \cite[\S65]{curtis_reiner1986} for the definition and properties of a group with a $BN$-pair. Alternatively, one may consider our special setting $G = \GL_n(\mathbb{F}_q)$ and $B = \UT_n(\mathbb{F}_q)$ throughout the discussion.

\subsubsection{A Basis of Hecke Algebra}

Let $G$ be a finite group, and and let $B$ be a subgroup of $G$. In this section, we present a basis of Hecke algebra $H = e_B \mathbb{C} G e_B$, and calculate the structure constants of $H$ when $G$ is general and when $G$ is a group with $BN$-pair. We refer the reader to \cite[\S11D]{curtis_reiner1981} for the general calculation, and to \cite[\S67A]{curtis_reiner1986} for the special case.

\begin{prop}
Let $B$ be a subgroup of a finite group $G$, and $H = e_B \mathbb{C} G e_B$. Let $B \backslash G / B = \{D_i\}_{1 \leq i \leq r}$, where $D_i = Bx_iB, 1 \leq i \leq r$.
\begin{enumerate}
\item For each $1 \leq i \leq r$, define $a_i = \frac{1}{|B|} \sum_{x \in D_i} x$ (see also Definition 4.1.3). Then $\{a_i \mid 1 \leq i \leq r\}$ is a $\mathbb{C}$-basis for $H$.
\item For $1 \leq i, j \leq r$, we have $a_i a_j = \sum_{k \in I} \mu_{ijk} a_k$, where the structure constants are given by $\mu_{ijk} = \frac{|D_i \cap x_k D_j^{-1}|}{|B|}$.
\end{enumerate}
\end{prop}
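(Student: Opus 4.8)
The plan is to first pin down $H = e_B\mathbb{C}Ge_B$ concretely as the space of functions constant on $B$-double cosets, and then read off both statements from that description together with a single direct multiplication count. No deep input is required; this is essentially the classical computation of the structure constants of a Hecke algebra (cf.\ \cite[\S11D]{curtis_reiner1981}).

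For part (1), I would begin from the observation that, since $e_B$ is idempotent, $H = \{\xi \in \mathbb{C}G : e_B\xi = \xi = \xi e_B\}$: indeed $e_B\eta e_B$ always satisfies this, and conversely $\xi = e_B\xi e_B$ whenever $e_B\xi = \xi = \xi e_B$. Writing $\xi = \sum_{g\in G}c_g g$, the relation $e_B\xi = \xi$ is equivalent to $b\xi = \xi$ for every $b\in B$ (averaging over $B$ is the projection onto the left-$B$-invariants, and $\xi$ is fixed iff it lies in the image), and this says exactly that $c_g$ is constant on each set $Bg$; symmetrically $\xi e_B = \xi$ forces $c_g$ to be constant on each set $gB$. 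Hence $H$ consists precisely of those $\xi$ whose coefficient function is constant on each double coset $D_i = Bx_iB$, so $\dim_{\mathbb{C}}H = |B\backslash G/B| = r$. Since each $\sum_{x\in D_i}x$ lies in $H$ (the set $D_i$ is stable under left and right multiplication by $B$) and these $r$ elements have pairwise disjoint supports, their normalizations $a_i = \frac{1}{|B|}\sum_{x\in D_i}x$ form a $\mathbb{C}$-basis of $H$. One can also record the identity $a_i = \frac{|D_i|}{|B|}\,e_{D_i} = \frac{|D_i|}{|B|}\,e_B x_i e_B$ via Lemma \ref{lemma:mul_idem}, which makes membership in $H$ transparent.

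For part (2), since $a_i,a_j\in H$, the product $a_ia_j$ lies in $H$ and is therefore a $\mathbb{C}$-combination of the $a_k$; what remains is to identify the coefficients. Expanding, $a_ia_j = \frac{1}{|B|^2}\sum_{x\in D_i}\sum_{y\in D_j}xy$, so the coefficient of a fixed $g\in G$ equals $\frac{1}{|B|^2}\bigl|\{(x,y)\in D_i\times D_j : xy = g\}\bigr|$. Substituting $y = x^{-1}g$, this count is $\frac{1}{|B|^2}\bigl|\{x\in D_i : x^{-1}g\in D_j\}\bigr| = \frac{1}{|B|^2}\,|D_i\cap gD_j^{-1}|$, where $D_j^{-1} = Bx_j^{-1}B$. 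Because $a_ia_j\in H$, this coefficient depends only on the double coset $D_k$ containing $g$; evaluating at $g = x_k$ and using $\sum_{x\in D_k}x = |B|\,a_k$ yields $a_ia_j = \sum_k \frac{|D_i\cap x_kD_j^{-1}|}{|B|}\,a_k$, as claimed.

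The only genuine point requiring care is bookkeeping of the powers of $|B|$ in the normalizations, together with the fact (used in part (2), and supplied by part (1)) that an element of $H$ has coefficient function constant on double cosets; everything else is a routine substitution. So I expect the "main obstacle" to be purely clerical rather than conceptual.
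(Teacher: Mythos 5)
Your proof is correct and follows essentially the same approach as the paper: identify $H$ as the span of double-coset indicator averages by observing that coefficients of elements of $H$ must be constant on $B$-double cosets, and obtain the structure constants by a direct count of pairs $(x,y)\in D_i\times D_j$ with $xy=g$. Your version spells out a few small steps (the characterization $H=\{\xi: e_B\xi=\xi=\xi e_B\}$ and the reduction of $e_B\xi=\xi$ to $b\xi=\xi$ for all $b\in B$) that the paper leaves implicit, but the argument is the same.
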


\begin{proof}
\begin{enumerate}
\item An element $c = \sum_{x \in G} c_x x \in \mathbb{C} G$ lies in $H$ exactly when $c_x = c_{bxb'}$ for any $b, b' \in B$. That is, the coefficient $c_x$ is uniform on each $B-B$ double coset of $G$. Then $\{a_i | 1 \leq i \leq r\}$ is a basis of $H$.
\item By definition, $a_i = \frac{1}{|B|} \sum_{x \in D_i} x$, and $a_j = \frac{1}{|B|} \sum_{y \in D_j} y$. Then the coefficient of $x_k$ in $a_i a_j$ equals $\frac{|\{x \in D_i, y \in D_j, x_k = xy\}|}{|B|^2} = \frac{|D_i \cap x_k D_j^{-1}|}{|B|^2}$. On the other hand, the coefficient of $x_k$ in $\sum_{k \in I} \mu_{ijk} a_k$ is $\frac{\mu_{ijk}}{|B|}$. Therefore, $\mu_{i,j,k} = \frac{|D_i \cap x_k D_j^{-1}|}{|B|}$.
\end{enumerate}
\end{proof}

Now consider $G$ is a finite group with a $BN$-pair, and $W$ is the Weyl group of $G$, with a set of reflections $S=\{s_1,...,s_n\} \subseteq W$ such that $(W,S)$ is a Coxeter system. Let $l$ be the length function in $(W,S)$. Since there is a bijection between $W$ and $B \backslash G / B$ given by $w \mapsto B \dot{w} B$, we may denote $a_w = \frac{1}{|B|} \sum_{x \in B \dot{w} B} x$. Then $\{a_w \mid w \in W\}$ is a basis for $W$, and the multiplication in $H$ satisfies:

\begin{prop}\label{prop:5.3.2} For each $s \in S$, let $q_s = |B \dot{s} B / B|$. Then the multiplication in $H$ satisfies $a_s a_w = a_{sw}$ if $l(sw) > l(w)$ and $a_s a_w = q_s a_{sw} + (q_s - 1) a_w$, if $l(sw) < l(w)$.
\end{prop}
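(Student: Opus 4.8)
The plan is to reduce the whole computation to two standard inputs from the theory of groups with a $BN$-pair, quotable from \cite[\S65, \S67A]{curtis_reiner1986}: first, the double-coset multiplication dichotomy, namely $B\dot sB\cdot B\dot wB = B\dot s\dot wB$ when $l(sw)>l(w)$ and $B\dot sB\cdot B\dot wB = B\dot s\dot wB\,\cup\,B\dot wB$ when $l(sw)<l(w)$, together with the special case $B\dot sB\dot sB = B\,\cup\,B\dot sB$; and second, the multiplicativity of the index $\mathrm{ind}(w):=|B\dot wB/B|$ along reduced expressions, so that $\mathrm{ind}(sw)=\mathrm{ind}(s)\,\mathrm{ind}(w)=q_s\,\mathrm{ind}(w)$ whenever $l(sw)=l(w)+1$. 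I also record two elementary facts used repeatedly: that $a_1=e_B$ is the identity of $H$, and that the ``sum of coefficients'' map $\varepsilon:\bbC G\to\bbC$ (the augmentation) is an algebra homomorphism with $\varepsilon(a_w)=|B\dot wB|/|B|=\mathrm{ind}(w)$; in particular $\varepsilon(a_s)=q_s$. (One could instead invoke the explicit structure-constant formula from the previous proposition, but the augmentation argument is cleaner.)

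First I would treat the case $l(sw)>l(w)$. Since $a_s,a_w\in H$ and $H$ is an algebra, $a_sa_w\in H$, so it is a $\bbC$-linear combination of the basis $\{a_v:v\in W\}$; on the other hand, as an element of $\bbC G$ its support lies in $(B\dot sB)(B\dot wB)=B\dot s\dot wB$, which meets only the double coset indexing $a_{sw}$. Hence $a_sa_w=c\,a_{sw}$ for a scalar $c$, and applying $\varepsilon$ gives $q_s\,\mathrm{ind}(w)=c\,\mathrm{ind}(sw)=c\,q_s\,\mathrm{ind}(w)$, so $c=1$ and $a_sa_w=a_{sw}$.

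Next I would compute $a_s^2$. Its support lies in $B\dot sB\dot sB=B\,\cup\,B\dot sB$, so $a_s^2=\alpha a_1+\beta a_s$ for scalars $\alpha,\beta$. The coefficient of $1\in G$ in $a_s^2=\tfrac1{|B|^2}\sum_{x,y\in B\dot sB}xy$ counts $x\in B\dot sB$ with $x^{-1}\in B\dot sB$; since $(B\dot sB)^{-1}=B\dot s^{-1}B=B\dot sB$ this is all of $B\dot sB$, giving coefficient $|B\dot sB|/|B|^2=q_s/|B|$, whence $\alpha=q_s$ (only $a_1$ contributes at $1$, with coefficient $1/|B|$). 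Applying $\varepsilon$ then yields $q_s^2=\alpha+\beta q_s=q_s+\beta q_s$, so $\beta=q_s-1$ and $a_s^2=q_sa_1+(q_s-1)a_s$. Finally, for $l(sw)<l(w)$ put $w'=sw$, so $l(sw')=l(w)>l(w')$ and the first case gives $a_sa_{w'}=a_w$; then
\[
a_sa_w = a_s(a_sa_{w'}) = (a_s^2)a_{w'} = \bigl(q_sa_1+(q_s-1)a_s\bigr)a_{w'} = q_sa_{w'}+(q_s-1)a_sa_{w'} = q_sa_{sw}+(q_s-1)a_w,
\]
using $a_{w'}=a_{sw}$ and $a_sa_{w'}=a_w$.

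The only genuinely load-bearing ingredient is the double-coset multiplication dichotomy together with index multiplicativity; once those are in hand, everything else is bookkeeping with the augmentation functional and the involution $x\mapsto x^{-1}$ on $B\dot sB$. So the main obstacle is not in this proposition per se but upstream: if one wanted a self-contained account rather than citing \cite{curtis_reiner1986}, one would first have to re-derive those $BN$-pair facts (e.g. the length-one exchange computation $B\dot sB\dot wB\subseteq B\dot s\dot wB\cup B\dot wB$ and the equality case) from the $BN$-pair axioms.
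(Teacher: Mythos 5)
Your proof is correct, and it follows the same overall skeleton as the paper: handle the case $l(sw)>l(w)$ first, compute $a_s^2$ separately, then reduce the case $l(sw)<l(w)$ to the identity $a_sa_w = a_s^2\,a_{sw}$. The real difference is in how you pin down scalar coefficients once the support of the product is known. The paper pulls them straight out of the structure-constant formula $\mu_{s,w,v}=|B\dot sB\dot v\cap B\dot wB|/|B|$ from the preceding proposition, which requires some careful double-coset intersection counting (for instance, showing $|B\dot sB\dot s\dot w\cap B\dot wB|=|B|$). You instead use the augmentation functional $\varepsilon$ together with one direct coefficient-of-$1$ count. Your route is cleaner arithmetic, but it leans on index multiplicativity $\mathrm{ind}(sw)=q_s\,\mathrm{ind}(w)$ as an \emph{extra} standard input from $BN$-pair theory, whereas the paper's structure-constant computation does not need that fact (indeed, the Hecke-algebra relations are one common way to \emph{derive} index multiplicativity). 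Since you explicitly flag it as a quotable fact from Curtis--Reiner, there is no circularity, but it is worth noticing that the paper's version is self-contained in a way yours is not. Either way, the load-bearing content is identical: the $BN$-pair double-coset dichotomy $B\dot sB\dot w\subseteq B\dot s\dot wB\cup B\dot wB$ with the equality case when $l(sw)>l(w)$.
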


\begin{proof}
From the previous proposition, we have that $a_s a_w = \sum_{v \in W} \mu_{s,w,v} a_v$, where $\mu_{s,w,v} = \frac{|B \dot{s} B \cap \dot{v} B \dot{w}^{-1} B|}{|B|} = \frac{|B \dot{s} B \dot{v} \cap B \dot{w} B|}{|B|}$.

By an axiom of $BN$-pairs, we know that $B \dot{s} B \dot{w} \subseteq B \dot{s} \dot{w} B \cup B \dot{w} B$. Therefore, if $B \dot{w} B \cap B \dot{s} B \dot{w}$ is non-empty, then $sw = w$ or $v = w$. This implies that $\mu_{s,w,v} = 0$ for $v$ not equal to $sw$ or $w$.

If $l(sw) > l(w)$, then $B \dot{s} B \dot{w} \subseteq B \dot{s} \dot{w} B$ and does not intersect with $B \dot{w} B$. Therefore, $\mu_{s,w,w} = |B \dot{s} B \dot{w} \cap B \dot{w} B| / |B| = 0$ and $\mu_{s,w,sw} = |B \dot{w} B| / |B| = 1$, so $a_s a_w = a_{sw}$.

Next, consider $a_s^2$. We have $\mu_{s,s,1} = |B \dot{s} B \dot{1} \cap B \dot{s} B| / |B| = q_s$ and $\mu_{s,s,s} = |B \dot{s} B \dot{s} \cap B \dot{s} B| / |B|$. As $B \subseteq B \dot{s} B \dot{s}$ and $B \dot{s} B \dot{s} \subseteq B \dot{s} B \cup B$, we conclude that the intersection $B \dot{s} B \dot{s} \cap B \dot{s} B = B \dot{s} B \dot{s} - B$. Therefore, $\mu_{s,s,s} = |B \dot{s} B \dot{s} - B| / |B| = q_s - 1$, and $a_s^2 = q_s a_1 + (q_s - 1) a_s$.

Finally, if $l(sw) < l(w)$, then $l(ssw) > l(sw)$. Applying the result of the first case, we find $a_s a_w = a_s a_s a_{sw} = (q_s a_1 + (q_s - 1) a_s) a_{sw} = q_s a_1 a_{sw} + (q_s - 1) a_s a_{sw} = q_s a_{sw} + (q_s - 1) a_w$. Note that $a_1 = e_B$ acts as the identity on $H = e_B \mathbb{C} G e_B$.
\end{proof}

One can prove a dual statement to this:
\begin{cor}\label{cor:5.3.3}
Assumptions as in the last proposition, 
$a_w a_s = a_{ws}$ if $l(ws) > l(w)$ and $ a_w a_s= q_s a_{ws} + (q_s - 1) a_w$, if $l(ws) < l(w)$.
\end{cor}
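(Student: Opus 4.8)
The plan is to deduce the corollary from Proposition~\ref{prop:5.3.2} by transporting its two identities through the canonical anti-automorphism of the group algebra. Let $\iota\colon\bbC G\to\bbC G$ be the $\bbC$-linear map determined by $\iota(g)=g^{-1}$ for $g\in G$. Since $(gh)^{-1}=h^{-1}g^{-1}$, the map $\iota$ is an algebra anti-automorphism and an involution. First I would check that $\iota$ restricts to an anti-automorphism of $H=e_B\bbC Ge_B$: as $B$ is a subgroup, $\iota(e_B)=\frac{1}{|B|}\sum_{b\in B}b^{-1}=e_B$, hence $\iota(e_B\bbC Ge_B)=e_B\bbC Ge_B$.

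Next I would compute the effect of $\iota$ on the standard basis $\{a_w\}_{w\in W}$ of $H$. For $w\in W$ with representative $\dot w\in N$, one has $(B\dot wB)^{-1}=B\dot w^{-1}B$, the double coset indexed by $w^{-1}$; therefore $\iota(a_w)=\frac{1}{|B|}\sum_{x\in B\dot wB}x^{-1}=a_{w^{-1}}$. I would also record two elementary facts about the Coxeter system $(W,S)$: every $s\in S$ is an involution, $s^{-1}=s$; and $l(w^{-1})=l(w)$ for all $w\in W$ (reversing a reduced word for $w$ gives one for $w^{-1}$). Consequently $l(sw)>l(w)\iff l((sw)^{-1})>l(w^{-1})\iff l(w^{-1}s)>l(w^{-1})$, and likewise with the inequality reversed.

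Now I would apply $\iota$ to the two relations of Proposition~\ref{prop:5.3.2}. Set $v=w^{-1}$. Applying $\iota$ to $a_sa_w=a_{sw}$ (valid when $l(sw)>l(w)$), and using that $\iota$ reverses products, gives $a_{w^{-1}}a_s=a_{(sw)^{-1}}=a_{w^{-1}s}$, i.e. $a_va_s=a_{vs}$, valid precisely when $l(vs)>l(v)$. Similarly, applying $\iota$ to $a_sa_w=q_sa_{sw}+(q_s-1)a_w$ (valid when $l(sw)<l(w)$) yields $a_va_s=q_sa_{vs}+(q_s-1)a_v$, valid when $l(vs)<l(v)$. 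Renaming $v$ back to $w$ gives exactly the two cases of the corollary.

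The argument is essentially formal, so I do not expect a genuine obstacle; the only points needing care are the bookkeeping of inverses — confirming $(B\dot wB)^{-1}=B\dot w^{-1}B$ and that the length inequality transforms correctly under $w\mapsto w^{-1}$ — together with the observation that $\iota$ fixes $e_B$, so that it truly acts on $H$. As an alternative one could avoid $\iota$ altogether and simply rerun the structure-constant computation of Proposition~\ref{prop:5.3.2} with the two factors interchanged, invoking the $BN$-pair axiom $\dot wB\dot sB\subseteq B\dot w\dot sB\cup B\dot wB$ in place of $B\dot sB\dot w\subseteq B\dot s\dot wB\cup B\dot wB$; this is a verbatim mirror of the earlier proof.
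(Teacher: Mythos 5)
Your proof is correct, but it takes a genuinely different route from the paper's. You transport the identities of Proposition~\ref{prop:5.3.2} through the standard anti-involution $\iota(g)=g^{-1}$ of $\bbC G$, checking that $\iota(e_B)=e_B$, that $\iota(a_w)=a_{w^{-1}}$, and that the length condition transforms correctly under $w\mapsto w^{-1}$; this is clean, purely formal, and makes no further appeal to reduced words or the $BN$-pair axioms. The paper instead proves the case $l(ws)>l(w)$ by writing $a_w$ as a product $a_{s_1}\cdots a_{s_{l(w)}}$ along a reduced word for $w$ (using Proposition~\ref{prop:5.3.2} iteratively) and then observing that $a_w a_s = a_{s_1}\cdots a_{s_{l(w)}}a_s = a_{ws}$, and it handles the case $l(ws)<l(w)$ by writing $a_w a_s = a_{ws}a_s a_s$ and expanding $a_s^2$ via the quadratic relation. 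Both are short; the anti-automorphism argument has the advantage of making the left/right symmetry structurally transparent (and is the more reusable trick), while the paper's argument has the side benefit of establishing along the way the useful fact that $a_w=a_{s_1}\cdots a_{s_{l(w)}}$ for any reduced expression of $w$, which is implicitly used elsewhere (e.g.\ in Corollary~\ref{cor:5.3.4}). Your "alternative" at the end — mirroring the structure-constant computation directly using $\dot wB\dot sB\subseteq B\dot w\dot sB\cup B\dot wB$ — is also valid and is closer in spirit to how Proposition~\ref{prop:5.3.2} itself is proved, but it is not what the paper does for this corollary.
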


\begin{proof}
Suppose $l(ws)>l(w)$. Let $w=s_1...s_{l(w)}$ be a reduced expression for $w$. Then $ws=s_1...s_{l(w)}s$ is a reduced expression for $ws$. Use the previous proposition repeatedly, one find that  $a_w=a_{s_1}a_{s_2...s_{l(w)}}=...=a_{s_1}...a_{s_{l(w)}}$. Similarly, $a_{ws}=a_{s_1}...a_{s_{l(w)}}a_s$. Therefore, $a_{ws}=a_w a_s$.

Suppose $l(ws)<l(w)$. Then $a_{w}a_s=a_{ws}a_sa_s=a_{ws}(q_s a_1+(q_s-1)a_s)=q_s a_{ws}+(q_s-1)a_w$.
\end{proof}

\begin{cor}\label{cor:5.3.4} Let $w_0$ be the longest element in $(W,S)$, and $w\in W$.
Suppose each $q_s\geq 2$, the coefficient of $a_w$ in $a_{w_0}^2$ is positive, and $G=B\overline{B}B$.
\end{cor}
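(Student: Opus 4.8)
The plan is to first show that, under the hypothesis $q_s\ge 2$ for all $s\in S$, writing $a_{w_0}^2=\sum_{w\in W}c_w a_w$ in the basis $\{a_w:w\in W\}$ of $H$, one has $c_w>0$ for every $w$; the decomposition $G=B\overline BB$ will then be immediate. Indeed, $a_w=\frac1{|B|}\sum_{x\in BwB}x$ has support exactly the Bruhat cell $BwB$, distinct cells are disjoint, and $a_{w_0}^2=\frac1{|B|^2}\sum_{x,y\in Bw_0B}xy$ has support $Bw_0B\cdot Bw_0B=B(w_0Bw_0)B=B\overline BB$ (using $\overline B=w_0Bw_0$); so if all $c_w>0$ then $B\overline BB=\bigsqcup_{w\in W}BwB=G$. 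Thus everything reduces to the positivity of the $c_w$.

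For \textbf{Step 1} (all $c_w\ge0$) I would fix a reduced word $w_0=s_1\cdots s_N$, note that $a_{w_0}=a_{s_1}\cdots a_{s_N}$ by Corollary \ref{cor:5.3.3} (the length-increasing case, iterated), so $a_{w_0}^2=a_{w_0}a_{s_1}\cdots a_{s_N}$, and observe that by Corollary \ref{cor:5.3.3} right multiplication by any $a_s$ sends a basis element $a_v$ to $a_{vs}$ or to $q_sa_{vs}+(q_s-1)a_v$, each a $\bbZ_{\ge0}$-linear combination of basis elements. Iterating $N$ times from $a_{w_0}$ gives $c_w\ge0$ for all $w$; crucially, no cancellation ever occurs.

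For \textbf{Step 2} (all $c_w>0$) I would prove, by induction on $l(u)$, that for every $u\in W$ and every $v\in W$ with $v\ge w_0u$ in the Bruhat order, the coefficient of $a_v$ in $a_{w_0}a_u$ is strictly positive. The case $u=1$ is immediate since $a_{w_0}a_1=a_{w_0}$. For the inductive step I would write $u=u's$ with $l(u')=l(u)-1$; the standard identity $l(w_0x)=l(w_0)-l(x)$ gives $w_0u=(w_0u')s$ with $s$ a right descent of $w_0u'$. Writing $a_{w_0}a_{u'}=\sum_{v'}d_{v'}a_{v'}$, Step 1 gives $d_{v'}\ge0$ and the inductive hypothesis gives $d_{v'}>0$ whenever $v'\ge w_0u'$. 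Expanding $a_{w_0}a_u=(a_{w_0}a_{u'})a_s$ via Corollary \ref{cor:5.3.3}, the coefficient of $a_v$ in $a_{w_0}a_u$ equals $d_{vs}+(q_s-1)d_v$ if $vs<v$, and $q_sd_{vs}$ if $vs>v$. Now fix $v\ge w_0u=(w_0u')s$ and invoke the lifting property of the Bruhat order: if $s$ is a right descent of both $x$ and $y$ and $xs\le y$, then $x\le y$. In the case $vs<v$, here $s$ is a right descent of both $w_0u'$ and $v$, so $(w_0u')s=w_0u\le v$ lifts to $w_0u'\le v$, giving $d_v>0$ by induction; since $q_s-1\ge1$ the coefficient $d_{vs}+(q_s-1)d_v$ is positive. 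In the case $vs>v$, we have $w_0u\le v<vs$ and $s$ is a right descent of both $w_0u'$ and $vs$, so $(w_0u')s=w_0u\le vs$ lifts to $w_0u'\le vs$, giving $d_{vs}>0$ and $q_sd_{vs}>0$. As Step 1 forbids cancellation, the coefficient of $a_v$ in $a_{w_0}a_u$ is genuinely $>0$. Taking $u=w_0$ (so $w_0u=1$ and every $v\in W$ satisfies $v\ge 1$) yields $c_w>0$ for all $w$, and the corollary follows from the first paragraph.

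The main obstacle will be Step 2, specifically showing that \emph{every} $v\ge w_0u$ occurs with positive coefficient (not merely that the coefficients are nonnegative): this is where the lifting property of the Bruhat order and the no-cancellation fact from Step 1 are essential, and the precise form of the lifting property must be applied with some care to the two elements $v$ and $vs$. The hypothesis $q_s\ge2$ is used exactly through $q_s-1\ge1$ in the first case of Step 2, and it is genuinely necessary — for $q_s=1$ one gets $a_{w_0}^2=a_1$, hence $c_w=0$ for $w\neq1$. Everything else is routine bookkeeping.
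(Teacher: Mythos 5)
Your proof is correct, but it takes a genuinely different route from the paper's. The paper avoids the Bruhat order entirely: starting from a reduced word $w=s_1\cdots s_r$, it extends it to a reduced word $w_0=s_1\cdots s_l$, then produces a \emph{second} reduced word $w_0=s'_1\cdots s'_r\,s_l\cdots s_{r+1}$, so that $a_{w_0}^2=(a_{w_0}a_{s'_1}\cdots a_{s'_r})(a_{s_l}\cdots a_{s_{r+1}})$. It then observes, purely from the quadratic relation and nonnegativity of structure constants, that the first factor has a positive $a_{w_0}$-coefficient (each $a_{s'}$ is a descent of $w_0$, contributing a factor $q_{s'}-1\ge1$) while $a_{w_0}a_{s_l}\cdots a_{s_{r+1}}$ telescopes back to $a_w\,a_{s_{r+1}}\cdots a_{s_l}\,a_{s_l}\cdots a_{s_{r+1}}$, whose $a_w$-coefficient contains the summand $q_{s_{r+1}}\cdots q_{s_l}>0$; multiplying the two positive contributions gives the result. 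Your argument instead proceeds by induction on $l(u)$, proving the sharper structural claim that the $a_v$-coefficient of $a_{w_0}a_u$ is positive whenever $v\ge w_0u$ in Bruhat order, with the lifting property doing the inductive bookkeeping — a clean statement that gives more information about the supports of products $a_{w_0}a_u$, at the price of invoking the lifting property of the Bruhat order, which the paper never needs. One small remark on phrasing: when you say ``Step 1 gives $d_{v'}\ge 0$'' you are applying Step 1's mechanism (nonnegativity of structure constants) to $a_{w_0}a_{u'}$ rather than to $a_{w_0}^2$; the argument is identical, but it is worth stating that nonnegativity is a general feature of the basis $\{a_w\}$ rather than something particular to the element $a_{w_0}^2$.
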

\begin{proof}
Let $w=s_1...,s_r$ be a reduced expression for $w$, and $s_{r+1}...s_l$ be a reduced expression for $w^{-1}w_0$. Then $w_0=s_1...,s_l$ is a reduced expression for $w_0$. There exists $s'_1,...,s'_r$ such that $w_0=s'_1...s'_r s_l...s_{r+1}$ is also a reduced expression for $w_0$. 

Then $a_{w_0}^2=a_{w_0} a_{s'_1}...a_{s'_r} a_{s_l}...a_{s_{r+1}}$. Since each $q_s\geq 2$, the coefficient of $a_{w_0}$ is positive in $a_{w_0} a_{s'_1}...a_{s'_r}$. On the other hand, the coefficient of $a_{w}$ in $a_{w_0}a_{s_l}...a_{s_{r+1}}=a_wa_{s_{r+1}}...a_{s_l}a_{s_l}...a_{s_{r+1}}$ is also positive. This proves the first statement.

Now, $a_w = \frac{1}{|B|} \sum_{x \in BwB} x$. Then the first statement implies that $BwB\subseteq Bw_0 Bw_0B$ for each $w\in W$. The statement $G=B\overline{B}B$ then follows from the Bruhat Decomposition  $G=\cup_{w\in W} BwB$ and $w_0 Bw_0=\overline{B}$.
\end{proof}

In the case of $G = \GL_n(\mathbb{F}_q)$, each $q_s = q\geq 2$, and $G=B\overline{B}B$.

\subsubsection{Generator-Relation Representation of Hecke Algebra}\label{sec:5.3.2}

The Hecke algebra $H$ can be described in terms of generators and relations. For each $s \in S$, we define a generator $T_s = a_s$. The relations between these generators are given by the braid relations and quadratic relations.

Braid relations: For any $s, t \in S$, we have $(T_s T_t)^{m_{st}} = (T_t T_s)^{m_{st}}$, where $m_{st}$ is the order of $st$ in the Coxeter group $W$.

Quadratic relations: For any $s \in S$, we have $T_s^2 = (q_s - 1) T_s + q_s T_1$.

These relations reflect the Coxeter relations and the reflection properties of the elements of $S$ in the Weyl group $W$. When each $q_s = 1$, the Hecke algebra $H$ reduces to the group algebra $\mathbb{C} W$ of the Weyl group $W$.

\subsubsection{Tits' Deformation Theorem}

The Tits' Deformation Theorem establishes a powerful tool that relates the representations of the Hecke algebra of a finite group with a $BN$-pair and its associated Weyl group.

For a detailed discussion and proof of the following theorem, readers are referred to Theorem 8.1.7 and 9.1.9 in \cite{geck_pfeiffer2000} or Theorem 68.24 in \cite{curtis_reiner1986} for the first part and Theorem 68.21 in \cite{curtis_reiner1986} for the second part. The theorem is stated as follows:

\begin{thm}[Tits' Deformation Theorem]
Let $G$ be a finite group that has a $BN$-pair, and let $(W, S)$ denote the Weyl group associated with $G$. If we let $H=e_B \mathbb{C} G e_B$, then the following holds:
\begin{enumerate}
\item There exists a bijection from $\Irr(\mathbb{C} W)$ to $\Irr(H)$, which, when extended linearly, maps $\phi_J$ to $\Phi_J|_{H}$.
\item There exists a $\mathbb{C}$-algebra isomorphism $H \simeq \mathbb{C} W$.
\end{enumerate}
\end{thm}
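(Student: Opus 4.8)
Since the statement is classical I only outline an argument; complete proofs are in the cited references. I first treat the case relevant to this paper, $G=\GL_n(\bbF_q)$ with $W=S_n$, where the theorem is essentially a corollary of the preceding material. By Proposition \ref{prop:5.2.12} the irreducible characters $\zeta$ of $G$ with $\angles{\zeta,(1_B)^G}>0$ are exactly the $\Psi_\lambda=\widehat{\psi_\lambda}$, $\lambda\vdash n$; combining this with Proposition \ref{prop:5.1.2}(2) shows $\psi_\lambda\mapsto\Psi_\lambda|_H$ is a bijection $\Irr(\bbC S_n)\to\Irr(H)$, and Corollary \ref{cor:5.1.4}(2) together with Proposition \ref{prop:5.2.12} shows it preserves degrees, $\deg(\Psi_\lambda|_H)=\angles{\Psi_\lambda,(1_B)^G}=\deg(\psi_\lambda)$. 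Extending this bijection linearly and using the Kostka decomposition $\phi_J=\phi_{\mu(J)}=\sum_\lambda K_{\lambda,\mu(J)}\psi_\lambda$ (Propositions \ref{prop:5.2.4} and \ref{prop:5.2.6}) together with the linearity of the hat map, one finds that $\phi_J$ is sent to $\sum_\lambda K_{\lambda,\mu(J)}\Psi_\lambda|_H=\Phi_J|_H$, which is part 1. For part 2, both $\bbC S_n$ and $H$ are split semisimple over $\bbC$ (Proposition \ref{prop:5.1.1}) with irreducibles of matching dimensions indexed by $\lambda\vdash n$, hence both are isomorphic to $\bigoplus_{\lambda\vdash n}M_{\deg(\psi_\lambda)}(\bbC)$, giving a $\bbC$-algebra isomorphism $H\simeq\bbC S_n$.

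For a general finite group $G$ with a $BN$-pair, where no explicit parametrization of $\Irr(W)$ is available, one argues by deformation. Introduce the generic Hecke algebra $\calH$ over $A=\bbZ[u_s:s\in S]$ (with $u_s=u_t$ whenever $s,t$ are $W$-conjugate), presented by the braid relations of $(W,S)$ and the quadratic relations $T_s^2=(u_s-1)T_s+u_s$; it is $A$-free with basis $\{T_w:w\in W\}$. The ring maps $A\to\bbC$ sending all $u_s\mapsto 1$, resp.\ $u_s\mapsto q_s$, produce via Proposition \ref{prop:5.3.2} the algebras $\calH\otimes_A\bbC\simeq\bbC W$, resp.\ $\calH\otimes_A\bbC\simeq H$. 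The plan is: (i) show $\calH\otimes_A K$ is split semisimple over a suitable finite extension $K$ of the fraction field of $A$, using the symmetrizing trace $\tau(T_w)=\delta_{w,1}$ and the nonvanishing of its Schur elements; (ii) invoke Tits' deformation lemma — over an integrally closed subring $\calO$ of $K$ containing $A$, a finite free $\calO$-algebra with split semisimple generic fibre has identity decomposition map into any specialization which is again split semisimple of the same dimension, hence the same index set and the same degrees for its irreducibles; (iii) apply (ii) to the two specializations above to obtain the bijection $\Irr(\bbC W)\to\Irr(H)$, and then, by the split semisimple dimension count as in the first paragraph, the algebra isomorphism $H\simeq\bbC W$.

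It remains to see that the bijection sends $\phi_J$ to $\Phi_J|_H$. For $J\subseteq S$ consider the parabolic module $M_J=\calH\otimes_{\calH_J}A$, where $\calH_J\subseteq\calH$ is the sub-Hecke-algebra attached to $(W_J,J)$; it is $A$-free, and under $u_s\mapsto 1$ it specializes to $\bbC W\otimes_{\bbC W_J}\bbC=\Ind_{W_J}^W 1_{W_J}$, which affords $\phi_J$, while under $u_s\mapsto q_s$ it is identified (via $a_w\leftrightarrow\dot w$) with an $H$-module affording $\Phi_J|_H$. Since the multiplicities of the generic irreducible constituents of $M_J\otimes_A K$ are preserved by the decomposition maps of both specializations, the linear extension of the bijection from (iii) carries $\phi_J$ to $\Phi_J|_H$.

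The technical heart is step (i): proving split semisimplicity of $\calH\otimes_A K$ and checking that the Schur elements do not vanish under either specialization, so that neither fibre degenerates. Granting that, Tits' lemma is formal and the module bookkeeping of (iii) is routine. None of this machinery is needed for $\GL_n$, as the first paragraph shows, precisely because the parametrizing set of partitions of $n$ is known in advance on both sides.
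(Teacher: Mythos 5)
Your $\GL_n$ argument is exactly the chain of observations the paper records in the opening of Section 5.3 (combining Propositions 5.1.2, 5.2.12, Corollary 5.1.4, and the Kostka decomposition), and for a general group with a $BN$-pair the paper simply cites Geck--Pfeiffer and Curtis--Reiner, whose standard generic-Hecke-algebra/Schur-element/Tits-lemma argument is precisely what you sketch in your second and third paragraphs. So the proposal is correct and follows the same route as the paper (which itself defers the general case to the references you are summarizing).
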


This theorem generalizes the discussion at the start of this subsection by providing a concrete relationship between the irreducible representations of the Weyl group and the Hecke algebra for all finite groups $G$ with a $BN$-pair. This insight is crucial for understanding the structure of Hecke algebras and their representations.

\section{Right action of \texorpdfstring{$(e_B w_0 e_B)^2$}{(eBw0eB)2} on \texorpdfstring{$\mathbb{C} G e_P$}{CGeP}}\label{sec:6}

In this section, $G=\GL_n(\mathbb{F}_q)$, $B=\UT_n(\mathbb{F}_q)$, $W=S_n$ and $S=\{s_1=(1,2),\ldots,s_{n-1}=(n-1,n)\}$. The longest element of $W$ is denoted by $w_0$, and $P$ denotes a parabolic subgroup of $G$ containing $B$.

We return to the question raised in Section \ref{sec:4}, the study of the eigenvalues of the right action of the element $(e_Bw_0 e_B)^2$ on the module $\mathbb{C} Ge_P$. This question lies at the heart of finding zeta functions in the general case.

To solve this question, firstly, we change the side and show that it is equivalent to consider the left action of the element $(e_Bw_0 e_B)^2$ on the space $e_P\mathbb{C} G$. Secondly, we employ the decomposition of $\bbC G$-module $\bbC G=\oplus_{M\in \Irr(G)} M^{\dim(M)}$ to get the decomposition $e_B \bbC G= \oplus_{\lambda} (e_B M_\lambda)^{\dim(M_\lambda)}$ and $e_P \bbC G= \oplus_{\lambda} (e_P M_\lambda)^{\dim(M_\lambda)}$.
It turns out $(e_B w_0e_B)^2$ acts as a scalar on each $e_B M_\lambda$ by Springer's theorem on the centrality of certain Hecke algebra elements. Therefore, $(e_B w_0e_B)^2$ acts as a scalar on the subspace $e_P M_\lambda$. It remains to find this scalar, as well as the dimensions of various $ M_\lambda$ and $e_P M_\lambda$.

We will make this outline rigorous in Section \ref{sec:6.1}, and in Section \ref{sec:6.3}, we will give a proof of Springer's Theorem. The dimensions and scalars will be computed in Section \ref{sec:6.2}.

\subsection{Main Theorem on Action of \texorpdfstring{$(e_B w_0 e_B)^2$}{CGeP}}\label{sec:6.1}
We first prove a lemma allowing for changing sides in finding eigenvalues.
\begin{lemma}
Let $R$ be a finite-dimensional split semisimple algebra over $\bbC$. Let $e\in R$ be an idempotent element, and let $a\in eRe$. Then the left multiplication $a_l: eR\to eR$ has the same eigenvalues as the right multiplication $a_r:Re\to Re$.
\end{lemma}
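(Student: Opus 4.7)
The plan is to reduce to a direct matrix computation via the Artin--Wedderburn decomposition. Since $R$ is finite-dimensional split semisimple over $\bbC$, I would first write $R \cong \bigoplus_\alpha M_{n_\alpha}(\bbC)$ and correspondingly decompose $e = \sum_\alpha e_\alpha$, $a = \sum_\alpha a_\alpha$ with each $a_\alpha \in e_\alpha M_{n_\alpha}(\bbC) e_\alpha$, $eR = \bigoplus_\alpha e_\alpha M_{n_\alpha}(\bbC)$, and $Re = \bigoplus_\alpha M_{n_\alpha}(\bbC) e_\alpha$. Because $L_a$ respects the first decomposition and $R_a$ the second, the problem splits across factors, and it suffices to handle a single matrix algebra $R = M_n(\bbC)$.

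Next, in $M_n(\bbC)$ every rank-$k$ idempotent is conjugate to the standard diagonal projection $e_0 = \mathrm{diag}(1,\ldots,1,0,\ldots,0)$, and conjugation by $g \in R^{\times}$ is an algebra automorphism intertwining the data $(eR, L_a)$ with $((geg^{-1})R, L_{gag^{-1}})$, and analogously on the right. So I may assume $e = e_0$, in which case $a = eae$ takes the block form $\left(\begin{smallmatrix} \tilde a & 0 \\ 0 & 0 \end{smallmatrix}\right)$ for some $\tilde a \in M_k(\bbC)$.

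The computation then finishes by splitting along matrix-unit blocks. The space $eR$ decomposes as $\bigoplus_{j=1}^{n} V_j$ with $V_j = \mathrm{span}\{e_{ij} : 1 \le i \le k\} \cong \bbC^k$, each preserved by $L_a$ and on which $L_a$ acts as $\tilde a$. Symmetrically, $Re$ decomposes as $\bigoplus_{i=1}^{n} W_i$ with $W_i \cong \bbC^k$, on each of which $R_a$ acts as right multiplication by $\tilde a$, which has the same characteristic polynomial as $\tilde a$. Both $L_a|_{eR}$ and $R_a|_{Re}$ therefore have characteristic polynomial $\det(tI_k - \tilde a)^n$, and the statement follows, in fact with matching algebraic multiplicities.

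The argument is essentially bookkeeping, so I do not anticipate a real obstacle; the one point of care is verifying that the conjugation step produces simultaneously compatible intertwiners on $eR$ and on $Re$, which is a direct check. A more conceptual repackaging I would also consider is to observe that $eR$ is a left $eRe$-module and $Re$ a right $eRe$-module, each (in the matrix algebra case) isomorphic to $n$ copies of the standard simple module of $eRe \cong M_k(\bbC)$, so that $L_a$ and $R_a$ realize the same $k \times k$ block $\tilde a$ up to transposition.
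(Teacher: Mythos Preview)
Your argument is correct and follows the same overall strategy as the paper: reduce via Artin--Wedderburn to a single matrix algebra $M_n(\bbC)$, then do an explicit computation. The execution within the matrix algebra differs slightly. The paper first treats the case $e=1$, observing that $a_l$ and $a_r$ on all of $R$ are block-diagonal with blocks $A$ and $A^T$ respectively (hence have the same eigenvalues), and then handles general $e$ by writing $R = eR \oplus (1-e)R = Re \oplus R(1-e)$, noting that $a$ kills the complementary summands, and matching the number of extra zero eigenvalues via $\dim(eR)=\Tr(e_l)=\Tr(e_r)=\dim(Re)$. You instead conjugate $e$ to the standard rank-$k$ projection and compute the block action of $\tilde a$ directly on columns and rows. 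Your route is more explicit and yields the exact characteristic polynomial $\det(tI_k-\tilde a)^n$; the paper's route avoids the conjugation bookkeeping at the cost of a small dimension-chasing step. Both are equally valid and about the same length.
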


\begin{proof}
It suffices to consider the case where $R=M_n(\bbC)$, for some integer $n$. When $e=1$, then under suitable bases, $a_l:R\to R$ is represented by the block-diagonal matrix $\diag(A,A,...,A)$, with $A$ repeated $n$-times, while $a_r$ is represented by the block-diagonal matrix $\diag(A^T,A^T,...,A^T)$. These two matrices are transposes of each other, and have the same eigenvalues.

Now, $R=eR\oplus (1-e)R$, and $R=Re\oplus R(1-e)$. Note that $\dim(eR)=\Tr(e_l)$ and $\dim(Re)=\Tr(e_r)$. Then since $\Tr(e_l)=\Tr(e_r)$, $\dim(eR)=\dim(Re)$ and $\dim((1-e)R)=\dim(R(1-e))$. Note that $a_l$ acts as 0 on $(1-e)R$, so the eigenvalues of $a_l$ on $R$ consist of the eigenvalues of $a_l$ on $eR$, as well as 0 with multiplicity $\dim((1-e)R)$. A similar statement holds for $a_r$, with $\dim((1-e)R)$ replaced by $\dim(R(1-e))$. Since the eigenvalues of $a_l$ and $a_r$ on $R$ match, and $\dim((1-e)R)=\dim(R(1-e))$, the eigenvalues of $a_l|_{eR}$ and $a_r|_{Re}$ also match.
\end{proof}

Now $\bbC G$ is split semisimple, the eigenvalues of $(e_Be_{\overline{B}}e_B)_r$ on $\bbC Ge_{P}$ are the same as the eigenvalues of $(e_Be_{\overline{B}}e_B)_l$ on $e_P \bbC G$. To find these eigenvalues, we study the decomposition of $e_P\bbC G$. It is beneficial to also consider the larger space $e_B\bbC G$, which is an $H$-module, where $H=e_B \bbC G e_B$. We first decompose $\bbC G$ into simple $\bbC G$-modules.

Let $\lambda\vdash n$. Recall that $\Psi_\lambda=\widehat{\psi_\lambda}$ denotes the irreducible unipotent character of $G$ corresponding to $\lambda$. Let $M_\lambda$ be a representation of $G$ affording this character, and put $d_\lambda=\dim(M_\lambda)$. Let $M_1,...,M_s$ be a complete set of representatives of all the non-unipotent irreducible representations of $G$, with dimensions $d_1,...,d_s$ respectively. Then we have the standard decomposition of $\bbC G$-modules:
\begin{equation}
    \bbC G\simeq \bigoplus_{\lambda\vdash n} M_\lambda^{d_\lambda}\oplus \bigoplus_{i=1}^s M_i^{d_i},
    \label{eq:equation of CG}
\end{equation}

\begin{prop}\label{prop:6.1.2}
    The formula $$e_B \bbC G \simeq\bigoplus_{\lambda\vdash n} (e_B M_\lambda)^{d_\lambda}$$ gives a decomposition of $e_B \bbC G$ into simple $H$-modules, where $H=e_B \bbC G e_B$.
\end{prop}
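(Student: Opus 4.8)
The plan is to apply the idempotent $e_B$ to the standard decomposition \eqref{eq:equation of CG} of $\bbC G$ and then identify which summands survive and in what form. First I would note that left multiplication by $e_B$ is a $\bbC G$-module map only on the right, so I should instead view $\bbC G$ as a $(\bbC G, \bbC G)$-bimodule and apply $e_B$ on the left; the decomposition \eqref{eq:equation of CG} is a decomposition of left $\bbC G$-modules, so applying the functor $e_B(-)$ (which is exact, being multiplication by an idempotent) gives $e_B\bbC G \simeq \bigoplus_{\lambda\vdash n}(e_B M_\lambda)^{d_\lambda}\oplus\bigoplus_{i=1}^s (e_B M_i)^{d_i}$ as $\bbC$-vector spaces, and in fact as left $H$-modules since $H = e_B\bbC G e_B$ acts on $e_B\bbC G$ on the left.

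The key step is then to show two things: (a) $e_B M_i = 0$ for every non-unipotent irreducible $M_i$, and (b) each $e_B M_\lambda$ is a simple $H$-module, with the $e_B M_\lambda$ pairwise non-isomorphic. For (a): by Corollary \ref{cor:5.1.4}(2), $\dim(e_B M) = \langle \chi_M, (1_B)^G\rangle$ for any irreducible $M$ with character $\chi_M$; since $M_i$ is non-unipotent, $\langle \chi_{M_i}, (1_B)^G\rangle = 0$ by the definition of unipotent representations (a module is unipotent iff it embeds in $\Ind_B^G 1_B = \bbC G e_B$), so $e_B M_i = 0$. For (b): this is exactly Proposition \ref{prop:5.1.2}(2) applied with $e = e_B$ — the map $M \mapsto e_B M$ is a bijection from isomorphism classes of simple $\bbC G$-submodules of $\bbC G e_B$ to isomorphism classes of simple $H$-modules. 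The simple submodules of $\bbC G e_B$ are precisely the unipotent irreducibles, i.e. the $M_\lambda$ for $\lambda\vdash n$ (by Proposition \ref{prop:5.2.12}), so each $e_B M_\lambda$ is simple over $H$ and these are pairwise non-isomorphic.

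Putting this together: the non-unipotent summands vanish, leaving $e_B\bbC G \simeq \bigoplus_{\lambda\vdash n}(e_B M_\lambda)^{d_\lambda}$, and by (b) this is a decomposition into simple $H$-modules (with multiplicity $d_\lambda = \dim M_\lambda$, which — as a sanity check — matches $\deg(\psi_\lambda)$, the multiplicity predicted by Proposition \ref{prop:5.2.12} since $\dim(e_B M_\lambda) = \langle \Psi_\lambda, (1_B)^G\rangle = \deg\psi_\lambda$ as well, and one checks $\sum_\lambda \deg(\psi_\lambda)\dim(e_B M_\lambda) = \sum_\lambda \deg(\psi_\lambda)^2 = |S_n| = \dim H$). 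I do not expect a genuine obstacle here; the only point requiring a little care is being explicit that $e_B(-)$ respects the left $H$-module structure and that "simple submodule of $\bbC G e_B$" coincides with "unipotent irreducible", both of which are immediate from the cited results in Section 5.
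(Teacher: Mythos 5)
Your proposal is correct and follows essentially the same route as the paper's proof: apply $e_B$ to the standard decomposition of $\bbC G$, observe it respects the $H$-action, kill the non-unipotent summands via $\dim(e_B M_i) = \langle \chi_{M_i}, (1_B)^G\rangle = 0$, and invoke Proposition \ref{prop:5.1.2} to get simplicity of each $e_B M_\lambda$. The extra remarks (pairwise non-isomorphism, the dimension sanity check) are sound but not needed for the stated claim.
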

\begin{proof}
    Since $H$ is a subring of $\bbC G$, the decomposition \eqref{eq:equation of CG} also preserves the inherited $H$-action. Note that $e_B \bbC G$, each $e_B M_\lambda$ and $e_B M_i$ are $H$-modules, we have the decomposition  $$e_B \bbC G \simeq \bigoplus_{\lambda\vdash n} (e_B M_\lambda)^{d_\lambda}\oplus \bigoplus_{i=1}^s (e_BM_i)^{d_i}$$ of $H$-modules. Now it suffices to show that each $e_B M_\lambda$ is simple and each $e_B M_i$ is 0.

    Recall as in the proof of Proposition \ref{prop:3.3.2} that $e M$ is naturally identified with the space  $\Hom(\bbC Ge, M)$ for any idempotent $e\in \bbC G$ and for any $\bbC G$-module $M$. Then $\dim(e_B M)=\dim(\Hom(\bbC Ge_B, M))=\angles{1_B^G,\Psi}$, where $\Psi$ is the character afforded by $M$. But by Proposition \ref{prop:5.2.12}, $(1_B)^G=\sum_{\lambda \vdash n} \deg({\psi_\lambda}){\Psi_\lambda}$. Therefore, $\dim(e_BM_i)=0$ for each $i$ and $\dim(e_BM_\lambda)=\deg({\psi_\lambda})>0$ for each $\lambda$. Now $M_\lambda$ is a simple $\bbC G$-module, and it follows from Proposition \ref{prop:5.1.2} that each $e_B M_\lambda$ is simple.
\end{proof}
Now we turn to the study of $(e_B w_0 e_B)^2_l$ on $e_P \mathbb{C} G$. Firstly, $P$ is a parabolic subgroup of $G$ containing $B$, so by Theorem 65.17 in \cite{curtis_reiner1986}, there exists a subset $I \subseteq S$ such that $P = P_I$. Let $\mu$ be a partition of $n$ so that $S_\mu$ is conjugate to $W_I$ (see Section \ref{sec:5.2.2}). We can now state the main theorem of this section, and prove it modulo the Springer's Theorem \ref{thm:6.3.1}.

\begin{thm}\label{prop:6.4.1}
Let $P$, $I$ and $\mu$ be as above. The left action of $(e_B w_0 e_B)^2$ on $e_P \mathbb{C} G$ is diagonalizable. 
Each partition $\lambda \unrhd \mu$ contributes the eigenvalue $q^{f_\lambda-n(n-1)}$ with multiplicity $d_\lambda K_{\lambda,\mu}$, and this accounts for all the eigenvalues with multiplicities, where $f_\lambda$ is some integer to be defined in Theorem \ref{thm:6.3.1}.
\end{thm}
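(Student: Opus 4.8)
Proof proposal.

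The plan is to reduce the statement to the behaviour of the single element $z:=(e_Bw_0e_B)^2$ on the unipotent part of the regular representation, using that $z$ is central in the Hecke algebra $H:=e_B\mathbb{C}Ge_B$. Note first that $z=e_Be_{\overline B}e_B\in H$, since $e_{\overline B}=w_0e_Bw_0$ by the averaging lemma and $w_0^2=1$. Since $B\subseteq P$ we have $e_Pe_B=e_P=e_Be_P$; in particular $e_P\in H$, and $e_P\mathbb{C}G=e_P(e_B\mathbb{C}G)$ is a subspace of $e_B\mathbb{C}G$ which, because $z$ is central and $e_P\in H$, is stable under left multiplication by $z$.

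Next I would decompose $e_P\mathbb{C}G$. Fix a decomposition of $\mathbb{C}G$ into minimal left ideals, $\mathbb{C}G=\bigoplus_{\lambda\vdash n}M_\lambda^{d_\lambda}\oplus\bigoplus_i M_i^{d_i}$; applying $e_P=e_Pe_B$ and using $e_PM_i=e_Pe_BM_i\subseteq e_BM_i=0$ (Proposition \ref{prop:6.1.2}) for the non-unipotent $M_i$ gives, inside $e_B\mathbb{C}G$, a decomposition $e_P\mathbb{C}G=\bigoplus_{\lambda\vdash n}(e_PM_\lambda)^{d_\lambda}$ compatible with the $H$-action. For the multiplicities I need $\dim(e_PM_\lambda)$. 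Using the identification $e_PM_\lambda\cong\Hom_{\mathbb{C}G}(\mathbb{C}Ge_P,M_\lambda)$ as in the proof of Proposition \ref{prop:6.1.2}, this dimension equals $\langle(1_P)^G,\Psi_\lambda\rangle$. Writing $P=P_I$ with $W_I$ conjugate to the Young subgroup $S_\mu$ (Section \ref{sec:5.2.2}), we have $(1_P)^G=\Phi_I=\widehat{\phi_\mu}=\sum_{\nu\unrhd\mu}K_{\nu,\mu}\Psi_\nu$ by the Kostka decomposition of $\phi_\mu$ (Proposition \ref{prop:5.2.4}), linearity of the hat map, and $\widehat{\psi_\nu}=\Psi_\nu$ (Proposition \ref{prop:5.2.12}). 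Hence $\dim(e_PM_\lambda)=K_{\lambda,\mu}$, which is positive exactly when $\lambda\unrhd\mu$.

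Finally I would invoke Springer's Theorem \ref{thm:6.3.1}: it asserts that $z$ is central in $H$ and acts on the simple $H$-module $e_BM_\lambda$ by the scalar $q^{f_\lambda-n(n-1)}$, where $f_\lambda$ is the integer specified there (the shift $n(n-1)=2\ell(w_0)$ reflecting the normalization $e_Bw_0e_B=q^{-\ell(w_0)}a_{w_0}$). Since $e_PM_\lambda$ is a $z$-stable subspace of $e_BM_\lambda$, the element $z$ acts on $e_PM_\lambda$ by the same scalar $q^{f_\lambda-n(n-1)}$. Therefore, on $e_P\mathbb{C}G=\bigoplus_\lambda(e_PM_\lambda)^{d_\lambda}$ the operator $z_l$ is a scalar on each summand, hence diagonalizable, and the eigenvalue $q^{f_\lambda-n(n-1)}$ occurs with multiplicity $d_\lambda\dim(e_PM_\lambda)=d_\lambda K_{\lambda,\mu}$. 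Since $\dim(e_P\mathbb{C}G)=\sum_\lambda d_\lambda K_{\lambda,\mu}$, summing over all $\lambda$ (equivalently, over $\lambda\unrhd\mu$, the others having $K_{\lambda,\mu}=0$) accounts for every eigenvalue with its multiplicity, which is the assertion.

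The only genuinely delicate point inside this argument is the transfer from $e_BM_\lambda$ to $e_PM_\lambda$: one must be careful that $z_l$ really preserves the smaller space $e_P\mathbb{C}G$ and restricts there to the \emph{same} scalar, and this is precisely where centrality of $z$ in the full algebra $H$ — rather than in some smaller subalgebra such as $e_P\mathbb{C}Ge_P$ — is used. Everything else is bookkeeping with the characters $\phi_\mu$, $\Phi_I$ and the Kostka numbers of Section \ref{sec:5}. The substantive input, namely that $z$ is central and that its scalar on $e_BM_\lambda$ is $q^{f_\lambda}$ up to the stated normalization, is Springer's Theorem, whose proof is deferred to Section \ref{sec:6.3}; here it is used as a black box, so this proof is essentially complete modulo that theorem.
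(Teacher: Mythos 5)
Your proof is correct and follows essentially the same route as the paper's: identify $e_P\mathbb{C}G$ with $\bigoplus_{\lambda\unrhd\mu}(e_PM_\lambda)^{d_\lambda}$ via $e_P=e_Pe_B$, compute $\dim(e_PM_\lambda)=\langle(1_P)^G,\Psi_\lambda\rangle=K_{\lambda,\mu}$ through the hat map and Kostka decomposition, and apply Springer's centrality theorem to get the scalar $q^{f_\lambda-n(n-1)}$ on each summand. Your explicit remark that $e_P\in H$ and that centrality of $z$ in $H$ is what guarantees $z$-stability of $e_P\mathbb{C}G$ is a slightly more careful spelling-out of a point the paper leaves implicit, but it is the same argument.
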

\begin{proof}
Since $P$ is a parabolic subgroup containing $B$, we note that $e_P=e_Pe_B$, and $e_P \bbC G\subseteq e_B \bbC G$ corresponds to the subspace $\bigoplus_{\lambda\vdash n} (e_P M_\lambda)^{d_\lambda}$ via the isomorphism in Proposition \ref{prop:6.1.2}, in other words, $e_P \bbC G\simeq \bigoplus_{\lambda\vdash n} (e_P M_\lambda)^{d_\lambda}$ as vector spaces. This isomorphism is compatible with action by any $h\in H$ that stabilizes either side.

By Springer's theorem \ref{thm:6.3.1}, the left multiplication by $a_{w_0}^2=q^{n(n-1)}(e_B w_0 e_B)^2\in H$ acts as scalar multiplication by $q^{f_\lambda}$ on the simple $H$-module $e_B M_\lambda$, where $f_\lambda$ is an integer. Then $(e_B w_0 e_B)^2$ acts as the scalar $q^{f_\lambda-n(n-1)}$ on $e_B M_\lambda$ as well as on $e_P M_\lambda\subseteq e_B M_\lambda$. 

As in the proof of Proposition \ref{prop:6.1.2}, $\dim(e_P M_\lambda)=\angles{1_P^G, \Psi_\lambda}$. Since $S_I$ is conjugate to $S_\mu$, the character $1_{W_I}^{W}$ equals $(1_{S_\mu})^W=\phi_\mu$. Then by Section \ref{sec:5.2}, $(1_P)^G=(1_{P_I})^G=\widehat{(1_{W_I})^W}=\widehat{\phi_\mu}=\Phi_\mu$. Then $\dim(e_P M_\lambda)=\angles{\Phi_\mu,\Psi_\lambda}=K_{\lambda,\mu}$, and is nonzero exactly when $\lambda\unrhd \mu$. 

Now, $e_P \bbC G\simeq \bigoplus_{\lambda\unrhd \mu} (e_P M_\lambda)^{d_\lambda}$, and $(e_Bw_0e_B)^2$ acts as $q^{f_\lambda-n(n-1)}$ on each $(e_P M_\lambda)^{d_\lambda}$, and $\dim(e_P M_\lambda)^{d_\lambda}=d_\lambda K_{\lambda,\mu}$. This proves the theorem.
\end{proof}

\subsection{Springer's Theorem on the Centrality of \texorpdfstring{$(e_B w_0 e_B)^2$}{(eBw0eB)2}}\label{sec:6.3}

In our study of the Hecke algebra $H = e_B \mathbb{C} G e_B$ and its representations, we are particularly interested in the action of the element $a_w^2 = (a_{BwB}^2)=[Bw_0B:B]^2 (e_B w_0 e_B)^2=q^{n(n-1)}(e_B w_0 e_B)^2$ on simple $H$-modules, where $w=w_0$ is the longest element in the Weyl group $W=S_n$ associated with $G=\GL_n(\bbF_q)$.

As we shall see, this element $a_w^2$  lies in the center of $H$. Then it acts as a constant on each simple $H$-module $e_B M_\lambda$. This phenomenon is described in the following theorem, which is a special case of Springer's Theorem 9.2.2 in \cite{geck_pfeiffer2000}. We present an ad-hoc proof for the sake of completeness.

\begin{thm}[Springer]\label{thm:6.3.1}
Let $G = \GL_n(\mathbb{F}_q)$ and $H = e_B \mathbb{C} G e_B$ be as before. Let $w$ be the longest element in $S_n$. Then the element $a_w^2$ lies in the center of $H$.

For each partition $\lambda$ of $n$, let $M_\lambda$ is a simple $\bbC G$-module corresponding to $\lambda\vdash n$. Then this element $a_w^2$ acts by scalar multiplication by $q^{f_\lambda}$ on the simple $H$-module $e_B M_\lambda$, where the  exponent $f_\lambda=\frac{n(n-1)}{2} (1 + \frac{\psi_{\lambda}(s)}{\psi_{\lambda}(1)})$ is an integer and $\psi_{\lambda}$ is the character of $S_n$ corresponding to $\lambda$.
\end{thm}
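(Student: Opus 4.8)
The plan is to work entirely inside the Hecke algebra $H$ via its generator-relation presentation from Section \ref{sec:5.3.2}, reducing everything to the element $a_{w_0}$ and then transporting the computation to $\mathbb{C} S_n$ through Tit's Deformation Theorem. First I would establish the centrality of $a_{w_0}^2$. The key observation is that conjugation by $a_{w_0}$ implements the diagram automorphism of the Coxeter system: for each $s \in S$ one has $w_0 s w_0 = \sigma(s)$ where $\sigma$ is the (length-preserving) automorphism of $(W,S)$ given by $s_i \mapsto s_{n-i}$. Using Proposition \ref{prop:5.3.2} and Corollary \ref{cor:5.3.3} one checks, by a short induction on length, that $a_{w_0} a_s = a_{\sigma(s)} a_{w_0}$ for every $s \in S$ (the cases $l(w_0 s) < l(w_0)$ and the interplay of the quadratic relation $T_s^2 = (q-1)T_s + q T_1$ with $q_s = q_{\sigma(s)} = q$ both behave symmetrically under $\sigma$). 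Consequently conjugation by $a_{w_0}$ is an algebra automorphism of $H$ inducing $\sigma$ on generators; applying it twice gives $\sigma^2 = \mathrm{id}$, so $a_{w_0}^2$ commutes with every $a_s$ and hence lies in $Z(H)$. Since $H$ is split semisimple (Proposition \ref{prop:5.1.1}) with simple modules $e_B M_\lambda$ indexed by $\lambda \vdash n$ (Proposition \ref{prop:6.1.2}), Schur's lemma forces $a_{w_0}^2$ to act on $e_B M_\lambda$ as a scalar, say $c_\lambda \in \mathbb{C}$.

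Next I would pin down the scalar $c_\lambda$ by a trace/degree computation. On the one hand $c_\lambda = \chi_\lambda(a_{w_0}^2)/\chi_\lambda(1)$ where $\chi_\lambda$ is the irreducible character of $H$ afforded by $e_B M_\lambda$ and $\chi_\lambda(1) = \dim e_B M_\lambda = \deg(\psi_\lambda)$. On the other hand, one can compute $a_{w_0}^2$ as an $H$-element by iterating Corollary \ref{cor:5.3.3}: writing $w_0 = s_{i_1}\cdots s_{i_N}$ a reduced word ($N = \binom{n}{2}$), we have $a_{w_0} = a_{s_{i_1}}\cdots a_{s_{i_N}}$, and $a_{w_0}^2 = a_{w_0}\cdot a_{s_{i_1}}\cdots a_{s_{i_N}}$; since each right multiplication by $a_{s}$ sends $a_{w_0}$ to $q\, a_{w_0 s} + (q-1) a_{w_0}$ (as $l(w_0 s) < l(w_0)$ always), one obtains a formula for $a_{w_0}^2$ in the $a_w$-basis whose "leading" behavior is governed by powers of $q$. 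The cleanest route is the specialization argument: in the generic Hecke algebra $H_v$ over $\mathbb{C}[v]$ with parameter $v$, the element $T_{w_0}^2$ is central, and it is classical (Springer, see \cite{geck_pfeiffer2000} Ch.~9) that on the generic simple module labelled $\lambda$ it acts by $v^{2 N} \cdot v^{-a_\lambda}\cdot(\text{something})$ — concretely $T_{w_0}$ acts on the $\lambda$-isotypic part of the regular module by a scalar of absolute value $v^{N}$ twisted by $\psi_\lambda(w_0)/\psi_\lambda(1)$, because specializing $v \to 1$ recovers $w_0$ acting on the Specht module $S^\lambda$, and $w_0$ is central-by-$\sigma$ in $S_n$ with $w_0^2 = 1$. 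Tracking the power of $v$ through the length function and specializing $v \mapsto q$ yields $c_\lambda = q^{f_\lambda}$ with $f_\lambda = N\bigl(1 + \tfrac{\psi_\lambda(s)}{\psi_\lambda(1)}\bigr) = \tfrac{n(n-1)}{2}\bigl(1 + \tfrac{\psi_\lambda(s)}{\psi_\lambda(1)}\bigr)$, where $s$ is any transposition (all transpositions are conjugate in $S_n$, so $\psi_\lambda(s)$ is well-defined). Finally, since $a_{w_0}^2 = q^{n(n-1)}(e_B w_0 e_B)^2$, the element $(e_B w_0 e_B)^2$ acts by $q^{f_\lambda - n(n-1)}$, matching the statement of Theorem \ref{prop:6.4.1}.

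It remains to check that $f_\lambda$ is an integer, which I would do combinatorially: $\tfrac{\psi_\lambda(s)}{\psi_\lambda(1)}$ equals the normalized character value at a transposition, and the classical formula (Frobenius) gives $\tfrac{\psi_\lambda(s)}{\psi_\lambda(1)} = \tfrac{1}{\binom{n}{2}}\sum_i \bigl[\binom{\lambda_i}{2} - \binom{\lambda_i'}{2}\bigr]$ where $\lambda'$ is the conjugate partition; hence $f_\lambda = N + \sum_i\bigl[\binom{\lambda_i}{2} - \binom{\lambda_i'}{2}\bigr]$, manifestly an integer. The main obstacle I anticipate is the middle step — getting the exact power of $q$ (equivalently $v$) by which $a_{w_0}$, or rather $a_{w_0}^2$, scales the $\lambda$-module, without simply quoting Springer wholesale. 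Doing this "ad hoc" as the paper promises requires either a careful bookkeeping of the recursion $a_{w_0} a_s = q\,a_{w_0 s} + (q-1)a_{w_0}$ down a reduced word (which computes $a_{w_0}^2$ explicitly and then one reads off the scalar via the trivial or sign character, plus one more independent linear functional to separate the two unknowns $N$ and the character ratio), or an appeal to the Kazhdan-Lusztig / generic-algebra fact that $T_{w_0}$ is, up to the automorphism $\sigma$, a "central-like" element whose square acts by a monomial in $v$. I would present the specialization-from-the-generic-algebra version, since it most transparently forces the exponent to interpolate between the $q=1$ value (where $a_{w_0}^2 \mapsto w_0^2 = 1$, consistent with $f_\lambda$'s dependence only through the character ratio) and the leading $q$-power $q^{2N}$ coming from the index $[Bw_0B:B]^2$.
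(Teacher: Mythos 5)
Your centrality argument is essentially the paper's: you observe that conjugation by $a_{w_0}$ implements the length-preserving automorphism $\sigma$ (the paper writes $t = wsw$ and checks $a_s a_w = a_w a_t$, $a_t a_w = a_w a_s$ directly from Proposition \ref{prop:5.3.2} and Corollary \ref{cor:5.3.3}), so $\sigma^2 = \mathrm{id}$ gives $a_{w_0}^2 \in Z(H)$; and the integrality of $f_\lambda$ via the Frobenius/Beynon--Lusztig formula for $\psi_\lambda(s)/\psi_\lambda(1)$ is a legitimate alternative to the paper's algebraic-integer argument. Those parts are fine.

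The genuine gap is exactly the one you flag yourself: the determination of the scalar $c_\lambda$. You offer two escape routes --- ``careful bookkeeping of the recursion $a_{w_0} a_s = q\,a_{w_0 s} + (q-1)a_{w_0}$'' down a reduced word, or quoting the generic-Hecke-algebra form of Springer's theorem from \cite{geck_pfeiffer2000} --- but the first is not actually carried out (the recursion produces an explicit expansion of $a_{w_0}^2$ in the $a_w$-basis, yet reading a single module-by-module scalar off that expansion is not straightforward), and the second simply assumes the theorem you are trying to give an ad hoc proof of. Your remark that ``specializing $v \to 1$ recovers $w_0^2 = 1$'' only pins down the value of the scalar at $q = 1$, not the exponent $f_\lambda$; a monomial $q^{f}$ always specializes to $1$ at $q=1$ regardless of $f$. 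What is missing is a mechanism to extract the exponent itself.

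The paper closes this gap with a determinant trick you did not find. Since $a_{w_0}^2$ acts as a scalar on the $\dim N_\lambda = x+y$ dimensional space $N_\lambda = e_B M_\lambda$, the scalar to the power $x+y$ equals $\det(a_{w_0}^2, N_\lambda)$. Each $a_s$ has only eigenvalues $q$ and $-1$ (from $T_s^2 = (q-1)T_s + q$), so $\det(a_s, N_\lambda) = (-1)^y q^x$, and $a_{w_0}$ being a product of $N = \tbinom{n}{2}$ such generators gives $\det(a_{w_0}^2, N_\lambda) = q^{n(n-1)x}$. The multiplicities $x,y$ are then determined by Frobenius reciprocity: $x+y = \psi_\lambda(1)$ and $x = \tfrac{1}{2}(\psi_\lambda(1)+\psi_\lambda(s))$. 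This yields the scalar up to a root of unity $\zeta$, and the deformation corollary \ref{cor:8.1.2} (your $q\to 1$ idea, but used at the right moment) pins $\zeta=1$. The deformation principle is thus the \emph{finishing} step, not the main engine --- without the determinant computation first isolating the scalar as $\zeta q^{f_\lambda}$, there is nothing for the specialization to decide.
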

\begin{proof}

By Proposition \ref{prop:5.3.2} or by the generator-relation description in Section \ref{sec:5.3.2}, the algebra $H$ is generated by $\{a_s: s \in S\}$ as a $\mathbb{C}$-algebra. Then for the first statement, it suffices to prove that $a_w^2$ commutes with each $a_s$.

Take any $s \in S$, and let $t = wsw$. Then $l(t) = 1$ (if $s = (j, j+1)$, then $t = (n-j, n-j-1)$). Because $w$ is the longest element, $\ord(w) = 2$, and the two elements $ws = tw$ and $sw = wt$ both have length $l(w) - 1$.

By Proposition \ref{prop:5.3.2} and Corollary \ref{cor:5.3.3}, $a_s a_w = q a_{sw} + (q-1) a_w = q a_{wt} + (q-1) a_w = a_{wt} = a_w a_t$. Similarly, $a_t a_w = q a_{tw} + (q-1) a_w = q a_{ws} + (q-1) a_w = a_w a_s$. Then $a_s a_w^2 = a_w a_t a_w = a_w^2 a_s$. Therefore, $a_w^2$ commutes with each $a_s$, and so $a_w^2$ lies in the center of $H$.

Now, the module $e_B M_\lambda$ is a simple $H$-module by Proposition \ref{prop:6.1.2}. By Schur's lemma, $a_w^2$ acts as a scalar on $e_B M_\lambda$. It remains to find the scalar.

Let $N_\lambda = e_B M_\lambda$. We first figure out the determinant of $a_s$ on $N_\lambda$ for each simple transposition $s\in S$. Note that $a_s^2 = q a_1 + (q-1) a_s$, so that $(a_s - q)(a_s + 1) = 0$. Then the eigenvalues of action of $a_s$ on $N_\lambda$ are $q$ and $-1$. Suppose $q$ appears with multiplicity $x$ and $-1$ appears with multiplicity $y$. Then $x + y = \dim(N_\lambda)=\Tr(a_1,N_\lambda)$, and $qx - y=\Tr(a_s,N_\lambda)$.

For any $a\in H$, and for any $\bbC G$-module $M$, the action of $a$ on $M=eM\oplus (1-e)M$ is zero on $(1-e)M$. Then $\Tr(a,eM)=\Tr(a,M)$. When $M=M_\lambda$, we have $\Tr(a,N_\lambda)=\Tr(a,M_\lambda)=\Psi_\lambda(a)$. In particular, $x+y=\Psi_\lambda(a_1)$ and $qx-y=\Psi_\lambda(a_s)$.

Then by the Frobenius reciprocity, and the formula $e_{P_s}=\frac{1}{q+1}(a_1+a_s)$, we have
$$\langle \Psi_\lambda, (1_{B})^G \rangle = \angles{\Psi_\lambda|_B, 1_B } =\Psi_\lambda(e_B) = \Psi_\lambda(a_1) = x + y,$$ 
$$\langle \Psi_\lambda, (1_{P_s})^G \rangle = \angles{\Psi_\lambda|_{P_s}, 1_{P_s} } = \Psi_\lambda(e_{P_s}) = \frac{\Psi_\lambda(a_1) + \Psi_\lambda(a_s)}{q + 1} = \frac{x + y + qx - y}{q + 1} = x.$$
On the other hand, $$\langle \psi_\lambda, (1_{\{1\}})^S \rangle = \angles{\psi_\lambda|_{\{1\}}, 1_{\{1\}} } = \psi_\lambda(1),$$
$$\langle \psi_\lambda, (1_{\{1,s\}})^S \rangle = \angles{\psi_\lambda|_{\{1,s\}}, 1_{\{1,s\}} } = \frac{\psi_\lambda(1) + \psi_\lambda(s)}{2}.$$ But the hat map $\psi_\lambda \mapsto \Psi_\lambda$ preserves inner product. Then $x + y = \psi_\lambda(1)$, and $x = \frac{\psi_\lambda(1) + \psi_\lambda(s)}{2}$.

Then $\det((a_s)_r, N_\lambda) = (-1)^y q^x $ for any $s \in S$. Since $a_w$ can be written as a product of $l(w) = \frac{n(n-1)}{2}$ many different elements of the form $a_s$, $\det((a_w^2)_r, N_\lambda) = q^{n(n-1)x}$. The scalar of the action of $a_w^2$ on the $(x+y)$-dimensional space $N_\lambda$ is then $\zeta  q^{\frac{n(n-1)x}{x+y}} = \zeta  q^{\frac{n(n-1)}{2}\cdot \frac{\psi_\lambda(1) + \psi_\lambda(s)}{\psi_\lambda(1)} } = \zeta  q^{f_\lambda}$, for some  root of unity $\zeta$. Here, $f_\lambda= \frac{n(n-1)}{2}\cdot \frac{\psi_\lambda(1) + \psi_\lambda(s)}{\psi_\lambda(1)}$.

That $\zeta$ is real follows from the expression of $\Psi$ as an integer combination of $(1_{P_\lambda})^G$. To show that $\zeta$ is $1$, one needs the deformation theory (see Section \ref{sec:8.1}):

Note that the trace $\Psi_{\lambda}(a_w^2)$ of $a_w^2$ on $\epsilon_\lambda \bbC Ge_B$ equals $\zeta\psi_\lambda(1)=\pm \psi_\lambda(1) q^{f_\lambda}$. By Corollary \ref{cor:8.1.2}, there is a polynomial $g(u)$ such that for each $q$, $\Psi_{\lambda}(a_w^2)=g(q)$ in the case of $G=\GL_n(\bbF_q)$, and that $g(1)=\psi_\lambda(w^2)=\psi_\lambda(1)>0$. 

Then the polynomial $g$ agrees with one of the two polynomials $\pm \psi_\lambda(1) u^{f_\lambda}$ infinitely many times, so it must be that polynomial. Because $g(1)=\phi_\lambda(1)$, we conclude that $g(u)=\psi_\lambda(1) u^{f_\lambda}$. Then the trace $\Phi_\lambda(a_w^2)= \psi_\lambda(1)q^{f_\lambda}$. Therefore, $\zeta=1$.

To show that $f_\lambda$ is an integer,  one note that $\frac{n(n-1)}{2}$ is the number of elements in $S_n$ that are conjugate to $s$. By a result in representation theory, $\frac{n(n-1)}{2} \frac{\psi_\lambda(s)}{\psi_\lambda(1)}$ is an algebraic integer for any irreducible character $\psi_\lambda$ of $S_n$ (See Theorem~9.31 in \cite{curtis_reiner1981}).  Then $f_\lambda$ is an algebraic integer, and since $f_\lambda$ is rational, $f_\lambda$ is in fact an integer.
\end{proof}

In the next subsection, we will figure out the number $d_\lambda$ and $f_\lambda$.

\subsection{Formulae for constants}\label{sec:6.2}

The dimensions of the simple $\mathbb{C} G$-modules $M_\lambda$ are afforded by the $q$-hook length formula, and as a preliminary, we need the concept of $q$-binomial coefficients and Young diagrams.

\begin{defn}
The $q$-binomial coefficient, also known as the Gaussian binomial coefficient, is a $q$-analog of the binomial coefficient. It is denoted by $\qbinom{n}{k}$ or $\genfrac{[}{]}{0pt}{}{n}{k}_q$ and is defined as

$$\qbinom{n}{k} = \frac{[n]_q!}{[k]_q! [n-k]_q!},$$
where $[n]_q!$ is the $q$-factorial, defined as $[n]_q! = [n]_q [n-1]_q \cdots [2]_q [1]_q$, and the $q$-number $[n]_q$ is defined by $[n]_q = 1 + q + q^2 + \cdots + q^{n-1}$. 
\end{defn}

\begin{defn}
Let $\lambda = (\lambda_1, \lambda_2, ..., \lambda_m)$ be a partition of $n$. The Young diagram of $\lambda$ is a graphical representation of $\lambda$, consisting of $n$ boxes arranged in left-justified rows, with $\lambda_i$ boxes in the $i$th row, and with first row drawn from top.
\end{defn}

The conjugate partition $\lambda'$ of $\lambda$ is obtained by flipping the Young's diagram against its principal diagonal.

The hook length $h(b)$ of a box $b$ in the Young diagram of $\lambda$ is a crucial parameter in the $q$-hook formula. For a box $b = (i, j)$ located in the $i$th row and $j$th column of the Young diagram, the hook length is given by $h(b) = \lambda_i + \lambda'_j - i - j + 1$, where $\lambda'_j$ is the $j$-th part of the conjugate partition $\lambda'$ of $\lambda$. See Figure \ref{figure: hook length}.

\begin{figure}[ht]
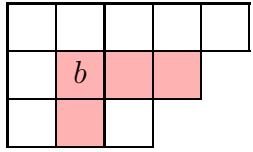

\centering
\begin{ytableau}
\ &  &  &  & \\
 &  *(red!30) b  & *(red!30) & *(red!30) \\
  & *(red!30)  & \\
\end{ytableau}
\caption{The hook length is 4 for the position $b=(2,2)$ }
\label{figure: hook length}
\end{figure}

The $q$-hook formula gives the dimension of the representation $M_\lambda$, see \cite{gnedin_kerov2005} for a reference.
\begin{prop}[$q$-Hook Formula]\label{prop:6.2.1}
The dimension $d_\lambda$ of the representation corresponding to $\Psi_{\lambda}$ is given by:

$$d_\lambda=\dim(\Psi_{\lambda}) = \frac{[n]_q!}{\prod_{b \in \lambda} [h(b)]_q!} q^{n(\lambda)},$$
where the product runs over all boxes $b$ in the Young diagram of $\lambda$, and $n(\lambda) = \sum_{i=1}^{\lambda_1} (i-1) \lambda_i$.
\end{prop}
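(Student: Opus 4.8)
The plan is to determine the numbers $d_\lambda$ as the unique solution of a unitriangular linear system built from the degrees of the permutation characters $\Phi_\mu=(1_{P_\mu})^G$, and then to recognise the right-hand side of the asserted formula as that solution by specializing a classical symmetric-function identity.

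\emph{Setting up the system.} Recall from Section~\ref{sec:5.2} that the hat map is linear, injective and inner-product preserving, and that it sends $\phi_\mu=(1_{S_\mu})^{S_n}$ to $\Phi_\mu=(1_{P_\mu})^G$ and $\psi_\lambda$ to $\Psi_\lambda$. Applying it to the Kostka decomposition $\phi_\mu=\sum_{\lambda\unrhd\mu}K_{\lambda,\mu}\psi_\lambda$ of Proposition~\ref{prop:5.2.4} gives $\Phi_\mu=\sum_{\lambda\unrhd\mu}K_{\lambda,\mu}\Psi_\lambda$; taking degrees, with $\deg\Psi_\lambda=\dim M_\lambda=d_\lambda$ and $\deg\Phi_\mu=[G:P_\mu]$, yields
\[
[G:P_\mu]=\sum_{\lambda\unrhd\mu}K_{\lambda,\mu}\,d_\lambda\qquad(\mu\vdash n).
\]
Since $(K_{\lambda,\mu})$ is unitriangular with respect to any linear extension of the dominance order (Proposition~\ref{prop:5.2.4}), this system has a unique solution in the variables $(d_\lambda)_{\lambda\vdash n}$; so it suffices to evaluate the left-hand sides and to verify the candidate formula against the system.

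\emph{Evaluation and verification.} Writing $\mu=(\mu_1,\dots,\mu_r)$, the standard parabolic $P_\mu$ has Levi subgroup $\prod_i\GL_{\mu_i}(\mathbb{F}_q)$ and unipotent radical of order $q^{\binom{n}{2}-\sum_i\binom{\mu_i}{2}}$, so from $|\GL_m(\mathbb{F}_q)|=q^{\binom{m}{2}}\prod_{j=1}^m(q^j-1)$ one gets $[G:P_\mu]=\frac{\prod_{j=1}^n(q^j-1)}{\prod_i\prod_{j=1}^{\mu_i}(q^j-1)}=\frac{[n]_q!}{\prod_i[\mu_i]_q!}$, the $q$-multinomial coefficient. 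For the candidate, the identity to be checked is exactly the image of the symmetric-function identity $h_\mu=\sum_{\lambda\unrhd\mu}K_{\lambda,\mu}s_\lambda$ under the principal specialization $x_i\mapsto q^{i-1}$ $(i\geq1)$: there $h_m(1,q,q^2,\dots)=\prod_{j=1}^m(1-q^j)^{-1}$, so $h_\mu$ specializes to $\frac{1}{(1-q)^n\prod_i[\mu_i]_q!}$, while the hook-content specialization $s_\lambda(1,q,q^2,\dots)=q^{n(\lambda)}\prod_{b\in\lambda}(1-q^{h(b)})^{-1}$ makes $s_\lambda$ specialize to $\frac{q^{n(\lambda)}}{(1-q)^n\prod_{b\in\lambda}[h(b)]_q}$. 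Multiplying the specialized identity by $(1-q)^n[n]_q!$ turns it into
\[
\frac{[n]_q!}{\prod_i[\mu_i]_q!}=\sum_{\lambda\unrhd\mu}K_{\lambda,\mu}\,q^{n(\lambda)}\frac{[n]_q!}{\prod_{b\in\lambda}[h(b)]_q},
\]
which is precisely the system above with $d_\lambda=q^{n(\lambda)}[n]_q!/\prod_{b\in\lambda}[h(b)]_q$; by uniqueness this equals $\dim M_\lambda$.

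The only input from outside the present circle of ideas is the principal-specialization (hook-content) formula for Schur functions, and I expect that — not the representation theory — to be the technical heart. It can be quoted (e.g.\ \cite{gnedin_kerov2005} or a standard reference on symmetric functions), or proved internally by induction on $|\lambda|$ via the Pieri rule, which on the group side corresponds to the Harish-Chandra-induction recursion $[n]_q\,d_\mu=\sum_\lambda d_\lambda$, the sum taken over $\lambda$ obtained from $\mu$ by adjoining a single box; that route is fully self-contained but considerably longer. Finally, note that the symmetric-function dictionary $\phi_\mu\leftrightarrow h_\mu$, $\psi_\lambda\leftrightarrow s_\lambda$ is used only to produce the candidate, whose correctness is then confirmed by direct substitution into the unitriangular system; hence no properties of the Frobenius characteristic map beyond the single identity $h_\mu=\sum_{\lambda}K_{\lambda,\mu}s_\lambda$ are needed.
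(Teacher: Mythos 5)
The paper does not prove this proposition at all --- it simply states it and cites \cite{gnedin_kerov2005} --- so there is nothing to compare against on the paper's side. Your proposal supplies an actual proof, and it is correct. The strategy is sound and fits the paper's toolkit exactly: the hat map converts the Kostka decomposition $\phi_\mu=\sum_{\lambda\unrhd\mu}K_{\lambda,\mu}\psi_\lambda$ into $\Phi_\mu=\sum_{\lambda\unrhd\mu}K_{\lambda,\mu}\Psi_\lambda$ (this is legitimate by the linearity and injectivity of the hat map together with Proposition~\ref{prop:5.2.12}), and taking degrees yields the unitriangular linear system $[G:P_\mu]=\sum_{\lambda\unrhd\mu}K_{\lambda,\mu}d_\lambda$, which determines $(d_\lambda)$ uniquely. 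Your computation $[G:P_\mu]=\dfrac{[n]_q!}{\prod_i[\mu_i]_q!}$ is correct, and the verification that $q^{n(\lambda)}[n]_q!/\prod_{b\in\lambda}[h(b)]_q$ solves the system is exactly the principal specialization $x_i\mapsto q^{i-1}$ of $h_\mu=\sum_\lambda K_{\lambda,\mu}s_\lambda$, cleared of the common factor $(1-q)^{-n}$. The only external input is the hook-content specialization for $s_\lambda$, which you correctly isolate as the one nontrivial analytic ingredient; everything else is elementary bookkeeping inside the framework already developed in Section~\ref{sec:5.2}. You are also right that no property of the characteristic map beyond the identity $h_\mu=\sum_\lambda K_{\lambda,\mu}s_\lambda$ (where $K_{\lambda,\mu}$ is, as in Section~\ref{sec:7.3}, the semistandard-tableau count) is needed, since the symmetric-function side merely produces the candidate.

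One point worth noting: your proof establishes the formula with $\prod_{b\in\lambda}[h(b)]_q$ in the denominator, which is the correct $q$-hook formula; the paper's statement has $\prod_{b\in\lambda}[h(b)]_q!$ with an extraneous factorial (and the upper limit $\lambda_1$ in the definition of $n(\lambda)$ should be the number of parts of $\lambda$, i.e.\ $\lambda'_1$). Both appear to be typographical errors in the statement, and your version is the right one --- as is confirmed by checking $d_{(n)}=1$ and by the remark following Theorem~\ref{thm:6.4.2}, which identifies $n(\lambda)$ with the column weight $\wt_c(\lambda)$.
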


This formula provides a powerful tool for computing the dimensions of the unipotent representations of $G = \GL_n(\mathbb{F}_q)$.

Now we turn to the scalar $f_\lambda$ and present the Beynon-Lusztig's Theorem, which gives a formula for the scalars $f_\lambda$ for $S_n$. For the proof of this formula, we refer to Beynon and Lusztig's paper \cite[p419]{beynon_lusztig1978} or Theorem 5.4.11 in \cite{geck_pfeiffer2000}.

\begin{thm}[Beynon-Lusztig] \label{thm:6.4.2}
Let $\lambda=(\lambda_1,...,\lambda_r)$ be a partition of $n$, and let $\lambda'=(\lambda'_1,...,\lambda'_s)$ be the conjugate partition. Then
$$\frac{n(n-1)}{2}\frac{\psi_\lambda(s)}{\psi_\lambda(1)}= \sum_i \binom{\lambda_i}{2}-\sum_i \binom{\lambda'_i}{2}.$$
\end{thm}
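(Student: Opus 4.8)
The plan is to recognize the left-hand side as the scalar by which a central element of $\bbC S_n$ acts on the Specht module $S^\lambda$, and then to evaluate that scalar combinatorially by means of Jucys--Murphy elements.

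First, $\tfrac{n(n-1)}{2}=\binom{n}{2}$ is the number of transpositions in $S_n$, and these form a single conjugacy class $C$. Hence the class sum $Z=\sum_{t\in C}t$ lies in the center of $\bbC S_n$, so by Schur's lemma it acts on the irreducible module $S^\lambda$ by a scalar $\omega_\lambda$. Taking traces, $\omega_\lambda\,\psi_\lambda(1)=\Tr(Z,S^\lambda)=\sum_{t\in C}\psi_\lambda(t)=|C|\,\psi_\lambda(s)=\tfrac{n(n-1)}{2}\psi_\lambda(s)$, so $\omega_\lambda=\tfrac{n(n-1)}{2}\frac{\psi_\lambda(s)}{\psi_\lambda(1)}$ is exactly the left-hand side. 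It remains to prove $\omega_\lambda=\sum_i\binom{\lambda_i}{2}-\sum_i\binom{\lambda'_i}{2}$.

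Next I would resolve $Z$ along the chain $S_1\subset S_2\subset\dots\subset S_n$. Writing $X_k=\sum_{1\le j<k}(j\,k)$ for the $k$-th Jucys--Murphy element (so $X_1=0$), one has $Z=\sum_{k=1}^n X_k$. The key input is the Okounkov--Vershik description of the Young (Gelfand--Tsetlin) basis $\{v_T\}$ of $S^\lambda$, indexed by standard Young tableaux $T$ of shape $\lambda$: the $X_k$ act simultaneously diagonally, with $X_kv_T=c_T(k)v_T$, where $c_T(k)=j-i$ is the content of the box $(i,j)$ of $T$ holding the entry $k$. Summing over $k$ gives $Zv_T=\bigl(\sum_{(i,j)\in\lambda}(j-i)\bigr)v_T$, which is visibly independent of $T$ (reconfirming centrality) and shows $\omega_\lambda=\sum_{(i,j)\in\lambda}(j-i)$. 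If one wishes to avoid quoting the full Okounkov--Vershik machinery, the same scalar may be pinned down by induction on $n$: the branching rule restricts $S^\lambda$ from $S_n$ to $S_{n-1}$ as $\bigoplus_\mu S^\mu$, multiplicity-free, summed over the partitions $\mu$ obtained from $\lambda$ by deleting a corner box; the element $X_n=Z-Z'$, with $Z'$ the analogous class sum in $\bbC S_{n-1}$, commutes with $\bbC S_{n-1}$ and hence acts as a scalar on each $S^\mu$, giving $\omega_\lambda=\omega_\mu+(\text{that scalar})$; one explicit computation identifies the scalar with the content $c(\lambda/\mu)$ of the deleted box, and the induction closes.

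Finally I would finish with the elementary identity
\[
\sum_{(i,j)\in\lambda}(j-i)=\sum_i\binom{\lambda_i}{2}-\sum_i\binom{\lambda'_i}{2},
\]
which follows from $\sum_{(i,j)\in\lambda}j=\sum_i\sum_{j=1}^{\lambda_i}j=\sum_i\binom{\lambda_i+1}{2}=\sum_i\binom{\lambda_i}{2}+n$ and $\sum_{(i,j)\in\lambda}i=\sum_j\sum_{i=1}^{\lambda'_j}i=\sum_j\binom{\lambda'_j+1}{2}=\sum_j\binom{\lambda'_j}{2}+n$ upon subtracting. The main obstacle is the middle step: proving that $Z$ acts on $S^\lambda$ by the content sum. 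The clean route uses the spectral theory of Jucys--Murphy elements on the Young basis; a self-contained route instead has to show that the scalar by which $X_n$ acts on the corner-$\mu$ component of the restriction of $S^\lambda$ equals the content of the box $\lambda/\mu$ --- everything else is bookkeeping and the two routine binomial sums displayed above.
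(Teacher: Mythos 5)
Your proof is correct, and it goes beyond what the paper itself does: the paper does not prove this theorem at all but simply cites Beynon--Lusztig \cite{beynon_lusztig1978} and Geck--Pfeiffer (Theorem 5.4.11 in \cite{geck_pfeiffer2000}), both of which establish the result in the broader framework of generic degrees and $a$-invariants for arbitrary Weyl groups. What you supply instead is the classical, type-$\mathbf{A}$-specific argument: identify the left side as the eigenvalue of the transposition class sum $Z$ on $S^\lambda$ via Schur's lemma, decompose $Z=\sum_k X_k$ into Jucys--Murphy elements, read off the content eigenvalues on the Gelfand--Tsetlin basis to get $\omega_\lambda=\sum_{(i,j)\in\lambda}(j-i)$, and then reduce the content sum to the stated binomial difference. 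Every step checks: the trace computation correctly uses that all $\binom{n}{2}$ transpositions lie in the conjugacy class of $s$; the content eigenvalue fact is standard (and your fallback inductive argument via the branching rule and the commutant property of $X_n$ is also sound); and the final identity follows from $\sum_i\binom{\lambda_i+1}{2}=\sum_i\binom{\lambda_i}{2}+n$ together with its conjugate analogue. The trade-off is that your route is elementary and self-contained for $S_n$ but does not generalize to other types, whereas the cited references prove a uniform statement; since the paper only ever uses the $S_n$ case, your proof is a perfectly good (arguably cleaner) substitute for the citation.
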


Therefore $f_\lambda=\frac{n(n-1)}{2}+\frac{n(n-1)}{2}\frac{\psi_\lambda(s)}{\psi_\lambda(1)} =\frac{n(n-1)}{2}+\sum_i \binom{\lambda_i}{2}-\sum_i \binom{\lambda'_i}{2}$.

\begin{rem}
We call the number $\sum_i \binom{\lambda_i}{2}$ the ``row weight" of the partition $\lambda$, denoted by $\wt_r(\lambda)$, because it is computed by assigning the number $\frac{r(r-1)}{2}$ to a row with $r$ elements and summing up, or by filling the number $j-1$ in each column $j$ box in the Young's diagram of $\lambda$ and then summing up. Similarly, the number $\sum_i \binom{\lambda'_i}{2}$ is called the ``column weight" of $\lambda$, denoted by $\wt_c(\lambda)$. 

Note that $n(\lambda)$ in the $q$-Hook Formula (Proposition \ref{prop:6.2.1}), computed by filling the number $i-1$ in each row $i$ box, is also the column weight $\wt_c(\lambda)$. The right hand side of the Beynon-Lusztig formula (Theorem \ref{thm:6.4.2}) equals the difference between the row weight and column weight, which we may call the ``mixed weight" of the partition $\lambda$.

In \cite[\S5.4]{geck_pfeiffer2000}, the row weight $\wt_r(\lambda)$ is called the $a^*$-invariant of the partition $\lambda$, denoted by $a^*(\lambda)$, and the column weight $\wt_c(\lambda)$ is denoted by $a(\lambda)$. Moreover, $a(\lambda)$ equals the $b$-invariant of the corresponding representation $\psi_\lambda$ of $S_n$: $\wt_c(\lambda)=a(\lambda)=b_{\psi_\lambda}$. This highlights the importance of these two weights.

\end{rem}

\section{Zeta Functions in the Generic Case, Kostka Numbers}\label{sec:7}
We are now able to address the question of expressing the zeta function in the generic case. First, we will present the result for the graph $X_0$, then for $X_2$, and finally we will compute the Kostka numbers. In this section and the next, the notations for $G,B,W,S$ are the same as in Section \ref{sec:6}, and $\overline{B}$ denotes the set of lower triangular matrices in $G$.

We begin with a proposition at the core of the generic cases for $X_0$ and $X_2$. 
\begin{prop}\label{prop:7.0.1}
Let $I\subseteq S$, $P=P_I=B W_I B$, and $P'=\overline{P}_I=\overline{B} W_I \overline{B}$. Let $\mu=(\mu_1,...,\mu_s)$ be a partition of $n$ so that $W_I$ is conjugate to $S_\mu$ in $W=S_n$. Let $a_{P,P'}=\frac{1}{|P'|}\sum_{x\in PP'}x$ and $a_{P',P}=\frac{1}{|P|}\sum_{x\in  P'P}x$. Then the eigenvalues of the right multiplication of $a_{P,P'}a_{P',P}$ on $\bbC Ge_{P}$ are parametrized by partitions $\lambda\unrhd \mu$, with each $\lambda$ providing the eigenvalue $q^{f_\lambda-2 \wt_r(\mu)}$ with multiplicity $d_\lambda K_{\lambda,\mu}$. The number $f_\lambda-2\wt_r(\mu)$ is a non-negative integer.
\end{prop}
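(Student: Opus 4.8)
The plan is to recognise $a_{P,P'}a_{P',P}$ as an explicit power of $q$ times $(e_Bw_0e_B)^2e_P$ and then invoke Theorem~\ref{prop:6.4.1}. First, exactly as in Sections~\ref{sec:4.1.2simplify} and~\ref{sec:4.2.1}, I would write $a_{P,P'}a_{P',P}=\frac{|PP'|^2}{|P||P'|}\,e_Pe_{P'}e_P$. Writing $L=L_I$ for the Levi subgroup common to $P=P_I$ and $P'=\overline P_I$, Lemma~\ref{lemma:mul_idem} gives $e_P=e_Le_B=e_Be_L$ and $e_{P'}=e_Le_{\overline B}=e_{\overline B}e_L$; combining these with $e_Le_P=e_P=e_Pe_B=e_Be_P$ and $e_{\overline B}=w_0e_Bw_0$ collapses the product to $e_Pe_{P'}e_P=e_Pe_{\overline B}e_P=e_P(e_Bw_0e_B)^2e_P$. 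The decisive step is then that, because $P\supseteq B$ we have $e_P\in H=e_B\bbC Ge_B$, while by Springer's Theorem~\ref{thm:6.3.1} the element $(e_Bw_0e_B)^2=q^{-n(n-1)}a_{w_0}^2$ lies in the centre of $H$; hence $e_P(e_Bw_0e_B)^2e_P=(e_Bw_0e_B)^2e_P$ and so
\[
 a_{P,P'}a_{P',P}=\frac{|PP'|^2}{|P||P'|}\,(e_Bw_0e_B)^2e_P .
\]

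Next I would compute the scalar $\frac{|PP'|^2}{|P||P'|}$. As $P=P_I$ and $P'=\overline P_I$ are opposite standard parabolics with $P\cap P'=L$ and $|P|=|P'|=|L|\,q^{\binom n2-\wt_r(\mu)}$ — the exponent being the number of positive roots of $G$ lying outside $L$, which depends only on the multiset of block sizes, i.e.\ on $\mu$ — one has $|PP'|=|P|\,|P'|/|L|=|L|\,q^{2(\binom n2-\wt_r(\mu))}$, so $\frac{|PP'|^2}{|P||P'|}=q^{n(n-1)-2\wt_r(\mu)}$.

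Now, since $a:=a_{P,P'}a_{P',P}\in e_P\bbC Ge_P$, the change-of-sides lemma at the start of Section~\ref{sec:6.1} identifies the eigenvalues (with multiplicities) of the right action of $a$ on $\bbC Ge_P$ with those of the left action of $a$ on $e_P\bbC G$; on the latter space $e_Px=x$, so this left action equals $q^{n(n-1)-2\wt_r(\mu)}$ times the left action of $(e_Bw_0e_B)^2$ on $e_P\bbC G$. Theorem~\ref{prop:6.4.1} says the latter is diagonalisable with eigenvalue $q^{f_\lambda-n(n-1)}$ of multiplicity $d_\lambda K_{\lambda,\mu}$ for each $\lambda\unrhd\mu$, and that these account for everything; rescaling by $q^{n(n-1)-2\wt_r(\mu)}$ gives eigenvalue $q^{f_\lambda-2\wt_r(\mu)}$ with multiplicity $d_\lambda K_{\lambda,\mu}$, as claimed.

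Finally, $f_\lambda-2\wt_r(\mu)$ is an integer since $f_\lambda$ is (Theorem~\ref{thm:6.3.1}). For non-negativity I would invoke $f_\lambda=\binom n2+\wt_r(\lambda)-\wt_c(\lambda)$ (Beynon--Lusztig, Theorem~\ref{thm:6.4.2}): convexity of $x\mapsto\binom x2$ together with $\lambda\unrhd\mu$ gives $\wt_r(\lambda)\geq\wt_r(\mu)$ by the majorization inequality, and the same applied to the conjugate partitions (which reverse dominance, $\mu'\unrhd\lambda'$) gives $\wt_c(\lambda)\leq\wt_c(\mu)$; hence $f_\lambda-2\wt_r(\mu)\geq\binom n2-\wt_r(\mu)-\wt_c(\mu)\geq 0$, where the last inequality holds because $\wt_r(\mu)+\wt_c(\mu)$ is the number of pairs of cells of the Young diagram of $\mu$ sharing a row or a column, which is at most the total number $\binom n2$ of pairs of cells. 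The one genuinely delicate point in all of this is the bookkeeping of the idempotent identities and keeping the centrality argument on the correct side; once the displayed identity for $a_{P,P'}a_{P',P}$ is in place, the rest is a direct appeal to results already proved.
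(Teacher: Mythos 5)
Your proof is correct and takes essentially the same route as the paper: write $a_{P,P'}a_{P',P}$ as an explicit power of $q$ times $(e_Bw_0e_B)^2$ acting on $\bbC Ge_P$, pass to the left action on $e_P\bbC G$ via the change-of-sides lemma, invoke Theorem~\ref{prop:6.4.1}, and finish with the dominance and weight inequalities. The only cosmetic differences are that you absorb the surrounding idempotents by invoking the centrality of $(e_Bw_0e_B)^2$ in $H$ rather than the paper's observation that right multiplication by $e_L$ is trivial on $\bbC Ge_P$, and that you supply the short combinatorial argument for $\wt_r(\mu)+\wt_c(\mu)\le\binom{n}{2}$, which the paper leaves as an exercise.
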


\begin{proof} Let $L=P\cap P'$. Then $L=\GL_{\mu'_1}(\bbF_q)\times ...\times \GL_{\mu'_s}(\bbF_q)$ consists of block diagonal matrices in $G$ with block sizes prescribed by some permutation $\mu'$ of $\mu$. Observe that $P=LB=BL$, and $P'=L\overline{B}=\overline{B}L$. Then $e_P=e_Le_B=e_Be_L$, and $e_{P'}=e_Le_{\overline{B}}=e_{\overline{B}} e_L$. In particular, $e_L$ commutes with $e_B$ and $e_{\overline{B}}$.

Moreover, $a_{P,P'}=\frac{|PP'|}{|P'|} e_{PP'}=\frac{|P|}{|P'\cap P|}e_{PP'}=q^{\sum_{i<j}\mu_i\mu_j}e_{PP'}$, and $a_{P',P}=q^{\sum_{i<j}\mu_i\mu_j}e_{P'P}$. Then $a_{P,P'} a_{P',P}=q^{2\sum_{i<j}\mu_i\mu_j}e_{PP'}e_{P'P}$.
But $e_{PP'}e_{P'P}=e_{P}e_{P'}e_{P}=e_{B}e_{L}e_{\overline{B}}e_L e_Be_L=e_L e_Be_{\overline{B}} e_B=e_L (e_B w_0 e_B)^2$, and the right action of $e_L$ is trivial on $\bbC Ge_P$ since $e_Pe_L=e_{PL}=e_P$. Then the right action of $a_{P,P'} a_{P',P}$ on $\bbC Ge_P$ is the same as the right action of $q^{2\sum_{i<j}\mu_i\mu_j} (e_Bw_0e_B)^2$ on $\bbC Ge_P$, and has the same eigenvalues as the left action of $q^{2\sum_{i<j}\mu_i\mu_j}(e_Bw_0e_B)^2$ on $e_P \bbC G$.

By Proposition \ref{prop:6.1.2}, the eigenvalues of this action are afforded by partitions $\lambda \unrhd \mu$, with each $\lambda$ providing the eigenvalue $q^{f_\lambda-n(n-1)+2\sum_{i<j}\mu_i\mu_j}$ to the multiplicity $d_\lambda K_{\lambda,\mu}$. Now, $n=\sum \mu_i$, and $-n(n-1)+2\sum_{i<j}\mu_i\mu_j=\sum{\mu_i}-\sum{\mu_i^2}=-2 \wt_r(\mu)$. 

The number $f_\lambda-2\wt_r(\mu)=\frac{n(n-1)}{2}+\wt_r(\lambda)-\wt_c(\lambda)-2\wt_r(\mu)$ is an integer. To show that it is non-negative, we use two properties of weights: 

Let $\lambda,\mu$ be partitions of the same positive integer $n$. (1) If $\lambda\unrhd\mu$, then $\wt_r(\lambda)\geq \wt_r(\mu)$, and $\wt_c(\lambda)\leq \wt_c(\mu)$. (2) $\wt_r(\lambda)+\wt_c(\lambda)\leq \frac{n(n-1)}{2}$.  Both properties are easy combinatorical exercises and we omit their proofs. Then $f_\lambda-2\wt_r(\mu)\geq \frac{n(n-1)}{2}+\wt_r(\lambda)-\wt_c(\lambda)-2\wt_r(\lambda)=\frac{n(n-1)}{2}-\wt_r(\lambda)-\wt_c(\lambda)\geq 0$ as desired.
\end{proof}

\subsection{Zeta Function of \texorpdfstring{$X_0$}{X0} in the Generic Case}\label{sec:7.1}
We find the zeta function $\frac{1}{Z_c(X_0^{[k]},u)}$ in the generic case.
\begin{thm}\label{thm:7.1.1}
For the graph $X_0(\mathbb{F}_q^n)$, if $k\neq \frac{n}{2}$, $\frac{1}{Z_c(X_0^{[k]},u)}$  can be expressed as a product of factors of the form $(1-q^su^2)$, where $s\in \mathbb{Z}_{\geq 0}$.
\end{thm}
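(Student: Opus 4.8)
The plan is to convert the statement into an eigenvalue computation using the machinery of Sections~\ref{sec:3.4} and~\ref{sec:4.1}, and then quote Proposition~\ref{prop:7.0.1} wholesale. The hypothesis $k\neq n/2$ is precisely what guarantees (Section~\ref{sec:3.4}) that $X_0^{[k]}(\bbF_q^n)$ is a \emph{bipartite}, i.e.\ cyclic $2$-partite, digraph, its two vertex types being the $k$-dimensional and the $(n-k)$-dimensional subspaces of $V=\bbF_q^n$, on which $G=\GL_n(\bbF_q)$ acts partite-transitively. Proposition~\ref{prop:3.3.4 Main prop on group zeta} then applies to this cyclic $2$-partite digraph: declaring the $k$-dimensional subspaces to be the first type and fixing $W_0=\angles{e_1,\dots,e_k}$ with $P_0=\Stab(W_0)$, we get $Z_c(X_0^{[k]},u)=\prod_i(1-\lambda_iu^2)^{-1}$, where the $\lambda_i$ are the eigenvalues, with multiplicity, of the right action of $D(W_0,W_0;2)$ on $\bbC Ge_{P_0}$. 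Which vertex type we call ``first'' is immaterial, by the shift-map symmetry of Proposition~\ref{prop:2.1.13}.

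Next I would invoke Section~\ref{sec:4.1} (Corollary~\ref{cor:4.1.2} and the discussion following it) to write $D(W_0,W_0;2)=a_{P_0P_1}a_{P_1P_0}$, where $P_1=\Stab(\angles{e_{k+1},\dots,e_n})$. The key observation is that $P_0$ and $P_1$ are opposite parabolic subgroups sharing the Levi factor $L=\GL_k(\bbF_q)\times\GL_{n-k}(\bbF_q)$: in the language of Section~\ref{sec:5.2.2}, $P_0=P_I=BW_IB$ and $P_1=\overline{P}_I=\overline{B}W_I\overline{B}$ for $I=S\setminus\{s_k\}$, and $W_I$ is the Young subgroup $S_{\{1,\dots,k\}}\times S_{\{k+1,\dots,n\}}$, hence conjugate in $S_n$ to $S_\mu$ with $\mu$ the partition of $n$ obtained by sorting $(k,n-k)$ into weakly decreasing order. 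This places us exactly in the setting of Proposition~\ref{prop:7.0.1} with $(P,P',\mu)=(P_0,P_1,\mu)$, so the eigenvalues of $(a_{P_0P_1}a_{P_1P_0})_r$ on $\bbC Ge_{P_0}$ are $q^{f_\lambda-2\wt_r(\mu)}$, indexed by partitions $\lambda\unrhd\mu$, with $\lambda$ contributing multiplicity $d_\lambda K_{\lambda,\mu}$, and each exponent $f_\lambda-2\wt_r(\mu)$ is a non-negative integer. Assembling the two displays yields $1/Z_c(X_0^{[k]},u)=\prod_{\lambda\unrhd\mu}(1-q^{f_\lambda-2\wt_r(\mu)}u^2)^{d_\lambda K_{\lambda,\mu}}$, a finite product of factors of the required shape $(1-q^su^2)$ with $s\in\bbZ_{\geq0}$.

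The substantive work is entirely upstream: Proposition~\ref{prop:7.0.1} already absorbs the change-of-sides lemma, the decomposition of $e_B\bbC G$ into simple Hecke modules, Springer's centrality theorem~\ref{thm:6.3.1}, and the weight inequalities forcing $f_\lambda-2\wt_r(\mu)\geq0$. So within the proof of this theorem there is no genuine obstacle; the only points requiring care are bookkeeping ones — reconciling the first-type convention of Proposition~\ref{prop:3.3.4 Main prop on group zeta} with the bipartite graph, and verifying that $P_0$ and $P_1$ really do arise as $P_I$ and $\overline{P}_I$ for one common $I$ with the correct two-part partition $\mu$, so that Proposition~\ref{prop:7.0.1} applies verbatim.
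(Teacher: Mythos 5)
Your proof is correct and follows essentially the same route as the paper's: reduce via Proposition~\ref{prop:3.3.4 Main prop on group zeta} and Section~\ref{sec:3.4} to the eigenvalues of $D(W_0,W_0,2)_r=(a_{P_0P_1}a_{P_1P_0})_r$ on $\bbC Ge_{P_0}$, identify $P_0=P_I$, $P_1=\overline{P}_I$ with $I=S\setminus\{s_k\}$ and $\mu$ the sorted pair $(k,n-k)$, and quote Proposition~\ref{prop:7.0.1}. The paper additionally makes the $\lambda\unrhd\mu$ indexing explicit (the two-row partitions $(n-j,j)$, $0\le j\le i$, with the formulas in Table~\ref{Table:2}), but that is presentation rather than additional substance.
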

Let's consider the setting in Section \ref{sec:4.1}. Let $V=\mathbb{F}_q^n$, and $W_0= \langle e_1,...,e_k \rangle$, with $1\leq k\leq n$. Let $W_1= \langle e_{k+1},...,e_n \rangle$, and $P_0=\text{Stab}(W_0)$, $P_1=\text{Stab}(W_1)$. Let $I=\{s_1,...,s_{k-1}, s_{k+1},...,s_{n-1}\}$. Then $P_0=P_I=BW_IB$, $P_1=\overline{P}_I=\overline{B}W_I \overline{B}$. The partition $\mu$ corresponding to $I$ is $(n-i,i)$ where $i=\min(k,n-k)$. Except for the formula on zeta function, our discussion also applies when $k=\frac{n}{2}$.

By Proposition \ref{prop:3.3.4 Main prop on group zeta} and the first case in Section \ref{sec:3.4}, we have for $k\neq \frac{n}{2}$,
$$Z_c(X_0^{[k]},u) = \prod_{t=1}^M \frac{1}{(1-q_t u^2)},$$
where $q_1,q_2,...,q_M$ are eigenvalues of the map $D(W_0,W_0,2)_r=(a_{P_I,\overline{P}_I}a_{\overline{P}_I P_I})_r:\bbC Ge_{P_I}\to \bbC Ge_{P_I}$. 

The eigenvalues are afforded by proposition \ref{prop:7.0.1}, and parametrized by $\lambda\unrhd \mu$. The partitions $\lambda$ such that $\lambda\unrhd \mu$ along with corresponding $K_{\lambda,\mu}$, $f_\lambda$, and $d_\lambda$ are displayed in the table \ref{Table:2} below. The values $K_{\lambda,\mu}$ and the formula for $d_\lambda$ are derived from Section \ref{sec:7.3}. One may also compute the $d_\lambda$ using the $q$-hook formula in Section \ref{sec:6.2}.

\begin{table}[h]
\centering
\renewcommand{\arraystretch}{1.5}
\begin{tabular}{|c|c|c|c|c|c|c|}
\hline
$ \lambda $ & $ (n) $ & $(n-1,1)$& $ \dots $ & $ (n-j,j) $ & $ \dots $ & $ (n-i,i) $ \\ \hline
$ K_{\lambda,\mu} $ & 1 & 1 & $ \dots $ & 1 & $ \dots $ & 1 \\ \hline
$ f_\lambda $ & $n^2-n$ & $n^2-2n$ & $ \dots $ & $n^2+j^2-nj-n-j$ & $ \dots $ & $n^2+i^2-ni-n-i$ \\  \hline
$ d_\lambda $ & $1$ & $q [n-1]_q$ & $ \dots $ & $\qbinom{n}{j}-\qbinom{n}{j-1}$ & $ \dots $ & $\qbinom{n}{i}-\qbinom{n}{i-1}$ \\  \hline
\end{tabular}
\caption{Partitions $ \lambda \unrhd \mu $ with $ K_{\lambda,\mu},f_\lambda,d_\lambda$ for $\mu=(n-i,i)$}\label{Table:2}
\end{table}

For each $0\leq j\leq i$, let $\lambda_j=(n-j,j)$, with $\lambda_0=(n)$. For the action of $D(W_0,W_0,2)_r$ on $\bbC Ge_{P_0}$, $\lambda_j$ contributes to the eigenvalue $q^{f_{\lambda_j}-2\wt_r(\mu)}$ with multiplicity $d_{\lambda_j}K_{\lambda_j,\mu}=d_{\lambda_j}\geq 0$. The number $f_{\lambda_j}-2\wt_r(\mu)=-j(n-j+1)+2i(n-i)\geq 0$ by the Proposition $\ref{prop:7.0.1}$. We write $d_j$ for $d_{\lambda_j}$, and $f_j$ for $f_{\lambda_j}$. 
We can now prove Theorem \ref{thm:7.1.1}.

\begin{proof}[Proof of Theorem 7.1.1]
Suppose that $1\leq k\leq n-1$ and $k\neq \frac{n}{2}$. Let  $i=\min{(k,n-k)}$. By the preceding formula for the zeta function,
\begin{equation}
    \frac{1}{Z_c(X_0^{[k]},u)}=\prod_{j=0}^{i} {(1-q^{f_j-2\mu} u^2)^{d_j}}=\prod_{j=0}^{i}{(1-q^{-j(n-j+1)+2i(n-i)} u^2)^{\qbinom{n}{j}-\qbinom{n}{j-1}}}.
\end{equation}
 
This shows that the inverse zeta component $\frac{1}{Z_c(X_0^{[k]},u)}$ is a product of factors of the form $(1-q^s u^2)$, where $s\in \bbZ_{\geq 0}$. 
\end{proof}

The values for various $1/Z_c(X_0^{\{n-i,i\}},u)$ for $n\leq 5$ are summarized in Table \ref{table:7.1.2} below, where the cases for $i=\frac{n}{2}$ are computed using the formula in Section \ref{sec:8.5}. The value of $1/Z_c(X_0(\bbF_q^n),u)$ is the product of all terms in the corresponding row.
\begin{table}[h]
\centering
\begin{tabular}{|c||c|c|c|}
\hline
$n$ &  $i=0$ & $i=1$ & $i=2$ \\
\hline
2 & $1 - u^2$ & $(1 + u)^{\qbinom{2}{1}-1} (1 -qu)$ & \\
3 & $1 - u^2$ & $(1 - q u^2)^{\qbinom{3}{1}-1} (1 - q^4 u^2)$ & \\
4 & $1 - u^2$ & $(1 - q^2 u^2)^{\qbinom{4}{1}-1} (1 - q^6 u^2)$ & $(1 - qu)^{\qbinom{4}{2}-\qbinom{4}{1}} (1 + q^2 u)^{\qbinom{4}{1}-1} (1 - q^4 u)$ \\
5 & $1 - u^2$ & $(1 - q^3 u^2)^{\qbinom{5}{1}-1} (1 - q^{8} u^2)$ & $(1 - q^4 u^2)^{\qbinom{5}{2}-\qbinom{5}{1}} (1 - q^7 u^2)^{\qbinom{5}{1}-1} (1 - q^{12} u^2)$ \\
\hline
\end{tabular}
\caption{Expressions of $1/Z_c(X_0^{\{n-i,i\}},u)$ }
\label{table:7.1.2}
\end{table}

\subsection{Zeta Function of \texorpdfstring{$X_2$}{X2} in the Generic Case}\label{sec:7.2}
We find the zeta function $\frac{1}{Z_c(X_2^{[(a,b)]},u)}$ in the generic case. 
\begin{thm}\label{thm:7.2.1}
For the digraph $X_2(\mathbb{F}_q^n)$, if $(a,b)\neq (\frac{n}{3},\frac{2n}{3})$ or $(\frac{2n}{3},\frac{n}{3})$, $\frac{1}{Z_c(X_2^{[(a,b)]},u)}$  can be expressed as a product of factors of the form $(1-q^su^6)$, where $s\in \mathbb{Z}_{\geq 0}$.
\end{thm}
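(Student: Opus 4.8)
The plan is to run, for $X_2^{[(a,b)]}(V)$, the exact analogue of the argument that proved Theorem~\ref{thm:7.1.1} for $X_0^{[k]}(V)$, replacing the bipartite picture by the cyclic $6$-partite one and the partition $(n-i,i)$ by the decreasing rearrangement of a suitable triple $(i,j,k)$ with $i+j+k=n$. Write $(a,b)$ in the form $(i,i+j)$ as in Section~\ref{sec:2.2.2} and set $k=n-i-j$; since $(a,b)$ is not $(n/3,2n/3)$ or $(2n/3,n/3)$, the integers $i,j,k$ are not all equal, so the class $[(a,b)]$ has six elements and $X_2^{[(a,b)]}(V)$ is a cyclic $6$-partite digraph on which $G=\GL_n(\bbF_q)$ acts partite-transitively (Section~\ref{sec:2.2.2} and Section~\ref{sec:3.4}). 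Let $\mu$ denote the partition of $n$ obtained by sorting $(i,j,k)$ into weakly decreasing order.

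First I would apply Proposition~\ref{prop:3.3.4 Main prop on group zeta} to the cyclic $6$-partite digraph $X_2^{[(a,b)]}(V)$: taking the flag $F_0=(V_I,V_I+V_J)$ of multi-dimension $(i,i+j)$ and its stabilizer $P_0$, this yields $Z_c(X_2^{[(a,b)]},u)=\prod_t (1-\lambda_t u^6)^{-1}$, where the $\lambda_t$ are the eigenvalues of the right multiplication by $D(F_0,F_0,6)$ on $\bbC Ge_{P_0}$. Then I would insert the computation of Section~\ref{sec:4.2.1}, namely $D(F_0,F_0,6)=D(F_0,F_3,3)\,D(F_3,F_0,3)=a_{P_0,P_3}\,a_{P_3,P_0}$, where $F_3=(V_J+V_K,V_K)$ and $P_3=\Stab(F_3)$.

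The one step requiring genuine care is to recognize $(P_0,P_3)$ as a pair of opposite parabolics of the shape demanded by Proposition~\ref{prop:7.0.1}. Setting $I=S\setminus\{s_i,s_{i+j}\}$, a direct inspection of which flags are stabilized by upper- and by lower-triangular matrices shows $B\subseteq P_0$ and $\overline{B}\subseteq P_3$, and both parabolics have Levi $L=\GL(V_I)\times\GL(V_J)\times\GL(V_K)$; hence $P_0=P_I=BW_IB$ and $P_3=\overline{P}_I=\overline{B}W_I\overline{B}$, with $W_I=S_{\{1,\dots,i\}}\times S_{\{i+1,\dots,i+j\}}\times S_{\{i+j+1,\dots,n\}}$ conjugate in $S_n$ to $S_\mu$. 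This is precisely the hypothesis of Proposition~\ref{prop:7.0.1}, so the eigenvalues of $(a_{P_0,P_3}a_{P_3,P_0})_r$ on $\bbC Ge_{P_0}$ are $q^{f_\lambda-2\wt_r(\mu)}$, one for each $\lambda\unrhd\mu$, with multiplicity $d_\lambda K_{\lambda,\mu}$, and each exponent $f_\lambda-2\wt_r(\mu)$ is a non-negative integer.

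Combining the two displays gives
\[
\frac{1}{Z_c(X_2^{[(a,b)]},u)}=\prod_{\lambda\unrhd\mu}\bigl(1-q^{f_\lambda-2\wt_r(\mu)}u^6\bigr)^{d_\lambda K_{\lambda,\mu}},
\]
a product of factors $(1-q^su^6)$ with $s=f_\lambda-2\wt_r(\mu)\in\bbZ_{\geq 0}$, which is the assertion. The main obstacle is bookkeeping rather than analysis: one must verify that the pair $(P_0,P_3)$ coming from $(F_0,F_3)$ is really of the form $(P_I,\overline{P}_I)$ — pinning down $I$ and $\mu$ — and note that excluding $(a,b)=(n/3,2n/3),(2n/3,n/3)$ is exactly what makes the component cyclic $6$-partite rather than bipartite, so that Proposition~\ref{prop:3.3.4 Main prop on group zeta} is applied with exponent $6$ and with $D(F_0,F_0,6)$, not with the element $D(F_0,F_0,2)$ that governs the excluded (bipartite) case treated in Section~\ref{sec:8}. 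Everything else transcribes the proof of Theorem~\ref{thm:7.1.1}.
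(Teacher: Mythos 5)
Your proof is correct and follows essentially the same route as the paper: apply Proposition~\ref{prop:3.3.4 Main prop on group zeta} to the cyclic $6$-partite component, compute $D(F_0,F_0,6)=a_{P_0,P_3}a_{P_3,P_0}$ via Section~\ref{sec:4.2.1}, and invoke Proposition~\ref{prop:7.0.1}. The only difference is that you spell out the identification $P_0=P_I$, $P_3=\overline P_I$ with $I=S\setminus\{s_i,s_{i+j}\}$ in a bit more detail than the paper, which simply asserts that $(P_0,P_3,\mu)$ plays the role of $(P,P',\mu)$ there; that added care is harmless and arguably an improvement.
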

\begin{proof}
We consider the setting in Section \ref{sec:4.2}. Given integers $i,j,k>0$ such that $n=i+j+k$, let the index sets $I,J,K$, vector spaces $V_I,V_J,V_K$, and flags $F_0$ to $F_5$  be defined as there. The parabolic groups $P_0$ to $P_5$ are defined by $P_s=\Stab(F_s)$, $s=0,1,...,5$.

Given $(a,b)$ as stated in the theorem, we choose $(i,j,k)$ as follows:
$$(i,j,k)=\begin{cases}
  (a, b-a, n-b), & \text{ if } a<b,\\
 (a-b, b, n-a), & \text{ if } a>b.
\end{cases}$$
Then $(i,j,k)\neq (\frac{n}{3},\frac{n}{3},\frac{n}{3})$, and the multi-dimensions of $F_0-F_5$ are mutually distinct.

We first find the eigenvalues of the right action of $D(F_0,F_0,6)=a_{P_0,P_3}a_{P_3,P_0}$ on $\bbC Ge_{P_0}$. Note that $P_0$, $P_3$, and $\mu=(i_1,j_1,k_1)$ takes the role of $P$, $P'$ and $\mu$ in Proposition \ref{prop:7.0.1}, where $(i_1,j_1,k_1)$ are the numbers $i,j,k$ in weakly decreasing order. The eigenvalues are then afforded by that proposition: each $\lambda\unrhd\mu$ contributes the eigenvalue $q^{f_\lambda-2\wt_r(\mu)}$ with multiplicity $d_\lambda K_{\lambda,\mu}$. Then one could plug these eigenvalues in the formula in Proposition \ref{prop:3.3.4 Main prop on group zeta} and Case 3 in Section \ref{sec:3.4}, and gets
\begin{equation}
    \frac{1}{Z_c(X_2^{[(a,b)]},u)}=\prod_{\lambda\unrhd \mu} (1-q^{f_\lambda-2\wt_r(\mu)} u^6)^{d_\lambda K_{\lambda,\mu}}.
\end{equation}

Note that the numbers $f_\lambda-2\wt_r(\mu)$ and $d_\lambda K_{\lambda,\mu}$ are non-negative integers, thereby proving the theorem.
\end{proof}

The number $-2\wt_r(\mu)=2ij+2jk+2ik-n(n-1)$. We list the numbers $K_{\lambda,\mu}$, $f_\lambda$, and $d_\lambda$ for partitions $\lambda$ such that $\lambda\unrhd \mu$. We will calculate the formula for $K_{\lambda,\mu}$ in Section 7.3, and deduce the formula for $d_\lambda$ in Section 7.4.

\begin{table}[h]
\centering
\renewcommand{\arraystretch}{1.5}
\begin{tabular}{|c|c|}
\hline
$ \lambda $ &   $ (x,y,z): x\geq i_1, x+y\geq i_1+j_1 $  \\ \hline
$ K_{\lambda,\mu} $ &  $\min{(x-i_1,j_1)}-\max{(0,j_1-y,y-i_1)}+1$  \\ \hline
$ f_\lambda $ &   $\binom{n}{2}+\binom{x}{2}+\binom{y}{2}+\binom{z}{2}-2z-y$  \\  \hline
$ d_\lambda $ & $\qbinom{n}{x,y,z}-\qbinom{n}{x+1,y-1,z}+\qbinom{n}{x+2,y-1,z-1}-\qbinom{n}{x,y+1,z-1}+\qbinom{n}{x+1,y+1,z-2}-\qbinom{n}{x+2,y,z-2}$  \\  \hline
\end{tabular}
\caption{Partitions $ \lambda \unrhd \mu $ with $ K_{\lambda,\mu},f_\lambda,d_\lambda$ for $\mu=(i_1,j_1,k_1)$}\label{table:3}
\end{table}
Here $\qbinom{n}{i,j,k}=\frac{[n]_q!}{[i]_q![j]_q![k]_q!}$, and if some of $i,j,k<0$, the number $\qbinom{n}{i,j,k}$ is understood as 0.

The following table \ref{table:7.2.2} summarizes the expression $1/Z_c(X_2^{[(i,i+j)]}(\bbF_q^{i+j+k}),u)$ for various $(i,j,k)$ with $i+j+k\leq 6$. The first entry for $i=j=k=1$ is computed as in Section \ref{sec:8.4}.
\begin{table}[h]\renewcommand{\arraystretch}{1.2}
\centering
\begin{tabular}{|c||c|}
\hline
$(i,j,k)$ & Expression $1/Z_c(X_2^{[(i,i+j)]},u)$ \\
\hline
(1, 1, 1) & $(1 - u^2)^{q^3}  (1 - q \omega u^2)^{q (1 + q)} (1 - q  \omega^2 u^2)^{q (1 + q)}(1 - q^2 u^2)$ \\
(1, 1, 2) & $(1 - q^2 u^6)^{q^3 (1 + q + q^2)} (1 - q^4 u^6)^{(\qbinom{4}{2}-\qbinom{4}{1})} (1 - q^6 u^6)^{2 (\qbinom{4}{1}-1)} (1 - q^{10} u^6)$ \\
(1, 1, 3) & $(1 - q^4 u^6)^{d_{(3,1,1)}} (1 - q^6 u^6)^{d_{(3,2)}} (1 - q^9 u^6)^{2 d_{(4,1)}} (1 - q^{14} u^6)$ \\
(1, 2, 2) & $(1-q^4 u^6)^{d_{(2,2,1)}} (1-q^6 u^6)^{d_{(3,1,1)}} (1-q^8 u^6)^{2 d_{(3,2)}} (1-q^{11} u^6)^{2 d_{(4,1)}} (1-q^{16} u^6)$ \\
(1, 1, 4) & $(1 - q^6 u^6)^{d_{(4,1,1)}} (1 - q^8 u^6)^{d_{(4,2)}} (1 - q^{12} u^6)^{2 d_{(5,1)}} (1 - q^{18} u^6)$ \\
(1, 2, 3) & $(1 - q^7 u^6)^{d_{(3,2,1)}} (1 - q^{10} u^6)^{d_{(3,3)}+d_{(3,2,1)}} (1 - q^{12} u^6)^{2 d_{(4,2)}} (1 - q^{16} u^6)^{2 d_{(5,1)}} (1 - q^{22} u^6)$ \\
(1, 3, 2) & $(1 - q^7 u^6)^{d_{(3,2,1)}} (1 - q^{10} u^6)^{d_{(3,3)}+d_{(3,2,1)}} (1 - q^{12} u^6)^{2 d_{(4,2)}} (1 - q^{16} u^6)^{2 d_{(5,1)}} (1 - q^{22} u^6)$ \\
(2, 2, 2) & ... \\
\hline
\end{tabular}
\caption{Expressions of $1/Z_c(X_2^{[(i,i+j)]},u)$ for various $(i,j,k)$}
\label{table:7.2.2}
\end{table}

\subsection{Kostka Numbers: Count of Semistandard Tableaux}\label{sec:7.3}

Kostka numbers are integers that capture the combinatorial essence of the decomposition of characters for symmetric groups and general linear groups. They also count the number of semistandard generalized tableaux of a specific shape and type. We refer to \cite[\S 2.9]{sagan2001}.

\subsubsection{Generalized and Semistandard Tableaux}
Let's introduce the concept of a generalized Young tableau. 
\begin{defn}
    For a given partition $\lambda$, a generalized Young tableau of shape $\lambda$ is an array, $T$, created by filling the cells of the Young diagram corresponding to $\lambda$ with positive integers, with repetition allowed.

    Each tableau $T$ has an associated type, represented by an array $\mu = (\mu_1, \mu_2, ..., \mu_m)$. Each $\mu_i$ is the count of the integer $i$ within the tableau $T$.
\end{defn}

We denote the collection of all tableaux of a specific shape $\lambda$ and type $\mu$ as $\calT_{\lambda, \mu}$, defined as:
\[\calT_{\lambda, \mu} = \{ T : T \text{ has shape } \lambda \text{ and type } \mu \}.\]

For example
\[ T = \begin{ytableau}
    3 & 2 & 4 \\ 2
\end{ytableau} \] 
is a generalized Young tableau of shape $(3,1)$ and type $(0,2,1,1)$.

\begin{defn}
A semistandard generalized tableau is a generalized tableau whose rows weakly increase (entries may repeat) and whose columns strictly increase. We denote by $\calT_{\lambda, \mu}^0$ the set of semistandard $\lambda$-tableaux of type $\mu$.
\end{defn}

For example, the tableau
\[
T=\begin{ytableau}
1 & 1 & 1 & 2 & 2\\
2 & 3\\
3
\end{ytableau}
\]
is a semistandard tableau of type $(5,2,1)$ and content $(3,3,2)$. 

\subsubsection{Kostka Numbers}
The Kostka number $K_{\lambda, \mu}$, introduced in Definition \ref{def:5.2.3}, where $\lambda$ and $\mu$ are partitions of the same number $n$, exactly equals the number of semistandard $\lambda$-tableaux of type $\mu$ \cite[\S 2.11]{sagan2001}. That is,
\[
K_{\lambda, \mu} = |\calT_{\lambda, \mu}^0|.
\]

It's worth noting that if $\mathcal{T}_{\lambda, \mu}^0$ is nonempty, then the partition $\lambda$ dominates $\mu$. This is because, if $T\in \mathcal{T}_{\lambda, \mu}^0$, the numbers $1,2,...,a$ should appear in the first $a$ rows of $T$, because the columns of $T$ are strictly increasing. Therefore, $\lambda_1+...+\lambda_a \geq \mu_1+...+\mu_a$, and hence $\lambda \unrhd \mu$. Conversely, $\lambda \unrhd \mu$ implies that $\mathcal{T}_{\lambda, \mu}^0$ is nonempty (See \cite{wildon_mathoverflow} for a combinatorical proof). Note that this proves Proposition \ref{prop:5.2.4}.

For instance, consider $\lambda=(x,n-x)$ and $\mu=(y,n-y)$. If $T\in \mathcal{T}_{\lambda,\mu}^0$, then all the 1's appear in the first row and the remaining entries are filled with 2's. Then $K_{\lambda,\mu}=1$ when $x\geq y\geq \frac{n}{2}$. 

Recall from Definition \ref{def:5.2.3} that Kostka numbers appear as multiplicities of certain characters of $S_n$, and that $\phi_\mu=\sum_{\lambda\unrhd\mu}K_{\lambda,\mu}\psi_\mu$, where $\phi_\mu$ denotes the character $(1_{S_\mu})^{S_n}$. 

Then  $\phi_{(n-i,i)}= \psi_{(n)}+\psi_{(n-1,1)}+...+\psi_{(n-i,i)}$ for $i\leq \frac{n}{2}$. Therefore, for $1\leq i\leq \frac{n}{2}$, $\psi_{(n-i,i)}=\phi_{(n-i,i)}-\phi_{(n-i+1,i-1)}$. Applying the hat map, one gets $\Psi_{(n-i,i)}=\Phi_{(n-i,i)}-\Phi_{(n-i+1,i-1)}$. Note that $\dim(\Phi_{(n-i,i)})=\dim((1_{P_{(n-i,i)}})^{G})=\dim(\bbC Ge_{P_{(n-i,i)}})=\Gr(n,i)=\qbinom{n}{i}$. Hence, the dimension $d_{(n-i,i)}$ of $\Psi_{(n-i,i)}$ equals $\qbinom{n}{i}- \qbinom{n}{i-1}$, as promised in Section \ref{sec:7.1}.

Now, let's consider $\lambda = (x, y, z)$ and $\mu = (i, j, k)$ with $x\geq y\geq z\geq 0$ and $i\geq j\geq k\geq 0$. Assuming $\lambda \unrhd \mu$, we have $x\geq i$ and $x+y\geq i+j$. We have computed the Kostka number $K_{\lambda, \mu}$ by elementary counting of semistandard Young tableaux. The result is  $K_{\lambda,\mu} = K_{(x-z,y-z),(i-z,j-z,k-z)} = \min{(x-i,j-z)} - \max{(0,j-y,y-i)} + 1$. This gives the formula for $K_{\lambda,\mu}$ in Table \ref{table:3}.

For example, the following table \ref{table:5} illustrates the results of $K_{\lambda,\mu}$ with $\mu = (i,4,2)$, $\lambda = (x,y,z)$, and $i\geq6$.

\begin{table}[h]
\centering
\renewcommand{\arraystretch}{1.5}
\begin{tabular}{|c|c|c|c|c|c|c|c|c|}
\hline
$z\backslash y$ & 0 & 1 & 2 & 3 & 4 & 5 & 6 \\
\hline
0 & 1 & 2 & 3 & 3 & 3 & 2 & 1 \\
\hline
1 & & 1 & 2 & 2 & 2 & 1 & \\
\hline
2 & & & 1 & 1 & 1 & & \\
\hline
\end{tabular}
\caption{$K_{\lambda,\mu}$ with $\mu = (i,4,2)$, $\lambda = (x,y,z)$, and $i\geq6$.}\label{table:5}
\end{table}
\subsection{Inverse Kostka Numbers and Jacobi-Trudi Identity}\label{sec:7.4}

In this subsection, we will introduce the Jacobi-Trudi Identity to give an expression of $\psi_\mu$ in terms of $\phi_\lambda$. 

\begin{thm}[Jacobi-Trudi Identity, representation form]\label{thm:7.4.2}
Let $\lambda = (\lambda_1, \lambda_2, \ldots, \lambda_m)$ be a partition. Define the notation $\phi_{k_1} \circ \phi_{k_2} \circ \cdots \circ \phi_{k_s} = \phi_{(k_1, k_2, \ldots, k_s)}$, where $\phi_k = (1_{S_k})^{S_n}$ is the induced character. Then the irreducible character $\psi_\lambda$ can be expressed as a determinant:
$$\psi_\lambda = \det(\phi_{\lambda_i+j-i})_{1 \leq i,j \leq m},$$
where multiplication is understood in the sense of $\circ$ and $\phi_k = 0$ for $k < 0$.
\end{thm}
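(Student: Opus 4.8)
The plan is to prove the Jacobi–Trudi identity in its representation-theoretic form by transporting the classical symmetric-function identity through the characteristic map (or equivalently, by working directly with the ring of virtual characters of the symmetric groups). First I would recall the standard dictionary: under the Frobenius characteristic isomorphism $\mathrm{ch}\colon \bigoplus_n R(S_n) \to \Lambda$, the irreducible character $\psi_\lambda$ corresponds to the Schur function $s_\lambda$, while the induced character $\phi_k = (1_{S_k})^{S_k}$ corresponds to the complete homogeneous symmetric function $h_k$. More generally, for a composition $(k_1,\dots,k_s)$, the induced character $\phi_{(k_1,\dots,k_s)} = \mathrm{Ind}_{S_{k_1}\times\cdots\times S_{k_s}}^{S_n}(1)$ maps to the product $h_{k_1}h_{k_2}\cdots h_{k_s}$, since $\mathrm{ch}$ is a ring isomorphism taking induction product to multiplication of symmetric functions. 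This is precisely the statement that the operation $\circ$ defined in the theorem corresponds to multiplication in $\Lambda$.

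The key step is then the classical Jacobi–Trudi formula in $\Lambda$: for a partition $\lambda = (\lambda_1,\dots,\lambda_m)$,
\[
s_\lambda = \det\bigl(h_{\lambda_i + j - i}\bigr)_{1 \leq i,j \leq m},
\]
with the convention $h_0 = 1$ and $h_k = 0$ for $k < 0$. I would cite this (for instance Macdonald's book, or Sagan's Chapter 4) rather than reprove it, though I could sketch the lattice-path / Lindström–Gessel–Viennot argument or the Pieri-rule inductive proof if self-containedness is wanted. Applying $\mathrm{ch}^{-1}$ to both sides and using that $\mathrm{ch}^{-1}(s_\lambda) = \psi_\lambda$ and $\mathrm{ch}^{-1}(h_{\lambda_i+j-i}) = \phi_{\lambda_i+j-i}$, together with the fact that $\mathrm{ch}^{-1}$ converts the product structure into the $\circ$-product of induced characters, yields exactly
\[
\psi_\lambda = \det\bigl(\phi_{\lambda_i+j-i}\bigr)_{1 \leq i,j \leq m},
\]
which is the assertion. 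One must check that the determinant expansion, which a priori is an alternating sum of $\circ$-products of the $\phi_k$'s, is well-defined: each monomial in the expansion is $\pm\,\phi_{\lambda_{\sigma(1)}+1-\sigma(1)}\circ\cdots\circ\phi_{\lambda_{\sigma(m)}+m-\sigma(m)}$, and whenever some index $\lambda_i + j - i$ is negative the corresponding term is declared zero, consistent with $h_k = 0$ for $k<0$; when an index is $0$ the factor $\phi_0$ is the trivial character of the trivial group, i.e.\ the identity for $\circ$, matching $h_0=1$.

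The main obstacle is essentially bookkeeping rather than deep mathematics: one has to be careful that the noncommutative-looking $\circ$ notation in the theorem statement genuinely corresponds to the commutative multiplication in $\Lambda$, so that the determinant (whose definition implicitly uses commutativity to collect terms) makes sense as written. This is resolved by the observation that $\phi_{k_1}\circ\phi_{k_2} = \phi_{k_2}\circ\phi_{k_1}$ as characters of $S_{k_1+k_2}$ — both equal $(1_{S_{k_1}\times S_{k_2}})^{S_{k_1+k_2}}$, and the two Young subgroups are conjugate in $S_{k_1+k_2}$ — so $\circ$ is commutative (and associative) on the relevant characters, and the determinant is unambiguous. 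Once this commutativity is noted, the proof is just the transport-of-structure argument above. Alternatively, if one prefers to avoid the characteristic map, the same result follows by inducting on $m$ using the branching/Pieri rule $\phi_k \circ \psi_\nu = \sum_{\nu \subseteq \kappa,\ \kappa/\nu \text{ a horizontal } k\text{-strip}} \psi_\kappa$ (which is the representation-theoretic form of Pieri's rule, itself a consequence of Proposition \ref{prop:5.2.4} on Kostka numbers and Young's rule) and expanding the determinant along its first row; I would mention this as the self-contained route and carry out the induction's inductive step schematically, leaving the horizontal-strip cancellation as the routine computation.
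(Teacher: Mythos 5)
Your proof is correct and takes essentially the same route as the paper: transport the classical Jacobi--Trudi determinant identity from the ring of symmetric functions to the character ring of the symmetric groups. The paper's Remark after Theorem~\ref{thm:7.4.2} accomplishes this by observing that the transition from $\{h_\nu\}$ to $\{s_\mu\}$ and from $\{\phi_\nu\}$ to $\{\psi_\mu\}$ is governed by the same inverse Kostka matrix $K^{-1}$ (cf.\ Proposition~\ref{prop:5.2.5}), so any linear identity among $h$'s giving $s_\mu$ yields the corresponding identity among $\phi$'s giving $\psi_\mu$; you instead invoke the Frobenius characteristic isomorphism $\mathrm{ch}$ directly. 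These are two phrasings of the same argument, since $\mathrm{ch}$ is exactly the ring isomorphism that encodes the matching transition and multiplicative structure. Your observation that $\circ$ is commutative (so the determinant is well defined as written) is a small but legitimate point the paper does not spell out, and your alternative Pieri-rule induction is a genuine self-contained route that the paper does not pursue.
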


\begin{rem}
The Jacobi-Trudi identity is a formula that relates two bases $s_\lambda$ (Schur polynomials) and $h_\lambda$ (homogeneous polynomials) of the ring of symmetric functions. The homogeneous polynomials satisfy $h_\lambda=h_{\lambda_1}\cdots h_{\lambda_m}$ for $\lambda=(\lambda_1,\ldots,\lambda_m)$, and $h_a=0$ for $a\in \bbZ_{<0}$. The usual form of Jacobi-Trudi identity states that $$s_\mu=\det(h_{\mu_i+j-i})_{1 \leq i,j \leq m}.$$

The $s_\lambda$ and $h_\lambda$ are also related by inverse Kostka numbers 
$s_\mu=\sum_\lambda K^{-1}_{\lambda,\mu} h_\lambda.$ This formula has the same shape as the formula $\psi_\mu=\sum_{\lambda} K^{-1}_{\lambda,\mu} \phi_\lambda$.
Therefore, the two forms of Jacobi-Trudi identities are easily seen as equivalent.  

See \cite{egecioglu_remmel1990} for more on Schur polynomials and the usual Jacobi-Trudi identity, and see \cite[p57]{neretin2013} for the representation form of this identity.
\end{rem}

For example, we can apply this formula to compute $\psi_\mu$ when $\mu$ has 3 parts:

\begin{prop}\label{prop:7.4.3}
Let $\mu=(i,j,k)$ be a partition with $k\geq 1$. Then
$$\psi_{(i,j,k)} = (\phi_{(i,j,k)} + \phi_{(i+2,j-1,k-1)} + \phi_{(i+1,j+1,k-2)}) - (\phi_{(i,j+1,k-1)} + \phi_{(i+1,j-1,k)} + \phi_{(i+2,j,k-2)}),$$
where $\phi_{(a,b,c)}=(1_{S_{(a,b,c)}})^{S_n}$ is the induced character when $a,b,c\geq 0$. If any one of $a,b,c$ is less than $0$, the expression $\phi_{(a,b,c)}$ is understood as 0.
\end{prop}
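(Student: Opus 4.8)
The plan is to apply the Jacobi–Trudi identity (Theorem \ref{thm:7.4.2}) directly to the partition $\mu = (i,j,k)$ with $m = 3$ parts and then expand the resulting $3\times 3$ determinant, keeping careful track of the convention $\phi_a = 0$ for $a < 0$. This is essentially a bookkeeping computation rather than a conceptual argument, so the write-up will be short.

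First I would write out the matrix $\bigl(\phi_{\mu_i + j - i}\bigr)_{1\le i,j\le 3}$ explicitly. With $\mu_1 = i$, $\mu_2 = j$, $\mu_3 = k$, the $(p,q)$ entry is $\phi_{\mu_p + q - p}$, giving
\[
\begin{pmatrix}
\phi_i & \phi_{i+1} & \phi_{i+2}\\
\phi_{j-1} & \phi_j & \phi_{j+1}\\
\phi_{k-2} & \phi_{k-1} & \phi_k
\end{pmatrix}.
\]
Then $\psi_{(i,j,k)}$ is the determinant of this matrix, where products of $\phi$'s are interpreted via the $\circ$ operation, i.e. $\phi_a \circ \phi_b \circ \phi_c = \phi_{(a,b,c)}$ after sorting the arguments into a partition (and the whole term vanishes if any argument is negative). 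Expanding along the first row yields
\[
\psi_{(i,j,k)} = \phi_i\bigl(\phi_j\phi_k - \phi_{j+1}\phi_{k-1}\bigr) - \phi_{i+1}\bigl(\phi_{j-1}\phi_k - \phi_{j+1}\phi_{k-2}\bigr) + \phi_{i+2}\bigl(\phi_{j-1}\phi_{k-1} - \phi_j\phi_{k-2}\bigr).
\]
Collecting the six terms and writing each $\phi_a\phi_b\phi_c$ as $\phi_{(a,b,c)}$ (the $\circ$-product being commutative, the order of arguments does not matter) gives exactly the claimed expression: the positive terms are $\phi_{(i,j,k)}$, $\phi_{(i+2,j-1,k-1)}$, $\phi_{(i+1,j+1,k-2)}$ and the negative terms are $\phi_{(i,j+1,k-1)}$, $\phi_{(i+1,j-1,k)}$, $\phi_{(i+2,j,k-2)}$.

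The only point requiring a word of care is the sign/vanishing convention: the Jacobi–Trudi statement uses $\phi_a = 0$ for $a < 0$, so whenever $k = 1$ (so $k - 2 = -1$) the three terms containing $\phi_{k-2}$ drop out, and this is consistent with the proposition's stipulation that $\phi_{(a,b,c)} = 0$ if any of $a,b,c$ is negative; similarly for $j = k$ one should note no cancellation is forced beyond what the formula already encodes, since the $\phi_{(a,b,c)}$ are treated as formal induced characters indexed by compositions. I do not expect any genuine obstacle here — the main (mild) risk is a transcription error in matching up which of the six cofactor terms carries which sign, so I would double-check by specializing to a small case such as $\mu = (2,1,1)$ or $\mu = (1,1,1)$, where the identity can be verified against known character degrees of $S_n$.
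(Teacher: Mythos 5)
Your proof is correct and is essentially identical to the paper's argument: the paper also applies the Jacobi--Trudi identity to $\mu=(i,j,k)$, writes down the same $3\times 3$ matrix, and states that expanding the determinant gives the result. You have simply spelled out the cofactor expansion and sign bookkeeping in slightly more detail, and your observation that the $\circ$-product is commutative (so compositions may be freely rearranged into partitions) and that the $\phi_a=0$ convention for $a<0$ matches the proposition's vanishing convention is exactly the right point to flag.
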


\begin{proof}
Apply Theorem \ref{thm:7.4.2} to the partition $\mu=(i,j,k)$:
$$\psi_{(i,j,k)} = \det\begin{pmatrix}
\phi_i & \phi_{i+1} & \phi_{i+2} \\
\phi_{j-1} & \phi_j & \phi_{j+1} \\
\phi_{k-2} & \phi_{k-1} & \phi_k
\end{pmatrix}.$$
Expanding the determinant yields the desired expression for $\psi_{(i,j,k)}$.
\end{proof}

Applying the hat map from characters of $S_n$ to unipotent characters of $G=\GL_n(\bbF_q)$ yields:
$$\Psi_{(i,j,k)} = (\Phi_{(i,j,k)} + \Phi_{(i+2,j-1,k-1)} + \Phi_{(i+1,j+1,k-2)}) - (\Phi_{(i,j+1,k-1)} + \Phi_{(i+1,j-1,k)} + \Phi_{(i+2,j,k-2)}),$$
where $\Phi_{(a,b,c)}=(1_{BS_{(a,b,c)}B})^G$ equals the image of $\phi_{(a,b,c)}$ under the hat map.

The first application of this calculation is the dimension formula $d_\lambda$ in Table \ref{table:3} in Section \ref{sec:7.2}.
Note that the dimension of $\Phi_{(a,b,c)}$ is $\qbinom{a+b+c}{a,b,c}$. Then we get the formula of $d_\lambda=\dim(\Psi_{x,y,z})$ as in Table \ref{table:3}.

This calculation also paves the way for the special case analysis, as we will see in Section \ref{sec:8.2}.
\section{Zeta Functions in the Special Case}\label{sec:8}

In this section, we consider the special case where all $i,j,k$ are equal, and compute the zeta functions $Z_c(X_0^{[\frac{n}{2}]}(\mathbb{F}_q^n), u)$ and $Z_c(X_2^{[(\frac{n}{3},\frac{n}{3})]}(\mathbb{F}_q^n), u)$ explicitly.

\subsection{Deformation Principle}\label{sec:8.1}

In this subsection, we will continue the discussion of Section \ref{sec:5.3}, and relate the representations of the Hecke algebra $H_q=e_B\bbC \GL_n(\bbF_q) e_B$ with the representations of $\bbC S_n$. In particular, we will explain how the process of ``setting $q=1$" relates representations of $H_q$ and $\bbC S_n$.
In this subsection, we let $W=S_n$, $S=\{s_1=(1,2),s_2=(2,3),...,s_{n-1}=(n-1,n)\}$ as in Section \ref{sec:5}.

\paragraph{Hecke Algebras}
In the following, let $q$ be a prime power or 1. Then $H_q=H_{q,S}$ is the $\bbC$-algebra generated by $\{T_s:s\in S\}$ with the Braid and quadratic relations as in Section \ref{sec:5.3.2}:
\begin{itemize}
    \item Braid relations: For any $s,t\in S$, $(T_s T_t)^{m_{st}}=(T_t T_s)^{m_{st}}$, where $m_{st}=3$ if $t=s\pm 1$ and  $m_{st}=2$ otherwise.
    \item Quadratic relations: For any $s\in S$, $T_s^2=(q-1)T_s+q$.
\end{itemize}

For any $w\in W=S_n$, let $w=s_1...s_r$ be a reduced expression. Then we can define $T_w$ as $T_{s_1}...T_{s_n}$. Such a definition gives $T_w$ uniquely, by Braid relation and Mastumoto's theorem. The elements $\{T_w:w\in W\}$ form a basis of $H_q$ over $\bbC$.

For $q>1$ prime power, $H_q$ is identified with $e_B \bbC Ge_B$, where $G=\GL_n(\bbF_q)$, and $B=\UT_n(\bbF_q)$ as before, with $T_w=a_w=\frac{1}{|B|}\sum_{x\in BwB}x$. For $q=1$, $H_1$ is identified with $\bbC S_n$, with $T_w=w$.

The algebras $\{H_q\}$ can be viewed as a family of $\bbC$-algebras parametrized by $q$. Setting different values of $q$ gives information of representations of $\GL_n(\bbF_q)$ and $S_n$.

\paragraph{Induced Modules}
Now we discuss some representations of $H_q$, arising from the induction process.

Let $I\subseteq S$, we may  consider the subalgebra $H_{q,I}$ of $H_q$ generated by $\{T_s:s\in I\}$.  
Then $H_{q,I}$, as a vector space, is spanned by $\{T_w:w\in W_I\}$ over $\bbC$.
For $q>1$, $H_{q,I}$ corresponds to the subalgebra $e_B \bbC P_I e_B$ of $e_B\bbC G e_B$, and for $q=1$, $H_{q,I}$ corresponds to $\bbC S_I$.

Consider the index representation $\bbC$ of $H_{q,I}$, where each $T_w$ acts as $q^{l(w)}$. One can induce a representation $\rho_{q,I}$ of $H_q$ given by the module $M_{q,I}:=H_q\otimes_{H_{q,I}} \bbC$.
Take a set $R$ of distinguished left coset representatives  of $W_I$ in $W$. (This means for each left coset $xW_I$, we choose the unique element of shortest length, see \cite[\S 2.1]{geck_pfeiffer2000}) Then $\{T_r\otimes 1: r\in R\}$ is a $\bbC$-basis of $M_{q,I}$. For $s\in S$, we calculate $T_s(T_r\otimes 1)=(T_s T_r)\otimes 1$. 

If $l(sr)<l(r)$, the element $sr$ must still lie in $R$, and $(T_s T_r)\otimes 1 = (qT_{sr}+(q-1)T_r)\otimes 1 =qT_{sr}\otimes 1+(q-1)T_r\otimes 1$. If $l(sr)>l(r)$, by Deodhar's Lemma [GK00,2.1.2],  either $sr\in R$ or $sr=rv$ for some $v\in I$. Then $(T_s T_r)\otimes 1=T_{sr}\otimes 1$ in the former case, while $(T_s T_r)\otimes 1=T_rT_v\otimes 1=T_s\otimes (T_v.1)=q T_s \otimes 1$ in the latter case. 

Then, with respect to the basis $\calB=\{T_r\otimes 1: r\in R\}$, the matrices of $\{\rho_{q,I}(T_s):s\in S\}$ for different $q$ can be uniformly described by a matrix with entries in $\bbZ[u]$ by the specialization $u\mapsto q$. By this, we mean that there exists a matrix $A_s(u)\in M_{|R|}(\bbZ[u])$, such that for any $q$ which is a prime power or 1, the matrix of $\rho_{q,I}(T_s)$ with respect to the basis $\calB$ is $A_s(q)$.  We can now make clear the setting $q\mapsto 1$ process.

\begin{prop}\label{prop:8.1.1}
Let $\Phi_{q,I}$ be the character of the representation $\rho_{q,I}(T_w)$. Let $w_1,...,w_m$ be elements of $W$. Let $T=T_{w_1}...T_{w_m}$. Then there exists a unique polynomial $f(u)\in \bbZ[u]$, such that for any $q$ which is a prime power or 1, $\Phi_{q,I}(T)=f(q)$.
\end{prop}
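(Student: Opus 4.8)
The plan is to leverage the uniform matrix description just established in the paragraph preceding the statement. First I would recall that with respect to the basis $\calB=\{T_r\otimes 1: r\in R\}$, each generator $\rho_{q,I}(T_s)$ is given by $A_s(q)$ for a fixed matrix $A_s(u)\in M_{|R|}(\bbZ[u])$, valid simultaneously for $q=1$ and every prime power $q$. Since $H_q$ is generated by the $T_s$ and every $T_w$ is a product of generators via a reduced expression, the matrix of $\rho_{q,I}(T_w)$ with respect to $\calB$ is $A_w(q)$ where $A_w(u):=A_{s_1}(u)\cdots A_{s_r}(u)\in M_{|R|}(\bbZ[u])$ for a reduced word $w=s_1\cdots s_r$; this product lands in $M_{|R|}(\bbZ[u])$ because matrix multiplication is a polynomial operation on entries.

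Next I would form $A(u):=A_{w_1}(u)\cdots A_{w_m}(u)$, again an element of $M_{|R|}(\bbZ[u])$, so that the matrix of $\rho_{q,I}(T)$ on $\calB$ equals $A(q)$ for every $q$ that is a prime power or $1$. Then I would set $f(u):=\Tr(A(u))\in\bbZ[u]$, the trace being an integer-polynomial-valued function of the entries. By definition of $\Phi_{q,I}$ as the character of $\rho_{q,I}$, we get $\Phi_{q,I}(T)=\Tr(\rho_{q,I}(T))=\Tr(A(q))=f(q)$ for all such $q$. Uniqueness is automatic: two polynomials in $\bbZ[u]$ agreeing on the infinitely many prime powers must be equal.

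The only genuine subtlety — and the step I expect to require the most care — is justifying that the $A_s(u)$ are well-defined independently of the choice of reduced expressions, i.e. that the braid relations are respected at the level of $\bbZ[u]$-matrices, and that the $T_w$ (hence $A_w(u)$) are independent of the chosen reduced word. This is exactly Matsumoto's theorem / the braid relations for $H_q$, which hold identically in $u$ because the quadratic and braid relations defining $H_q$ have coefficients in $\bbZ[u]$; so the construction of $A_w(u)$ descends to $W$ consistently. Once this bookkeeping is in place the rest is the routine observation that sums and products of integer-polynomial matrices, and their traces, stay in $\bbZ[u]$. No deeper input (and in particular no representation-theoretic semisimplicity) is needed for this proposition, though it is the gateway to the deformation argument used later to pin down the root of unity $\zeta=1$ in Springer's theorem.
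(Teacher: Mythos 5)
Your proof is correct and follows essentially the same route as the paper: define $A_w(u)$ as the product of the generator matrices $A_s(u)$ along a reduced expression, form $A(u)=A_{w_1}(u)\cdots A_{w_m}(u)$, take $f(u)=\Tr(A(u))\in\bbZ[u]$, and deduce uniqueness from the fact that a polynomial is determined by its values at infinitely many prime powers. The only addition is your explicit remark on the independence of $A_w(u)$ from the chosen reduced word, which the paper leaves implicit; this is a reasonable point to flag, and your justification (Matsumoto's theorem, or equivalently the observation that agreement at infinitely many $q$ forces equality in $M_{|R|}(\bbZ[u])$) is sound.
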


\begin{proof}
For each $w\in W$, let $w=s_1...s_k$ be a reduced expression of $w$. Then $T_w=T_{s_1}...T_{s_k}$. Let $A_w(u)=A_{s_1}(u)...A_{s_k}(u)$, where $A_{s}(u)$ are matrices in $ M_{|R|}(\bbZ[u])$ as above. Then for each $q$, the matrix of $\rho_{q,I}(T_w)$ is $A_w(q)$. Let $A(u)=A_{w_1}(u)...A_{w_m}(u)$. Then the matrix of $\rho_{q,I}(T)$ is $A_{w_1}(q)...A_{w_m}(q)=A(q)$.

Therefore, for each $q$, $\Phi_{q,I}(T)=\Tr(\rho_{q,I}(T))=\Tr(A(u))$. Take $f(u)=\Tr(A(u))$. Then $f(u)$ is an integer coefficient polynomial satisfying the requirement. Such an $f$ is clearly unique, because the values of $f(q)$ at infinitely many $q$ determines the polynomial $f$.
\end{proof}

Next, we unravel the representation $\rho_{q,I}$ as representations of $G$ or $S_n$. For $q=1$, the situation is simpler. The induced module $M_{1,I}=H_1\otimes_{H_{1,I}} \bbC =\bbC S_n \otimes_{\bbC S_I} \bbC$, with $S_I$ acting trivially on $\bbC$. Then the character $\Phi_{1,I}$ afforded by $M_{1,I}$ is exactly $\phi_I=(1_{S_I})^{S_n}$. 

In the case $G=\GL_n(\bbF_q)$, consider the character $(1_{P_I})^G$ afforded by the module $\bbC Ge_{P_I}$, which is denoted by $\Phi_{I,G}$.
Identify $H_q$ with the subalgebra $e_B \bbC Ge_B$ of $\bbC G$. Then our claim is: $\Phi_{I,G}|_{H_q} = \Phi_{q,I}$.

In fact, for $a\in H$, $a \bbC Ge_{P_I}\subseteq e_B \bbC Ge_{P_I}$. Then $\Tr(a_l,\bbC Ge_{P_I})=\Tr(a_l, e_B \bbC Ge_{P_I})$. Note that $B\backslash G/P_I$ corresponds bijectively to  $W/W_I$ (Proof of Proposition \ref{prop:5.2.8}). Therefore, $\calB'=\{a_{r}e_{P_I}:r\in R\}$ is a $\bbC$-basis of $e_B \bbC Ge_{P_I}$. 

One can now consider the element $a_s a_r e_{P_I}$. When $l(sr)<l(r)$, $a_s a_r e_{P_I}=q a_{sr}e_{P_I}+(q-1)a_r e_{P_I}$. When $l(sr)>l(r)$, $a_s a_r e_{P_I}=a_{sr}e_{P_I}$ if $sr\in R$, and $a_s a_r e_{P_I}=a_r a_v e_{P_I}=q a_r e_{P_I}$ if $sr=rv$ for some $v\in I$.
Then, the matrices of left multiplication of $\{a_w:w\in W\}$ on $e_B \bbC Ge_{P_I}$ with respect to $\calB'$ agree with those of $\{T_w:w\in W\}$ on $M_{q,I}$ with respect to $\calB$. Therefore, $\Phi_{I,G}(a_w)=\Tr(a_w, e_B \bbC Ge_{P_I})=\Tr(T_w, M_{q,I})=\Phi_{q,I}(T_w)$.

Note the relation $\Phi_{I,G}=\widehat{\phi_I}$ between the two situations.

We can now rephrase Proposition \ref{prop:8.1.1} in a more convenient way:
\begin{cor}\label{cor:8.1.2}
Let $n\geq 1$ be fixed. Let $G_q=\GL_n(\bbF_q)$.
Let $w_1,...,w_m$ be elements of $W$. Then there exists a unique polynomial $f(u)\in \bbZ[u]$, such that for any prime power $q$, $\Phi_{I,G_q}(a_{w_1}a_{w_2}...a_{w_m})=f(q)$ and that $\phi_I(w_1w_2...w_m)=f(1)$.

More generally, for any virtual character $\phi$ of $S_n$, let $\Phi_q=\widehat{\phi}$ be the image $\phi$ to $G_q$. Then there exists a unique polynomial $g\in \bbZ[u]$ such that $g(q)=\Phi_q(a_{w_1}a_{w_2}...a_{w_m})$ for each prime power $q$ and $g(1)=\phi(w_1w_2...w_m)$.
\end{cor}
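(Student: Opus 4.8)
The plan is to reduce the general statement of Corollary \ref{cor:8.1.2} to Proposition \ref{prop:8.1.1} together with the identity $\Phi_{I,G}|_{H_q}=\Phi_{q,I}$ established just before the corollary. First I would treat the special case where $\phi=\phi_I$ for a single $I\subseteq S$. By Proposition \ref{prop:8.1.1} applied to the word $w_1,\dots,w_m$, there is a unique $f(u)\in\bbZ[u]$ with $\Phi_{q,I}(T_{w_1}\cdots T_{w_m})=f(q)$ for every prime power or $q=1$. For a prime power $q$, the identification of $H_q$ with $e_B\bbC G_q e_B$ sends $T_w$ to $a_w$, and $\Phi_{q,I}(a_w)=\Phi_{I,G_q}(a_w)$ because $\Phi_{I,G}|_{H_q}=\Phi_{q,I}$; hence $\Phi_{I,G_q}(a_{w_1}\cdots a_{w_m})=f(q)$. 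For $q=1$, $H_1=\bbC S_n$ with $T_w=w$, and $M_{1,I}$ affords the character $\phi_I=(1_{S_I})^{S_n}$, so $\Phi_{1,I}(T_{w_1}\cdots T_{w_m})=\phi_I(w_1\cdots w_m)$; thus $f(1)=\phi_I(w_1\cdots w_m)$. Uniqueness of $f$ is clear since it is forced at infinitely many specializations.

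Next I would bootstrap from $\phi_I$ to an arbitrary virtual character $\phi$ of $S_n$. By Propositions \ref{prop:5.2.5} and \ref{prop:5.2.6}, every irreducible character of $S_n$ — and hence every virtual character $\phi$ — can be written as a finite integer combination $\phi=\sum_{J\subseteq S} n_J\phi_J$. Applying the first case to each $J$ occurring, we obtain polynomials $f_J(u)\in\bbZ[u]$ with $\Phi_{J,G_q}(a_{w_1}\cdots a_{w_m})=f_J(q)$ for all prime powers $q$ and $f_J(1)=\phi_J(w_1\cdots w_m)$. Since the hat map is linear, $\widehat{\phi}=\sum_J n_J\Phi_{J,G_q}$, and character values are linear in the character, so setting $g(u)=\sum_J n_J f_J(u)\in\bbZ[u]$ gives $\Phi_q(a_{w_1}\cdots a_{w_m})=g(q)$ for every prime power $q$. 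Evaluating at $u=1$ and using $f_J(1)=\phi_J(w_1\cdots w_m)$ together with linearity of $\phi\mapsto\phi(w_1\cdots w_m)$ yields $g(1)=\sum_J n_J\phi_J(w_1\cdots w_m)=\phi(w_1\cdots w_m)$. Uniqueness of $g$ again follows because its values at the infinitely many prime powers pin down the polynomial.

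The only genuinely substantive input is Proposition \ref{prop:8.1.1}, which is already available, so the main thing to be careful about is the bookkeeping around the two identifications: that $T_w\leftrightarrow a_w$ under $H_q\cong e_B\bbC G_q e_B$ and that $\Phi_{I,G_q}|_{H_q}=\Phi_{q,I}$, both of which are stated in the text immediately preceding the corollary. A secondary point worth a sentence is that the integer combination expressing $\phi$ in terms of the $\phi_J$ is finite and has coefficients independent of $q$, so that the sum defining $g(u)$ is a genuine polynomial with integer coefficients; this is guaranteed by Proposition \ref{prop:5.2.5}. I do not anticipate any real obstacle here — the corollary is essentially a repackaging of Proposition \ref{prop:8.1.1} plus linearity — so the proof will be short.
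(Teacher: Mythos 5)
Your proposal is correct and follows essentially the same route as the paper's proof: reduce to Proposition \ref{prop:8.1.1} via the identifications $H_q\simeq e_B\bbC G_q e_B$ (with $T_w\mapsto a_w$) and $H_1\simeq\bbC S_n$ (with $T_w\mapsto w$), then extend to arbitrary virtual characters by linearity of the hat map using Proposition \ref{prop:5.2.5}. Your write-up is merely more explicit about the bookkeeping; there is no substantive difference.
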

\begin{proof}
Under the identification $H_1\simeq \bbC S_n$ via $T_w\mapsto w$, the representation $\Phi_{1,I}$ is identified with $\phi_I$. Then $\Phi_{1,I}(T_{w_1}...T_{w_m})=\phi(w_1...w_m)$. Under the isomorphism $H_q\simeq e_B\bbC G_qe_B$ via $T_w\mapsto a_w$, the representation $\Phi_{q,I}$ is identified with $\Phi_{I,G_q}$. Then $\Phi_{q,I}(T_{w_1}...T_{w_m})=\Phi_{I, G_q}(a_{w_1}...a_{w_m})$. The first statement is then a re-phrasal of Proposition \ref{prop:8.1.1}

Note that $\Phi_{I,G}=\widehat{\phi_I}$. Then the second statement is proved when $\phi=\phi_I$. In general, by Proposition \ref{prop:5.2.5}, a virtual character $\phi$ of $S_n$ is a integer combination of $\phi_I$. Then the second statement holds because the hat map is linear.
\end{proof}

This corollary relates the character values of various $\Phi_q$, and is instrumental in finding these values. We have seen its power in the proof of Theorem \ref{thm:6.3.1}, and will use it to determine the eigenvalues of various relative destination elements in the special cases.

\subsection{Zeta Function for \texorpdfstring{$X_2$}{X1}, Special Case}\label{sec:8.2}
Consider the setting in Section \ref{sec:4.2.2}. Suppose $n=3k$, and $w_1=(1,k+1,2k+1)(2,k+2,2k+2)...(k,2k,3k)\in S_n$. Let $P=P_0=P_\mu$, with $\mu=(k,k,k)$. 

The zeta function $Z_c(X_2^{[(\frac{n}{3},\frac{n}{3})]},u)$ is computed by finding the eigenvalues the right multiplication of the element $D(F_0,F_0,2)$ on $\bbC Ge_{P}$, or the left multiplication of $D(F_0,F_0,2)$ on $e_{P}\bbC G\simeq \oplus_{\lambda\unrhd\mu} {(e_{P_0}M_\lambda)}^{d_\lambda}$. By the calculation in Section \ref{sec:7.2}, right multiplication of $D(F_0,F_0,2)^3=D(F_0,F_0,6)$ acts as the constant $q^{f_\lambda-2\wt_r(\mu)}$ on the space $e_P M_\lambda$. Then the eigenvalues of $D(F_0,F_0,2)$ on $e_P M_\lambda$ are among $\{\omega^{t} q^\frac{f_\lambda-2\wt_r(\mu)}{3}: t=0,1,2\}$, where $\omega=e^{\frac{2\pi i}{3}}$. The dimension of $e_P M_\lambda$ is $K_{\lambda,\mu}$. To decide the multiplicity of each eigenvalue, it suffices to find the trace of this action (We know the total multiplicity, and the real and imaginary parts of the trace will give us 2 more equations). 

Note that $\Tr(D(F_0,F_0,2),e_P M_\lambda)=\Tr(D(F_0,F_0,2), M_\lambda)=\Psi_\lambda(D(F_0,F_0,2))$ because $D(F_0,F_0,2)\in e_{P}\bbC Ge_P$ and annihilates $(1-e)M_\lambda$. 

The idea is to let $q\to 1$, and then $D(F_0,F_0,2)\to e_{S_\mu}w_1e_{S_\mu}$. Then $\Psi_\lambda(D(F_0,F_0,2))$ should become $\psi_\lambda(e_{S_\mu}w_1e_{S_\mu})$. Finding the number $\psi_\lambda(e_{S_\mu}w_1e_{S_\mu})$ will then decide the multiplicity of each $\omega^{t} q^\frac{f_\lambda-2\wt_r(\mu)}{3}$ appearing as eigenvalues of the action $D(F_0,F_0,2)_r$. We now make this process precise.

\paragraph{The $q\to 1$ Deformation Process} We now show the validity of the $q\to 1$ process. For this, we need to express the element $D=D(F_0,F_0,2)$ as an expression involving $a_w$ to apply the Corollary \ref{cor:8.1.2}. Since we will do $q\to 1$ eventually, the various $q$ powers will be irrelevant, and the integer exponents will be denoted by $C_1,C_2,...$ when appropriate.

By Section \ref{sec:4.2.2},  $D=D(F_0,F_0,2)=a_{P_0w_1P_0} =q^{C_1}e_{P_0}w_1e_{P_0}.$
Note that since $a_w=a_{BwB}=\frac{1}{|B|}\sum_{x\in BwB}x$, we have
$$e_{P_0}=\frac{1}{|P_0|} \sum_{x\in P_0}x =\frac{|B|}{|P_0|} \sum_{w\in S_\mu}a_{BwB}=\frac{\sum_{w\in S_\mu}a_{w}}{\sum_{w\in S_\mu}{q^{l(w)}}}.$$
We denote by $a_{\mu}$ the sum $\sum_{w\in S_\mu}a_{w}$. Then the formula $D=q^{C_1}e_{P_0}w_1e_{P_0}$ yields 
$$D=q^{C_1}\frac{a_{\mu} w_1 a_{\mu}}{\ind(a_{\mu})^2},$$ where $\ind$ is the index character $a_w\mapsto q^{l(w)}$ as in Section \ref{sec:8.1}.

By Corollary \ref{cor:8.1.2}, there exists a polynomial $g\in \bbZ[u]$, such that for all prime power $q>1$, the numbers $\Psi_\lambda(a_\mu w_1 a_\mu)$ associated with $G=\GL_n(\bbF_q)$ equals $g(q)$, and that $\psi_\lambda(b_\mu w_1 b_\mu)=g(1)$, where $b_\mu=\sum_{x\in S_\mu}x \in \bbC S_\mu$ is the sum of elements in $S_\mu$.

 Therefore, $\Psi_\lambda(D)=g(q) q^{C_1}/\ind(a_\mu)^2=h(q)$ for some rational function $h(u)\in \bbQ(u)$ because  $\ind(a_\mu)=\sum_{w\in S_\mu}{q^{l(w)}}$. Then $h(1)=g(1)/|S_\mu|^2=\psi_\lambda(b_\mu w_1 b_\mu)/|S_\mu|^2=\psi_{\lambda}(e_{S_\mu}w_1e_{S_\mu})$. We will find the number $h(1)$ in Section \ref{sec:8.3}.
 
\paragraph{Deciding $\Psi_\lambda(D)$}
Recall that $\frac{D(F_0,F_0,2)}{q^{{(f_\lambda-2\wt_r(\mu))}/{3}}}=D q^{-C_2/3}$ acts on $e_P M_\lambda$ with eigenvalues among $1,\omega,\omega^2$, and the total multiplicities are $K_{\lambda,\mu}$. Therefore,  $\Psi_\lambda(D q^{-C_2/3})=\Tr(D q^{-C_2/3},e_P M_\lambda)$ falls in the finite set $S_1=\{a+b\omega+c\omega^2:a,b,c\in \bbZ_{\geq 0}, a+b+c=K_{\lambda,\mu}\}$.

Then for any $q$, there exists some $s\in S_1$ such that  $\Psi_\lambda(D)=s q^{C_2/3}$.
Then for some $s_1$, there exist infinitely many $q$'s such that $\Psi_\lambda(D)=s_1 q^{C_2/3}$  when considering  $G=\GL_n(\bbF_q)$.

On the other hand, $\Psi_\lambda(D)=h(q)$ for all prime power $q$. Then  $h(q)=s_1 q^{C_2/3}$ for infinitely many $q$. Treating both sides as rational functions in $q^{1/3}$ for example, one concludes that $h(u)=s_1 u^{C_2/3}$, and thus $C_2/3$ is an integer or $s_1=0$.

Note that $h(1)=s_1$.  The conclusion is that $\Psi_\lambda (D)=h(q)=h(1) q^{C_2/3}$ for all prime power $q$, where $h(1)=\psi_{\lambda}(e_{S_\mu}w_1e_{S_\mu})$. If we find $h(1)$, we know the multiplicities of the eigenvalues $1,\omega,\omega^2$.

\subsection{Character values  of \texorpdfstring{$e_{S_\mu} u e_{S_\mu}$}{eSmu u eSmu}}\label{sec:8.3}

Let $W=S_n$, with $n=3k$ and $\mu=(k,k,k)$. Let $\lambda =(x,y,z)\vdash n$ with $x\geq y\geq z\geq 0$. Suppose $\lambda\unrhd \mu$. We now calculate $\phi_\lambda(e_\mu u e_\mu)$ and $\psi_\lambda(e_\mu u e_\mu)$, where $u=(1,k+1,2k+1)(2,k+2,2k+2)...(k,2k,3k)$. To simplify notation, we write $e_\nu$ for $e_{S_\nu}$, for any partition $\nu\vdash n$.

Recall that $\phi_\lambda=(1_{S_\lambda})^{S_n}$, and is afforded by $\bbC S_n e_\lambda$. Then 
 $$\phi_\lambda(e_\mu u e_\mu)=\Tr(e_\mu u e_\mu)_l, \bbC S_n e_\lambda)=\Tr((e_\mu u e_\mu)_l,e_\mu \bbC S_n e_\lambda).$$
Now, $e_\mu \bbC Ge_\lambda$ has dimension $|S_\mu \backslash S_n/S_\lambda|$. We characterize double cosets in $S_\mu \backslash S_n/S_\lambda$.

Let $X_1=\llbracket1 ,x\rrbracket $, $X_2=\llbracket x+1, x+y\rrbracket $, $X_3=\llbracket x+y+1..,x+y+z\rrbracket$. Let $I_1=\llbracket1 ,k\rrbracket $, $I_2=\llbracket k+1 ,2k\rrbracket$, and $I_3=\llbracket 2k+1 ,3k\rrbracket$. A double coset $S_\mu wS_\lambda$ is characterized by the nine intesection numbers $(|wX_i\cap I_j|)_{1\leq i,j\leq 3}$:

\begin{lemma}
    Let $w_1,w_2\in S_n$. Then $S_\mu w_1S_\lambda=S_\mu w_2S_\lambda$ exactly if $|w_1 X_i\cap I_j|=|w_2 X_i\cap I_j|$ for all $i,j$.
\end{lemma}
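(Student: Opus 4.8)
The plan is to prove that the double coset $S_\mu w S_\lambda$ in $S_n$ is determined by, and determines, the $3 \times 3$ matrix of intersection numbers $(|wX_i \cap I_j|)_{1 \le i,j \le 3}$, where $X_1,X_2,X_3$ is the ordered partition of $\llbracket 1,n\rrbracket$ into blocks of sizes $x,y,z$ and $I_1,I_2,I_3$ is the partition into blocks of sizes $k,k,k$. This is the classical description of $S_\mu \backslash S_n / S_\lambda$ by "contingency tables" (two-sided orbits of $S_n$ acting on ordered pairs of set-partitions), so the argument is largely bookkeeping.

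First I would set up notation: for $w \in S_n$, write $m_{ij}(w) = |wX_i \cap I_j|$ (note $w$ acts on positions, so $wX_i = \{w(a) : a \in X_i\}$). The easy direction is invariance: if $w_2 = \sigma w_1 \tau$ with $\sigma \in S_\mu$ and $\tau \in S_\lambda$, then since $\tau$ permutes each block $X_i$ among itself, $\tau X_i = X_i$, hence $w_2 X_i = \sigma w_1 X_i$; and since $\sigma$ permutes each $I_j$ among itself, $|\sigma w_1 X_i \cap I_j| = |w_1 X_i \cap \sigma^{-1} I_j| = |w_1 X_i \cap I_j|$. So $m_{ij}(w_2) = m_{ij}(w_1)$, proving one implication.

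For the converse, suppose $m_{ij}(w_1) = m_{ij}(w_2) =: m_{ij}$ for all $i,j$. The strategy is to exhibit, for each $w$, a canonical representative of $S_\mu w S_\lambda$ depending only on $(m_{ij})$, and show $w$ lies in the same double coset as that representative. Concretely: partition $X_i$ into consecutive sub-blocks $X_{i1}, X_{i2}, X_{i3}$ of sizes $m_{i1}, m_{i2}, m_{i3}$ (so $X_i = X_{i1} \sqcup X_{i2} \sqcup X_{i3}$), and similarly partition $I_j$ into consecutive sub-blocks $I_{1j}, I_{2j}, I_{3j}$ of sizes $m_{1j}, m_{2j}, m_{3j}$. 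Both $\{X_{ij}\}$ and $\{I_{ij}\}$ are partitions of $\llbracket 1,n\rrbracket$ with matching block sizes, so there is a (unique order-preserving within blocks) permutation $w_0$ sending $X_{ij}$ bijectively onto $I_{ij}$ for every pair $(i,j)$. This $w_0$ depends only on $(m_{ij})$. Now given any $w$ with $m_{ij}(w) = m_{ij}$: since $|wX_i \cap I_j| = m_{ij} = |X_{ij}|$, I can choose $\tau \in S_\lambda$ (permuting within each $X_i$) so that $w\tau$ maps $X_{ij}$ into $I_j$ for each $i,j$; then $w\tau$ maps $X_{ij}$ onto a subset of $I_j$ of size $m_{ij} = |I_{ij}|$, and choosing $\sigma \in S_\mu$ (permuting within each $I_j$) appropriately, $\sigma w \tau$ maps $X_{ij}$ onto $I_{ij}$, hence $\sigma w \tau$ and $w_0$ agree up to permutation within the sub-blocks $X_{ij}$, i.e. $\sigma w \tau = w_0 \tau'$ for some $\tau'$ permuting within each $X_{ij} \subseteq X_i$, so $\tau' \in S_\lambda$. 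Thus $w \in S_\mu w_0 S_\lambda$, and since the same holds for $w_1$ and $w_2$ with the same $w_0$, we get $S_\mu w_1 S_\lambda = S_\mu w_0 S_\lambda = S_\mu w_2 S_\lambda$.

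I expect the main obstacle to be purely expository rather than mathematical: organizing the "refine both partitions into a common $3\times 3$ grid of sub-blocks and read off the permutation" argument cleanly, and making sure the claim that $\sigma w \tau$ can be arranged to equal $w_0$ up to within-sub-block permutations is stated carefully (it amounts to: two permutations lie in the same $S_\mu$–$S_\lambda$ double coset iff they induce the same matrix of block intersection sizes, which is exactly the content). Since this is a standard fact about double cosets of Young subgroups — contingency tables parametrize $S_\mu \backslash S_n / S_\lambda$ — one could alternatively just cite it, but I would include the short direct argument above for self-containedness.
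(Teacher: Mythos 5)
Your proof is correct. The forward (invariance) direction is identical to the paper's. For the converse, however, you take a genuinely different route: you introduce a refined $3\times 3$ grid of sub-blocks $X_{ij},I_{ij}$ of sizes $m_{ij}$, construct a canonical representative $w_0$ of the double coset from the contingency table, and show any $w$ with the given intersection numbers lies in $S_\mu w_0 S_\lambda$ by producing $\tau\in S_\lambda$ and $\sigma\in S_\mu$ in two steps. The paper instead works directly with $w_1$ and $w_2$: since $|w_1X_i\cap I_j|=|w_2X_i\cap I_j|$ for all $i,j$, one can pick a single $g\in S_n$ sending $w_1X_i\cap I_j$ bijectively onto $w_2X_i\cap I_j$; taking unions over $i$ shows $gI_j=I_j$ (so $g\in S_\mu$), and taking unions over $j$ then shows $gw_1X_i=w_2X_i$ (so $w_2^{-1}gw_1\in S_\lambda$), giving $w_1\in S_\mu w_2 S_\lambda$ in one shot. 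The paper's argument is marginally tighter since it never introduces the auxiliary $w_0$ or the refined partition; your argument has the side benefit of exhibiting an explicit normal form for each double coset, which makes the bijection with contingency tables transparent. Both are correct and complete.
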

\begin{proof}
Suppose that $S_\mu w_1 S_\lambda = S_\mu w_2 S_\lambda$. Then $w_1= s_1 w_2 s_2$ for some $s_1\in S_\mu, s_2\in S_\lambda$. 

Note that for any $s_1$ preserves each $I_j$, and $s_2$ preserves each $X_i$. Then $|w_1X_i\cap I_j|=|s_1 w_2 s_2 X_i\cap I_j|=| w_2 s_2 X_i\cap s_1^{-1} I_j|=|w_2 X_i\cap  I_j|$. Therefore, the intersection numbers $|w_1 X_i \cap I_j|$ and $|w_2 X_i \cap I_j|$ are equal.

Conversely, suppose the intersection numbers are equal. Then there exists some $g\in S_n$ such that $g(w_1 X_i \cap I_j)=w_2 X_i \cap I_j$ for all $i,j$. For each $j$, taking the union of both sides of the equation $gw_1 X_i \cap gI_j=w_2 X_i \cap I_j$ gives $g I_j= I_j$. Therefore, $g\in S_\mu$. Then $w_2 X_i \cap I_j=gw_1 X_i\cap I_j$. For each $i$, taking the union of both sides of $w_2 X_i \cap I_j=gw_1 X_i\cap I_j$ over $j=1,2,3$  yields $gw_1 X_i=w_2X_i$. Then $w_2^{-1}g w_1\in S_\lambda$. Therefore, $S_\mu w_1S_\lambda=S_\mu w_2S_\lambda$.
\end{proof}

Now, we describe the left action of $e_\mu w_1 e_\mu$ on the basis elements $e_\mu w_1e_\lambda$ for $w_1\in S_n$:
\begin{prop}\label{prop:8.3.2}
    $e_\mu u e_\mu e_\mu w_1 e_\lambda= e_\mu u w_1  e_\lambda$. Moreover, $e_\mu u w_1  e_\lambda= e_\mu w_1  e_\lambda $ if and only if $|w_1X_i\cap I_1|=|w_1X_i\cap I_2|=|w_1X_i\cap I_3|$ for $i=1,2,3$.
\end{prop}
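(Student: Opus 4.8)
The plan is to compute $e_\mu u e_\mu$ acting on the basis vector $e_\mu w_1 e_\lambda$ of $e_\mu\mathbb{C}S_n e_\lambda$ directly, using the idempotent identities from Lemma~\ref{lemma:mul_idem}. First I would observe that $e_\mu$ is idempotent, so $e_\mu u e_\mu \cdot e_\mu w_1 e_\lambda = e_\mu u e_\mu w_1 e_\lambda$. The key point is that $u = (1,k+1,2k+1)(2,k+2,2k+2)\cdots(k,2k,3k)$ is precisely the permutation that cyclically permutes the three index blocks $I_1 \to I_2 \to I_3 \to I_1$ in the ``straight'' way (sending $r \in I_1$ to $r+k \in I_2$, etc.), and therefore $u$ normalizes $S_\mu = S_{I_1}\times S_{I_2}\times S_{I_3}$: one has $u S_\mu u^{-1} = S_\mu$, hence $u e_\mu u^{-1} = e_{uS_\mu u^{-1}} = e_\mu$ by Lemma~\ref{lemma:mul_idem}. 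Consequently $e_\mu u e_\mu = e_\mu u (u^{-1} e_\mu u) = e_\mu e_\mu u = e_\mu u$, which immediately gives $e_\mu u e_\mu \cdot e_\mu w_1 e_\lambda = e_\mu u w_1 e_\lambda$, proving the first assertion.

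For the second assertion I would analyze when $e_\mu u w_1 e_\lambda = e_\mu w_1 e_\lambda$ as elements of $\mathbb{C}S_n$. Both sides are (up to the same positive scalar) averages over the double cosets $S_\mu (u w_1) S_\lambda$ and $S_\mu w_1 S_\lambda$ respectively; more precisely $e_\mu g e_\lambda = e_{S_\mu g S_\lambda}$ by Lemma~\ref{lemma:mul_idem}, so the equality $e_\mu u w_1 e_\lambda = e_\mu w_1 e_\lambda$ holds if and only if $S_\mu u w_1 S_\lambda = S_\mu w_1 S_\lambda$, i.e.\ iff $u \in S_\mu w_1 S_\lambda w_1^{-1}$. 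Using the preceding lemma (the characterization of $S_\mu\backslash S_n/S_\lambda$ by the nine intersection numbers $|w X_i \cap I_j|$), this double-coset equality is equivalent to $|u w_1 X_i \cap I_j| = |w_1 X_i \cap I_j|$ for all $i,j$. Since $u$ maps $I_1 \to I_2 \to I_3 \to I_1$ bijectively, $|u w_1 X_i \cap I_j| = |w_1 X_i \cap u^{-1} I_j| = |w_1 X_i \cap I_{j-1}|$ (indices mod $3$). Thus the condition becomes $|w_1 X_i \cap I_j| = |w_1 X_i \cap I_{j-1}|$ for all $i$ and all $j$, which for each fixed $i$ says exactly $|w_1 X_i \cap I_1| = |w_1 X_i \cap I_2| = |w_1 X_i \cap I_3|$, as claimed.

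The main obstacle I anticipate is being careful about the exact direction/action of $u$ on index sets and on the sets $X_i$: one must check that $u$ genuinely normalizes $S_\mu$ (not some conjugate), and that ``$e_\mu u e_\mu = e_\mu u$'' is used consistently with the left-module conventions in force (left multiplication acting on $\mathbb{C}S_n e_\lambda$). A secondary point requiring care is the reduction ``$e_\mu u w_1 e_\lambda = e_\mu w_1 e_\lambda \iff S_\mu u w_1 S_\lambda = S_\mu w_1 S_\lambda$'': this uses that distinct double cosets give linearly independent (indeed disjointly supported) averaged elements, so the identity of the two averages forces the double cosets to coincide. Once these bookkeeping points are pinned down, the rest is the straightforward translation via the intersection-number lemma.
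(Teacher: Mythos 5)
Your proof is correct. For the first assertion you take a genuinely different and cleaner route than the paper: you observe that $u$ normalizes $S_\mu$ (since $u$ permutes the blocks $I_1,I_2,I_3$ cyclically, so $uS_{I_1}u^{-1}=S_{I_2}$, etc.), hence $ue_\mu u^{-1}=e_\mu$ by Lemma~\ref{lemma:mul_idem}, from which $e_\mu u e_\mu=e_\mu u$ follows as a two-sided algebra identity and the first claim is immediate. The paper instead expands $e_\mu u e_\mu w_1 e_\lambda$ as a triple sum $\frac{1}{|S_\lambda||S_\mu|^2}\sum_{x,y\in S_\mu,\,z\in S_\lambda}xuyw_1z$ and shows every summand lies in the single double coset $S_\mu uw_1 S_\lambda$ by the intersection-number criterion, then uses that coefficients are constant on double cosets together with the total coefficient sum being $1$. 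Your normalization observation is slicker and avoids the coefficient bookkeeping entirely, at the mild cost of having to actually verify $uS_\mu u^{-1}=S_\mu$; the paper's calculation is more pedestrian but doesn't require spotting that structural fact. For the second assertion both approaches are essentially the same: reduce to an equality of double cosets $S_\mu uw_1 S_\lambda=S_\mu w_1 S_\lambda$, then apply the preceding lemma characterizing double cosets by the nine intersection numbers, using $|uw_1X_i\cap I_j|=|w_1X_i\cap u^{-1}I_j|$ and the cyclic action of $u^{-1}$ on $\{I_1,I_2,I_3\}$. Your explicit remark that $e_\mu g e_\lambda=e_{S_\mu gS_\lambda}$ (again Lemma~\ref{lemma:mul_idem}) and that distinct double cosets give disjointly supported averages is exactly the right justification for the ``only if'' direction, which the paper leaves slightly more implicit.
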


\begin{proof}
By definition, $e_\mu= \frac{1}{|S_\mu|}\sum_{x\in S_\mu} x$, and $e_\lambda= \frac{1}{|S_\lambda|}\sum_{y\in S_\lambda} y$. Note that $e_\mu$ is idempotent. Then $$e_\mu u e_\mu e_\mu w_1 e_\lambda=e_\mu u e_\mu w_1 e_\lambda=\frac{1}{|S_\lambda||S_\mu|^2}\sum_{x,y\in S_\mu, z\in S_\lambda} xuyw_1z. \quad (*)$$

Note that $x,y\in S_\lambda$ preserves the three subsets $I_1,I_2,I_3$, while $u(I_1)=I_2,u(I_2)=I_3$, and $u(I_3)=I_1$. Then $xuy$ also rotates the three sets $I_1,I_2,I_3$ by $I_1\mapsto I_2\mapsto I_3\mapsto I_1$.  
Now $|xuyw_1z(X_i\cap I_j)|=|w_1zX_i\cap (xuy)^{-1}(I_j)|=|w_1 X_i\cap u^{-1}I_j|=|uw_1X_i\cap I_j|$.

Then the intersection numbers $|xuyw_1z(X_i\cap I_j)|$ and $|uw_1X_i\cap I_j|$ all agree. Therefore, $xuyw_1z\in S_\mu u S_\lambda$. The sum $(*)$ is taken over elements in $S_\mu u S_\lambda$. Because the sum is in $e_\mu \bbC G e_\lambda$, all the coefficients $c_a$ and $c_b$ are equal when $a,b$ are in the same $S_\mu-S_\lambda$ double coset. Further, one notes that the total coefficient in $(*)$ is 1. Therefore, $e_\mu u e_\mu e_\mu w_1 e_\lambda= e_\mu u w_1  e_\lambda$.

Now, $uw$ and $w_1$ are in the same double coset exactly when $|uw_1 X_i\cap I_j|=|w_1 X_i\cap I_j|$ for all $i,j$. Equivalently, $|w_1 X_i\cap I_j|=|w_1 X_i\cap u^{-1}I_j|$. Since $u^{-1}$ maps $I_1\mapsto I_3\mapsto I_2\mapsto I_1$, this condition says for all $i$, $|w_1 X_i\cap I_1|=|w_1 X_i\cap I_3|= |w_1 X_i\cap I_2|$. Therefore,  $e_\mu u w_1  e_\lambda= e_\mu w_1  e_\lambda $ exactly when for all $i$, $|w_1 X_i\cap I_j|$ is uniform for $j=1,2,3$.    
\end{proof}

As a corollary, one can compute the trace of the left multiplication of $e_\mu u e_\mu$ on the space $\bbC G e_\lambda$, or $\phi_\lambda(e_\mu u e_\mu)$.

\begin{cor}\label{cor:8.3.3} Let $\mu=(k,k,k)$, and $\lambda=(x,y,z)$ be partitions of $n=3k>0$, with $x\geq y\geq z\geq 0$. Then $\phi_\lambda(e_\mu u e_\mu)=1$ exactly when $x,y,z$ are all divisible by 3, and $\phi_\lambda(e_\mu u e_\mu)=0$ otherwise.
\end{cor}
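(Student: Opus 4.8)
The plan is to compute the trace of $(e_\mu u e_\mu)_l$ on $\bbC S_n e_\lambda$ directly, using the basis $\{e_\mu w e_\lambda : S_\mu w S_\lambda \in S_\mu\backslash S_n/S_\lambda\}$ of the subspace $e_\mu\bbC S_n e_\lambda$, which by the preceding discussion carries the same trace as $\bbC S_n e_\lambda$. By Proposition \ref{prop:8.3.2}, left multiplication by $e_\mu u e_\mu$ sends the basis vector $e_\mu w e_\lambda$ to $e_\mu u w e_\lambda$, i.e. it permutes the basis (up to reindexing double cosets) via $S_\mu w S_\lambda \mapsto S_\mu u w S_\lambda$. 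Hence $\phi_\lambda(e_\mu u e_\mu) = \Tr((e_\mu u e_\mu)_l, e_\mu\bbC S_n e_\lambda)$ equals the number of double cosets $S_\mu w S_\lambda$ that are \emph{fixed} by this map, that is, the number of $S_\mu w S_\lambda$ with $S_\mu u w S_\lambda = S_\mu w S_\lambda$. Again by Proposition \ref{prop:8.3.2}, such a fixed double coset is exactly one for which the intersection numbers $n_{ij}(w) := |wX_i \cap I_j|$ satisfy $n_{i1}(w) = n_{i2}(w) = n_{i3}(w)$ for each $i = 1,2,3$.

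Next I would translate this into a counting problem on matrices of intersection numbers. By the Lemma preceding Proposition \ref{prop:8.3.2}, double cosets $S_\mu\backslash S_n/S_\lambda$ are in bijection with $3\times 3$ nonnegative integer matrices $(n_{ij})$ whose column sums are $(k,k,k)$ (the parts of $\mu$, from the $I_j$) and whose row sums are $(x,y,z)$ (the parts of $\lambda$, from the $X_i$). A double coset is $u$-fixed precisely when each row of its matrix is constant, i.e. $n_{ij} = r_i$ for all $j$, for some nonnegative integers $r_1, r_2, r_3$. The column-sum condition then forces $r_1 + r_2 + r_3 = k$, and the row-sum condition forces $3r_1 = x$, $3r_2 = y$, $3r_3 = z$. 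Therefore a $u$-fixed double coset exists if and only if $x$, $y$, $z$ are each divisible by $3$, and in that case $(r_1, r_2, r_3) = (x/3, y/3, z/3)$ is uniquely determined, so there is exactly one such double coset. This gives $\phi_\lambda(e_\mu u e_\mu) = 1$ when $3 \mid x, y, z$ and $\phi_\lambda(e_\mu u e_\mu) = 0$ otherwise.

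The one point requiring a little care — and the likeliest source of a gap — is the bookkeeping in the trace computation: I must be sure that $(e_\mu u e_\mu)_l$ genuinely acts as a \emph{permutation} of the basis (rather than merely sending basis vectors into the span with possible coefficient changes), so that the trace is literally the number of fixed basis vectors. This is exactly the content of the first sentence of Proposition \ref{prop:8.3.2}, namely $e_\mu u e_\mu \cdot e_\mu w e_\lambda = e_\mu u w e_\lambda$ with coefficient $1$; since $u$ has order dividing $3$ as a permutation of $\{I_1, I_2, I_3\}$ and the map $S_\mu w S_\lambda \mapsto S_\mu u w S_\lambda$ is a genuine bijection of the double-coset set (its inverse is multiplication by $u^{-1}$), the operator is a permutation matrix in this basis and its trace counts fixed double cosets. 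With that observed, the remaining steps are the elementary matrix-counting argument above, and the divisibility dichotomy drops out immediately.
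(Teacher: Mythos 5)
Your proof is correct and follows essentially the same approach as the paper's: both compute $\Tr\bigl((e_\mu u e_\mu)_l,\, e_\mu\bbC S_n e_\lambda\bigr)$ using the double-coset basis and Proposition \ref{prop:8.3.2}, then characterize fixed double cosets via the intersection numbers $|wX_i\cap I_j|$, which are forced to equal $|X_i|/3$ and hence exist (uniquely) iff $3\mid x,y,z$. Your contingency-table reformulation — viewing double cosets as $3\times 3$ nonnegative integer matrices with prescribed row and column sums, and fixed cosets as those with constant rows — is a clean way to package the same counting argument the paper does in prose. One small note: the caveat you flag about needing the operator to be a \emph{permutation} of the basis is slightly stronger than what the trace computation requires. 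Once you know $(e_\mu u e_\mu)\cdot e_\mu w_t e_\lambda = e_\mu u w_t e_\lambda$ is again a basis element (Proposition \ref{prop:8.3.2}), the $t$-th diagonal entry of the matrix of $(e_\mu u e_\mu)_l$ is $1$ or $0$ according as $e_\mu u w_t e_\lambda$ equals $e_\mu w_t e_\lambda$ or not, and the trace is the count of such $t$; injectivity of $w_t\mapsto u w_t$ on double cosets is not needed for this. Your bijectivity argument (via $u^{-1}$) is valid but can be dropped.
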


\begin{proof}
    Recall that $\phi_\lambda(e_\mu u e_\mu)=\Tr(e_\mu u e_\mu)_l, \bbC S_n e_\lambda)=\Tr((e_\mu u e_\mu)_l,e_\mu \bbC S_n e_\lambda).$ We now consider the trace of $(e_\mu u e_\mu)_l$ on $e_\mu \bbC S_n e_\lambda$.

    Take a basis $\{ e_\mu w_t e_\lambda:t=1,2,...,|S_\mu \backslash S_n/S_\lambda| \}$ of $e_\mu \bbC G e_\lambda$. The element $e_\mu w_t e_\lambda$ is characterized by the numbers $|X_i\cap w_t I_j|$.
    
    Then $e_\mu u e_\mu e_\mu w_t e_\lambda= e_\mu uw_t e_\lambda$, and equals $e_\mu w_t e_\lambda$ exactly when for each $i$, $|w_t X_i\cap I_1|=|w_t X_i\cap I_2|=|w_t X_i\cap I_3|=\frac{|X_i|}{3}$. There is exactly one such $w_t$ when $x=|X_2|, y=|X_2|,z=|X_3|$ are all divisible by $3$. If any one of $x,y,z$ is not divisible by 3, there is no such $w_t$. Therefore, the trace $\Tr((e_\mu u e_\mu)_l,e_\mu \bbC S_n e_\lambda)$ equals 1 when $x,y,z$ are all multiples of 3, and 0 otherwise.
\end{proof}

\begin{prop}\label{prop:8.3.4}
For $\lambda=(x,y,z)$, $\psi_\lambda(e_\mu u e_\mu)=1$ if $y-z =0 \pmod 3$. $\psi_\lambda(e_\mu u e_\mu)=-1$ if $y-z =1 \pmod 3$. $\psi_\lambda(e_\mu u e_\mu)=0$ if $y-z =2 \pmod 3$. 
\end{prop}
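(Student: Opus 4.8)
The plan is to compute $\psi_\lambda(e_\mu u e_\mu)$ from $\phi_\lambda(e_\mu u e_\mu)$ using the Jacobi–Trudi identity for three-part partitions (Proposition \ref{prop:7.4.3}). We already know from Corollary \ref{cor:8.3.3} that, writing $e_\mu := e_{S_\mu}$, the value $\phi_\nu(e_\mu u e_\mu)$ equals $1$ when all three parts of $\nu$ are divisible by $3$ and $0$ otherwise. Since $\psi_{(i,j,k)} = (\phi_{(i,j,k)} + \phi_{(i+2,j-1,k-1)} + \phi_{(i+1,j+1,k-2)}) - (\phi_{(i,j+1,k-1)} + \phi_{(i+1,j-1,k)} + \phi_{(i+2,j,k-2)})$, and the functional $\chi \mapsto \chi(e_\mu u e_\mu)$ is linear on virtual characters, we get $\psi_\lambda(e_\mu u e_\mu)$ as an alternating sum of six indicator terms, each term contributing $\pm 1$ precisely when the corresponding triple has all parts $\equiv 0 \pmod 3$ (and with the convention that a triple with a negative entry contributes $0$, which is consistent because $\phi_{(a,b,c)} = 0$ in that case).

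First I would set up the bookkeeping: write $\lambda = (x,y,z)$ and consider the six triples
$$T_1 = (x,y,z),\ T_2 = (x{+}2,y{-}1,z{-}1),\ T_3 = (x{+}1,y{+}1,z{-}2)$$
with sign $+1$, and
$$T_4 = (x,y{+}1,z{-}1),\ T_5 = (x{+}1,y{-}1,z),\ T_6 = (x{+}2,y,z{-}2)$$
with sign $-1$. Since $x+y+z = 3k$ is divisible by $3$, the three residues $x \bmod 3$, $y \bmod 3$, $z \bmod 3$ sum to $0 \bmod 3$, so the condition "all three parts $\equiv 0$" for any of these triples depends only on the pair of residues $(y \bmod 3, z \bmod 3)$ — note each $T_m$ has $x$-coordinate $x, x+1,$ or $x+2$, and correspondingly the $x$-residue is forced once the other two are known. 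The next step is to go case by case on $(\bar y, \bar z) := (y \bmod 3, z \bmod 3)$ and tabulate, for each of the nine residue classes, which of $T_1,\dots,T_6$ have all coordinates $\equiv 0 \pmod 3$, then sum the signs. For instance, when $\bar y = \bar z$ (i.e. $y - z \equiv 0$), one checks that $T_1$ qualifies iff $\bar y = \bar z = 0$; more care is needed, but the claim is that in every class with $y - z \equiv 0 \pmod 3$ exactly one qualifying triple appears and it carries sign $+1$; when $y - z \equiv 1$, exactly one qualifies with sign $-1$; when $y - z \equiv 2$, the qualifying triples cancel in signed pairs (or none qualify), giving $0$.

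I would also need to handle the boundary subtlety that some $T_m$ may fail to be a partition (a coordinate could be negative, e.g. $z = 0$ forces $z - 1 < 0$ and $z - 2 < 0$), in which case $\phi_{T_m} = 0$ by convention; this is automatically compatible with the residue count because a negative coordinate is in particular not $\equiv 0$ only if... — actually here is the one genuine point of caution: a coordinate equal to $-1$ or a large negative multiple of $3$ behaves differently, so I would instead argue that when $z$ is small the triples $T_3, T_4, T_6$ (which have $z-1$ or $z-2$) and also $T_2$ simply drop out, and one verifies directly in the residual small cases that the tabulated answer still holds; alternatively, observe that if any $\phi_{T_m}$ with $T_m$ not a genuine partition were nonzero it would need all parts divisible by $3$ including the negative one, impossible, so the convention $\phi_{(a,b,c)}=0$ for negative entries is harmless. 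The main obstacle is thus purely the combinatorial case analysis of the nine residue classes and making sure the negative-coordinate conventions do not break the count; once the table of six triples versus nine residue classes is written out, the three stated values of $\psi_\lambda(e_\mu u e_\mu)$ fall out immediately by summing signs.
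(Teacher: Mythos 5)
Your proposal is correct and takes essentially the same approach as the paper: expand $\psi_\lambda$ via the three-part Jacobi--Trudi identity (Proposition \ref{prop:7.4.3}), apply Corollary \ref{cor:8.3.3} termwise, and tabulate over the nine residue classes of $(y,z)\bmod 3$, which is exactly what the paper records in Table \ref{table:6}. One small correction to your hedge: when $y-z\equiv 2\pmod 3$ no triple qualifies at all --- there is never pairwise cancellation --- because the six residue-shift vectors of $T_1,\dots,T_6$ are mutually distinct mod $3$, so at most one triple can lie in $(3\mathbb{Z}_{\geq 0})^3$ for any fixed $(y,z)\bmod 3$.
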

Proof. By Proposition \ref{prop:7.4.3},  $\psi_{(x,y,z)} = (\phi_{(x,y,z)} + \phi_{(x+2,y-1,z-1)} + \phi_{(x+1,y+1,z-2)}) - (\phi_{(x,y+1,z-1)} + \phi_{(x+1,y-1,z)} + \phi_{(x+2,y,z-2)})$.  Again, if one of $a,b,c<0$, the expression $\phi_{(a,b,c)}$ is understood as 0. 

For fixed $x\geq y\geq z\geq 1$, note that at most one of the 6 tuples $(x,y,z),(x+2,y-1,z-1),(x+1,y+1,z-2),(x,y+1,z-1),(x+1,y-1,z),(x+2,y,z-2)$ can have all entries in $3\bbZ_{\geq 0}$.

Note that $x+y+z=n$ is divisible by 3, we can summarize the tuple in $(3\bbZ_{\geq 0},3\bbZ_{\geq 0},3\bbZ_{\geq 0})$ for all the possibilities of $(x,y,z) \pmod{3}$ in the following table \ref{table:6}:

\begin{table}[h]
\renewcommand{\arraystretch}{1}
\centering
\begin{tabular}{|c|c|c|c|}
\hline
  & $z = 0 \pmod 3$ & $z = 1 \pmod 3$ & $z = 2 \pmod 3$ \\
\hline
$y = 0 \pmod 3$ & $(x, y, z)$ & None & $(x+2, y, z-2)$ \\
\hline
$y = 1 \pmod 3$ & $(x+1, y-1, z)$ & $(x+2, y-1, z-1)$ & None \\
\hline
$y = 2 \pmod 3$ & None & $(x, y+1, z-1)$ & $(x+1, y+1, z-2)$ \\
\hline
\end{tabular}
\caption{The tuple in $(3\mathbb{Z},3\mathbb{Z}, 3\mathbb{Z})$ for each $(x=-y-z,y,z) \pmod 3$.}\label{table:6}
\end{table}

Now, given $\lambda=(x,y,z)$. Let $\lambda'$ be one of the 6 tuples above. By Corollary 8.4.3, $\phi_{\lambda'}(e_\mu u e_\mu)=1$ when $\lambda'$ is the suitable tuple in the table, and $\phi_{\lambda'}(e_\mu u e_\mu)=0$ otherwise. Then $\psi_{(x,y,z)}(e_\mu u e_\mu)$ equals $1$ if $y=z \pmod 3$, equals $-1$ if $y=z+1 \pmod 3$, and equals $0$ if $y=z+2 \pmod 3$. 

\subsection{Zeta Function for \texorpdfstring{$X_2$}{X1} in the Special Case (Continued)}\label{sec:8.4}

We now continue the calculation of $Z_c(X_2^{[(k,2k)]},u)$ for $n=3k$. 

Suppose all  the eigenvalues of $D(F_0,F_0,2)_l$ on $e_{P_0}\bbC G$ are $q_1,q_2,...q_M$. Then by Proposition \ref{prop:3.3.4 Main prop on group zeta}, 
$$Z_c(X^{[(k,2k)]}_2,u)=\prod_{i=1}^M \frac{1}{(1-q_iu^2)}.$$

Recall that $\mathbb{C} G e_\mu \simeq \bigoplus_{\lambda \unrhd \mu} (e_P M_\lambda)^{d_\lambda}$ as right $H_\mu$-modules. Now, by Section \ref{sec:8.2}, $\frac{D(F_0,F_0,2)}{q^{{(f_\lambda-2\wt_r(\mu))}/{3}}}$ acts on $e_{P_0}\bbC G$ with eigenvalues among $1,\omega,\omega^2$, and the total multiplicities are $K_{\lambda,\mu}$. Moreover, the sum of the 1, $\omega$, $\omega^2$ according to their multiplicities equals $\psi_\lambda(e_\mu ue_\mu)$ and was computed in Proposition \ref{prop:8.3.4}.

We now compute as in Table \ref{Table:7} the eigenvalues of $D(F_0,F_0,2)_r$ on $e_{P_0}\bbC G$, and on $\bbC Ge_\mu$. Each partition $\lambda=(x,y,z)\unrhd (k,k,k)$ contributes the three eigenvalues with the multiplicities listed to $q_i$.

\begin{table}[ht]
\renewcommand{\arraystretch}{1.2}
\begin{center}
\begin{tabular}{|c|c|c|c|}
\hline
$\lambda=(x,y,z)$ & Eigenvalue & Multiplicity on $e_{P_0}M_\lambda$ & Multiplicity on $\bbC G e_\mu$ \\
\hline
& $q^{(f_\lambda-2\wt_r(\mu))/3}$ & $(K_{\lambda,\mu}+2)/3$ & $d_\lambda(K_{\lambda,\mu}+2)/3$ \\
$y\equiv z \pmod 3$ & $\omega q^{(f_\lambda-2\wt_r(\mu))/3}$ & $(K_{\lambda,\mu}-1)/3$ & $d_\lambda(K_{\lambda,\mu}-1)/3$ \\
& $\omega^2 q^{(f_\lambda-2\wt_r(\mu))/3}$ & $(K_{\lambda,\mu}-1)/3$ & $d_\lambda(K_{\lambda,\mu}-1)/3$ \\
\hline
& $q^{(f_\lambda-2\wt_r(\mu))/3}$ & $(K_{\lambda,\mu}-2)/3$ & $d_\lambda(K_{\lambda,\mu}-2)/3$ \\
$y\equiv z+1 \pmod 3$ & $\omega q^{(f_\lambda-2\wt_r(\mu))/3}$ & $(K_{\lambda,\mu}+1)/3$ & $d_\lambda(K_{\lambda,\mu}+1)/3$ \\
& $\omega^2 q^{(f_\lambda-2\wt_r(\mu))/3}$ & $(K_{\lambda,\mu}+1)/3$ & $d_\lambda(K_{\lambda,\mu}+1)/3$ \\
\hline
& $q^{(f_\lambda-2\wt_r(\mu))/3}$ & $K_{\lambda,\mu}/3$ & $d_\lambda K_{\lambda,\mu}/3$\\
$y\equiv z+2 \pmod 3$ & $\omega q^{(f_\lambda-2\wt_r(\mu))/3}$ & $K_{\lambda,\mu}/3$ & $d_\lambda K_{\lambda,\mu}/3$\\
& $\omega^2 q^{(f_\lambda-2\wt_r(\mu))/3}$ & $K_{\lambda,\mu}/3$ & $d_\lambda K_{\lambda,\mu}/3$\\
\hline
\end{tabular}
\end{center}
\caption{Eigenvalues and multiplicities of $D(F_0,F_0,2)_r$}\label{Table:7}
\end{table}

\begin{rem}
Note that all the multiplicities are integers. Therefore, $K_{\lambda,\mu}\equiv y-z+1 \pmod 3$. This also follows from the explicit formula in Section \ref{sec:7.3}: Note that $\mu=(k,k,k)$. Then $K_{\lambda,\mu}=\min(x-k,k-z)-\max(0,k-y,y-k)+1=\min(2k-y-z,k-z)-|y-k|+1$. For $y\geq k$, $K_{\lambda,\mu}=2k-y-z-(y-k)+1=3k-2y-z+1\equiv y-z+1 \pmod 3$. For $y\leq k$, $K_{\lambda,\mu}=k-z-(k-y)+1=y-z+1$.
\end{rem}

Now, we can express the zeta function as:
\begin{equation*}
\frac{1}{Z_c(X_2^{[(k,2k)]},u)} = \prod_{\lambda\unrhd\mu} \prod_{\omega^j} {(1-\omega^j q^{(f_\lambda -2\wt_r(\mu))/3}u^2)^{d_\lambda(K_{\lambda,\mu}+\epsilon_{\lambda,j} )/3}}
\end{equation*}
where $\epsilon_{\lambda,j}=0,\pm2$ for $\omega^j=1$ and $\epsilon_{\lambda,j}=0,\pm1$ otherwise is a correcting term so that $3| K_{\lambda,\mu}+\epsilon_{\lambda,j}$ as in Table \ref{Table:7}.

Note that $(1-q^s\omega u^2)(1-q^s \omega^2 u^2)=(1+q^s u^2+q^{2s}u^4)$. Therefore, the inverse zeta function  $\frac{1}{Z_c(X_2^{[(k,2k)]},u)}$ is a polynomial in $\bbZ[q^{1/3},u]$. One can further show that all the coefficients lie in $\bbZ$. We give two proofs of this statement in the following remark.

\begin{rem}
    One method is using the result in Section \ref{sec:8.3}: If $3\nmid f_\lambda-2\wt_r(\mu)$, then $s=\psi_\lambda(e_\mu ue_\mu) =0$. Therefore, when $y-z\equiv 0 $ or 1$\pmod{3}$, the number $f_\lambda-2\wt_r(\mu)$ is divisible by 3, and the value $q^{(f_\lambda -2\wt_r(\mu))/3}$ is an integer.
    For $y-z\equiv 2\pmod{3}$, the correcting terms $\epsilon_{\lambda,j}$ vanish for each $j$. Then the term contributed by $\lambda$ is $$\prod_{j=0}^2 {(1-\omega^j q^{(f_\lambda -2\wt_r(\mu))/3}u^2)^{d_\lambda K_{\lambda,\mu} /3}}=(1-q^{(f_\lambda -2\wt_r(\mu))}u^6)^{d_\lambda K_{\lambda,\mu} /3},$$
    and is a integer polynomial.

    One may also use explicit formula at the end of Section \ref{sec:7.2}: $f_\lambda-2\wt_r(\mu)=\binom{3k}{2}+\binom{x}{2}+\binom{y}{2}+\binom{z}{2}-2z-y+2ij+2jk+2ik-n(n-1)=\binom{n}{2}+\binom{x}{2}+\binom{y}{2}+\binom{z}{2}-2z-y+6k^2-3k(3k-1)\equiv \binom{x}{2}+\binom{y}{2}+\binom{z}{2}+z-y \pmod 3$. Make a table like Table 6, one can see that $f_\lambda-2\wt_r(\mu)\equiv 0 \pmod{3}$ for $y-z\equiv 0 $ or 1$\pmod{3}$, and $f_\lambda-2\wt_r(\mu)\equiv 2 \pmod{3}$ for $y-z\equiv 2\pmod{3}$. The remaining argument is the same.
\end{rem}

Hence, we have proved:
\begin{thm}\label{thm:8.4.3}
    When $n=3k$, the inverse zeta function $\frac{1}{Z_c(X_2^{[(k,2k)]},u)}$ can be expressed as a product of factors of the form $(1-q^su^2)$ or $(1-\omega q^su^2)(1-\omega^2 q^su^2)$ or $(1-q^s u^6)$, where $s\in \mathbb{Z}_{\geq 0}$ and $\omega=e^{\frac{2\pi i}{3}}$.
\end{thm}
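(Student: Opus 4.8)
The plan is to assemble the machinery already prepared in Sections \ref{sec:8.2}--\ref{sec:8.4}. First I would apply Proposition \ref{prop:3.3.4 Main prop on group zeta} together with the fourth case of Section \ref{sec:3.4}: since $G=\GL(V)$ acts partite-transitively on the bipartite digraph $X_2^{[(k,2k)]}(V)$, with $P_0=P_\mu$ and $\mu=(k,k,k)$, one has $1/Z_c(X_2^{[(k,2k)]},u)=\prod_i(1-q_iu^2)$, where the $q_i$ are the eigenvalues of right multiplication by $D(F_0,F_0,2)$ on $\bbC Ge_{P_0}$. By the side-change lemma of Section \ref{sec:6.1} these equal the eigenvalues of left multiplication on $e_{P_0}\bbC G$, which by Proposition \ref{prop:6.1.2} decomposes as $\bigoplus_{\lambda\unrhd\mu}(e_{P_0}M_\lambda)^{d_\lambda}$ of vector spaces with $\dim(e_{P_0}M_\lambda)=K_{\lambda,\mu}$.

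Second, I would locate the eigenvalues on each summand. By multiplicativity of relative destination elements, $D(F_0,F_0,2)^3=D(F_0,F_0,6)=a_{P_0,P_3}a_{P_3,P_0}$, and Proposition \ref{prop:7.0.1} shows this acts on $e_{P_0}M_\lambda$ as the scalar $q^{f_\lambda-2\wt_r(\mu)}$, a non-negative integer power of $q$. Hence each eigenvalue of $D(F_0,F_0,2)$ on the $K_{\lambda,\mu}$-dimensional space $e_{P_0}M_\lambda$ lies in $\{\,\omega^t q^{(f_\lambda-2\wt_r(\mu))/3}: t=0,1,2\,\}$, and it remains only to read off multiplicities, for which it suffices to compute the trace $\Psi_\lambda(D(F_0,F_0,2))=\Tr(D(F_0,F_0,2)_l,e_{P_0}M_\lambda)$.

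Third, I would compute this trace by the $q\to1$ deformation principle. Writing $D(F_0,F_0,2)=q^{C}\,a_\mu w_1 a_\mu/\ind(a_\mu)^2$ as in Section \ref{sec:8.2}, where $w_1$ is the element of Section \ref{sec:4.2.2} and $a_\mu=\sum_{w\in S_\mu}a_w$, and applying Corollary \ref{cor:8.1.2}, the value $\Psi_\lambda(D(F_0,F_0,2))$ is, for every prime power $q$, the evaluation at $q$ of one fixed rational function whose value at $q=1$ equals $\psi_\lambda(e_{S_\mu}w_1e_{S_\mu})$. Since the cube-root eigenvalue structure of the preceding paragraph forces this trace, for each $q$, to lie in $\{(a+b\omega+c\omega^2)\,q^{(f_\lambda-2\wt_r(\mu))/3}: a,b,c\in\bbZ_{\geq0},\ a+b+c=K_{\lambda,\mu}\}$, matching against the rational function at infinitely many $q$ forces $\Psi_\lambda(D(F_0,F_0,2))=\psi_\lambda(e_{S_\mu}w_1e_{S_\mu})\,q^{(f_\lambda-2\wt_r(\mu))/3}$ identically; in particular the spurious root of unity is trivial, and $(f_\lambda-2\wt_r(\mu))/3$ is an integer whenever the coefficient is nonzero. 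Proposition \ref{prop:8.3.4} (via the Jacobi--Trudi expansion of Proposition \ref{prop:7.4.3} and Corollary \ref{cor:8.3.3}) then evaluates $\psi_{(x,y,z)}(e_{S_\mu}w_1e_{S_\mu})$ as $1$, $-1$, or $0$ according as $y-z\equiv0,1,2\pmod 3$, and solving the $1,\omega,\omega^2$ trace--multiplicity system pins down the three multiplicities exactly as in Table \ref{Table:7}.

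Finally, I would package the factors. Each $\lambda=(x,y,z)\unrhd\mu$ contributes $\prod_{j=0}^{2}(1-\omega^jq^{(f_\lambda-2\wt_r(\mu))/3}u^2)^{d_\lambda(K_{\lambda,\mu}+\epsilon_{\lambda,j})/3}$ with integer correcting terms $\epsilon_{\lambda,j}$. When $y-z\equiv0$ or $1\pmod 3$ the coefficient is nonzero, so $3\mid f_\lambda-2\wt_r(\mu)$ and $q^{(f_\lambda-2\wt_r(\mu))/3}=q^s$ with $s\in\bbZ_{\geq0}$; here the $j=1$ and $j=2$ exponents coincide, so those factors combine into powers of $(1-\omega q^su^2)(1-\omega^2q^su^2)$, while $j=0$ gives powers of $(1-q^su^2)$. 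When $y-z\equiv2\pmod3$ all $\epsilon_{\lambda,j}$ vanish and the three conjugate factors collapse via $(1-au^2)(1-\omega au^2)(1-\omega^2au^2)=1-a^3u^6$ into $(1-q^{f_\lambda-2\wt_r(\mu)}u^6)^{d_\lambda K_{\lambda,\mu}/3}$, again of the stated form. This exhausts the factorization. I expect the main obstacle to be the deformation step of the third paragraph: one must carefully reconcile the discrete constraint from the cube-root eigenvalue structure with the rational-in-$q$ behaviour from Corollary \ref{cor:8.1.2}, since only their intersection yields both that the extra root of unity is $1$ and that the relevant exponents are integral; the remaining work is bookkeeping of Kostka numbers, dimensions $d_\lambda$, and the congruence $K_{\lambda,\mu}\equiv y-z+1\pmod 3$.
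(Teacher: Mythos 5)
Your proposal is correct and follows essentially the same path as the paper: Proposition \ref{prop:3.3.4 Main prop on group zeta} and the side-change lemma to reduce to $(D(F_0,F_0,2))_l$ on $\bigoplus_{\lambda\unrhd\mu}(e_{P_0}M_\lambda)^{d_\lambda}$, the cube-root eigenvalue structure from $D(F_0,F_0,6)$ acting by $q^{f_\lambda-2\wt_r(\mu)}$, the deformation principle of Corollary \ref{cor:8.1.2} combined with the discreteness of possible traces to identify $\Psi_\lambda(D(F_0,F_0,2))=\psi_\lambda(e_{S_\mu}w_1e_{S_\mu})q^{(f_\lambda-2\wt_r(\mu))/3}$, the character computation of Proposition \ref{prop:8.3.4}, and the final regrouping of conjugate factors. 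The only minor infelicity is the phrase ``the spurious root of unity is trivial,'' which is borrowed from the Springer-theorem argument; here what is actually being forced is which Eisenstein-integer coefficient $s_1\in\{a+b\omega+c\omega^2: a+b+c=K_{\lambda,\mu}\}$ occurs, but the underlying reasoning is the same.
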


\subsection{Zeta Function for \texorpdfstring{$X_0$}{X0} in the Special Case}\label{sec:8.5}

Now we consider the special case where $n=2k$ is even, and compute the zeta function $Z_c(X_0^{[k]},u)$ explicitly.

Let $W_0=\langle e_1,...,e_k\rangle$. Then $P_0=\text{Stab}(W_0)$ corresponds to the partition $\mu=(k,k)$ of $n$.  To compute the zeta function $Z_c(X_0^{[k]},u)$, we need to understand the eigenvalues of $D(W_0,W_0,1)_r$ on $\bbC Ge_{P_0}$, or $D(W_0,W_0,1)_l$ on $e_{P_0}\bbC G$. 

 By Section \ref{sec:special_caseDW4.1.3}, $D(W_0,W_0,1)=a_{P_0w_0P_0}=\frac{|P_0w_0P_0|}{|P_0|}e_{P_0}w_0e_{P_0}=q^{k^2}e_{P_0}w_0e_{P_0}$, where $w_0=(1,n)(2,n-1)...(k,k+1)$ is the longest element in $S_n$.

By Section \ref{sec:6.1}, $e_{P_0}\bbC G\simeq \bigoplus_{\lambda\unrhd\mu}(e_{P_0}M_\lambda)^{d_\lambda}$ as right $H_\mu=e_{P_0}\bbC Ge_{P_0}$-modules. We thus need to compute $(e_{P_0}w_0e_{P_0})_l$ on the individual spaces $e_{P_0}M_\lambda$. Note that these individual spaces are 1-dimensional by Table \ref{Table:2}.
By Section \ref{sec:7.1}, the element $D(W_0,W_0,2)=D(W_0,W_0,1)^2$ acts on $e_{P_0}M_\lambda$ by the scalar $q^{f_\lambda-2\wt_r(\mu)}$, so $D(W_0,W_0,1)$ acts by the scalar $\pm q^{(f_\lambda-2\wt_r(\mu))/2}$, and $e_{P_0}w_0e_{P_0}$ acts by $\pm q^{C/2}$ for some integer $C$. It suffices to decide the sign by deformation theory. Again, we first relate the trace with certain character values.

\paragraph{Relating With Character Value $\Psi_\lambda$}
As in Section \ref{sec:8.2},
\begin{align*}
 \Psi_{\lambda}(e_{P_0 w_0 P_0}) &= \Tr((e_{P_0 w_0 P_0})_l, e_{P_0}M_\lambda ) \\
 &= \pm q^{C/2}.
\end{align*}
To determine the sign, we again use the Deformation Theory.

Note that $$e_{P_0}= \frac{1}{|P_0|}\sum_{x\in P_0}x = \frac{|B_0|}{|P_0|}\sum_{w\in S_\mu}a_w =\frac{a_\mu}{\ind(a_\mu)},$$ where $a_\mu=\sum_{w\in S_\mu}a_w$. The element in $\bbC S_n$ corresponding to $e_{P_0}$ is $ \frac{1}{|S_\mu|}(\sum_{w\in S_\mu}w)$, which is $e_{S_\mu}$, also denoted by $e_\mu$. Then the element $e_{P_0}w_0 e_{P_0}$ corresponds to the element $e_\mu w_0 e_\mu$ in $\bbC S_n$. The value of $\Psi_\lambda(e_{P_0} w_0 e_{P_0})$ then specializes to  $\psi_\lambda(e_{\mu} w_0 e_{\mu})$ via $q\mapsto 1$.

\paragraph{Results on $\psi_\lambda$}
We state the result of $\phi_\lambda(e_\mu w_0 e_\mu)$ and $\psi_\lambda(e_\mu w_0 e_\mu)$, as in Section \ref{sec:8.3}.
\begin{prop}
    Let $\mu=(k,k)$, and $\lambda=(x,y)$ be partitions of $n=2k>0$, where $x\geq y\geq 0$. Then $\phi_\lambda(e_\mu w_0 e_\mu)=1$ exactly when $x,y$ are both divisible by 2, and  $\phi_\lambda(e_\mu w_0 e_\mu)=0$ otherwise.
\end{prop}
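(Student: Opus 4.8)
The statement is the $X_0$ analogue of Corollary \ref{cor:8.3.3}, with the $3$-cycle $u$ replaced by the involution $w_0=(1,n)(2,n-1)\cdots(k,k+1)$ and $\mu=(k,k)$ in place of $(k,k,k)$. The plan is to mimic the argument of Section \ref{sec:8.3} verbatim, only with two blocks instead of three. First I would record that $\phi_\lambda(e_\mu w_0 e_\mu) = \Tr\big((e_\mu w_0 e_\mu)_l,\, \bbC S_n e_\lambda\big) = \Tr\big((e_\mu w_0 e_\mu)_l,\, e_\mu\bbC S_n e_\lambda\big)$, so that everything reduces to understanding the action of $e_\mu w_0 e_\mu$ on the double-coset basis of $e_\mu\bbC S_n e_\lambda$.

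Set $X_1 = \llbracket 1, x\rrbracket$, $X_2 = \llbracket x+1, n\rrbracket$ (the two blocks cut out by $S_\lambda$), and $I_1 = \llbracket 1, k\rrbracket$, $I_2 = \llbracket k+1, n\rrbracket$ (the two blocks fixed by $S_\mu$). The key steps are: (1) prove the two-block version of the lemma preceding Proposition \ref{prop:8.3.2}, namely that a double coset $S_\mu w S_\lambda$ is determined by the four intersection numbers $|wX_i\cap I_j|$ — the proof is identical, since $S_\mu$ permutes within each $I_j$ and $S_\lambda$ within each $X_i$; (2) prove the two-block analogue of Proposition \ref{prop:8.3.2}: $e_\mu w_0 e_\mu\cdot e_\mu w e_\lambda = e_\mu w_0 w e_\lambda$ (because $w_0$ swaps $I_1\leftrightarrow I_2$ so conjugating by $S_\mu\cdot w_0\cdot S_\mu$ keeps the element in a single $S_\mu$–$S_\lambda$ double coset, and the total coefficient is $1$), and that $e_\mu w_0 w e_\lambda = e_\mu w e_\lambda$ iff $|w X_i\cap I_1| = |w X_i\cap I_2|$ for $i=1,2$ (since $w_0^{-1}$ also swaps $I_1\leftrightarrow I_2$); (3) count the fixed basis vectors: a basis element $e_\mu w e_\lambda$ is fixed exactly when, for $i=1,2$, the block $X_i$ of size $x$, resp.\ $y$, splits evenly across $I_1$ and $I_2$, i.e.\ $|wX_i\cap I_1| = |wX_i\cap I_2| = |X_i|/2$. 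This is possible iff both $x$ and $y$ are even, and then there is exactly one such double coset (the four intersection numbers are forced to be $x/2, x/2, y/2, y/2$), while if either $x$ or $y$ is odd there is none. Hence the trace is $1$ when $2\mid x$ and $2\mid y$, and $0$ otherwise.

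I expect no serious obstacle: this is a routine two-block specialization of an argument already carried out for three blocks in Section \ref{sec:8.3}, and the combinatorics of "splitting a block of prescribed size evenly across two equal halves" is trivial compared to the mod-$3$ bookkeeping needed there. The only point requiring a moment's care is checking that $w_0$ genuinely acts as the transposition $I_1\leftrightarrow I_2$ on index sets — but this is immediate from $w_0(i) = n+1-i$, which sends $\llbracket 1,k\rrbracket$ to $\llbracket k+1,n\rrbracket$ and back. One then feeds the resulting count into the trace formula to conclude. (The companion statement on $\psi_\lambda(e_\mu w_0 e_\mu)$, not asked here, would follow by the two-row Jacobi–Trudi expansion $\psi_{(x,y)} = \phi_{(x,y)} - \phi_{(x+1,y-1)}$ together with this proposition, exactly as Proposition \ref{prop:8.3.4} follows from Corollary \ref{cor:8.3.3}.)
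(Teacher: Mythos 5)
Your proposal is correct and is precisely what the paper does: the paper's own proof is simply the instruction to ``Follow the argument of Proposition 8.3.2--8.3.3,'' which is the two-block specialization you carry out in detail. Your counting of the unique balanced double coset (with intersection numbers $x/2,x/2,y/2,y/2$ summing correctly to $k$ on each column) and the observation that $w_0$ swaps $I_1\leftrightarrow I_2$ are exactly the required adaptations.
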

\begin{proof}
    Follow the argument of Proposition \ref{prop:8.3.2}-\ref{cor:8.3.3}.
\end{proof}

\begin{prop}
    Let $\mu=(k,k)$, and $\lambda=(x,y)$ be as before. Then $\psi_\lambda(e_\mu w_0 e_\mu)=(-1)^x$.
\end{prop}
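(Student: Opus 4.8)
The plan is to mimic the strategy of Section \ref{sec:8.3}: first compute $\phi_{\lambda'}(e_\mu w_0 e_\mu)$ for all two-row partitions $\lambda'$ (this is the previous proposition, which tells us $\phi_{(a,b)}(e_\mu w_0 e_\mu)$ equals $1$ if both $a$ and $b$ are even and $0$ otherwise), and then use the Jacobi–Trudi expansion of $\psi_\lambda$ in terms of the $\phi$'s to assemble the answer. For a two-row partition $\lambda=(x,y)$ with $x \geq y \geq 0$ and $x+y = 2k$, the $2\times 2$ Jacobi–Trudi determinant (Theorem \ref{thm:7.4.2}) gives
\[
\psi_{(x,y)} = \det\begin{pmatrix} \phi_x & \phi_{x+1} \\ \phi_{y-1} & \phi_y \end{pmatrix} = \phi_{(x,y)} - \phi_{(x+1,y-1)},
\]
with the convention $\phi_{(a,b)} = 0$ if $b < 0$, and where $\phi_{(a,b)}$ for $a < b$ is interpreted via $\phi_{(a,b)} = \phi_{(b,a)}$ since the induced character depends only on the underlying Young subgroup. (One should double-check the degenerate case $y=0$: there $\psi_{(n)} = \phi_{(n)}$ and $\phi_{(n+1,-1)} = 0$, consistent with the formula.)

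Next I would evaluate both terms on $e_\mu w_0 e_\mu$ using the previous proposition. Write $x+y = 2k$, so $x$ and $y$ have the same parity. If $x$ (hence $y$) is even, then $\phi_{(x,y)}(e_\mu w_0 e_\mu) = 1$ because both parts are even, while $(x+1,y-1)$ has both parts odd, so $\phi_{(x+1,y-1)}(e_\mu w_0 e_\mu) = 0$; thus $\psi_{(x,y)}(e_\mu w_0 e_\mu) = 1 - 0 = 1 = (-1)^x$. If $x$ (hence $y$) is odd, then $\phi_{(x,y)}(e_\mu w_0 e_\mu) = 0$ since the parts are odd, while $(x+1,y-1)$ has both parts even, so $\phi_{(x+1,y-1)}(e_\mu w_0 e_\mu) = 1$ (here $y-1 \geq 0$ since $y \geq 1$ as $y$ is odd); thus $\psi_{(x,y)}(e_\mu w_0 e_\mu) = 0 - 1 = -1 = (-1)^x$. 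In both cases the value is $(-1)^x$, which is the claim.

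The main subtlety — not really an obstacle, but the point requiring care — is the bookkeeping of the conventions: ensuring that $\phi_{(a,b)}$ with $a<b$ is correctly identified with $\phi_{(b,a)}$ (so that, e.g., when $y=1$ the term $\phi_{(x+1,0)} = \phi_{(n)}$ is handled right), and that the vanishing conventions ($\phi_k = 0$ for $k<0$) are applied consistently at the boundary cases $y \in \{0,1\}$. Once the analogue of Corollary \ref{cor:8.3.3} for $\mu=(k,k)$ is in hand (stated as the preceding proposition), the argument is a two-line parity check, exactly parallel to the proof of Proposition \ref{prop:8.3.4} but with the $3$-part table replaced by the trivial observation that, among $(x,y)$ and $(x+1,y-1)$, exactly one has both entries even precisely when $x+y$ is even — which always holds here. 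I would close by remarking that this completes the determination of the eigenvalues of $D(W_0,W_0,1)_r$ needed for $Z_c(X_0^{[n/2]},u)$.
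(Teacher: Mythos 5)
Your proposal is correct and takes essentially the same approach as the paper: both express $\psi_{(x,y)}=\phi_{(x,y)}-\phi_{(x+1,y-1)}$ (the paper cites the Kostka inversion of Section~\ref{sec:7.3}, you cite the equivalent $2\times 2$ Jacobi--Trudi determinant), and both then apply the preceding proposition and split on the parity of $x$. The extra attention you give to the boundary cases $y\in\{0,1\}$ and to the $\phi_{(a,b)}=\phi_{(b,a)}$ convention is sound bookkeeping but does not change the argument.
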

\begin{proof}
By Section \ref{sec:7.3}, $\psi_{(x,y)}=\phi_{(x,y)}-\phi_{(x+1,y-1)}$. Again we treat $\phi_{(n+1,-1)}$ as 0. 

If $x$ is even, $y=n-x$ is also even. Then 
$$\psi_{(x,y)}(e_\mu w_0 e_\mu)=\phi_{(x,y)}(e_\mu w_0 e_\mu)-\phi_{(x+1,y-1)}(e_\mu w_0 e_\mu)=1-0=1.$$
If $x$ is odd, Then $\psi_{(x,y)}(e_\mu w_0 e_\mu)=\phi_{(x,y)}(e_\mu w_0 e_\mu)-\phi_{(x+1,y-1)}(e_\mu w_0 e_\mu)=0-1=-1$. Therefore, $\psi_\lambda(e_\mu w_0 e_\mu)=(-1)^x$.
\end{proof}

\paragraph{Eigenvalues of $D(W_0,W_0,1)_r$}
By Deformation theory, the sign of $\Psi_{\lambda}(e_{P_0 w_0 P_0})$ is $(-1)^x$ for $\lambda=(x,n-x)$. Hence, the eigenvalues of $D(W_0,W_0,1)_l$ on $e_{P_0}\bbC G$ are $(-1)^x q^{(f_\lambda-2\wt_r(\mu))/2}$  with multiplicity $d_\lambda K_{\lambda,\mu}=d_\lambda$ for each $\lambda=(x,n-x)$, where $k \leq x\leq n$. We may also write $\lambda=(n-j,j)$ as in Section \ref{sec:7.1}. Then $(-1)^x=(-1)^j$, and $d_\lambda, f_\lambda$ can be denoted as $d_j,f_j$, with values summarized in Table \ref{Table:2}.

We can plug in these eigenvalues to Proposition \ref{prop:3.3.4 Main prop on group zeta} and Case 2 of Section \ref{sec:3.4} and compute the zeta function as:
\begin{align*}
Z_c(X_0^{[k]},u)
&= \prod_{j=0}^k \frac{1}{\left(1-(-1)^j q^{(f_j-2\wt_r(\mu))/2}u\right)^{d_j}}= \prod_{j=0}^k \frac{1}{\left(1-(-1)^j q^{k^2-kj+\frac{j^2-j}{2}}u\right)^{d_j}}.
\end{align*}
Note that $(f_j-2\wt_r(\mu))/2=k^2-kj+\frac{j^2-j}{2}$ is a positive integer. This proves the final case regarding the properties of zeta functions of components:

\begin{thm}\label{thm:8.5.3}
When $n=2k$, $\frac{1}{Z_c(X_0^{[k]},u)}$ is the inverse zeta function of a subgraph of $X_0$, and it can be expressed as a product of factors of the form $(1\pm q^su)$, where $s\in \mathbb{Z}_{\geq 0}$.
\end{thm}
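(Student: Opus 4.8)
The plan is to assemble the ingredients prepared in the run-up to this statement into a single explicit formula, from which the claim is immediate. By Proposition \ref{prop:3.3.4 Main prop on group zeta} together with Case 2 of Section \ref{sec:3.4}, $Z_c(X_0^{[k]},u)$ is determined by the eigenvalues with multiplicity of the right action of $D(W_0,W_0,1)$ on $\bbC Ge_{P_0}$, where $P_0=\Stab(\langle e_1,\dots,e_k\rangle)$ corresponds to $\mu=(k,k)$; and since $D(W_0,W_0,1)=q^{k^2}e_{P_0}w_0e_{P_0}\in e_{P_0}\bbC Ge_{P_0}$ (Section \ref{sec:special_caseDW4.1.3}), the change-of-sides lemma at the start of Section \ref{sec:6.1} lets me compute instead the eigenvalues of its left action on $e_{P_0}\bbC G$. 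Using the $H_\mu$-module decomposition $e_{P_0}\bbC G\simeq\bigoplus_{\lambda\unrhd\mu}(e_{P_0}M_\lambda)^{d_\lambda}$ of Proposition \ref{prop:6.1.2}, and noting that for $\mu=(k,k)$ the partitions $\lambda\unrhd\mu$ are exactly the two-row partitions $\lambda=(n-j,j)$, $0\le j\le k$, each with $K_{\lambda,\mu}=1$ (Table \ref{Table:2}), every summand $e_{P_0}M_\lambda$ is one-dimensional. Hence $D(W_0,W_0,1)$ acts on $e_{P_0}M_\lambda$ by a single scalar; its square is the generic-case scalar $q^{f_\lambda-2\wt_r(\mu)}$ of Proposition \ref{prop:7.0.1}, so the scalar is $\varepsilon_\lambda\,q^{(f_\lambda-2\wt_r(\mu))/2}$ with $\varepsilon_\lambda\in\{\pm 1\}$, and it equals $\Psi_\lambda(D(W_0,W_0,1))$ since $D(W_0,W_0,1)$ kills $(1-e_{P_0})M_\lambda$.

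The crux — and the step I expect to be the main obstacle — is pinning down the sign $\varepsilon_\lambda$, for which I would run the $q\to 1$ deformation argument of Section \ref{sec:8.1}. Writing $e_{P_0}=a_\mu/\ind(a_\mu)$ with $a_\mu=\sum_{w\in S_\mu}a_w$ presents $D(W_0,W_0,1)$ as a fixed rational expression in the standard basis elements $a_w$, so by Corollary \ref{cor:8.1.2} there is a rational function $h$, independent of $q$, with $\Psi_\lambda(D(W_0,W_0,1))=h(q)$ for every prime power $q$ and $h(1)=\psi_\lambda(e_\mu w_0 e_\mu)$; comparing $h(q)=\varepsilon_\lambda q^{(f_\lambda-2\wt_r(\mu))/2}$ as rational functions forces $\varepsilon_\lambda=\psi_\lambda(e_\mu w_0 e_\mu)$. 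It then remains to evaluate this character, which I would do exactly as in the three-part case: a double-coset / intersection-number count on the basis $\{e_\mu w e_\mu\}$ of $e_\mu\bbC S_n e_\mu$ shows $e_\mu w_0 e_\mu$ fixes such a basis vector iff $w$ splits $\{1,\dots,k\}$ versus $\{k+1,\dots,n\}$ evenly, so $\phi_{(x,y)}(e_\mu w_0 e_\mu)=1$ when $x,y$ are both even and $0$ otherwise; then $\psi_{(x,y)}=\phi_{(x,y)}-\phi_{(x+1,y-1)}$ collapses this to $\psi_{(x,y)}(e_\mu w_0 e_\mu)=(-1)^x$. The delicate points are making the specialization rigorous (checking that the matrices of the $a_w$ have entries in $\bbZ[u]$, as in Section \ref{sec:8.1}) and the intersection-number bookkeeping for $\phi_\lambda$; everything else is assembly.

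Finally I would substitute. Writing $\lambda=(n-j,j)$ with $0\le j\le k$, the eigenvalues of $D(W_0,W_0,1)_r$ on $\bbC Ge_{P_0}$ are $(-1)^j q^{(f_j-2\wt_r(\mu))/2}$ with multiplicity $d_j=\qbinom{n}{j}-\qbinom{n}{j-1}$, and $(f_j-2\wt_r(\mu))/2=k^2-kj+\tfrac{j^2-j}{2}=k(k-j)+\binom{j}{2}$ is a non-negative integer. Feeding these into Proposition \ref{prop:3.3.4 Main prop on group zeta} yields
\[
\frac{1}{Z_c(X_0^{[k]},u)}=\prod_{j=0}^{k}\bigl(1-(-1)^j q^{k(k-j)+\binom{j}{2}}u\bigr)^{d_j},
\]
which is a product of factors of the form $(1\pm q^s u)$ with $s\in\bbZ_{\ge 0}$, as claimed.
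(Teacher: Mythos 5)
Your proof is correct and follows essentially the same route as the paper: reduce to the left action on $e_{P_0}\bbC G\simeq\bigoplus_{\lambda\unrhd(k,k)}(e_{P_0}M_\lambda)^{d_\lambda}$, use the generic-case scalar for the square to see the action on each one-dimensional $e_{P_0}M_\lambda$ is $\pm q^{(f_\lambda-2\wt_r(\mu))/2}$, pin down the sign by the $q\to 1$ deformation and the character computation $\psi_{(x,y)}(e_\mu w_0 e_\mu)=(-1)^x$, and assemble. The ingredients, the sign computation, and the final product formula all match the paper's Section~\ref{sec:8.5}.
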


\section{Conclusion}
We can now prove the main theorem \ref{thm:main thm on graphs 1.3.11} of this paper.
\begin{thm}[Main theorem on eigenvalues of graphs]
The nonzero eigenvalues of the digraphs $X_0(\bbF_q^n)$ and $X_2(\bbF_q^n)$ are a root of unity times a fractional power of $q$.
\end{thm}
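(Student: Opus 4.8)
The plan is to assemble the explicit zeta-function computations of Sections~\ref{sec:7} and \ref{sec:8} with the connected-component decomposition of Section~\ref{sec:2.2.2} and the root-extraction dictionary of Proposition~\ref{prop:2.1.11}. First I would recall that $X_0(\bbF_q^n) = \dot\bigcup_{[k]} X_0^{[k]}(V)$ and $X_2(\bbF_q^n) = \dot\bigcup_{[(a,b)]} X_2^{[(a,b)]}(V)$, so that $Z_c(X_0,u) = \prod_{[k]} Z_c(X_0^{[k]},u)$ and similarly for $X_2$. Since the eigenvalues of a disjoint union are the multiset union of the eigenvalues of the pieces (equivalently, $1/Z_c$ of the union is the product of the $1/Z_c$ of the components), it suffices to verify the claim on each component $X_0^{[k]}$ and $X_2^{[(a,b)]}$.

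Next, for each type of component I would quote the shape of $1/Z_c$ already established. Theorem~\ref{thm:7.1.1} gives, for $k \neq n/2$, that $1/Z_c(X_0^{[k]},u)$ is a product of factors $(1-q^s u^2)$ with $s \in \bbZ_{\geq 0}$; Theorem~\ref{thm:8.5.3} gives, for $k = n/2$, a product of factors $(1 \pm q^s u)$; Theorem~\ref{thm:7.2.1} gives, for $(a,b) \neq (n/3,2n/3),(2n/3,n/3)$, a product of factors $(1-q^s u^6)$; and Theorem~\ref{thm:8.4.3} gives, in the remaining case, a product of factors $(1-q^s u^2)$, $(1-\omega q^s u^2)(1-\omega^2 q^s u^2)$, or $(1-q^s u^6)$ with $\omega = e^{2\pi i/3}$. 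In every case the reciprocal zeta factors as a product of terms of the form $(1 - c\,u^d)$ where $d \in \{1,2,6\}$ and $c$ is a root of unity times a non-negative integer power of $q$ (indeed $c \in \{q^s, -q^s, \omega q^s, \omega^2 q^s\}$).

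Then, by Theorem~\ref{thm:2.1.1 Hashi} (the polynomial nature of $1/Z_c$ and the identification of the eigenvalues of a digraph with those of its adjacency matrix) together with Proposition~\ref{prop:2.1.11}, the nonzero eigenvalues of a component are exactly the reciprocals of the roots of $1/Z_c$: writing each factor as $(1-c\,u^d) = \prod_{\eta^d = 1}\bigl(1 - \eta\, c^{1/d}\, u\bigr)$ for a fixed $d$-th root $c^{1/d}$, the contributed eigenvalues are the $\eta\, c^{1/d}$ over all $d$-th roots of unity $\eta$. Here $c^{1/d} = \zeta' q^{s/d}$ where $\zeta'$ is a $d$-th root of the root of unity appearing in $c$ — in particular $\zeta'$ is again a root of unity (for instance when $c = \omega q^s$ one may take $c^{1/2} = e^{\pi i/3} q^{s/2}$, a primitive $6$-th root of unity times $q^{s/2}$), while $q^{s/d}$ is a fractional power of $q$. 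Hence each $\eta\, c^{1/d}$ is a root of unity times a fractional power of $q$, which is precisely the desired form. Taking the union over all components of $X_0$ and of $X_2$ finishes the argument.

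As for the main obstacle: at the level of this final assembly there is essentially none, since all the substance has been carried out in Sections~2--8. The only point needing a little care is the special-case components, where the cube root of unity $\omega$ appears in $1/Z_c$: one must note that extracting a square root of $\omega q^s$ keeps the unit part a root of unity (a $6$-th root of unity), so the ``root of unity times fractional power of $q$'' shape is preserved by the root extraction dictated by Proposition~\ref{prop:2.1.11}. The genuinely hard input behind everything is Springer's Theorem~\ref{thm:6.3.1}, which forces $(e_B w_0 e_B)^2$ to act as a pure power of $q$ on each unipotent piece $e_B M_\lambda$; it is that rigidity at the level of the building blocks that ultimately constrains the eigenvalues so sharply.
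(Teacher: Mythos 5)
Your proposal is correct and follows essentially the same route as the paper's own proof: decompose $X_0$ and $X_2$ into their components $X_0^{[k]}$ and $X_2^{[(a,b)]}$, quote the factored forms of the inverse zeta functions from Theorems~\ref{thm:7.1.1}, \ref{thm:7.2.1}, \ref{thm:8.4.3}, \ref{thm:8.5.3}, and read off the eigenvalues by splitting each factor $(1-cu^d)$ into linear factors. Your explicit spelling-out of the root-extraction step (including the care taken with $\omega$ in the special case) is a slightly more detailed presentation of what the paper's conclusion does tersely, but the substance and the inputs invoked are the same.
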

\begin{proof}
Let $V=\mathbb{F}^n_q$. For the graph $X_0(V)$:

By Theorem \ref{thm:7.1.1}, the inverse zeta function $\frac{1}{Z_c(X_0^{[k]}(V),u)}$ is a product of factors of the form $(1-q^su^2)$ with $s$ an integer for $k\neq \frac{n}{2}$, and the nonzero eigenvalues of $X_0^{[k]}(V)$ are then $\pm q^{s/2}$. This is also true for $k=\frac{n}{2}$ by Theorem \ref{thm:8.5.3}.

Given that $X_0(V)$ is the disjoint union of $X_0^{[k]}(V)$ across all equivalence classes $[k]$, the nonzero eigenvalues of $X_0(V)$ combine those of each $X_0^{[k]}(V)$, which are given by $\pm q^{s/2}$. Therefore, they also follow the same form, as desired.

For the digraph $X_2(V)$:

Analogous to the $X_0(V)$ case, Theorem \ref{thm:7.2.1} informs us that the inverse zeta function $\frac{1}{Z_c(X_2^{[(i,j)]}(V),u)}$ is a product of factors in the form $(1-q^su^6)$ with $s$ a positive integer for $[(i,j)]\neq \left\{\left(\frac{n}{3},\frac{2n}{3}\right),\left(\frac{2n}{3},\frac{n}{3}\right)\right\}$. This leads to the nonzero eigenvalues of $X_2^{[(i,j)]}(V)$ being $q^{\frac{s}{6}}$ times a 6th root of unity. The same is also valid for $[(i,j)]=\left\{\left(\frac{n}{3},\frac{2n}{3}\right),\left(\frac{2n}{3},\frac{n}{3}\right)\right\}$ when $n=3k$, as per Theorem \ref{thm:8.4.3}.

Given that $X_2(V)$ is the disjoint union of $X_2^{[(i,j)]}(V)$ across all equivalence classes $[(i,j)]$, the nonzero eigenvalues of $X_2(V)$ combine those of each $X_2^{[(i,j)]}(V)$, and adhere to the form $q^{s/6}$ times a 6th root of unity, as desired.
\end{proof}
\paragraph{Summary of Results}
In this paper, we have computed explicit formulas for the zeta functions of the digraphs $X_0(\bbF_q^n)$ and $X_2(\bbF_q^n)$ by analyzing their connectivity and leveraging representation theory tools.

Our main results, Theorems \ref{thm:7.1.1}, \ref{thm:7.2.1}, \ref{thm:8.4.3} and \ref{thm:8.5.3}, provide formulas for the inverse zeta functions of the component subgraphs of $X_0$ and $X_2$. By taking products over the components, this gives formulas for $Z_c(X_0(V), u)$ and $Z_c(X_2(V), u)$ for any finite-dimensional vector space $V$ over $\bbF_q$.

Moreover, by the relation $Z(\mathcal{B}(V), u) = Z_c(X_2(V), u)$ and the formula in Corollary \ref{cor:1.3.10 on zeta X0 and XV1V2} for $Z(\mathcal{B}(V_1,...,V_r), u)$,
our results yield formulas for the edge zeta functions $Z(\mathcal{B}(V), u)$ and $Z(\mathcal{B}(V_1,...,V_r), u)$ of the buildings $\mathcal{B}(V)$ and $\mathcal{B}(V_1,...,V_r)$.

Our approach demonstrates an intriguing connection between representations of finite groups of Lie type and the combinatorial study of buildings associated to these groups. The relative destination elements we introduced leverage the rich representation theory of $\GL_n(\bbF_q)$ to capture connectivity in the buildings.

\paragraph{Further Directions}
The buildings studied here are of type $\mathbf{A}_n$ and $\mathbf{A}_m \times \mathbf{A}_n$. An interesting direction for future research is to extend the analysis to buildings of other Lie types over finite fields, such as types $\mathbf{B}$, $\mathbf{C}$ or $\mathbf{D}$ or exceptional types. One could start by considering buildings of type $\mathbf{C} \times \mathbf{C}$ and type $\mathbf{C}$ associated with symplectic groups, as the $\mathbf{C}$ type case tends to be simpler than $\mathbf{B}$ or $\mathbf{D}$ type.

One may also explore higher zeta functions for buildings, in contrast to the edge zeta functions studied here. However, this would require suitably defining higher-dimensional analogues of geodesics in buildings.
\printbibliography[heading=bibintoc,title={Bibliography}]
\end{document}